\newcommand{\ZZ}{\mathbb{Z}}
\newcommand{\CC}{\mathbb{C}}
\newcommand{\NN}{\mathbb{N}}
\newcommand{\TT}{\mathbb{T}}
\newcommand{\RR}{\mathbb{R}}
\newcommand{\DD}{\mathbb{D}}
\newtheorem{theorem}{Theorem}
\newtheorem{proposition}{Proposition}
\newtheorem{lemma}{Lemma}
\newtheorem{remark}{Remark}
\newtheorem{definition}{Definition}
\newcommand{\dd}{\mathrm{d}}
\newcommand{\vertiii}[1]{{\vert\kern-0.25ex\vert\kern-0.25ex\vert #1
\vert\kern-0.25ex\vert\kern-0.25ex\vert}}
\begin{document}
\title{Doubly Connected V-States in Geophysical Models: A General Framework}
\author{Taoufik Hmidi}
\address{NYUAD Research Institute, New York University Abu Dhabi, PO Box 129188, Abu Dhabi, United Arab Emirates.}
\email{th2644@nyu.edu}
\author{Liutang Xue}
\address{School of Mathematical Sciences, Laboratory of Mathematics and Complex Systems (MOE), Beijing Normal University, Beijing 100875, P.R. China}
\email{xuelt@bnu.edu.cn}
\author{Zhilong Xue}
\address{School of Mathematical Sciences, Laboratory of Mathematics and Complex Systems (MOE), Beijing Normal University, Beijing 100875, P.R. China}
\email{zhilongxue@mail.bnu.edu.cn}

\date{\today}
{\thanks{L. Xue and Z. Xue have been partially supported by National Key Research and Development Program of China (No. 2020YFA0712900)
and National Natural Science Foundation of China (No. 12271045).
T. Hmidi has been supported by Tamkeen under the NYU Abu Dhabi Research Institute grant.}
}

\subjclass[2010]{35Q35, 35Q86, 76U05, 35B32, 35P30}
\keywords{doubly connected V-states,  geophysical flows, completely monotone kernels, bifurcation theory.}

\begin{abstract}
  In this paper, we prove the existence of doubly connected V-states (rotating patches)
  close to an annulus for active scalar equations with completely monotone kernels.
  This provides a unified framework for various results related to geophysical flows.
  This allows us to recover existing results on this topic while also extending to new models,
  such as the gSQG and QGSW equations in radial domains and 2D Euler equation in annular domains.
\end{abstract}

\maketitle

\tableofcontents

\section{Introduction}
We consider the two-dimensional (abbr. 2D) active scalar equation
\begin{align}\label{eq:ASE}
\begin{cases}
  \partial_t\omega+(u\cdot \nabla)\omega=0,\quad & (t,\mathbf{x})\in (0,\infty)\times \mathbf{D}\\
  u=\nabla^{\perp}\psi,\quad & (t,\mathbf{x})\in (0,\infty)\times \mathbf{D},\\
  \omega(\mathbf{x},0)=\omega_0(\mathbf{x}),\quad & \mathbf{x}\in \mathbf{D},
\end{cases}
\end{align}
where $\mathbf{D}$ is either the whole space $\RR^2$ or some radially connected domain (e.g. disc or annulus),
$\nabla^{\perp}=(\partial_2,-\partial_1)$, the vector $u=(u_1,u_2)$ is the velocity field,
$\omega$ is a scalar field understood as vorticity or temperature or buoyancy of the fluid,
and the stream function $\psi$ is prescribed through the following relation
\begin{align}\label{eq:stream_function}
  \psi(t,\mathbf{x})=\int_{\mathbf{D}}K(\mathbf{x},\mathbf{y})\omega(t,\mathbf{y})\dd \mathbf{y}.
\end{align}
Throughout our paper, we assume that the kernel can be decomposed as follows
\begin{align*}
  K(\mathbf{x},\mathbf{y})=K_0(|\mathbf{x}-\mathbf{y}|)+K_1(\mathbf{x},\mathbf{y})
\end{align*}
and satisfies the following conditions:
\begin{enumerate}[label=$(\mathbf{A}\theenumi)$,ref=$\mathbf{A}\theenumi$]
\item\label{A1}
{\it Complete monotonicity}:
$t\in (0,\infty)\mapsto -K'_0(t)$ is a nonzero completely monotone function (see Definition \ref{def:cmf} below), equivalently,
there exists a non-negative measure $\mu$ on $[0, \infty)$ such that $\mu\not\equiv 0$ and
\begin{align}\label{eq:K0prim}
  -K'_0(t)=\int_0^{\infty}e^{-tx }\dd \mu(x),\quad \forall t >0.
\end{align}
\item\label{A2}
{\it Integrability assumption}:
there exists
some $\alpha\in (0,1)$ such that\footnote{The upper limit 1 in \eqref{cond:K0} can be changed to any fixed number
$a_0\in (0,\infty)$, and they are indeed equivalent in view of Remark \ref{rmk:assumA2} below.}
\begin{equation}\label{cond:K0}
\begin{aligned}
  \int_0^1 |K_0(t)| t^{-\alpha+\alpha^2}\dd t < \infty.
\end{aligned}
\end{equation}
\item\label{A3} {\it{Regularity assumption}}: $K_1\in C^k_{\mathrm{loc}}({\mathbf{D}}^2)$ for some $k\geqslant 4$.\vskip1mm
\item\label{A4} {\it{Symmetry  assumption}}: we assume that for any $ \mathbf{x},\mathbf{y}\in\mathbf{D},$
\begin{align}\label{eq:symm-assum}
  K_1(\mathbf{x},\mathbf{y})= K_1(\mathbf{y},\mathbf{x}),
  \,K_1(\overline{\mathbf{x}},\overline{\mathbf{y}})=K_1(\mathbf{x},\mathbf{y}),\,
  K_1(e^{i\theta}\mathbf{x},e^{i\theta}\mathbf{y}) = K_1(\mathbf{x},\mathbf{y}), \forall \theta\in \RR,
\end{align}
where $\overline{\mathbf{x}} = (x_1, -x_2)$ is the reflection of $\mathbf{x} =(x_1,x_2)$.
\end{enumerate}
We shall frequently identify the complex plane $\mathbb{C}$ with $\mathbb{R}^2$.
It is important to observe that the symmetry assumption \eqref{A4} guarantees that the radial functions serve as stationary solutions for equation \eqref{eq:ASE}-\eqref{eq:stream_function}. 
\\[1mm]
\noindent
The authors in \cite{HXX23b} initially introduced this general model \eqref{eq:ASE}-\eqref{eq:stream_function}
under the assumptions \eqref{A1}-\eqref{A4},
which notably includes a variety of significant hydrodynamic models as particular examples.
\begin{itemize}
\item \textit{2D Euler equation in the whole space}. In this case, $\mathbf{D}=\mathbb{R}^2$, $\omega(t,\mathbf{x})$ is the vorticity of the fluid and
\begin{align}\label{eq:K-euler}
  \psi (t,\mathbf{x}) = (- \Delta)^{-1} \omega(t,\mathbf{x}),\quad K(\mathbf{x},\mathbf{y}) = K_0(|\mathbf{x}-\mathbf{y}|) = -\frac{1}{2\pi} \log |\mathbf{x} -\mathbf{y}|.
\end{align}
The 2D Euler equation serves as a core model in fluid mechanics,
describing the motion of an inviscid incompressible fluid.
\item
\textit{The generalized surface quasi-geostrophic (abbr. gSQG) equation in the whole plane}.
In this case, $\mathbf{D}=\mathbb{R}^2$, 
and
\begin{align}\label{eq:K-gSQG}
   \psi(t,\mathbf{x})= (-\Delta)^{-1 + \frac{\beta}{2}}\omega(t,\mathbf{x}),\quad K(\mathbf{x}, \mathbf{y}) = K_0(|\mathbf{x}-\mathbf{y}|)
  = c_{\beta}|\mathbf{x}- \mathbf{y}|^{-\beta},\quad \beta\in (0,1),
\end{align}
with $c_{\beta}=\tfrac{\Gamma(\frac{\beta}{2})}{\pi 2^{2-\beta}\Gamma(1-\frac{\beta}{2})}$.
The $\beta=1$ case corresponds to the surface quasi-geostrophic
(abbr. SQG) equation, which is a simplified model to track the atmospheric circulation near the tropopause \cite{HPGS}
and the ocean dynamics in the upper layers \cite{LK06}.
It is important to mention that the SQG equation exhibits a formal deep analogy with the 3D incompressible Euler equations \cite{CMT94}.
The gSQG equation, introduced by C\'ordoba, Fontelos, Mancho, Rodrigo in \cite{CFMR}, serves as an interpolation model between 2D Euler and SQG equations.
\item
\textit{The quasi-geostrophic shallow-water (abbr. QGSW) equation in the whole space}. We have that
$\mathbf{D}= \mathbb{R}^2$, 
and
\begin{align}\label{eq:K-QGSW}
  \psi(t,\mathbf{x})=(-\Delta + \varepsilon^2)^{-1} \omega(t,\mathbf{x}),\quad
  K(\mathbf{x},\mathbf{y})=K_0(|\mathbf{x}-\mathbf{y}|)=\frac{1}{2\pi }\mathbf{K}_0(\varepsilon|\mathbf{x}-\mathbf{y}|),
\end{align}
with $\varepsilon>0$, $\mathbf{K}_0(\cdot)$ the zero-th modified Bessel function (see \eqref{def:Kn-z}).
The QGSW equation is derived asymptotically from the rotating shallow water equations in the limit of fast rotation
and small variation of free surface \cite{Val08}.
\item \textit{The 2D Euler equation in the radial domain $R \mathbb{D}$}. Here, $\mathbb{D}\triangleq B(\mathbf{0},1)$,
$R\mathbb{D} = B(\mathbf{0},R)$.
We impose the slip (non-penetration) boundary condition $u\cdot \mathbf{n} =0$ on $\partial B(\mathbf{0},R)$,
where $\mathbf{n}$ denotes the outward unit normal vector.
It holds that $\mathbf{D} = R \mathbb{D}$, the stream function $\psi$ solves the  Dirichlet boundary value problem of Poisson equation
\begin{align*}
  -\Delta\psi = \omega\quad \text{in}\,\,\mathbf{D},\qquad \psi|_{\partial \mathbf{D}}=0,
\end{align*}
and the kernel function $K(\mathbf{x},\mathbf{y})$ corresponds to the Green function given by
\begin{align}\label{eq:Green_Euler}
  K(\mathbf{x},\mathbf{y})=-\frac{1}{2\pi}\log |\mathbf{x}-\mathbf{y}|+\frac{1}{2\pi}\log |R-R^{-1}\mathbf{x}\overline{\mathbf{y}}|,
  \quad \mathbf{x},\mathbf{y}\in \mathbf{D},
\end{align}
with $\overline{\mathbf{y}} = (y_1,-y_2)$ the conjugate of $\mathbf{y}$.
\item \textit{The gSQG equation and QGSW equation in the radial domain $R \mathbb{D}$, and the 2D Euler equation in the annular domain
$R_2 \mathbb{D}\setminus \overline{R_1\mathbb{D}}$}.
The slip (non-penetration) boundary condition is also imposed.
The formulas for $K(\mathbf{x},\mathbf{y})$
in these three models are complex, but all satisfy assumptions \eqref{A1}-\eqref{A4}
(see Section \ref{sec:applications} for more details).
\end{itemize}

\noindent
Intense efforts have been dedicated in recent decades to the mathematical analysis of the active scalar equation
\eqref{eq:ASE}-\eqref{eq:stream_function}, with particular focus on the 2D Euler equation and the gSQG equation.
The 2D Euler equation is globally well-posed both in the whole space $\mathbb{R}^2$
and in domains that have sufficiently smooth boundaries.
For the smooth solution, one can refer to \cite{MB02,MP94}. For the  patch solution,
which is a weak solution of equation \eqref{eq:ASE}-\eqref{eq:stream_function} with
$\omega_0(\mathbf{x}) = \mathbf{1}_D(\mathbf{x})$ for some bounded smooth domain $D\subset \mathbb{R}^2$,
one can see \cite{Yud63} for the global existence and uniqueness result.
See also \cite{Chemin93,BerC93,KRYZ,Kiselev19}
for solving the contour dynamics problem on the global persistence of
$C^{k,\gamma}$-patch boundary regularity with $k\geqslant 1$ and $\gamma\in (0,1)$.
The ill-posedness results in various function spaces for the 2D Euler equation
can be found in \cite{BouL15,BouL15b,ElgM20,ElgJ17} (regular solution) and \cite{KL23a} (patch solution).
The gSQG equation with $\beta\in (0,2)$ is more subtle in the global well-posedness issue.
The local well-posedness results in the Sobolev framework were explored by \cite{CCCG12,ConN18a}
for the regular solution and by \cite{Rod05,Gan08,CCCG12,KYZ17,GanP21,Ber-Gan}
for the patch solution.
Considering the patch solution composed of two patches with different sign, the
finite-time singularity of such a locally well-posed solution
was proved for the gSQG equation with $\beta\in (0, \frac{1}{2}]$ in the half-plane with slip boundary condition
\cite{KRYZ,GanP21,Zlat23}.
This singularity result also extends to the smooth solution framework \cite{JKY25,Zlat23}.
However, so far the global well-posedness issue for the gSQG equation with any
$\beta\in (0,2)$ in whole space $\mathbb{R}^2$ or torus $\mathbb{T}^2$ remains an open problem. For the gSQG equation, one can also refer to \cite{Resnick,Mar08,ConN18b,NHQ18} for the global existence of $L^2$-weak solution,
as well as \cite{Cor-Zo21,Cor-Zo24,KL23} for the ill-posedness results
in the H\"older or Sobolev spaces.
\vskip1mm
\noindent Here, we mainly focus on a specific class of patch solutions,
usually known as \textit{V-states}, or \textit{relative equilibria} or
\textit{rotating patches}. Their shape is not altered during the motion
and can be described via a rigid body transformation.
By assuming the center of rotation is the origin, the V-states take the form
\begin{align}\label{eq:Vstates}
  \omega(t,\mathbf{x}) = \mathbf{1}_{D_t}(\mathbf{x}),
  \quad \textrm{with}\;\; D_t = e^{i \Omega t} D,
\end{align}
where $D \subset \RR^2$ is a bounded smooth domain and compactly embedded in $\mathbf{D}$.
The real number $\Omega$ is called the \textit{angular velocity} of the rotating domain.
The V-states is referred to as \textit{simply connected} or \textit{doubly connected}
if the rotating domain $D$ is simply or doubly connected, respectively.
We also call the V-states \textit{$m$-fold symmetric} if $e^{i\frac{2\pi }{m}} D = D$,
$m\in\mathbb{N}^\star$.

\vskip0.5mm
\noindent
The study of V-states for the planar Euler equation (i.e. \eqref{eq:ASE}-\eqref{eq:stream_function} and \eqref{eq:K-euler})
goes back to Kirchhoff \cite{Kirch} in 1874,
where he proved that any ellipse with semi-axis $a$ and $b$ rotates uniformly with the angular velocity $\Omega=\frac{ab}{a^2+b^2}$.
Approximately a century later, Deem and Zabusky \cite{DZ78} performed numerical experiments showcasing
the existence of V-states with more $m$-fold symmetry.
Burbea \cite{Burbea82} analytically justified the existence of simply connected V-states with
$m$-fold ($m\geqslant 2$) symmetry. His original approach is to apply local bifurcation tools to
construct local curves of solutions bifurcating from Rankine vortices (i.e. $\mathbf{1}_{\mathbb{D}}(\mathbf{x})$)
at the angular velocity $\Omega=\frac{m-1}{2m}$.
Hmidi, Mateu, and Verdera \cite{HMV13} revisited this construction with more rigor
and also showed that the boundaries of such implicit V-states are $C^\infty$ smooth and convex near the equilibrium.
The boundary analyticity of the constructed V-states was explored by Castro, C\'ordoba, and G\'omez-Serrano \cite{CCG16b},
and its global bifurcation version was discussed by Hassainia, Masmoudi, and Wheeler \cite{HMW20}.
As for the doubly connected V-states of 2D Euler equation, De la Hoz, Hmidi, Mateu, and Verdera \cite{FHMV16}
proved the existence of V-states with $m$-fold ($m\geqslant 3$) symmetry bifurcating from annuli
(i.e. $\mathbf{1}_{\mathbb{D}\setminus \overline{b\mathbb{D}}}(\mathbf{x})$, $0<b<1$) at simple eigenvalues.
The existence of nontrivial simply and doubly connected V-states
for the 2D Euler equation on the disc
(with slip boundary condition) was addressed by De la Hoz, Hassainia, Hmidi, and Mateu \cite{DHHM}.
For other related research about simply or doubly connected V-states for the planar Euler equation,
we refer to \cite{ABCD+,BHM23,CCG19,CJS24,Garcia20,Garcia21,GHJ20,GHM23,GPSY,HHM23+,HHR23,HHR24,HM16,HM16b,HM17,HW22,HR22,HT24,WXZ22,WZZ23,Vishik18a,Vishik18b} and references therein.
\\[0.5mm]
\noindent
The construction of simply or doubly connected $m$-fold symmetric V-states for the gSQG equation
(i.e.  \eqref{eq:ASE}-\eqref{eq:stream_function} and \eqref{eq:K-gSQG})
with $\beta\in (0,1)$ was given by Hassainia, Hmidi \cite{HH15} and De la Hoz, Hassainia, Hmidi
\cite{FHH16}, respectively. These are global-in-time time-periodic patch solutions for gSQG equation,
although the global regularity issue of patch solutions generally remains open.
The method is also to conduct the Crandall-Rabinowitz theorem, but the spectral analysis is more involved and complicated compared with the 2D Euler case.
Later, Castro, C\'ordoba, and G\'omez-Serrano \cite{CCG16}
proved the existence of simply connected convex smooth V-states for the gSQG equation with $\beta\in [1,2)$
(see \cite{CCG16b} for the boundary analyticity).
By finding a decomposition of the Green function $K(\mathbf{x},\mathbf{y})$
for the spectral fractional Laplacian in bounded domains, 
the authors in \cite{HXX23a} showed the existence of $m$-fold symmetric V-states bifurcating from
$\mathbf{1}_{b\mathbb{D}}(\mathbf{x})$ for the gSQG equation in the unit disc $\mathbb{D}$, either for $m\in \mathbb{N}^\star$ large
or $b\in(0,1)$ small or $\beta\in(0,1)$ small.
One can refer to \cite{CCG20,CFMS25,Garcia20,Garcia21,GSIP,GPSY,HHM23,HM17,HW22} for other related works about V-states for the gSQG equation.
\\[0.5mm]
\noindent
The mathematical study of V-states for the QGSW equation (i.e.  \eqref{eq:ASE}-\eqref{eq:stream_function} and \eqref{eq:K-QGSW})
was initiated by Dritschel, Hmidi, and Renault \cite{DHR19}.
They investigated the analytical and numerical aspects of the bifurcation diagram of simply connected rotating vortex patch equilibria
and proved in particular  the existence of $m$-fold symmetric simply connected V-states in the whole space.
This existence result of V-states for sufficiently large $m\in \mathbb{N}^\star$ also holds for the QGSW equation in radial domain,
see \cite[Theorem 1.2]{HXX23b}.
Roulley \cite{Rou23a} established the existence of doubly connected V-states in the planar case provided that the symmetry is large enough.
One can refer to \cite{HR21} for the construction of quasi-periodic simply-connected V-states for the QGSW model using KAM tools.
\\[0.5mm]
\noindent
Note that all the previous work on the V-states of the 2D Euler, gSQG and QGSW equations relies on the
explicit formula of kernel $K$, and a crucial step in verifying the assumptions of
Crandall-Rabinowitz's theorem requires the case-by-case study of the monotonicity of spectral sequence:
for example in the whole space,
\begin{align*}
  \lambda_n^{\mathrm{Euler}} = \frac{n-1}{2n},\quad
  \lambda_n^{\mathrm{gSQG}} = \frac{\Gamma(1-\beta)}{2^{1-\beta}\Gamma^2(1-\frac{\beta}{2})}
  \bigg(\frac{\Gamma(1+\frac{\beta}{2})}{\Gamma(2-\frac{\beta}{2})}
  -\frac{\Gamma(n+\frac{\beta}{2})}{\Gamma(n+1-\frac{\beta}{2})}\bigg),
\end{align*}
\begin{align*}
  \lambda_n^{\mathrm{QGSW}} = \mathbf{I}_1(\varepsilon)\mathbf{K}_1(\varepsilon)-\mathbf{I}_{n}(\varepsilon)\mathbf{K}_{n}(\varepsilon),
\end{align*}
with $\Gamma(\cdot)$ the Gamma function, and $\mathbf{I}_n(\cdot)$ and $\mathbf{K}_n(\cdot)$ the modified Bessel functions of the first and second kind (see \eqref{def:In-z}-\eqref{def:Kn-z}).
In \cite{HXX23b}, the authors were the first to investigate the V-states for the active scalar equation \eqref{eq:ASE}-\eqref{eq:stream_function} under general assumptions on $K$.
They proved the existence of $m$-fold symmetric simply connected rotating patches bifurcating from
$\mathbf{1}_{b\mathbb{D}}(\mathbf{x})$ ($b\mathbb{D}\subset\subset \mathbf{D}$)
for the convolution-type kernels assuming \eqref{A1}-\eqref{A2}.
They also extended their result for general kernels under assumptions \eqref{A1}-\eqref{A4} but with a large symmetry.
A key novelty of that work lies in factorizing the spectrum $\{\lambda_{n,b}\}$ for the convolution kernel case through the introduction of a universal function $\phi_n$, as follows
\begin{align}\label{def:lam-n-b-intro}
  \lambda_{n,b}   =  \int_0^{\infty}\phi_{n}(bx)\tfrac{\dd \mu(x)}{x},\quad \textrm{with}\quad
  \phi_{n}(x)  = 2\int_0^{\pi}e^{-2x \sin \eta} e^{i2n\eta}\dd \eta.
\end{align}
Then, observing that $\phi_n$ satisfies a second-order differential equation (see \eqref{eq:phi_n_ode})
and employing the comparison principle together with the Hankel transform,
we established the positivity and monotonicity of the mapping $n \mapsto \phi_n(x)$ for all $x>0$,
and also derived quantitative versions for these properties; see \eqref{ineq:est-phi-n}-\eqref{ineq:est-phi-n-diff}.
\vskip1mm
\noindent
In this paper, we extend our previous work \cite{HXX23b} by investigating the existence of doubly connected V-states for the active scalar equation
\eqref{eq:ASE}-\eqref{eq:stream_function}, under the general assumptions \eqref{A1}-\eqref{A4}.
Our main goal is to construct doubly connected V-states \eqref{eq:Vstates} bifurcating from the annulus
$\mathbf{1}_{\mathbb{D}\setminus \overline{b\mathbb{D}}}(\mathbf{x}), b\in (0,1)$ by performing bifurcation theory.
For consistency, we assume that
\begin{align*}
  \mathbb{D}\setminus \overline{b\mathbb{D}} \subset\subset \mathbf{D}.
\end{align*}
We aim to construct doubly connected V-states \eqref{eq:Vstates} bifurcating from this annular patch.
Specifically, we consider domains enclosed by two simple curves: the inner boundary parameterized by
\begin{align*}
  \theta \in \mathbb{R} \mapsto R_1(\theta)=\sqrt{b^2 + 2r_1(\theta)}\, e^{i\theta},
\end{align*}
and the outer boundary parameterized by
\begin{align*}
  \theta \in \mathbb{R} \mapsto R_2(\theta)=\sqrt{1 + 2r_2(\theta)}\, e^{i\theta}.
\end{align*}
Then, the contour dynamics equation of V-states with angular velocity $\Omega$ can be written as
\begin{align*}
  F_j(\Omega,\mathbf{r})\triangleq\Omega\, r'_j(\theta) + \partial_\theta \bigg(\int_0^{2\pi}
  \int_{R_1(\eta)}^{R_2(\eta)} K\big(R_j(\theta)e^{i\theta},\rho e^{i\eta}\big) \rho\, \dd \rho \dd \eta\bigg)=0, \quad j=1,2,\quad \mathbf{r}=(r_1,r_2).
\end{align*}
Since $\mathbf{1}_{\mathbb{D}\setminus \overline{b\mathbb{D}}}$ is a stationary solution,
the vectorial function $F\triangleq (F_1,F_2)$
satisfies $F(\Omega,\mathbf{0})=\mathbf{0}$ for any $\Omega\in \RR$.
In order to show the existence of non-trivial V-states,
it remains to check the hypotheses of the Crandall-Rabinowitz theorem
in some suitable Banach spaces.
\\
Compared to the analysis of simply connected V-states carried out in \cite{HXX23b}, the study of doubly connected V-states leads to a system composed of two coupled scalar equations. This significantly complicates the spectral analysis of the linearized operator $\partial_{\mathbf{r}}F(\Omega,\mathbf{r})$ at the equilibrium state $\mathbf{r}=\mathbf{0}$. In this case, the  spectrum  solves a dispersion equation given by the following quadratic equation
\begin{align}\label{eq:Omega-eq-intro}
  \Omega^2 - (A_{n,b} + B_{n,b})\, \Omega +  A_{n,b} B_{n,b} +  (\widetilde{\lambda}_{n,b} + \widetilde{p}_{n,b})^2 = 0,
\end{align}
where
\begin{align*}
  A_{n,b} = - V_b^1[\mathbf{0}] + \lambda_{n,b} + p_{n,b},\quad
  B_{n,b} = - V_b^2[\mathbf{0}] -\lambda_{n,1} + p_{n,1} ,
\end{align*}
the coefficient $\lambda_{n,b}$ is defined in \eqref{def:lam-n-b-intro} and
\begin{align}\label{def:tild-lamb-intro}
  \widetilde{\lambda}_{n,b}  = \int_0^{\infty}\phi_{n,b}(x) \frac{\dd \mu(x)}{x}, \quad \textrm{with} \quad
  \phi_{n,b}(x) \triangleq   \int_0^{2\pi} e^{-x \sqrt{1+b^2 - 2b\cos \eta}} e^{i n\eta} \dd \eta.
\end{align}
The coefficients
 $p_{n,b}$, $\widetilde{p}_{n,b}$ are defined from $K_1$ as \mbox{in \eqref{def:Pnb}.} However, $V_b^{1}[\mathbf{0}]$ and $V_b^{2}[\mathbf{0}]$ are given by
\begin{equation}\label{def:Vb1-intro}
\begin{split}
  V_b^{1}[\mathbf{0}] & = -\frac{1}{b}\widetilde{\lambda}_{1,b}
  + \lambda_{1,b}  + \frac{1}{b} \int_0^{2\pi} \int_b^1 \nabla_{\mathbf{x}}K_1(b,\rho e^{i\eta})\cdot (1,0)^T \,\rho\dd \rho \dd \eta,
\end{split}
\end{equation}
and
\begin{equation}\label{def:Vb2-intro}
\begin{split}
  V_b^2[\mathbf{0}]  &  = - \lambda_{1,1}  + b \widetilde{\lambda}_{1,b}
  + \int_0^{2\pi} \int_b^1 \nabla_{\mathbf{x}}K_1(1,\rho e^{i\eta}) \cdot (1,0)^T \,\rho\dd \rho \dd \eta.
\end{split}
\end{equation}
To obtain two distinct real roots from equation \eqref{eq:Omega-eq-intro},
which is the minimal requirement for applying the Crandall--Rabinowitz theorem,
its discriminant must be positive (see Section \ref{subsec:stable-mode} for more discussion). Specifically, we require
\begin{align}\label{eq:discriminant_intro}
  \mathbf{\Delta}_{n,b} \triangleq (A_{n,b}-B_{n,b})^2 - 4(\widetilde{\lambda}_{n,b} + \widetilde{p}_{n,b})^2>0.
\end{align}
Under this latter condition, the solutions $\Omega^+_{n,b}$ and $\Omega^-_{n,b}$
take the form
\begin{align}\label{eq:Omega_n_pm-intr}
  \Omega^+_{n,b} \triangleq   \frac{A_{n,b} + B_{n,b}}{2} + \frac{\sqrt{ \mathbf{\Delta}_{n,b}}}{2},\quad
  \Omega^-_{n,b} \triangleq  \frac{A_{n,b} + B_{n,b}}{2}- \frac{\sqrt{ \mathbf{\Delta}_{n,b}}}{2}.
\end{align}
Proving the monotonicity with respect to $n$ of the spectral sequences $(\Omega^\pm_{n,b})_{n\in\mathbb{N}^\star}$ is highly nontrivial, even in the simplified case when $K_1 \equiv 0$. Nevertheless, since the term $\widetilde{\lambda}_{n,b} + \widetilde{p}_{n,b}$ decays rapidly with respect to $n$, equation~\eqref{eq:Omega-eq-intro} can be approximated by the simpler form
\[
(\Omega - A_{n,b})(\Omega - B_{n,b}) = 0
\]
for sufficiently large $n$. Using perturbative arguments, the monotonicity property of the spectrum sequences can thus be reduced to establishing the monotonicity of $A_{n,b}$ and $B_{n,b}$ for large enough $n$, a task that is more tractable. Notice that the limit of the discriminant can be computed explicitly,
 \begin{align}\label{eq:basic_assum}
  \mathbf{\Delta}_{\infty,b} \triangleq \lim_{n\to \infty}\mathbf{\Delta}_{n,b} = \big( V_b^1[\mathbf{0}] - V_b^2[\mathbf{0}] \big)^2.
 \end{align}
\vskip1mm

Our main result states as follows.
\begin{theorem}\label{thm:general_kernel}
Suppose that $K(\mathbf{x},\mathbf{y})=K_0(|\mathbf{x}-\mathbf{y}|)+K_1(\mathbf{x},\mathbf{y})$ meets the conditions \eqref{A1}-\eqref{A4}.
Set
\begin{align}\label{def:S-set}
  \mathcal{S} \triangleq \Big\{b\in (0,1)\,:\, V_b^1[\mathbf{0}] - V_b^2[\mathbf{0}] \neq 0\Big\}\cap \mathcal{S}_{\mathrm{max}}, \quad
  \mathcal{S}_{\mathrm{max}} \triangleq \Big\{ b\in (0,1)\,:\,\mathbb{D}\setminus \overline{b \mathbb{D}} \subset\subset \mathbf{D} \Big\}.
\end{align}
Then, for any $b\in \mathcal{S}$,
there exists a sufficiently large number $m_0\in \mathbb{N}^\star$,
such that for any $m\geqslant  m_0$, the active scalar equation \eqref{eq:ASE}-\eqref{eq:stream_function} admits two families of $m$-fold
symmetric doubly connected V-states with $C^{2-\alpha}$ boundary bifurcating from the trivial solution
$\mathbf{1}_{\mathbb{D}\setminus \overline{b\mathbb{D}}}(\mathbf{x})$,
at two different angular velocities $\Omega^\pm_{m,b}$ given by \eqref{eq:Omega_n_pm-intr}.
In addition, if $K(\mathbf{x},\mathbf{y})=K_0(|\mathbf{x}-\mathbf{y}|)$, then the set $\mathcal{S}_{\mathrm{max}} \setminus \mathcal{S}$ is at most countable.
\end{theorem}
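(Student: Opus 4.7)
The strategy is to apply the Crandall--Rabinowitz bifurcation theorem to $F = (F_1, F_2)$ at the stationary state $\mathbf{r} = \mathbf{0}$, separately at each of the two candidate values $\Omega = \Omega_{m,b}^\pm$. I would set up the framework in $m$-fold symmetric Hölder spaces matching the boundary regularity $C^{2-\alpha}$ stated in the theorem: choose a Banach space $X_m$ of pairs $\mathbf{r} = (r_1, r_2)$ of $m$-fold symmetric, even, mean-free $C^{2-\alpha}(\mathbb{T})$-functions, and a target $Y_m$ of the corresponding $C^{1-\alpha}(\mathbb{T})$-pairs. The $C^1$ regularity of $F : \mathbb{R} \times U \to Y_m$ on a neighbourhood $U$ of the origin follows from \eqref{A1}--\eqref{A3} via the Hölder potential estimates already developed for the simply connected case in \cite{HXX23b}, now adapted to a two-curve system; the symmetry assumption \eqref{A4} guarantees that $F$ maps into $Y_m$.

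The next step is the Fourier diagonalization of the linearization $L_\Omega := \partial_{\mathbf{r}} F(\Omega, \mathbf{0})$. Expanding $r_j(\theta) = \sum_{n \geqslant 1} a_{j,n} \cos(n m \theta)$, $L_\Omega$ acts on the $n$-th pair $(a_{1,n}, a_{2,n})$ by multiplication by a $2\times 2$ matrix $M_{nm}(\Omega)$ whose determinant is precisely the quadratic \eqref{eq:Omega-eq-intro} (with $n$ replaced by $nm$). Under the discriminant condition \eqref{eq:discriminant_intro} at mode $m$, $M_m(\Omega)$ is singular exactly at $\Omega = \Omega_{m,b}^\pm$, with a one-dimensional kernel in each case; the two bifurcation values are distinct and must be analyzed separately.

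The main obstacle is proving that $\ker L_{\Omega_{m,b}^\pm}$ is exactly one-dimensional, i.e. that for every $n \geqslant 2$ the matrix $M_{nm}(\Omega_{m,b}^\pm)$ is invertible. Since $\widetilde{\lambda}_{n,b}$ decays faster than any polynomial in $n$ (smoothness of the integrand in \eqref{def:tild-lamb-intro} for $b<1$, giving rapid Fourier decay) and $\widetilde{p}_{n,b} = O(n^{-k})$ by \eqref{A3}, the equation \eqref{eq:Omega-eq-intro} collapses to $(\Omega - A_{n,b})(\Omega - B_{n,b}) = 0$ as $n \to \infty$, so that (after labelling) $\Omega_{n,b}^+ = A_{n,b} + o(1)$ and $\Omega_{n,b}^- = B_{n,b} + o(1)$. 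Separation then reduces to two ingredients: the strict monotonicity in $n$ of $\lambda_{n,b}$ and $\lambda_{n,1}$ established in \cite{HXX23b} through the ODE satisfied by $\phi_n$ and a Hankel-transform positivity argument, and the uniform lower bound $|A_{n,b} - B_{n,b}| \to |V_b^1[\mathbf{0}] - V_b^2[\mathbf{0}]| \neq 0$ granted by $b \in \mathcal{S}$ together with $p_{n,b}, \widetilde{p}_{n,b} \to 0$. Choosing $m_0$ sufficiently large, all the differences $\Omega_{m,b}^\pm - \Omega_{nm,b}^\pm$ and $\Omega_{m,b}^\pm - \Omega_{nm,b}^\mp$ for $n \geqslant 2$ are uniformly bounded away from zero, so each $M_{nm}(\Omega_{m,b}^\pm)$ is invertible with operator norm bounded independently of $n$.

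The remaining Crandall--Rabinowitz hypotheses are then routine. The Fredholm property with index zero and codimension one range for $L_{\Omega_{m,b}^\pm}$ follows from the uniform invertibility of the blocks $M_{nm}(\Omega_{m,b}^\pm)$ at $n\geqslant 2$ combined with the rank-one structure at $n=1$. The transversality condition that $\partial_\Omega L_{\Omega_{m,b}^\pm}(v_0) \notin \mathrm{Range}\, L_{\Omega_{m,b}^\pm}$ for $v_0$ spanning the kernel reduces, via \eqref{eq:Omega_n_pm-intr}, to the elementary $2\times 2$ condition $\mathbf{\Delta}_{m,b} \neq 0$, which holds for large $m$ thanks to the limit \eqref{eq:basic_assum} and $b \in \mathcal{S}$. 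For the final claim under $K_1 \equiv 0$, the representation of $V_b^1[\mathbf{0}] - V_b^2[\mathbf{0}]$ through \eqref{def:lam-n-b-intro} and \eqref{def:tild-lamb-intro} together with \eqref{A1}--\eqref{A2} shows that this difference is real-analytic in $b \in \mathcal{S}_{\mathrm{max}}$; a short asymptotic check (for instance as $b \to 0$, where $\widetilde{\lambda}_{1,b} \to 0$ while $\lambda_{1,1}$ is a fixed positive number) rules out the identically vanishing case, so its zero set, and therefore $\mathcal{S}_{\mathrm{max}} \setminus \mathcal{S}$, is discrete and at most countable.
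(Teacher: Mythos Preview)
Your overall strategy matches the paper's: set up $F$ on $m$-fold symmetric $C^{2-\alpha}$ spaces, diagonalize the linearization in Fourier, verify the Crandall--Rabinowitz hypotheses via monotonicity of the spectral sequences for large $m$ (packaged in the paper as Propositions~\ref{prop:spectral} and~\ref{prop:CR_conditions}), and deduce countability from real analyticity. One technical point you gloss over: uniform invertibility of the blocks $M_{nm}$ is not by itself enough to conclude that the range is closed in $C^{1-\alpha}$; the paper additionally checks the Mikhlin-type difference bound $\sup_n n\,|M_{n+1}^{-1}-M_n^{-1}| < \infty$, using the quantitative estimates on $\lambda_{n,b}-\lambda_{n+1,b}$ from Proposition~\ref{prop:lambda_property}, in order to invoke Lemma~\ref{lem:multiplier-lemma}.

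There is, however, a genuine gap in your countability argument. Your ``short asymptotic check as $b\to 0$'' overlooks that $V_b^1[\mathbf{0}]$ contains the term $-\tfrac{1}{b}\widetilde\lambda_{1,b}$ (see~\eqref{def:Vb1-intro}). While $\widetilde\lambda_{1,b}\to 0$, the combination $\tfrac{1}{b}\widetilde\lambda_{1,b}$ converges to the strictly positive quantity $\pi\!\int_0^\infty e^{-x}\,\dd\mu(x)$ (cf.~\eqref{eq:phi1b-conver}), and $\lambda_{1,b}\to 0$ as well. Hence
\[
\lim_{b\to 0^+}\big(V_b^1[\mathbf{0}]-V_b^2[\mathbf{0}]\big)=\int_0^\infty\big(\phi_1(x)-\pi x e^{-x}\big)\,\tfrac{\dd\mu(x)}{x},
\]
and the integrand $\phi_1(x)-\pi x e^{-x}$ \emph{changes sign} (this is exactly the obstruction discussed in Remark~\ref{rem:b-small-discussion} and Lemma~\ref{lem:kernel_function_limit}(i)). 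For suitably chosen admissible measures $\mu$ this limit is zero, so the $b\to 0$ asymptotic cannot rule out identical vanishing. The paper instead evaluates at an interior point $b=0.5$: Lemma~\ref{lem:kernel_function_limit}(iii) establishes $\Psi_{0.5}(x)>0$ for all $x>0$ (via $\Psi_{0.5}'(0)>0$, an explicit lower bound for $x\geqslant 20$, and a numerical check on the intermediate interval), which forces $V_{0.5}^1[\mathbf{0}]-V_{0.5}^2[\mathbf{0}]>0$ for every nonzero nonnegative $\mu$, and then analyticity (Lemma~\ref{lem:analytic_lambda}) yields the countability.
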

\noindent
This theorem enables to recover previous results obtained for various geophysical models, including Euler, gSQG and QGSW equations.
This connection will be explored in detail in Section \ref{sec:applications}.
Furthermore, it provides new results regarding the construction of doubly connected V-states for geophysical flows in different settings.
Notably, we shall consider the gSQG and QGSW equations in the specific case where the domain is a disc:
$\mathbf{D} = R\mathbb{D} = B(\mathbf{0},R)$, $R>1$.
\begin{theorem}\label{Cor:rad-domain}
  Consider the gSQG equation and the QGSW equation in the radial domain $\mathbf{D}= R\mathbb{D}$ with the slip boundary condition.
Then the set $\mathcal{S}= \mathcal{S}_{\mathrm{max}} =(0,1)$, and for any $b\in(0,1)$,
the existence result of $m$-fold symmetric doubly connected V-states as in Theorem \ref{thm:general_kernel}
holds.
\end{theorem}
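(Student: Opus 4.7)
The plan is to specialize Theorem \ref{thm:general_kernel} to the two models. Since $R>1$, the closed annulus $\overline{\mathbb{D}\setminus \overline{b\mathbb{D}}}$ lies compactly in $R\mathbb{D}$ for every $b\in(0,1)$, so $\mathcal{S}_{\mathrm{max}}=(0,1)$ is automatic. It therefore suffices to (i) produce, for each model, a decomposition $K(\mathbf{x},\mathbf{y})=K_0(|\mathbf{x}-\mathbf{y}|)+K_1(\mathbf{x},\mathbf{y})$ meeting (\ref{A1})--(\ref{A4}), and (ii) verify the non-degeneracy condition $V_b^1[\mathbf{0}]\neq V_b^2[\mathbf{0}]$ on the whole of $(0,1)$.

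For (i), I would treat QGSW and gSQG in parallel. For QGSW, set $K_0(t)=\tfrac{1}{2\pi}\mathbf{K}_0(\varepsilon t)$, so that $-K_0'(t)=\tfrac{\varepsilon}{2\pi}\mathbf{K}_1(\varepsilon t)$ is completely monotone via the classical Laplace-type representation of $\mathbf{K}_1$; the correction $K_1(\mathbf{x},\mathbf{y})$ is the smooth solution on $R\mathbb{D}$ of $(-\Delta_{\mathbf{x}}+\varepsilon^{2})K_1(\cdot,\mathbf{y})=0$ with Dirichlet trace $-K_0(|\cdot-\mathbf{y}|)|_{\partial(R\mathbb{D})}$. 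Elliptic regularity gives $K_1\in C^{\infty}((R\mathbb{D})^{2})$, and (\ref{A4}) follows from the rotation-reflection invariance of the disc together with the corresponding symmetries of $K_0$. For gSQG, the kernel $K_0(t)=c_\beta t^{-\beta}$ satisfies (\ref{A1}) via the Gamma-integral representation already used in \cite{HXX23b}, and the required smooth decomposition of the spectral fractional Green's function on $R\mathbb{D}$ is provided by \cite{HXX23a}, with its regular part playing the role of $K_1$. Condition (\ref{A2}) is immediate in both cases.

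The core of the argument is (ii). Using \eqref{def:Vb1-intro}--\eqref{def:Vb2-intro} together with the definitions of $\lambda_{n,b}$ and $\widetilde{\lambda}_{n,b}$, one rewrites
\begin{equation*}
  \Phi(b)\triangleq V_b^1[\mathbf{0}]-V_b^2[\mathbf{0}]
  =\int_0^{\infty}\Bigl[\phi_1(bx)+\phi_1(x)-\bigl(b+\tfrac{1}{b}\bigr)\phi_{1,b}(x)\Bigr]\tfrac{\dd\mu(x)}{x}+\mathcal{E}(b),
\end{equation*}
where $\mathcal{E}(b)$ collects the two $K_1$-boundary integrals from \eqref{def:Vb1-intro}--\eqref{def:Vb2-intro}. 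The strategy is to prove first that $\Phi$ is real-analytic on $(0,1)$, a consequence of dominated convergence against $\mu$ combined with the smoothness of $K_1$ in its arguments; this reduces (ii) to showing that $\Phi$ is not identically zero. To exhibit a value of $b$ at which $\Phi\neq 0$, I would analyze the convolution bracket using the second-order ODE satisfied by $\phi_n$ and the Hankel-transform tools from \cite{HXX23b}, while $\mathcal{E}(b)$ can be integrated by parts in $\rho$ so as to be expressed through boundary evaluations of $K_1$ at $(b, b e^{i\eta})$, $(b, e^{i\eta})$, $(1, b e^{i\eta})$, $(1, e^{i\eta})$; for the explicit QGSW reflection kernel the sign of each such evaluation is transparent, and for gSQG the positivity structure of the decomposition in \cite{HXX23a} plays the analogous role. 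Pushing the resulting expression to one of the endpoints $b\to 0^{+}$ or $b\to 1^{-}$ is expected to yield an asymptotic with a definite sign, forcing $\Phi\not\equiv 0$.

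The hard part is precisely this sign analysis. Because $\mathcal{E}(b)$ is not small for moderate $R$, the whole-space intuition (in which $K_1\equiv 0$ already gives the non-degeneracy by the results recalled in the introduction) cannot be invoked as a perturbative argument, and keeping track of the cancellations between $\widetilde{\lambda}_{1,b}$ and $\mathcal{E}(b)$ near an endpoint is the delicate step. Once $\Phi$ is shown to be nonvanishing on $(0,1)$, the identity \eqref{eq:basic_assum} yields $\mathbf{\Delta}_{\infty,b}>0$ and hence $\mathcal{S}=(0,1)$, so Theorem \ref{thm:general_kernel} delivers the two families of $m$-fold doubly connected V-states bifurcating at $\Omega_{m,b}^{\pm}$ for every $b\in(0,1)$ and every $m\geqslant m_0$, completing the proof.
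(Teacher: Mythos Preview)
There is a genuine logical gap in your reduction step. You propose to show that $\Phi(b)=V_b^1[\mathbf{0}]-V_b^2[\mathbf{0}]$ is real-analytic on $(0,1)$ and then that it is not identically zero via an endpoint asymptotic. But analyticity together with $\Phi\not\equiv 0$ only gives that the zero set of $\Phi$ is isolated (hence at most countable); it does not give $\Phi(b)\neq 0$ for every $b\in(0,1)$. The statement to be proved is precisely $\mathcal{S}=(0,1)$, i.e. no exceptional $b$ at all, so your argument as written would only recover the weaker conclusion already contained in Theorem~\ref{thm:general_kernel} for convolution kernels.

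The paper proceeds in a completely different and much more concrete way. Rather than working with the decomposition $\Psi_b(x)+\mathcal{E}(b)$ and endpoint limits, it exploits the eigenfunction expansion of the Green's function on $R\mathbb{D}$ to write $V_b^1[\mathbf{0}]$ and $V_b^2[\mathbf{0}]$ as explicit series in Bessel functions $J_1(x_{0,k}\cdot)$, and then sums these series in closed form: via Sneddon's formula for gSQG, and via a residue computation (Lemma~\ref{lem:sum_up_QGSW}) for QGSW. The resulting expressions for $V_b^1[\mathbf{0}]-V_b^2[\mathbf{0}]$ are finite combinations of modified Bessel functions $\mathbf{I}_1,\mathbf{K}_0,\mathbf{K}_1$ evaluated at $\varepsilon,b\varepsilon,R\varepsilon$ (QGSW) or integrals of such products against $\rho^{\beta-1}\,\dd\rho$ (gSQG). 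Strict positivity for every $b\in(0,1)$ then follows from the elementary monotonicity inequalities $\mathbf{I}_1(b\varepsilon)<b\,\mathbf{I}_1(\varepsilon)$, $\mathbf{K}_1(\varepsilon)<b\,\mathbf{K}_1(b\varepsilon)$, together with the algebraic identity~\eqref{eq:QGSW_ineq_2}. Your integration-by-parts scheme for $\mathcal{E}(b)$ is also off target for QGSW: there is no explicit reflection formula for $K_1$ in that model (it is defined only as the solution of a Dirichlet problem), so the ``transparent signs'' you allude to are not available without passing through the spectral representation the paper uses.
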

\noindent The second new result is related to active scalar equations in annular domains,
and we illustrate this discussion through 2D Euler equation evolving in the domain
\begin{align}\label{Annular-d}
  \mathbf{D}= R_2 \mathbb{D}\setminus \overline{R_1 \mathbb{D}}, \quad 0< R_1<1<R_2\leqslant +\infty,
\end{align}
where the case $R_2 = +\infty$ should be understood as the exterior domain of a disc. In particular, by careful spectral analysis,
the integer $m_0$ in Theorem \ref{thm:general_kernel} in this case may be not large.
\begin{theorem}\label{Cor:Euler}
  Consider the Euler equation in the annular domain \eqref{Annular-d} with the slip boundary condition.
Then the set $\mathcal{S} =  \mathcal{S}_{\mathrm{max}} = (R_1,1)$,
and for every $b\in (R_1,1)$ and for every $m\geqslant 2$ satisfying $\mathbf{\Delta}_{m,b}>0$, equivalently,
\begin{align}\label{eq:Delta>0equa-intro}
  m > \tfrac{b^2}{(1-b^2)(b^2 + 2\mathfrak{C}_b)} \tfrac{1}{1- (\frac{R_1}{R_2})^{2m}}
  \Big( (1-R_1^{2m})(1-\tfrac{1}{R_2^{2m}}) + (1-\tfrac{R_1^{2m}}{b^{2m}}) \big(1-\tfrac{b^{2m}}{R_2^{2m}}
  +  2b^m(1- \tfrac{1}{R_2^{2m}})  \big) \Big),
\end{align}
with $\mathfrak{C}_b$ given by \eqref{def:Cb},
there exist two curves of $m$-fold symmetric doubly connected V-states with $C^{2-\alpha}$
boundary bifurcating from $\mathbf{1}_{\mathbb{D}\setminus \overline{b\mathbb{D}}}(\mathbf{x})$
at two different angular velocities $\Omega_{m,b}^\pm$.
\end{theorem}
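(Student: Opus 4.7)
The plan is to apply Theorem \ref{thm:general_kernel} to the Euler equation in the annular domain $\mathbf{D}=R_2\mathbb{D}\setminus\overline{R_1\mathbb{D}}$, and then to refine the spectral analysis so as to include every $m\geqslant 2$ with $\mathbf{\Delta}_{m,b}>0$ rather than only asymptotically large $m$. First, I construct the Green function of $-\Delta$ on $\mathbf{D}$ with Dirichlet boundary condition via the method of images. When $R_2=+\infty$, only the reflection through the inner circle contributes, giving a single image term analogous to \eqref{eq:Green_Euler}; when $R_2$ is finite, successive reflections across $\partial(R_1\mathbb{D})$ and $\partial(R_2\mathbb{D})$ produce a doubly infinite series of image charges whose geometric ratio is $(R_1/R_2)^2<1$, so the series converges uniformly on compact subsets of $\mathbf{D}^2$ and defines a function smooth in both variables since all images lie outside $\mathbf{D}$. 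This yields the decomposition $K(\mathbf{x},\mathbf{y})=-\frac{1}{2\pi}\log|\mathbf{x}-\mathbf{y}|+K_1(\mathbf{x},\mathbf{y})$. Since $-K_0'(t)=\frac{1}{2\pi t}=\int_0^{\infty}e^{-tx}\tfrac{\dd x}{2\pi}$ is completely monotone, $K_0(t)=-\frac{1}{2\pi}\log t$ is integrable against $t^{-\alpha+\alpha^2}$ near zero for any $\alpha\in(0,1)$, and the radial symmetry of $\mathbf{D}$ transfers to the symmetries in \eqref{eq:symm-assum}, the hypotheses \eqref{A1}--\eqref{A4} are satisfied.

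Next, I compute all spectral quantities in closed form. A direct Fourier computation in \eqref{def:lam-n-b-intro} and \eqref{def:tild-lamb-intro} with $\dd\mu(x)=\frac{\dd x}{2\pi}$ yields $\lambda_{n,b}=\frac{1}{2n}$ and $\widetilde{\lambda}_{n,b}=\frac{b^n}{2n}$. Each image term in $K_1$ is a logarithm of the distance to a reflected point lying outside $\mathbf{D}$, so its Fourier coefficient on a concentric circle is a power of the reflection ratio; consequently $p_{n,b}$ and $\widetilde{p}_{n,b}$ reduce to explicit geometric series in $b^{2n}$, $R_1^{2n}$, and $R_2^{-2n}$. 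Substituting these expressions into \eqref{def:Vb1-intro}--\eqref{def:Vb2-intro} yields a closed form for $V_b^1[\mathbf{0}]-V_b^2[\mathbf{0}]$, whose non-vanishing on $(R_1,1)$ is verified by identifying the constant $\mathfrak{C}_b$ appearing in \eqref{def:Cb} and performing an elementary positivity check; this gives $\mathcal{S}=\mathcal{S}_{\mathrm{max}}=(R_1,1)$. Plugging the same closed forms into $A_{n,b}$, $B_{n,b}$ and the discriminant \eqref{eq:discriminant_intro}, a direct algebraic manipulation converts $\mathbf{\Delta}_{m,b}>0$ into the explicit inequality \eqref{eq:Delta>0equa-intro}.

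Finally, to replace the asymptotic condition $m\geqslant m_0$ of Theorem \ref{thm:general_kernel} by the sharp condition $m\geqslant 2$ with $\mathbf{\Delta}_{m,b}>0$, I reprove the Crandall--Rabinowitz hypotheses using the closed-form expressions rather than the perturbative bound. The Fredholm and range conditions carry over from the general construction unchanged; the nontrivial point is the simplicity of the eigenvalue at $\Omega^\pm_{m,b}$ and the transversality condition. Transversality is immediate whenever $\mathbf{\Delta}_{m,b}>0$, since the two roots of the dispersion relation \eqref{eq:Omega-eq-intro} are then simple and depend smoothly on the parameter. Simplicity reduces to ruling out resonances $\Omega^\pm_{m,b}=\Omega^\pm_{n,b}$ for $n\neq m$, which I plan to establish from the explicit strict monotonicity of $n\mapsto A_{n,b}$ and $n\mapsto B_{n,b}$ (obtained by telescoping the rational and geometric series in $\frac{1}{n}, b^{2n}, R_1^{2n}, R_2^{-2n}$) together with the exponential decay $\widetilde{\lambda}_{n,b}+\widetilde{p}_{n,b}=O(b^n/n)$, which forces the two branches in \eqref{eq:Omega_n_pm-intr} to remain monotone and well separated in $n$.

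The main obstacle is precisely this last step, namely the uniform-in-$m$ spectral separation without recourse to asymptotics: the two scalar components of the system couple through $\widetilde{\lambda}_{n,b}+\widetilde{p}_{n,b}$, and one must check by hand that the geometric-series contributions from $K_1$ never conspire to produce an accidental crossing $\Omega^+_{m,b}=\Omega^\pm_{n,b}$ or $\Omega^-_{m,b}=\Omega^\pm_{n,b}$ with $n\neq m$ for small values of $m$. This will rest on a careful exploitation of the specific rational structure of the Fourier coefficients of the logarithmic kernel, which forces all coincidence equations to be algebraic identities in $b^{2n}, R_1^{2n}, R_2^{-2n}$ that can be excluded by monotonicity and sign considerations alone.
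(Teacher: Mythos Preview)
Your overall framework---verify \eqref{A1}--\eqref{A4} for the annulus Green function, compute $\lambda_{n,b},\widetilde\lambda_{n,b},p_{n,b},\widetilde p_{n,b},\mathsf{c}_b,\widetilde{\mathsf{c}}_b$ in closed form, deduce $\mathcal S=\mathcal S_{\max}=(R_1,1)$, and then invoke Theorem~\ref{thm:general-statement}---matches the paper exactly, and you have correctly located the one genuinely delicate point: upgrading ``$m$ large enough'' to ``every $m\geqslant 2$ with $\mathbf\Delta_{m,b}>0$''.

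However, your proposed resolution of that point is where the gap lies. You plan to rule out resonances by combining monotonicity of $n\mapsto A_{n,b}$, $n\mapsto B_{n,b}$ with the exponential smallness $\widetilde\lambda_{n,b}+\widetilde p_{n,b}=O(b^n/n)$. That is precisely the perturbative mechanism behind Proposition~\ref{prop:spectral}, and it only bites once $n$ is large enough that the coupling is dominated by $|A_{n,b}-A_{n+1,b}|$; for small $m$ it gives nothing. Moreover, you do not address condition \eqref{eq:m-fold-condition}, namely $\mathbf\Delta_{nm,b}>0$ for \emph{every} $n\geqslant1$: knowing $\mathbf\Delta_{m,b}>0$ at a single $m$ does not by itself propagate upward unless you prove something structural.

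The paper's device is different and sharper. It extends all spectral quantities to smooth functions of a \emph{real} variable $x\geqslant2$ via the auxiliary functions $f_t(x)=(1-t^{2x})/x$ and $g_{t_1,t_2}(x)=(1-t_1^{2x})(1-t_2^{2x})$, and then computes signs of derivatives directly. Two algebraic facts drive everything: (i) for $n\geqslant2$ the factorization of $\mathbf\Delta_{n,b}$ shows that only one of the two linear factors can vanish, so $\mathbf\Delta_{n,b}>0$ is equivalent to the single inequality \eqref{eq:Delta>0equa-intro}; (ii) the elementary inequalities $f_t'<0$, $xf_t'+f_t>0$ and $g_{t_1,t_2}-xg_{t_1,t_2}'>0$ yield $\partial_x\Pi_{x,b}>0$ and $\partial_x\mathsf E_{x,b}<0$ for the decomposition $\mathbf\Delta_{x,b}=\tfrac14\Pi_{x,b}^2-\mathsf E_{x,b}^2$. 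From this one gets, whenever $\mathbf\Delta_{\tilde x,b}>0$, that $\partial_x\mathbf\Delta_{x,b}>0$ at $\tilde x$, hence $\mathbf\Delta_{x,b}$ stays positive and increasing on $[\tilde x,\infty)$---this simultaneously gives \eqref{eq:m-fold-condition}. The same derivative bookkeeping, together with the key inequality $\tfrac12\Pi_{x,b}>\sqrt{\mathbf\Delta_{x,b}}$ (immediate from $\mathsf E_{x,b}>0$), then shows $\partial_x\Omega^{+}_{x,b}>0$ and $\partial_x\Omega^{-}_{x,b}<0$ on $[m,\infty)$, which is exactly \eqref{eq:m-fold-condition2}. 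Your ``algebraic coincidence'' argument is not needed, and would in any case be hard to close without these monotonicity computations.
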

Some remarks are listed below.
\begin{remark}
\begin{enumerate}
\item[(1)]
In the convolution case where  $K_1\equiv 0$, one gets from \eqref{def:lam-n-b-intro} and
\eqref{def:tild-lamb-intro},
\begin{align}\label{eq:good_decomposition}
  V_b^1[\mathbf{0}] - V_b^2[\mathbf{0}] = \int_0^\infty \tfrac{\Psi_b(x)}{x} \dd \mu(x),\quad
  \textrm{with}\;\; \Psi_b(x) =  \phi_1(x)+\phi_1(bx)
  -(b+\tfrac{1}{b})\phi_{1,b}(x).
\end{align}
According to Lemma $\ref{lem:kernel_function_limit}$, for sufficiently small values of $b$, the function $x \in (0,1) \mapsto \Psi_b(x)$ changes sign. Thus, by appropriately choosing the measure $\mu$, we can guarantee that the set $\mathcal{S}$ defined in \eqref{def:S-set} is empty.
However, we can in fact exhibit a broad class of measures $\mu$ for which the set $\mathcal{S}$ is nonempty;
see Proposition $\ref{prop:measure_examples}$ for further discussion.
Moreover, for all geophysical flows on the whole plane or the disc or the annulus considered in Section $\ref{sec:applications}$,
the set $\mathcal{S}$ always equals $\mathcal{S}_{\mathrm{max}}$.
Performing arguments on the real analyticity of the involved functions, we can show the dichotomy:
either $\mathcal{S}_{\mathrm{max}}\setminus \mathcal{S}$ is countable or $\mathcal{S}$ is an empty set.
We believe that the latter scenario can only occur when the kernel is identically zero.
\item[(2)]
In Section $\ref{sec:conclusion_discuss}$, we further explore various possibilities for the sign of $V_b^1[\mathbf{0}]$, $V_b^2[\mathbf{0}]$
and $\Omega^{\pm}_{m,b}$ (for $m$ large enough).
For each case, we construct illustrative examples of convolution-type kernels that satisfy the required conditions.
\item[(3)] For the 2D Euler equation in the annular domain \eqref{Annular-d}, by letting $R_1\rightarrow 0$
and $R_2 = R$, and arguing as deriving \eqref{eq:K-extdom-der}, the kernel function $K(\mathbf{x},\mathbf{y})$ given by \eqref{eq:K-Euler-annulus}
becomes the Green function \eqref{eq:Green_Euler}.
Since the analysis in $R\mathbb{D}\setminus \{0\}$ in the setting of double connected V-states
is essentially identical to that in $R\mathbb{D}$,
Theorem \ref{Cor:Euler} about the annular domain is a natural generalization of \cite[Theorem 6]{DHHM}
concerning the disc domain and one can view the latter result as the limit version of Theorem \ref{Cor:Euler}.
\end{enumerate}
\end{remark}

In the proof of Theorem \ref{thm:general_kernel}, the first key step is to establish the monotonicity in \(n\) of the sequences \((\Omega^\pm_{n,b})_{n\in \mathbb{N}^\star}\),
defined in \eqref{eq:Omega_n_pm-intr}, at least for sufficiently large \(n\).
This property is crucial to ensure a one-dimensional kernel of the linearized operator at the equilibrium state,
\(\partial_{\mathbf{r}}F(\Omega^\pm_{n,b},\mathbf{0})\).
Regarding the terms \(\lambda_{n,b}\) and \(\widetilde{\lambda}_{n,b}\), which are determined by the kernel \(K_0\), both can be factorized as in \eqref{def:lam-n-b-intro} and \eqref{def:tild-lamb-intro} using the universal functions \(\phi_n(x)\) and \(\phi_{n,b}(x)\), respectively.
The function $\phi_n(x)$ has been thoroughly studied in \cite{HXX23a} and was found to
satisfy the quantitative estimates \eqref{ineq:est-phi-n}-\eqref{ineq:est-phi-n-diff} below.
Using these estimates and assumptions \eqref{A1}-\eqref{A2},
we derive the crucial lower/upper bounds of $\lambda_{n,b}$ and
$\lambda_{n,b} - \lambda_{n+1,b}$ for every $n\in \mathbb{N}^\star$ in Proposition \ref{prop:lambda_property}.
Although $\phi_n(x) = \phi_{n,1}(x)$, so far we do not find an ordinary differential equation satisfied by $\phi_{n,b}(x)$
(in contrast to ODE \eqref{eq:phi_n_ode} by $\phi_n(x)$), so the techniques used in studying $\phi_n(x)$ as in \cite{HXX23a} will not work for $\phi_{n,b}(x)$.
Instead, we develop a new approach based on Chebyshev's polynomials to
derive the new integral expression formula of $\phi_{n,b}(x)$
and $\phi_{n,b}(x) - \phi_{n+1,b}(x)$ as respectively in
\eqref{exp:phi-nb} and \eqref{eq:diff-phi-n-b-2},
and then we prove the positivity and monotonicity of $n\mapsto \phi_{n,b}(x)$
for all $b\in (0,1]$ and $x>0$.
Furthermore, we show that $\widetilde{\lambda}_{n,b}$ is always positive
and decays very rapidly with respect to $n$. For the spectral terms $p_{n,b}$
and $\widetilde{p}_{n,b}$ defined by the kernel $K_1$,
by using the assumption \eqref{A3} and integration by parts,
they also decay sufficiently rapidly about $n$.
Consequently, based on these asymptotics of the above spectral terms,
we can verify the convergence in \eqref{eq:basic_assum};
moreover, under the condition that $V_b^1[\mathbf{0}]- V_b^2[\mathbf{0}] \neq 0$,
and especially noting that $\lambda_{n,b}-\lambda_{n+1,b}$ decays
much slower than $\widetilde{\lambda}_{n,b}$, $p_{n,b}$ and $\widetilde{p}_{n,b}$,
we manage to prove that for $n$ large enough, the spectrum sequence
$(\Omega_{n,b}^\pm)_{n\in \mathbb{N}^\star}$ is monotonic with respect to $n$
(see Proposition \ref{prop:spectral}).

\vskip0.5mm
\noindent
In verifying the spectral assumptions of Crandall-Rabinowitz's theorem, it remains to establish that \(\partial_{\mathbf{r}}F(\Omega^\pm_{n,b},\mathbf{0})\)
has codimension one and satisfies the transversality condition.
To achieve this, we apply in part a Mikhlin-type multiplier theorem (see Lemma \ref{lem:multiplier-lemma}).

\vskip0.5mm
\noindent
For the regularity analysis required in the proof of Theorem \ref{thm:general_kernel},
we note that the singular terms arising from the induced effects have been precisely handled in \cite{HXX23b}.
Moreover, the interaction terms are non-singular, making their treatment straightforward.
\vskip0.5mm
\noindent The last delicate point in the proof of Theorem \ref{thm:general_kernel} is to prove that
$\mathcal{S}_{\mathrm{max}}\setminus\mathcal{S}$ given by \eqref{def:S-set}
contains at most countable points in the special case $K(\mathbf{x},\mathbf{y})=K_0(|\mathbf{x}-\mathbf{y}|)$.
To this end, by studying the real analyticity in $b$ of the universal functions $\phi_n(bx)$ and $\phi_{n,b}(x)$,
we prove that the spectral terms $\lambda_{n,b}$ and $\widetilde{\lambda}_{n,b}$ are both real analytic in $b\in (0,1)$
(see Lemma \ref{lem:analytic_lambda}), which combined with \eqref{def:Vb1-intro}-\eqref{def:Vb2-intro}
implies that $V_b^1[\mathbf{0}] - V_b^2[\mathbf{0}]$ is also real analytic in $b$.
On the other hand, since the universal function \(\Psi_b(x)\) in \eqref{eq:good_decomposition} is positive for \( b = 0.5 \)
and all \( x > 0 \) (see Lemma \ref{lem:kernel_function_limit}), there exists at least one value \( b = 0.5 \) such that
\( V_b^1[\mathbf{0}] - V_b^2[\mathbf{0}] \) is strictly positive.
Consequently, the desired result follows from the properties of real analytic functions.
\vskip0.5mm
\noindent
To prove Theorem \ref{Cor:rad-domain}, we analyze the gSQG and QGSW models in the disc.
Using summation identities such as Sneddon's formula or Lemma \ref{lem:sum_up_QGSW},
we derive explicit expressions for \( V_b^1[\mathbf{0}] \) and \( V_b^2[\mathbf{0}] \) in terms of the modified Bessel functions
\( \mathbf{I}_n \) and \( \mathbf{K}_n \). Furthermore, we establish that \( V_b^1[\mathbf{0}] - V_b^2[\mathbf{0}] > 0 \)
for all \( b \in (0,1) \), and in these special cases, the desired existence result follows directly from \mbox{Theorem \ref{thm:general_kernel}.}
\vskip0.5mm
\noindent
In the proof of Theorem \ref{Cor:Euler}, the main challenge lies in deriving an explicit expression for Green's function of the annulus.
Fortunately, we have obtained an infinite series representation of Green's function, which allows us to apply
Theorem \ref{thm:general_kernel} through straightforward computations.
Moreover, by some careful spectral analysis in this special case,
we present an explicit and natural condition \eqref{eq:Delta>0equa-intro} on $m$ (the value of $m$ may be not large)
to show the existence of $m$-fold symmetric doubly connected V-staes.

\vskip1mm
\noindent The remainder of this paper is organized as follows.
In Section \ref{sec:eq-Vstates-linear}, we derive the boundary equation governing the doubly connected V-states,
and explore the linearization around the equilibrium state (annulus), along with the associated spectrum (linear dispersion).
In Section \ref{sec:universal-func}, we analyze key qualitative properties of universal functions
$x\mapsto \phi_{n,b}(x)$ and $x\mapsto\phi_n(bx)$ that appear in spectral terms $\lambda_{n,b}$ and $\widetilde{\lambda}_{n,b}$.
Section \ref{sec:spectral-stud} is dedicated to crucial spectral analysis.
We first establish the lower/upper pointwise bounds of certain spectral terms.
We then investigate the positivity of the discriminant $\mathbf{\Delta}_{n,b}$
and the distribution of spectra $\Omega^\pm_{n,b}$ for $n$ large enough.
In Section \ref{sec:proof-thm1}, we present a detailed proof of Theorem \ref{thm:general_kernel},
and Theorem \ref{thm:general-statement},
dealing with a more general statement on the existence of doubly connected V-states.
As immediate applications of Theorem \ref{thm:general_kernel}
and Theorem \ref{thm:general-statement},
we revisit and reprove in Section \ref{sec:applications} the known existence results of doubly connected V-states for 2D Euler,
gSQG and QGSW equations in the whole space as well as for the 2D Euler equation in the radial domain. Furthermore,
we establish Theorem \ref{Cor:rad-domain} and Theorem \ref{Cor:Euler} with regard to some new results to these models.
In Section \ref{sec:conclusion_discuss}, we further discuss the possible signs of \( V_b^1[\mathbf{0}] \), \( V_b^2[\mathbf{0}] \),
and \( \Omega^\pm_{n,b} \) for sufficiently large \( n \) as well as clarify the meaning of stable and unstable states.
Finally, some useful auxiliary lemmas are compiled in the Appendix.

\section{Boundary equation of doubly connected V-states and linearization}\label{sec:eq-Vstates-linear}
The primary goal is to derive the equations that govern the boundary motion of rotating doubly connected patches,
modeled by a nonlinear transport system that couples the two interfaces.
The secondary objective is to analyze the linearized operator around the annulus,
represented as a matrix Fourier multiplier, from which the dispersion relation can be obtained.

\subsection{Boundary equation}
Consider a vortex solution with doubly connected rotating domains, defined by \eqref{eq:Vstates}
and angular velocity $\Omega\in\mathbb{R}$. We assume that the boundary $\partial D_t$, which consists of two interfaces,
can be parameterized in polar coordinates as follows:
\begin{align*}
  \mathbf{z}(t,\cdot):\mathbb{T}&\mapsto \partial D_t, \\
  \theta&\mapsto \mathbf{z}(t,\theta)=\begin{pmatrix}
z_1(t,\theta) \\
z_2(t,\theta)
\end{pmatrix} = e^{i t \Omega}\mathbf{z}(\theta)\triangleq e^{it\Omega}
  \begin{pmatrix}
    \sqrt{b^2+2r_1(\theta)}e^{i\theta}\\
    \sqrt{1+2r_2(\theta)}e^{i\theta}
  \end{pmatrix},
\end{align*}
where $
\mathbf{z}(\theta)=\begin{pmatrix}
z_1(\theta) \\
z_2(\theta)
\end{pmatrix},$
 with
$z_1(\theta)\triangleq \sqrt{b^2+2r_1(\theta)}e^{i\theta}$ $(0<b<1)$ is the inner boundary of $D$
and $z_2(\theta)\triangleq \sqrt{1+2r_2(\theta)}e^{i\theta}$ represents the outer boundary of $D$.
It is a classical fact that, since the boundary is transported by the flow, the patch equations can be written as follows:
\begin{align*}
  \partial_tz_j(t,\theta) \cdot \mathbf{n}_j = u\big(t,z_j(t,\theta)\big) \cdot \mathbf{n}_j,\quad j=1,2,
\end{align*}
where $\mathbf{n}_j(t,z_j(t,\theta)) \triangleq i\,\partial_{\theta}z_j(t,\theta)$ ($j=1,2$)
denotes a normal vector of $\partial D_t$ at $z_j(t,\theta)$.
Note that
\begin{align*}
  u(t,z_j(t,\theta))\cdot \mathbf{n}_j = -\partial_\theta \big[\psi(t,z_j(t,\theta))\big],
\end{align*}
and
\begin{align*}
  \partial_tz_j(t,\theta)\cdot \mathbf{n}_j(t,z_j(t,\theta))
  =\text{Im}\Big(\partial_tz_j(t,\theta)\,\overline{\partial_{\theta}z_j(t,\theta)}\Big) = \Omega\, r'_j(\theta).
\end{align*}
Taking advantage of the symmetry assumption \eqref{eq:symm-assum}, we see that
\begin{align*}
  \psi(t,z_j(t,\theta)) & = \int_D K(e^{i t \Omega } z_j(\theta), e^{it \Omega} \mathbf{y})\, \dd \mathbf{y} \\
  & = \int_D K\big(z_j(\theta),\mathbf{y}\big) \,\dd \mathbf{y}.
\end{align*}
Hence, the rotating patch equations can be rewritten as
\begin{align}\label{eq:vortex-patch-eqs}
  \Omega\, r'_j(\theta) + \partial_\theta \bigg(\int_{D}K(z_j(\theta),\mathbf{y})\dd \mathbf{y}\bigg)=0, \quad j=1,2.
\end{align}
Using the polar coordinates yields
\begin{align*}
  \int_{D}K(z_j(\theta),\mathbf{y})\dd \mathbf{y}
  =\int_0^{2\pi}\int_{R_1(\eta)}^{R_2(\eta)} K\big(R_j(\theta)e^{i\theta},\rho e^{i\eta}\big) \rho\, \dd \rho \dd \eta,\quad j=1,2,
\end{align*}
with
\begin{align*}
  R_1(\theta)\triangleq \sqrt{b^2+2r_1(\theta)},\quad R_2(\theta)\triangleq \sqrt{1+2r_2(\theta)}.
\end{align*}
Therefore, by setting
\begin{equation}\label{def:F0-r}
\begin{split}
  \mathbf{r}(\theta) \triangleq
  \displaystyle{\begin{pmatrix}
	r_1(\theta)\\
	r_2(\theta)
  \end{pmatrix}},
  \quad F_0[\mathbf{r}](\theta) \triangleq
  \displaystyle{\begin{pmatrix}
	\int_0^{2\pi}\int_{R_1(\eta)}^{R_2(\eta)} K\big(R_1(\theta)e^{i\theta},\rho e^{i\eta}\big) \rho\, \dd \rho \dd \eta\\[1mm]
	\int_0^{2\pi}\int_{R_1(\eta)}^{R_2(\eta)} K\big(R_2(\theta)e^{i\theta},\rho e^{i\eta}\big) \rho\, \dd \rho \dd \eta
      \end{pmatrix}},
\end{split}
\end{equation}
the  equations \eqref{eq:vortex-patch-eqs} become
\begin{align}\label{eq:vortex-patch}
  F(\Omega,\mathbf{r})\triangleq \Omega\, \mathbf{r}'(\theta)+\partial_{\theta}F_0[\mathbf{r}](\theta)=0.
\end{align}
\vskip1mm
\noindent
Additionally, notice that Rankine vortices $\mathbf{1}_{\DD\setminus \overline{b\DD}}(x)$
are stationary solutions of the equation \eqref{eq:vortex-patch},
\begin{align}\label{eq:rankine_vorticity_double}
  F(\Omega,\mathbf{0})\equiv 0,\quad \forall \Omega\in \RR,
\end{align}
which follows from the symmetry assumption of $K(\mathbf{x},\mathbf{y})$.

\subsection{Linearization}
We now proceed to linearize the nonlinear functional equation \eqref{eq:vortex-patch}.
This can be accomplished in a straightforward manner,
following the approach used in previous works, see, for instance, \cite{FHMV16,HHR23}.
We have
\begin{align}\label{eq:linear-main}
  \partial_{\mathbf{r}}F(\Omega,\mathbf{r})\mathbf{h}(\theta)
  =\Omega \,\mathbf{h}'(\theta)+\partial_{\theta}
  \Big(V_b[\mathbf{r}](\theta)\, \mathbf{h}(\theta) + \mathcal{L}[\mathbf{r}](\mathbf{h})(\theta)\Big),
\end{align}
where
\begin{equation}\label{def:eq-Vb}
\begin{split}
  V_b[\mathbf{r}](\theta)=
  \begin{pmatrix}
    V_b^1[\mathbf{r}](\theta) & 0\\
    0 & V_b^2[\mathbf{r}](\theta)
  \end{pmatrix},
\end{split}
\end{equation}
with
\begin{equation}\label{def:Vb1-2}
\begin{split}
  V_b^1[\mathbf{r}](\theta)
  =&\frac{1}{R_1(\theta)}\int_0^{2\pi}\int_{R_1(\eta)}^{R_2(\eta)}
  \Big(\nabla_\mathbf{x}K(R_1(\theta)e^{i\theta},\rho e^{i\eta})\cdot e^{i\theta}\Big)\rho \, \dd \rho \dd \eta,\\
  V_b^2[\mathbf{r}](\theta) = &\frac{1}{R_2(\theta)} \int_0^{2\pi}\int_{R_1(\eta)}^{R_2(\eta)}
  \Big(\nabla_\mathbf{x}K(R_2(\theta)e^{i\theta},\rho e^{i\eta})\cdot e^{i\theta}\Big)\rho \, \dd \rho \dd \eta,
\end{split}
\end{equation}
and
\begin{equation}\label{def:eq-L[r]}
\begin{split}
  \mathcal{L}[\mathbf{r}](\mathbf{h}) (\theta) = \int_{\mathbb{T}}
  \begin{pmatrix}
    -K(R_1(\theta)e^{i\theta},R_1(\eta)e^{i\eta}) & K(R_1(\theta)e^{i\theta},R_2(\eta)e^{i\eta}) \\
    -K(R_2(\theta)e^{i\theta},R_1(\eta)e^{i\eta}) & K(R_2(\theta)e^{i\theta}, R_2(\eta)e^{i\eta})
  \end{pmatrix}
  \begin{pmatrix}
	h_1(\eta)\\
	h_2(\eta)
  \end{pmatrix}\dd \eta.
\end{split}
\end{equation}
	
Since
\begin{align*}
  K(\mathbf{x},\mathbf{y}) = K_0(|\mathbf{x}-\mathbf{y}|)+K_1(\mathbf{x},\mathbf{y}),
\end{align*}
by applying the Gauss-Green theorem, we infer that
\begin{align}\label{def:eq-Vb1}
  V_b^1[\mathbf{r}](\theta)=&-\frac{1}{R_1(\theta)}
  \iint_D \Big(\nabla_\mathbf{y} (K_0(|R_1(\theta)e^{i\theta}-\mathbf{y}|))\cdot e^{i\theta}\Big)\,\dd \mathbf{y}
  \nonumber \\
  &+\frac{1}{R_1(\theta)}\int_0^{2\pi}\int_{R_1(\eta)}^{R_2(\eta)}
  \Big(\nabla_\mathbf{x}K_1(R_1(\theta)e^{i\theta},\rho e^{i\eta})\cdot e^{i\theta}\Big)\rho \, \dd \rho \dd \eta
  \nonumber \\
  = & -\frac{1}{R_1(\theta)}\int_{\TT} K_0(|R_1(\theta)e^{i\theta} - R_2(\eta)e^{i\eta}|)
  \big(-i\partial_{\eta}(R_2(\eta)e^{i\eta})\big)\cdot e^{i\theta} \dd \eta \nonumber \\
  &-\frac{1}{R_1(\theta)} \int_{\TT} K_0(|R_1(\theta)e^{i\theta} - R_1(\eta)e^{i\eta}|)
  \big(i\partial_{\eta}(R_1(\eta)e^{i\eta})\big) \cdot e^{i\theta} \dd \eta \nonumber \\
  &+\frac{1}{R_1(\theta)}\int_0^{2\pi}\int_{R_1(\eta)}^{R_2(\eta)}
  \Big(\nabla_\mathbf{x}K_1(R_1(\theta)e^{i\theta},\rho e^{i\eta})\cdot e^{i\theta}\Big)\rho \, \dd \rho \dd \eta,
\end{align}
and similarly,
\begin{align}\label{def:eq-Vb2}
  V_b^2[\mathbf{r}](\theta) = & -\frac{1}{R_2(\theta)}
  \int_{\TT} K_0(|R_2(\theta)e^{i\theta}-R_2(\eta)e^{i\eta}|)
  \big(-i\partial_{\eta}(R_2(\eta)e^{i\eta})\big)\cdot e^{i\theta} \dd \eta \nonumber \\
  & - \frac{1}{R_2(\theta)}\int_{\TT} K_0(|R_2(\theta)e^{i\theta} - R_1(\eta)e^{i\eta}|)
  \big(i\partial_{\eta}(R_1(\eta)e^{i\eta})\big) \cdot e^{i\theta} \dd \eta \nonumber \\
  & + \frac{1}{R_2(\theta)} \int_0^{2\pi}\int_{R_1(\eta)}^{R_2(\eta)}
  \Big(\nabla_\mathbf{x}K_1(R_2(\theta)e^{i\theta},\rho e^{i\eta})\cdot e^{i\theta}\Big)\rho \, \dd \rho \dd \eta.
\end{align}

Now, we collect some useful facts in the polar coordinates.
Denote by
\begin{align}\label{def:G}
  G_1(\rho_1,\theta,\rho_2,\eta) \triangleq K_1(\rho_1e^{i\theta},\rho_2e^{i\eta}),
  \quad G(\rho_1,\theta,\rho_2,\eta) \triangleq K(\rho_1e^{i\theta},\rho_2e^{i\eta}),
\end{align}
then thanks to assumption \eqref{A4} we have
\begin{equation}\label{eq:G1-prop1}
\begin{split}
  G_1(\rho_1,-\theta,\rho_2,-\eta) & = G_1(\rho_1,\theta,\rho_2,\eta),\quad \\
  G_1(\rho_1,\theta+\theta',\rho_2,\eta+\theta') & = G_1(\rho_1,\theta,\rho_2,\eta),\quad
  \forall \theta'\in \mathbb{R}.
\end{split}
\end{equation}
Hence, we get in particular
$G_1(\rho_1,0,\rho_2,-\eta)=G_1(\rho_1,0,\rho_2,\eta)$,
and by differentiating in $\theta^\prime$ at $\theta^\prime=0$, the second identity  in \eqref{eq:G1-prop1} yields
\begin{equation}\label{eq:G-deriv}
\begin{split}
  \partial_\theta G_1(\rho_1,\theta,\rho_2,\eta) =-\partial_\eta G_1(\rho_1,\theta,\rho_2,\eta), \quad
  \quad \partial_\theta G(\rho_1,\theta,\rho_2,\eta) =-\partial_\eta G(\rho_1,\theta,\rho_2,\eta).
\end{split}
\end{equation}
By letting $\mathbf{x}= \rho_1 e^{i\theta}$ and $\mathbf{y} = \rho_2 e^{i\eta}$,
we get from straightforward computations:
\begin{align}\label{eq:K-der}
  \nabla_{\mathbf{x}} K(\mathbf{x},\mathbf{y})=\begin{pmatrix}
  \partial_{\rho_1}G\cos \theta - \partial_{\theta}G \frac{\sin \theta}{\rho_1}\\
  \partial_{\rho_1}G\sin \theta + \partial_{\theta}G  \frac{\cos \theta}{\rho_1}
\end{pmatrix}, \quad
  \nabla_{\mathbf{y}} K (\mathbf{x},\mathbf{y})=\begin{pmatrix}
  \partial_{\rho_2}G \cos \eta - \partial_{\eta}G \frac{\sin \eta}{\rho_2}\\
  \partial_{\rho_2}G\sin \eta+\partial_{\eta}G  \frac{\cos \eta}{\rho_2}
\end{pmatrix}.
\end{align}
Note that the  above identities  hold true when  $(K,G)$ is replaced by $(K_1,G_1)$.
\vskip1mm

\noindent Next, we consider the linearization around the trivial solution $\mathbf{r}(\theta) \equiv \mathbf{0}$.
For $V_b^1[\mathbf{0}]$ and $V_b^2[\mathbf{0}]$, the formula \eqref{def:Vb1-2} gives
\begin{equation}\label{eq:Vb1-2[0]}
\begin{split}
  V_b^1[\mathbf{0}](\theta)
  & =\frac{1}{b} \int_0^{2\pi}\int_b^1
  \Big(\nabla_\mathbf{x}K(b e^{i\theta},\rho e^{i\eta})\cdot e^{i\theta}\Big)\rho \, \dd \rho \dd \eta,\\
  V_b^2[\mathbf{0}](\theta) & =  \int_0^{2\pi}\int_b^1
  \Big(\nabla_\mathbf{x}K( e^{i\theta},\rho e^{i\eta})\cdot e^{i\theta}\Big)\rho \, \dd \rho \dd \eta,
\end{split}
\end{equation}
and by virtue of \eqref{eq:G1-prop1}, we also have
\begin{align*}
  V_b^1[\mathbf{0}](\theta)=&-\frac{1}{b}\int_{\TT}K_0(|be^{i\theta}-e^{i\eta}|)
  (-i\partial_{\eta}(e^{i\eta}))\cdot e^{i\theta}\dd \eta\\
  & -\frac{1}{b}\int_{\TT}K_0(b|e^{i\theta}-e^{i\eta}|)(i\partial_{\eta}(be^{i\eta}))\cdot e^{i\theta} \,\dd \eta \\
  &+\frac{1}{b}\int_0^{2\pi}\int_{b}^{1}
  \Big(\nabla_{\mathbf{x}}K_1(be^{i\theta},\rho e^{i\eta})\cdot e^{i\theta}\Big)\rho \, \dd \rho \dd \eta \\
  = & -\frac{1}{b}\int_{\TT}K_0(|b-e^{i\eta}|)\cos \eta\, \dd \eta + \int_{\TT}K_0(b|1-e^{i\eta}|) \cos \eta \,\dd \eta \\
  & +\frac{1}{b}\int_0^{2\pi}\int_{b}^{1}
  \partial_{\rho_1}G_1(b,0,\rho,\eta)\rho \, \dd \rho \dd \eta,
\end{align*}
and similarly,
\begin{align*}
  V_b^2[\mathbf{0}](\theta)=&-\int_{\TT}K_0(|1-e^{i\eta}|)\cos \eta \,\dd \eta
  + b\int_{\TT}K_0(|1-be^{i\eta}|) \cos \eta \,\dd \eta\\
  & + \int_0^{2\pi}\int_{b}^{1}
  \partial_{\rho_1}G_1(1,0,\rho,\eta)\rho \, \dd \rho \dd \eta.
\end{align*}
Thus, we find that $V_b[\mathbf{0}]$ is indeed a constant matrix independent of $\theta$.
For $\mathcal{L}[\mathbf{0}](\mathbf{h})$, we get
\begin{align*}
  \mathcal{L}[\mathbf{0}](\mathbf{h}) (\theta) = &\int_{\mathbb{T}}
  \begin{pmatrix}
    - K(be^{i\theta},be^{i\eta}) & K(be^{i\theta},e^{i\eta}) \\
    - K(e^{i\theta},be^{i\eta}) & K(e^{i\theta}, e^{i\eta})
  \end{pmatrix}
  \begin{pmatrix}
    h_1(\eta)\\
    h_2(\eta)
  \end{pmatrix}\dd \eta\\
  = & \int_{\mathbb{T}}
  \begin{pmatrix}
    -K(b,be^{i\eta}) & K(b,e^{i\eta}) \\
    -K(1,be^{i\eta}) & K(1, e^{i\eta})
  \end{pmatrix}
  \begin{pmatrix}
    h_1(\eta+\theta)\\
    h_2(\eta+\theta)
  \end{pmatrix}\dd \eta.
\end{align*}
For every smooth function
$\mathbf{h}(\theta)=\sum_{n\in \ZZ}
\begin{pmatrix}
  h_{1,n}\\
  h_{2,n}
\end{pmatrix} e^{in\theta}$,
we have
\begin{align*}
  \mathcal{L}[\mathbf{0}](\mathbf{h})=\sum_{n\in \ZZ}\mathbf{\Lambda}_{n,b}
  \begin{pmatrix}
	h_{1,n}\\
	h_{2,n}
  \end{pmatrix} e^{in\theta},
\end{align*}
where 
\begin{align}\label{def:Lambda-nb}
  \mathbf{\Lambda}_{n,b} \triangleq \int_{\mathbb{T}}
  \begin{pmatrix}
	-K(b,be^{i\eta}) & K(b,e^{i\eta}) \\
	-K(1,be^{i\eta}) & K(1, e^{i\eta})
  \end{pmatrix} e^{in\eta}\dd \eta.
\end{align}
Notice that $\mathbf{\Lambda}_{n,b}=\mathbf{\Lambda}_{-n,b}$ and the operator $\mathcal{L}[\mathbf{0}]$ is a Fourier multiplier operator
with the discrete spectrum set $\mathrm{sp}(\mathcal{L}[\mathbf{0}]) = \{\mathbf{\Lambda}_{n,b}\,,\, n\in \mathbb{N} \}$.
Direct computations also give that
\begin{align*}
  \partial_{\theta}\Big(V_b[\mathbf{r}](\theta)\mathbf{h}(\theta)\Big)\Big|_{\mathbf{r}=\mathbf{0}}
  =\partial_{\theta}\Big(V_b[\mathbf{0}](\theta)\mathbf{h}(\theta)\Big)=V_b[\mathbf{0}]\mathbf{h}'(\theta),
\end{align*}
and
\begin{align*}
  \partial_{\theta}\Big(\mathcal{L}[\mathbf{r}](\mathbf{h})(\theta)\Big)\Big|_{\mathbf{r} = \mathbf{0}}
  =\partial_{\theta}\Big(\mathcal{L}[\mathbf{0}](\mathbf{h})(\theta)\Big)=\mathcal{L}[\mathbf{0}](\mathbf{h}')(\theta).
\end{align*}
Hence, we find
\begin{align}\label{eq:linear-0}
  \partial_{\mathbf{r}}F(\Omega,\mathbf{0})\mathbf{h}(\theta)
  =\sum_{n\in \mathbb{Z}\setminus \{0\}}i\,n\Big(\Omega\, \mathbf{Id} + V_b[\mathbf{0}] + \mathbf{\Lambda}_{n,b}\Big)
  \begin{pmatrix}
	h_{1,n}\\
	h_{2,n}
  \end{pmatrix} e^{in\theta},
\end{align}
where $\mathbf{Id}$ is the unitary matrix of $\mathbb{R}^{2\times 2}$.

\subsection{Spectrum}
In the following, our aim is to describe the structure of the linearized operator in the annular patch. This operator takes the form of a Fourier matrix multiplier. The key objective is to derive the dispersion relation using a factorization formula.
From the decomposition
$$K(\mathbf{x},\mathbf{y}) = K_0(|\mathbf{x}-\mathbf{y}|) + K_1(\mathbf{x},\mathbf{y}),$$
we rewrite $\mathbf{\Lambda}_{n,b}$, given by \eqref{def:Lambda-nb}, as
\begin{align}\label{eq:Lambda-n-b}
  \mathbf{\Lambda}_{n,b}=
  \begin{pmatrix}
	-\lambda_{n,b} & \widetilde{\lambda}_{n,b} \\
	-\widetilde{\lambda}_{n,b} & \lambda_{n,1}
  \end{pmatrix}
  + \mathbf{P}_{n,b},
  \quad
  \textrm{with}
  \quad
  \mathbf{P}_{n,b} \triangleq \begin{pmatrix}
	-p_{n,b} & \widetilde{p}_{n,b} \\
	-\widetilde{p}_{n,b} & p_{n,1}
  \end{pmatrix}
\end{align}
and (we may assume $0<b\leqslant 1$ below to give some unified definitions)
\begin{equation}\label{def:lambda-nb}
\begin{aligned}
  \lambda_{n,b} \triangleq \int_{\mathbb{T}} K_0(2b|\sin \tfrac{\eta}{2}|) e^{in\eta}\dd \eta,\quad
  \widetilde{\lambda}_{n,b}  \triangleq \int_{\mathbb{T}}
  K_0(|b-e^{i\eta}|) e^{in\eta}\dd \eta,
\end{aligned}
\end{equation}
with
\begin{align}\label{def:Pnb}
  p_{n,b} \triangleq \int_{\mathbb{T}}
  K_1(b,be^{i\eta}) e^{in\eta}\dd \eta,\quad
  \widetilde{p}_{n,b} \triangleq \int_{\mathbb{T}} K_1(b,e^{i\eta}) e^{in\eta}\dd \eta.
\end{align}
In the above, we also have used the facts that
\begin{align}\label{eq:fact-a1}
  |b-e^{i\eta}|=|1-be^{-i\eta}|=|1-be^{i\eta}|,
\end{align}
and
\begin{align*}
  K_1(b,e^{i\eta}) = K_1(b e^{-i\eta},1) = K_1(be^{i\eta},1)=K_1(1,be^{i\eta}).
\end{align*}
Denoting by
\begin{align}\label{c-1}
  \mathsf{c}_b\triangleq\frac{1}{b}\int_0^{2\pi}\int_{b}^{1}
  \partial_{\rho_1}G_1(b,0,\rho,\eta)\rho \, \dd \rho \dd \eta
  = \frac{1}{b} \int_0^{2\pi} \int_b^1 \nabla_{\mathbf{x}}K_1(b,\rho e^{i\eta})\cdot (1,0)^T \,\rho\dd \rho \dd \eta,
\end{align}
and
\begin{align}\label{c-2}
  \widetilde{\mathsf{c}}_b \triangleq\int_0^{2\pi}\int_{b}^{1}
  \partial_{\rho_1}G_1(1,0,\rho,\eta)\rho \, \dd \rho \dd \eta
  =  \int_0^{2\pi} \int_b^1 \nabla_{\mathbf{x}}K_1(1,\rho e^{i\eta}) \cdot (1,0)^T \,\rho\dd \rho \dd \eta,
\end{align}
we also recast $V_b[\mathbf{0}]$ as
\begin{align}\label{eq:V-0}
  V_b[\mathbf{0}]=
    \begin{pmatrix}
    V_b^1[\mathbf{0}] & 0\\
    0 & V_b^2[\mathbf{0}]
  \end{pmatrix}
  =
  \begin{pmatrix}
	\lambda_{1,b} -\frac{1}{b} \widetilde{\lambda}_{1,b} + \mathsf{c}_b &	0\\
	0	&  - \lambda_{1,1} + b \widetilde{\lambda}_{1,b} +  \widetilde{\mathsf{c}}_b
  \end{pmatrix}.
\end{align}
Next under the assumption \eqref{A1}, we factorize the coefficients $\lambda_{n,b}$ and $\widetilde{\lambda}_{n,b}$
by using some universal functions.
In view of \eqref{eq:K0prim},
we deduce by Fubini's theorem that for any $C>0$,
\begin{align*}
  K_0(t) & = K_0(C) - \int_C^t \int_0^\infty e^{-\tau x} \dd \mu(x)
  \dd \tau  \\
  & = K_{0}(C)+\int_0^{\infty}\frac{e^{-tx}-e^{-Cx}}{x}\dd \mu(x).
\end{align*}
Thanks to Fubini's theorem again, we have that for every $n\in \NN^\star$ and $0<b\leqslant 1$,
\begin{align}\label{def:lam-n-1}
  \lambda_{n,b} = \int_0^{2\pi}K_0(2b\sin \tfrac{\eta}{2})e^{in\eta}\dd \eta
  = & \int_0^{2\pi}\int_0^{\infty}\frac{e^{-2bx\sin \frac{\eta}{2}}-e^{-C x}}{x} e^{in\eta} \,\dd \mu(x)\dd \eta \nonumber \\
  = & \int_0^{\infty}\int_0^{2\pi}e^{-2bx\sin \frac{\eta}{2}} e^{in\eta}\dd \eta \frac{\dd \mu(x)}{x} \nonumber \\
  = & \int_0^{\infty}\phi_{n}(bx)\frac{\dd \mu(x)}{x},
\end{align}
where\footnote{Indeed, the universal function $\phi_n(x)$ defined by \eqref{eq:phi_n_ori} is two times of the same function in \cite{HXX23b}.}
\begin{align}\label{eq:phi_n_ori}
  \phi_{n}(x) \triangleq \int_0^{2\pi}e^{-2x \sin \frac{\eta}{2}} e^{in\eta}\dd \eta
  = 2\int_0^{\pi}e^{-2x \sin \eta} e^{i2n\eta}\dd \eta.
\end{align}
In addition, in light of the relation \eqref{eq:fact-a1}, we find
\begin{align}\label{def:lam-n-3}
  \widetilde{\lambda}_{n,b}  & = \int_0^{2\pi}K_0(|1-be^{i\eta}|)e^{in\eta}\dd \eta
  = \int_0^{2\pi}K_0(|b-e^{i\eta}|)e^{in\eta}\dd \eta \nonumber \\
  & = \int_0^{2\pi} \int_0^{\infty} \frac{e^{-|1-be^{i\eta}|x} - e^{-C x}}{x} e^{in\eta}\, \dd \mu(x) \dd \eta   \nonumber \\
  & = \int_0^\infty \int_0^{2\pi}e^{-|1-be^{i\eta}|x}e^{in\eta}\dd \eta \frac{\dd \mu(x)}{x} \nonumber \\
  & = \int_0^{\infty}\phi_{n,b}(x) \frac{\dd \mu(x)}{x},
\end{align}
where
\begin{align}\label{def:phi-n-b}
  \phi_{n,b}(x) \triangleq \int_0^{2\pi} e^{-|1-be^{i\eta}|x } e^{in\eta} \dd \eta
  = \int_0^{2\pi} e^{-x \sqrt{1+b^2 - 2b\cos \eta}} e^{i n\eta} \dd \eta.
\end{align}


In order to apply the local bifurcation tools, i.e. Crandall-Rabinowitz's theorem, we first need to analyze the lack of invertibility of
the linear operator $\partial_{\mathbf{r}}F(\Omega,\mathbf{0})$. Namely,  those values of $\Omega$ such that
\begin{align}\label{eq:Omega}
  \det\big(\Omega\, \mathbf{Id} + V_b[\mathbf{0}]+\mathbf{\Lambda}_{n,b}\big) = 0,
\end{align}
equivalently,
\begin{align*}
  \mathrm{det}
  \begin{pmatrix}
    \Omega + V_b^1[\mathbf{0}] - \lambda_{n,b} - p_{n,b}  & \widetilde{\lambda}_{n,b} + \widetilde{p}_{n,b} \\
    -\widetilde{\lambda}_{n,b} - \widetilde{p}_{n,b} & \Omega + V_b^2[\mathbf{0}]
    + \lambda_{n,1} + p_{n,1}
  \end{pmatrix}
  = 0.
\end{align*}
Denote by
\begin{align}\label{eq:def-A-n}
  A_{n,b} \triangleq - V_b^1[\mathbf{0}] + \lambda_{n,b} + p_{n,b}
  = - \lambda_{1,b} + \tfrac{1}{b} \widetilde{\lambda}_{1,b}  - \mathsf{c}_b
  + \lambda_{n,b} + p_{n,b}
\end{align}
and
\begin{align}\label{eq:def-B-n}
  B_{n,b} \triangleq - V_b^2[\mathbf{0}] -\lambda_{n,1} - p_{n,1}
  = \lambda_{1,1} - b \widetilde{\lambda}_{1,b} - \widetilde{\mathsf{c}}_b- \lambda_{n,1} - p_{n,1} .
\end{align}
Then we can rewrite \eqref{eq:Omega} as
\begin{align}\label{eq:Omega-eq}
  \Omega^2 - (A_{n,b} + B_{n,b})\, \Omega +  A_{n,b} B_{n,b} +  (\widetilde{\lambda}_{n,b} + \widetilde{p}_{n,b})^2 = 0.
\end{align}
The discriminant of the quadratic polynomial \eqref{eq:Omega-eq} is as follows
\begin{equation}\label{discriminant}
\begin{split}
 \mathbf{\Delta}_{n,b} &\triangleq  (A_{n,b} + B_{n,b})^2
 - 4 \big(A_{n,b} B_{n,b} +  (\widetilde{\lambda}_{n,b} + \widetilde{p}_{n,b})^2 \big) \\
 &=(A_{n,b}-B_{n,b})^2 - 4(\widetilde{\lambda}_{n,b} + \widetilde{p}_{n,b})^2.
\end{split}
\end{equation}
It will be used later in the discussion on stable and unstable modes.

\section{Analysis of the universal functions}\label{sec:universal-func}
In this section, we collect several properties of the universal functions $\phi_{n,b}(x)$ and $\phi_n(b x)$
that appear in \eqref{def:lam-n-1} and \eqref{def:lam-n-3},
which play an important role in the spectral study below.

\subsection{Positivity and Monotonicity of $\phi_{n,b}(x)$}
For the universal function $\phi_n(x)=\phi_{n,1}(x)$ given by \eqref{eq:phi_n_ori},
according to \cite[Eq. (2.15)]{HXX23b} and \cite[Eq. (3.3)]{HXX23b},
it satisfies an ordinary differential equation (ODE)
\begin{align}\label{eq:phi_n_ode}
  \phi''_{n}(x) + x^{-1}\phi_n'(x) - 4 \Big(1+\frac{n^2}{x^2}\Big) \phi_n(x) = -\frac{8}{x},\quad x>0,
\end{align}
with the boundary value
$\phi_n(0)=0$ and $\lim\limits_{x\to \infty}\phi_n(x)=0$.
Based on the study of equation \eqref{eq:phi_n_ode},
the authors \cite{HXX23b} prove the positivity and monotonicity of the map $n\mapsto\phi_n(x)$,
and moreover show the following quantitative estimates (see \cite[Lem. 3.2]{HXX23b} and \cite[Prop. 3.3]{HXX23b})
\begin{align}\label{ineq:est-phi-n}
  \frac{8n^2}{4n^2+1}\frac{x}{n^2+x^2}\leqslant \phi_n(x) \leqslant \frac{8n^2}{4n^2-1}\frac{x}{n^2+x^2},
\end{align}
and
\begin{align}\label{ineq:est-phi-n-diff}
  \frac{(2n+1)x}{(n^2+x^2)((n+1)^2+x^2)} \leqslant \phi_{n}(x)-\phi_{n+1}(x)
  \leqslant \frac{8(2n+1)x}{(n^2+x^2)((n+1)^2+x^2)}\cdot
\end{align}
However,  the deformation in $b$ is so strong, and we do not succeed to find an  ordinary differential equation for $\phi_{n,b}(x)$ with $b\in (0,1)$.
Here, using Chebyshev's polynomials, we find a new approach to show the positivity and monotonicity of the function
$n\mapsto\phi_n(x)$, which also applies to $n\mapsto \phi_{n,b}(x)$ for all $b\in (0,1)$.
\\[1mm]
\noindent
Before proceeding forward, we recall the definition and some properties of Chebyshev's polynomials (see e.g. \cite[Sec. 8.94]{GR15}).
The Chebyshev polynomials of the first kind $T_n$, is defined by
\begin{align}\label{eq:chebyshev-1}
  \forall x\in [-1,1],\quad T_n( x) \triangleq \cos (n\arccos x), \quad \forall n\in \mathbb{N},
\end{align}
and the Chebyshev polynomials of the second kind $U_n(x)$, $x\in [-1,1]$ is given by
\begin{align}\label{eq:chebyshev-2}
  U_n( x)  \triangleq \frac{\sin [(n+1)\arccos x]}{\sin (\arccos x)}, \quad \forall n\in \NN.
\end{align}
The Rodrigues formulas for Chebyshev polynomials of the first and second kinds respectively read that
\begin{align}\label{eq:T-n-formula}
  T_n(x)=(-1)^n \frac{\sqrt{\pi}}{2^n \Gamma(n+ \frac{1}{2})}
  (1-x^2)^{\frac{1}{2}} \frac{\dd^n }{\dd x^n}\big[(1-x^2)^{n-\frac{1}{2}}\big]
\end{align}
and
\begin{align}\label{eq:U-n-formula}
  U_n(x)=(-1)^n \frac{\sqrt{\pi} (n+1)}{2^{n+1} \Gamma(n+ \frac{3}{2})} (1-x^2)^{-\frac{1}{2}}
  \frac{\dd^n}{\dd x^n}\big[(1-x^2)^{n+\frac{1}{2}}\big].
\end{align}



First, we give a direct proof for the positivity of $\phi_{n,b}(x)$ with any $0<b\leqslant 1$.
\begin{lemma}\label{lem:phi_n_b_1}
Let $n\in \mathbb{N}^\star$ and $0<b \leqslant 1$.
Then, the function $\phi_{n,b}(x)$ is strictly  positive for any $x\in (0,+\infty)$ and satisfies
\begin{align}\label{eq:bound_uniform}
\forall x>0,\quad   0 < \phi_{n,b}(x) \leqslant 2\pi e^{-(1-b)x}.
\end{align}
\end{lemma}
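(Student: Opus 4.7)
The plan is to dispose of the upper bound and the positivity by two independent arguments. For the upper bound, I would first rewrite the exponent in terms of complex modulus, namely $\sqrt{1+b^2-2b\cos\eta}=|1-be^{i\eta}|$, and observe that this quantity attains its global minimum $1-b\ge 0$ at $\eta=0$ (the case $b=1$ still being fine because the constant bound $2\pi$ is trivially valid). Note also that the substitution $\eta\mapsto -\eta$ leaves the radial factor $e^{-x|1-be^{i\eta}|}$ invariant and conjugates $e^{in\eta}$, so $\phi_{n,b}(x)\in\mathbb{R}$. The triangle inequality then yields at once
\begin{align*}
  |\phi_{n,b}(x)|\leqslant \int_0^{2\pi}e^{-x|1-be^{i\eta}|}\dd \eta\leqslant 2\pi\, e^{-(1-b)x}.
\end{align*}

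For the strict positivity, which is the crux of the lemma, I would appeal to the classical Lévy subordination identity
\begin{align*}
  e^{-x\sqrt{s}}=\frac{x}{2\sqrt{\pi}}\int_0^{\infty}\frac{e^{-x^2/(4t)}}{t^{3/2}}\, e^{-ts}\,\dd t, \quad x,s>0,
\end{align*}
applied pointwise in $\eta$ with $s=1+b^2-2b\cos\eta\geqslant (1-b)^2\geqslant 0$ (with $s>0$ for $\eta\neq 0$ when $b=1$, which suffices for an a.e. argument). The Gaussian factor $e^{-x^2/(4t)}$ at $t\to 0^+$ and the exponential factor $e^{-t(1-b)^2}$ at $t\to \infty$ together with uniform $\eta$-boundedness of $e^{-t(1+b^2)+2tb\cos\eta}$ by $e^{-t(1-b)^2}$ justify Fubini's theorem, and after exchanging the order of integration I obtain
\begin{align*}
  \phi_{n,b}(x)=\frac{x}{2\sqrt{\pi}}\int_0^{\infty}\frac{e^{-x^2/(4t)}\,e^{-t(1+b^2)}}{t^{3/2}}\left(\int_0^{2\pi}e^{2tb\cos \eta}\,e^{in\eta}\dd \eta\right)\dd t.
\end{align*}

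The final step is to recognize the inner Fourier integral via the classical generating function $e^{z\cos \eta}=\sum_{m\in\mathbb{Z}}\mathbf{I}_m(z)\,e^{im\eta}$ for the modified Bessel function of the first kind, which gives
\begin{align*}
  \int_0^{2\pi}e^{2tb\cos \eta}\,e^{in\eta}\dd \eta=2\pi\,\mathbf{I}_n(2tb).
\end{align*}
Since $\mathbf{I}_n(y)>0$ for every $y>0$ and every $n\in\mathbb{Z}$, the integrand is strictly positive for all $t>0$, forcing $\phi_{n,b}(x)>0$. Combined with the real-valuedness and the crude bound above, this yields the claimed estimate \eqref{eq:bound_uniform}. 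The principal (and only mildly technical) step is the Fubini interchange; the positivity then follows mechanically from the positivity of $\mathbf{I}_n$ on the positive half-line, so no real obstacle is anticipated.
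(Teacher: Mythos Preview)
Your proof is correct, but takes a genuinely different route from the paper. For the upper bound you and the paper do the same thing (the crude triangle inequality using $|1-be^{i\eta}|\geqslant 1-b$). For the strict positivity, however, the paper proceeds via Chebyshev polynomials: it writes $\phi_{n,b}(x)=2\int_{-1}^{1}e^{-x\sqrt{1+b^2-2by}}T_n(y)(1-y^2)^{-1/2}\dd y$, inserts Rodrigues' formula for $T_n$, integrates by parts $n$ times, and then shows that $y\mapsto e^{-x\sqrt{1+b^2-2by}}$ is absolutely monotonic via Fa\`a di Bruno, so that the resulting integrand is pointwise positive. Your argument instead linearizes the square root through the L\'evy subordination identity and reduces the angular integral to $2\pi\,\mathbf{I}_n(2tb)>0$, which is very clean. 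Your Fubini step is fully justified even at $b=1$: the uniform majorant $t^{-3/2}e^{-x^2/(4t)}$ is integrable on $(0,\infty)$ (its integral equals $2\sqrt{\pi}/x$ by the subordination formula at $s=0$), so no separate a.e.\ argument is needed. The trade-off is that the paper's Chebyshev machinery is not set up merely for this lemma---it is reused immediately afterwards to prove the strict monotonicity of $n\mapsto\phi_{n,b}(x)$ (Lemma~\ref{lem:phi-n-b-mono}) via the second-kind polynomials $U_{2n}$; your Bessel representation would recover that monotonicity too (since $n\mapsto\mathbf{I}_n(z)$ is decreasing for $z>0$), but the paper's approach keeps both lemmas under a single unified technique.
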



\begin{proof}[Proof of Lemma \ref{lem:phi_n_b_1}]
By changing of variables and using \eqref{eq:chebyshev-1}, we write
\begin{align*}
  \phi_{n,b}(x) & =\int_0^{2\pi}e^{-|1-be^{i\eta}|x}e^{in\eta}\dd \eta=2\int_0^{\pi}e^{-|1-be^{i\eta}|x}\cos (n\eta)\dd \eta\\
  & = 2\int_0^{\pi}e^{- x \sqrt{1+ b^2 -2b\cos \eta}}\,T_n(\cos \eta)\dd \eta\\
  & = 2\int_{-1}^{1}e^{-x\sqrt{1+b^2-2by}}T_n(y)(1-y^2)^{-\frac{1}{2}}\dd y.
\end{align*}
Next, we apply the Rodrigues formula \eqref{eq:T-n-formula} to the above identity and we find that
\begin{align*}
  \phi_{n,b}(x)=\frac{\sqrt{\pi}}{2^{n-1} \Gamma(n+ \frac{1}{2})}
  \int_{-1}^{1}e^{-x\sqrt{1+b^2-2by}}(-1)^n\frac{\dd^n}{\dd y^n}[(1-y^2)^{n-\frac{1}{2}}]\dd y.
\end{align*}
Noting that for any $k=0,1,\cdots,n-1$,
\begin{align*}
  \Big(\frac{\dd^k}{\dd y^k} \big[(1-y^2)^{n-\frac{1}{2}}\big]\Big)\Big|_{y=\pm 1}=0,
\end{align*}
through integration by parts, we conclude that
\begin{align}\label{exp:phi-nb}
  \phi_{n,b}(x)=\frac{\sqrt{\pi}}{2^{n-1} \Gamma(n+ \frac{1}{2})} \int_{-1}^{1}\frac{\dd^n}{\dd y^n}
  \Big(e^{-x\sqrt{1+b^2-2by}}\Big)(1-y^2)^{n-\frac{1}{2}}\dd y.
\end{align}
We claim that for each $x>0$ and $b\in(0,1]$ the function
\begin{align}\label{Comp-Mom}
  y\in (-1,1)\mapsto e^{-x\sqrt{1+b^2-2by}}\triangleq e^{W_b(y)},\quad \textrm{with}\quad
  W_b(y) \triangleq -x\sqrt{1+b^2-2by},
\end{align}
is absolutely monotonic, that is, for any $n\in \NN$,
\begin{align}\label{eq:positive-derivative-2}
  \frac{\dd^n}{\dd y^n}\Big(e^{-x\sqrt{1+b^2-2by}}\Big)>0,\quad \forall y\in (-1,1).
\end{align}
This ensures that $\phi_{n,b}$ is strictly positive in $(0,\infty)$.
Let us now move to the proof of \eqref{eq:positive-derivative-2}.
First, we observe that $W_b^\prime$ is absolutely monotonic, since
\begin{align*}
  W_b^\prime(y)=bx \big(1+b^2-2by\big)^{-\frac12}>0;
\end{align*}
and after successive  differentiation  we see that
\begin{align*}
  \forall n\in \NN,\quad \forall y\in(-1,1),\quad W_b^{(1+n)}(y)>0.
\end{align*}
Now, since $t\in\RR\mapsto e^t$ is absolutely monotonic, by composition we find that
$e^{W_b}$ is absolutely monotonic. This last property follows from Fa\`a di Bruno's formula:
\begin{align*}
  \frac{\dd^n}{\dd x^n} e^{f(x)} = e^{f(x)} \cdot \sum_{k=0}^n \mathbf{B}_{n,k} \big(f'(x), f''(x), \dots, f^{(n-k+1)}(x)\big),
\end{align*}
where $\mathbf{B}_{n,k}(x_1,x_2,\cdots,x_{n-k+1})$ are the Bell polynomials,
and their coefficients are all positive numbers.
\\[1mm]
\noindent As to the estimate of $\phi_{n,b}$, we note that
\begin{align}\label{eq:basic_n_b}
  |1-b e^{i\eta}| = \sqrt{1+b^2-2b\cos \eta} = \sqrt{(1-b)^2+4b\sin^2 \tfrac{\eta}{2}}.
\end{align}
Therefore, we deduce that for any $x>0$ and $b\in (0,1)$,
\begin{align*}
  |\phi_{n,b}(x)|\leqslant \int_0^{2\pi}e^{-x \sqrt{1+b^2 - 2b\cos \eta}}\dd \eta
  \leqslant  2 \pi e^{- (1-b)x}.
\end{align*}
\end{proof}

The next result is about the strict monotonicity of $n\mapsto\phi_{n,b}(x)$.
\begin{lemma}\label{lem:phi-n-b-mono}
Let $n\in \mathbb{N}^\star$, $x\in (0,+\infty)$, $b\in (0,1]$. Then the map $n\mapsto \phi_{n,b}(x)$ is strictly decreasing.
\end{lemma}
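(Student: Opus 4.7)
The plan is to build on the Rodrigues-type formula \eqref{exp:phi-nb} derived in the proof of Lemma \ref{lem:phi_n_b_1}, and to produce a parallel representation for $\phi_{n+1,b}(x)$ by one well-chosen integration by parts, so that the difference $\phi_{n,b}(x)-\phi_{n+1,b}(x)$ is reduced to a single integral with a manifestly positive integrand. Writing $f(y)\triangleq e^{W_b(y)}=e^{-x\sqrt{1+b^2-2by}}$, I would start from the analogue of \eqref{exp:phi-nb} for the index $n+1$,
\[
  \phi_{n+1,b}(x) = \frac{\sqrt{\pi}}{2^n\,\Gamma(n+\tfrac{3}{2})}\int_{-1}^{1} f^{(n+1)}(y)\,(1-y^2)^{n+\frac{1}{2}}\,\dd y,
\]
and integrate by parts, moving one derivative off $f^{(n+1)}$ and onto $(1-y^2)^{n+1/2}$. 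Using $\Gamma(n+\tfrac{3}{2})=(n+\tfrac{1}{2})\Gamma(n+\tfrac{1}{2})$ to simplify the prefactor, this would yield
\[
  \phi_{n+1,b}(x) = \frac{\sqrt{\pi}}{2^{n-1}\,\Gamma(n+\tfrac{1}{2})}\int_{-1}^{1} y\,f^{(n)}(y)\,(1-y^2)^{n-\frac{1}{2}}\,\dd y.
\]

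Subtracting from \eqref{exp:phi-nb} then gives the compact representation
\[
  \phi_{n,b}(x)-\phi_{n+1,b}(x) = \frac{\sqrt{\pi}}{2^{n-1}\,\Gamma(n+\tfrac{1}{2})}\int_{-1}^{1} (1-y)\,f^{(n)}(y)\,(1-y^2)^{n-\frac{1}{2}}\,\dd y,
\]
and I believe this is the identity \eqref{eq:diff-phi-n-b-2} alluded to in the introduction. Strict positivity of the right-hand side is now transparent: the weight $(1-y)(1-y^2)^{n-1/2}=(1-y)^{n+1/2}(1+y)^{n-1/2}$ is strictly positive on $(-1,1)$, while $f^{(n)}(y)>0$ there by the absolute monotonicity of $y\mapsto e^{W_b(y)}$ already established via Fa\`a di Bruno's formula in the proof of Lemma \ref{lem:phi_n_b_1}. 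Integrating a strictly positive function over $(-1,1)$ then delivers $\phi_{n,b}(x)>\phi_{n+1,b}(x)$.

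The main technical point I expect to be a little delicate is the vanishing of the boundary terms in the integration by parts at $y=1$, particularly in the borderline case $b=1$ where $f^{(n)}(y)$ acquires a singularity of order $(1-y)^{-n+1/2}$ near $y=1$. This is handled by performing the integration by parts on $[-1+\varepsilon,1-\varepsilon]$ and passing to the limit $\varepsilon\to 0^+$: the boundary expression $f^{(n)}(y)(1-y^2)^{n+1/2}$ scales like $\varepsilon^{1}$ at $y=1-\varepsilon$ (the loss of $\varepsilon^{-n+1/2}$ from $f^{(n)}$ being dominated by the $(2\varepsilon)^{n+1/2}$ from the weight) and like $(2\varepsilon)^{n+1/2}$ at $y=-1+\varepsilon$, both vanishing in the limit, while the interior integrand $y\,f^{(n)}(y)(1-y^2)^{n-1/2}$ is absolutely integrable on $(-1,1)$, allowing the limit to be taken by dominated convergence. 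Once this routine verification is in place, the proof is complete.
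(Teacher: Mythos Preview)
Your proof is correct, and in fact cleaner than the paper's own argument, but it is \emph{not} the identity labeled \eqref{eq:diff-phi-n-b-2} in the paper. The paper takes a different route: it writes the difference directly as
\[
  \phi_{n+1,b}(x)-\phi_{n,b}(x)=-4\int_{-1}^{1}e^{-x\sqrt{(1+b)^2-4by^2}}\,U_{2n}(y)\sqrt{1-y^2}\,\dd y
\]
via the Chebyshev polynomial of the \emph{second} kind, then applies the Rodrigues formula for $U_{2n}$ and integrates by parts $2n$ times to reach
\[
  \phi_{n+1,b}(x)-\phi_{n,b}(x)=-\frac{\sqrt{\pi}\,(n+1)}{2^{n-1}\Gamma(n+\tfrac32)}\int_{-1}^{1}\frac{\dd^{2n}}{\dd y^{2n}}\Big(e^{-x\sqrt{(1+b)^2-4by^2}}\Big)(1-y^2)^{2n+\frac12}\,\dd y,
\]
after which an additional argument (Schloemilch's identity applied to $H(y^2)$ with $H$ absolutely monotone) is needed to conclude positivity of the $2n$-th derivative of a function of $y^2$.

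Your approach is more economical: you stay with the first-kind Rodrigues representation \eqref{exp:phi-nb}, perform a single integration by parts on the $(n+1)$-formula, and land on $\int_{-1}^{1}(1-y)f^{(n)}(y)(1-y^2)^{n-1/2}\dd y$, whose positivity follows immediately from the absolute monotonicity of $f(y)=e^{W_b(y)}$ already established in Lemma~\ref{lem:phi_n_b_1}. This avoids the second-kind Chebyshev machinery and the Schloemilch step entirely. Your handling of the endpoint $b=1$ (the only case where $f^{(n)}$ is singular at $y=1$) is also correct: the boundary term and the interior integrand have the scalings you claim, so the limiting argument goes through.
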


\begin{remark}
By Riemann-Lebesegue theorem, applied with \eqref{def:phi-n-b}, we get
$\lim\limits_{n\to +\infty} \phi_{n,b}(x)=0$. Thus, the strict decreasing property of
$n\mapsto \phi_{n,b}(x)$  implies the strict positivity of $\phi_{n,b}(x)$  as shown before in Lemma \ref{lem:phi_n_b_1}.
\end{remark}

\begin{proof}[Proof of Lemma \ref{lem:phi-n-b-mono}]
From \eqref{def:phi-n-b} and $\phi_{n,b}\in \RR$, we rewrite the difference $\phi_{n+1,b}-\phi_{n,b}$ as
\begin{align*}
  \phi_{n+1,b}(x)-\phi_{n,b}(x)=&\int_0^{2\pi} e^{-x \sqrt{1+b^2 - 2b\cos \eta}} (e^{i (n+1)\eta}-e^{in\eta}) \dd \eta\\
  = & \int_0^{2\pi} e^{-x \sqrt{1+b^2 - 2b\cos \eta}} (e^{i \frac{1}{2}\eta}-e^{-i\frac{1}{2}\eta})
  e^{i(n+\frac{1}{2})\eta}\dd \eta\\
  = & - 2\int_0^{2\pi} e^{-x \sqrt{1+b^2 - 2b\cos \eta}}\sin \tfrac{\eta}{2} \sin \big((n+\tfrac{1}{2})\eta\big) \, \dd \eta.
\end{align*}
According to the definition of Chebyshev polynomials of the second kind $U_m$ in \eqref{eq:chebyshev-2}, it holds that
\begin{align*}
  U_m(\cos \theta)\sin \theta=\sin (m+1)\theta,\,\quad \forall m\in \NN^\star,\,\theta\in [0,\pi].
\end{align*}
Hence, we have
\begin{align*}
  \phi_{n+1,b}(x)-\phi_{n,b}(x) & =-2\int_0^{2\pi} e^{-x \sqrt{1+b^2 - 2b\cos \eta}}
  (\sin \tfrac{\eta}{2})^2 U_{2n}(\cos \tfrac{\eta}{2})  \dd \eta \\
  & = - 2\int_0^{2\pi} e^{-x \sqrt{(1+b)^2-4b\cos^2 \frac{\eta}{2}}}(1-\cos^2 \tfrac{\eta}{2})U_{2n}(\cos \tfrac{\eta}{2})  \dd \eta\\
  & = -4\int_0^{\pi} e^{-x \sqrt{(1+b)^2-4b\cos^2 \eta}}(1-\cos^2 \eta)U_{2n}(\cos \eta)  \dd \eta.
\end{align*}
By changing of the variable $y=\cos \eta$, we find
\begin{align*}
  \phi_{n+1,b}(x)-\phi_{n,b}(x)=-4\int_{-1}^{1} e^{-x \sqrt{(1+b)^2-4by^2}}U_{2n}(y) \sqrt{1-y^2} \dd y.
\end{align*}
Using the Rodrigues formula \eqref{eq:U-n-formula} gives that
\begin{align*}
  \phi_{n+1,b}(x)-\phi_{n,b}(x)=-\frac{\sqrt{\pi} (n+1)}{2^{n-1} \Gamma(n+ \frac{3}{2})}
  \int_{-1}^{1} e^{-x \sqrt{(1+b)^2-4by^2}}
  \frac{\dd^{2n}}{\dd y^{2n}} \big[(1-y^2)^{2n+\frac{1}{2}}\big] \dd y.
\end{align*}
Noting that
\begin{align*}
  \Big(\frac{\dd^{k}}{\dd y^{k}}\big[(1-y^2)^{2n+\frac{1}{2}}\big]\Big)\Big|_{y=\pm 1}=0,\quad \forall k=0,1,\cdots, 2n,
\end{align*}
we integrate by parts to deduce that
\begin{align}\label{eq:diff-phi-n-b-2}
  \phi_{n+1,b}(x)-\phi_{n,b}(x) = - \frac{\sqrt{\pi} (n+1)}{2^{n-1} \Gamma(n+ \frac{3}{2})}
  \int_{-1}^{1}\frac{\dd^{2n}}{\dd y^{2n}}
  \Big(e^{-x \sqrt{(1+b)^2-4by^2}}\Big)(1-y^2)^{2n+\frac{1}{2}}\dd y.
\end{align}
We will show that
\begin{align}\label{eq:ind-2n-der}
\frac{\dd^{2n}}{\dd y^{2n}}
  \Big(e^{-x \sqrt{(1+b)^2-4by^2}}\Big)>0.
\end{align}
First we write in view of the notation \eqref{Comp-Mom},
\begin{align*}
  e^{-x \sqrt{(1+b)^2-4by^2}}=e^{\widetilde{W}_{b}(y^2)} \triangleq H(y^2),\quad \textrm{with}\quad
  \widetilde{W}_b(y) \triangleq -x\sqrt{(1+b)^2-4by}.
\end{align*}
According to the same argument as the proof of \eqref{eq:positive-derivative-2},
the function $y\in [0,1]\mapsto H(y)$ is absolutely monotonic.
Using Schloemilch's identity \cite{Sch},
based on Fa\`a di Bruno's formula, we infer that
\begin{align*}
  \frac{\dd^{2n}}{\dd y^{2n}} H(y^2)=
  \sum_{k=0}^{n}\frac{(2n)!}{(2n-2k)! k!}(2y)^{2n-2k} H^{(2n-k)}(y^2)>0.
\end{align*}
Therefore, the function $y\in(-1,1)\mapsto H(y^2)$ is absolutely monotonic. \\
As a result, \eqref{eq:diff-phi-n-b-2} and \eqref{eq:ind-2n-der}
imply the required monotonicity property, that is,
$\phi_{n+1,b}(x) - \phi_{n,b}(x) < 0$, $\forall n\in\mathbb{N}^\star$.
\end{proof}

\subsection{Analyticity of universal functions and related spectral terms}
In this subsection, we deal with the real analyticity in $b\in(0,1)$ of universal functions $\phi_n(bx)$, $\phi_{n,b}(x)$
and related spectral terms $\lambda_{n,b}$, $\widetilde{\lambda}_{n,b}$.
Recall that a  real function $x\mapsto f(x)$ is real analytic on an open set $D$
is equivalent to that $f$ is smooth and for every compact set $D_0\subset\subset D$,
there exists a constant $C>0$ such that for every $x\in D_0$ and every $k\in\mathbb{N}^\star$ the following bound holds
\begin{align}\label{eq:analytic_lem}
  \Big|\frac{\dd^k f}{\dd x^k}(x)\Big|\leqslant  C^{k+1}k!.
\end{align}
In fact, the inequality \eqref{eq:analytic_lem} implies the convergence of
$\sum\limits_{k=0}^\infty \frac{f^{(k)}(x)}{k!}(y-x)^k$ for any $y\in (x-\tfrac{1}{C},x-\tfrac{1}{C})$.

\begin{lemma}\label{lem:analytic_lambda}
  Let assumptions \eqref{A1}-\eqref{A2} be satisfied with some $\alpha\in (0,1)$.
\begin{enumerate}
\item
For any $n\in\mathbb{N}^\star$ and $x\in (0,+\infty)$, the functions $b\mapsto \phi_{n}(bx)$ and $b\mapsto\lambda_{n,b}$
are real analytic on $(0,1)$.
\item
For any $n\in \mathbb{N}^\star$ and $x\in (0,+\infty)$,
the functions $b\mapsto\phi_{n,b}(x)$ and $b \mapsto \widetilde{\lambda}_{n,b}$ are real analytic on $(0,1)$.
\end{enumerate}
\end{lemma}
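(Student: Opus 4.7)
The plan is to treat (1) and (2) in parallel: first establish real analyticity in $b$ of the universal functions $\phi_n(bx)$ and $\phi_{n,b}(x)$ by exhibiting holomorphic extensions in $b$, then transfer this analyticity to the spectral coefficients $\lambda_{n,b}$ and $\widetilde{\lambda}_{n,b}$ via dominated differentiation under the integral, with Cauchy's formula on a complex disk in the $b$-variable providing the uniform bounds on arbitrary $b$-derivatives required by the criterion \eqref{eq:analytic_lem}.

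For part (1), the integral representation $\phi_n(y)=\int_0^{2\pi}e^{-2y\sin(\eta/2)}e^{in\eta}\,\dd\eta$ makes $\phi_n$ an entire function of $y\in\CC$: the integrand is entire in $y$ for each $\eta$ and uniformly bounded on compact subsets of $\CC$ (uniformly in $\eta\in[0,2\pi]$), so differentiation under the integral is immediate. Hence $b\mapsto\phi_n(bx)$ is entire for each fixed $x>0$. To pass this property to $\lambda_{n,b}=\int_0^\infty\phi_n(bx)\,\frac{\dd\mu(x)}{x}$, I fix $b_0\in(0,1)$, choose $\delta\in(0,b_0/3)$, and apply Cauchy's integral formula on the circle $\{|b-b_0|=\delta\}\subset\CC$ to obtain
\begin{align*}
\left|\frac{\dd^k\phi_n(bx)}{\dd b^k}\bigg|_{b=b_0}\right|\leqslant \frac{k!}{\delta^k}\sup_{|b-b_0|=\delta}|\phi_n(bx)|.
\end{align*}
Since $\mathrm{Re}(b)\geqslant b_0-\delta>0$ on the contour, combining the Laplace-type bound $|\phi_n(bx)|\leqslant \int_0^{2\pi}e^{-2(b_0-\delta)x\sin(\eta/2)}\,\dd\eta$ (which is $O(1/x)$ as $x\to\infty$) with the small-$y$ estimate $|\phi_n(y)|\leqslant C|y|$ (following from $\phi_n(0)=0$ and a uniform bound on $|\phi_n'|$) yields a $k$-independent majorant $M_1(x)\sim\min(x,\,1/x)$ with $\int_0^\infty M_1(x)\,\frac{\dd\mu(x)}{x}<\infty$ under assumption \eqref{A2}. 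This gives $|\partial_b^k\lambda_{n,b}|_{b_0}\leqslant C\, k!/\delta^k$, and the criterion \eqref{eq:analytic_lem} with constant $1/\delta$ delivers real analyticity at $b_0$.

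For part (2), the identity $1+b^2-2b\cos\eta=(1-be^{i\eta})(1-be^{-i\eta})$ shows that for $|b|<1$ the expression under the square root in \eqref{def:phi-n-b} is bounded away from zero uniformly in $\eta$, so the principal square root extends holomorphically in $b$ to the complex disk $\{|b|<1\}$, and therefore so does $\phi_{n,b}(x)$. For $\widetilde{\lambda}_{n,b}$ I reapply the Cauchy strategy on $\{|b-b_0|=\delta\}$ with $\delta<\min(b_0,1-b_0)$. A careful tracking of $\mathrm{Re}\sqrt{(1-be^{i\eta})(1-be^{-i\eta})}$ on this contour yields an exponential bound $|\phi_{n,b}(x)|\leqslant 2\pi e^{-c x}$ with $c=c(b_0,\delta)>0$, which combined with $\phi_{n,b}(0)=0$ for $n\neq 0$ (and the uniform boundedness of $\partial_x\phi_{n,b}$) gives $|\phi_{n,b}(x)|\leqslant C\min(x,e^{-cx})$. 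This Cauchy majorant is plainly integrable against $\dd\mu(x)/x$ (the small-$x$ factor $x$ cancels the $1/x$ weight while the exponential handles $x\to\infty$), so $|\partial_b^k\widetilde{\lambda}_{n,b}|_{b_0}\leqslant C'\,k!/\delta^k$ and analyticity follows.

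The main obstacle is part (1): because $\phi_n(bx)$ decays only polynomially in $x$ (in contrast to the exponential decay enjoyed by $\phi_{n,b}$), the integrability of the Cauchy majorant against $\dd\mu(x)/x$ genuinely requires assumption \eqref{A2}, which controls $K_0$ near $0$ and thereby constrains the low-frequency mass of $\mu$ through the Laplace representation \eqref{eq:K0prim}. This is exactly the same type of moment estimate already exploited in deriving the pointwise bounds on $\lambda_{n,b}$ and $\lambda_{n,b}-\lambda_{n+1,b}$ in Proposition \ref{prop:lambda_property}, so no genuinely new technical tool is needed beyond reusing those inequalities.
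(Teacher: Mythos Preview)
Your proposal is correct, with one terminological slip that does not affect the argument. The route differs from the paper's in both parts, so a brief comparison is in order.

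For part (i), the paper computes the $b$-derivatives of $\phi_n(bx)$ directly, using the pointwise inequality $e^{-y}y^\sigma\leqslant\sigma\Gamma(\sigma)$ to extract a factor $k!\,b^{-k}$ from $\int_0^{2\pi}(2\sin\tfrac{\eta}{2})^k e^{-2bx\sin(\eta/2)}\dd\eta$, and then interpolates between two choices of the exponent to produce the majorant $\tfrac{x}{1+x^2}$. You reach the same majorant more cleanly via Cauchy's formula on a small complex $b$-disk, trading explicit derivative bookkeeping for a single $\sup$ bound over the contour. Both arguments bottom out in the moment condition $\int_0^\infty\tfrac{1}{1+x^2}\,\dd\mu(x)<\infty$ deduced from \eqref{A2}, which the paper proves as \eqref{eq:integrablity_mu}; you correctly identify this as the same estimate underlying Proposition~\ref{prop:lambda_property}.

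For part (ii), the paper works on the explicit domain $\mathbf{D}_1=\{z_1+iz_2:|z_2|<1-z_1,\;z_1\in(0,1)\}$, where $\mathrm{Re}(1+\mathbf{z}^2-2\mathbf{z}\cos\eta)>0$ uniformly in $\eta$ and the principal square root is available, and then applies Morera's theorem twice (once to $\Phi_{n,\mathbf{z}}(x)$ and once to $\int\Phi_{n,\mathbf{z}}\tfrac{\dd\mu}{x}$) via Fubini. Your Cauchy-estimate approach is again equivalent; however, your statement that the \emph{principal} square root extends to the full disk $\{|b|<1\}$ is not quite right, since $\mathrm{Re}(1+b^2-2b\cos\eta)$ can be negative there (take $b$ near $\tfrac{1}{\sqrt2}(1+i)$ and $\eta=0$). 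What is true, and sufficient for your purposes, is that $1+b^2-2b\cos\eta$ never vanishes on the simply connected disk, so \emph{some} holomorphic branch exists, normalized by its value $+1$ at $b=0$; on the small contour $\{|b-b_0|=\delta\}$ with $\delta<\min(b_0,1-b_0)$ this branch does coincide with the principal one and your exponential bound goes through.
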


\begin{proof}[Proof of Lemma \ref{lem:analytic_lambda}]
\textbf{(i)}
From \eqref{eq:phi_n_ori}, we see that
$\frac{\dd^k }{\dd b^k}(\phi_{n}(bx)) = x^k\phi^{(k)}_{n}(bx)$,
where
\begin{align*}
  \phi_n^{(k)}(x)=\int_0^{2\pi}e^{-2x \sin \tfrac{\eta}{2}}\Big(-2\sin \frac{\eta}{2}\Big)^{k} e^{in\eta}\dd \eta.
\end{align*}
Using the facts that for any $x>0$ and $\sigma > 0$,
\begin{align*}
  e^{-x} x^\sigma = \int_0^x (e^{-y}y^\sigma)'\dd y
  &  = \int_0^x \big(-e^{-y}y^\sigma + \sigma e^{-y} y^{\sigma - 1} \big) \dd y \\
  & \leqslant  \sigma \int_0^{+\infty} e^{-y}y^{\sigma-1}\dd y
  = \sigma \Gamma(\sigma),
\end{align*}
and for every $-1<s<0$,
\begin{align*}
  \int_0^{2\pi} \Big(\sin \frac{\eta}{2}\Big)^s  \dd \eta = 4 \int_0^{\frac{\pi}{2}} \big(\sin \eta\big)^s \dd \eta
  \leqslant 4 \int_0^{\frac{\pi}{2}} \Big(\frac{2}{\pi} \eta \Big)^s  \dd \eta
  \leqslant \frac{2\pi}{s+1},
\end{align*}
we infer that for every $x>0$ and $-1< s<k$,
\begin{align}\label{eq:phi-n-general-bound}
  |x^{k-s}\phi_n^{(k)}(bx)|  & =\Big|\int_0^{2\pi}e^{-2bx \sin \tfrac{\eta}{2}}
  \Big(-2\sin \frac{\eta}{2}\Big)^{k}x^{k-s} e^{in\eta}\dd \eta\Big|\notag\\
  & \leqslant 2^s \int_0^{2\pi} e^{-2bx \sin \tfrac{\eta}{2}} \Big(2\sin \frac{\eta}{2}\Big)^{k-s}x^{k-s} \Big(\sin \frac{\eta}{2}\Big)^s \dd \eta \notag \\
  & \leqslant  (k-s)\Gamma(k-s)2^sb^{-(k-s)} \int_0^{2\pi} \Big(\sin \frac{\eta}{2}\Big)^s
  \dd \eta \notag \\
  & \leqslant (k-s)\Gamma(k-s)\, 2^s b^{-(k-s)} \max\Big\{\frac{2\pi}{s+1},2\pi \big\}.
\end{align}
For the endpoint case $s=-1$, noting that
\begin{align*}
  \int_0^{2\pi}e^{-x\sin \tfrac{\eta}{2}}\dd \eta = 4 \int_0^{\frac{\pi}{2}} e^{-x \sin \eta} \dd \eta
  \leqslant 4\int_0^{\frac{\pi}{2}} e^{-\frac{2}{\pi} x \eta} \dd \eta \leqslant  \frac{2\pi }{x},
\end{align*}
we instead have that for every $x>0$,
\begin{align}\label{eq:phi-n-decay-bound}
    |x^{k+1}\phi_n^{(k)}(bx)| &=\Big|\int_0^{2\pi}e^{-2bx \sin \frac{\eta}{2}}
  \Big(-2\sin \frac{\eta}{2}\Big)^{k}x^{k+1} e^{in\eta}\dd \eta\Big|\notag\\
  &\leqslant 2^kb^{-k}k\Gamma(k)x\int_0^{2\pi}e^{-bx \sin \frac{\eta}{2}}
  \dd \eta\notag\\
  &\leqslant \pi 2^{k+1}b^{-(k+1)} k !.
\end{align}
Hence, the analyticity of $b\mapsto \phi_n(bx)$ for any $x>0$ and $b\in (0,1)$ is ensured by the case $s=0$ in
\eqref{eq:phi-n-general-bound}, indeed,
\begin{align*}
  \Big|\frac{\dd^k (\phi_{n}(bx))}{\dd b^k}\Big| \leqslant  2\pi\, k!\, b^{-k}.
\end{align*}
Choosing $s=1$ in \eqref{eq:phi-n-general-bound}, it follows that
\begin{align*}
  \Big|\frac{\dd^k \phi_{n}(bx)}{\dd b^k}\Big|=|x^k\phi^{(k)}_n(bx)|
   = |x||x^{k-1}\phi^{(k)}_n(bx)|\leqslant 4\pi b^{-k+1}(k-1)!x,
\end{align*}
which together with \eqref{eq:phi-n-decay-bound} leads to
\begin{align}\label{eq:uniform_bound_phi_nb}
  \Big|\frac{\dd^k \phi_{n}(bx)}{\dd b^k}\Big|=|x^k\phi^{(k)}_n(bx)|\leqslant  \frac{C^kk!}{b^k}\frac{x}{1+x^2},
\end{align}
with some absolute constant $C\geqslant 1$.
Next, for $\lambda_{n,b}$ given by \eqref{def:lam-n-1},
we see that for any $k\in \mathbb{N}$,
\begin{align}\label{eq:lamb-n1-diff}
  \frac{\dd^k \lambda_{n,b}}{\dd b^k}=\int_0^{+\infty}\frac{\dd^k \phi_{n}(bx)}{\dd b^k} \frac{\dd \mu(x)}{x}.
\end{align}
According to \eqref{cond:K0} and \eqref{es:f-weig}, we infer that for $\alpha\in (0,1)$,
\begin{align*}
  \int_0^1 |K_0'(t)|t^{1-\alpha+\alpha^2}\dd t
  \leqslant (1-\alpha +\alpha^2 ) \int_0^1 |K_0(t)| t^{-\alpha +\alpha^2} \dd t +  |K_0(1)| < +\infty.
\end{align*}
By virtue of \eqref{eq:K0prim} and Fubini's theorem, it follows that
\begin{align*}
  \int_0^1 |K_0'(t)|t^{1-\alpha+\alpha^2}\dd t
  & = \int_0^1 \int_0^{+\infty}e^{-tx} t^{1-\alpha+\alpha^2}\dd \mu(x) \dd t \\
  & = \int_0^{+\infty}\int_0^1 e^{-tx} t^{1-\alpha+\alpha^2}\dd t \,\dd \mu(x) \\
  & = \int_0^{+\infty}x^{-2+\alpha-\alpha^2} \int_0^x e^{-t}t^{1-\alpha+\alpha^2}\dd t \,\dd \mu(x) < +\infty.
\end{align*}
Noting that
\begin{align*}
  \lim_{x\to 0^+} \bigg( x^{-2+\alpha-\alpha^2} \int_0^x e^{-t}t^{1-\alpha+\alpha^2} \dd t \bigg)
  = \frac{1}{2-\alpha+\alpha^2} > 0,
\end{align*}
there exists some constant $c_{\alpha}>0$ such that
\begin{align*}
  \frac{c_{\alpha}}{1+x^{2-\alpha+\alpha^2}}\leqslant x^{-2+\alpha-\alpha^2} \int_0^x e^{-t}t^{1-\alpha+\alpha^2}\dd t,\quad \forall x>0,
\end{align*}
which directly implies that
\begin{align}\label{eq:integrablity_mu}
  \int_0^{+\infty}\frac{1}{1+x^2}\dd \mu(x)\leqslant 2\int_0^{+\infty}\frac{1}{1+x^{2-\alpha+\alpha^2}}\dd \mu(x)<+\infty.
\end{align}
Hence, from \eqref{eq:uniform_bound_phi_nb}, \eqref{eq:lamb-n1-diff}, and \eqref{eq:integrablity_mu},
we see that for every $k\in \mathbb{N}^\star$,
\begin{align*}
  \Big|\frac{\dd^k \lambda_{n,b}}{\dd b^k}\Big| \leqslant \frac{C^k k!}{b^k}
  \int_0^{+\infty} \frac{1}{1+x^2} \dd \mu(x) \leqslant  \frac{C^{k+1} k!}{b^k} ,
\end{align*}
which concludes the analyticity of $b \mapsto \lambda_{n,b}$ in the interval $(0,1)$.
\vskip1.5mm
	
\noindent \textbf{(ii)}
Recalling that
\begin{align*}
  \phi_{n,b}(x)=\int_0^{2\pi} e^{-x \sqrt{1+b^2 - 2b\cos \eta}} e^{i n\eta} \dd \eta
  = 2\int_0^{\pi}e^{-x \sqrt{1+b^2 - 2b\cos \eta}}\cos (n\eta) \,\dd \eta,
\end{align*}
we introduce the extension map:
\begin{align*}
  \mathbf{z} \in\mathbf{D}_1 \mapsto \Phi_{n,\mathbf{z}}(x) \triangleq
  2 \int_0^{\pi}e^{-x \sqrt{1+\mathbf{z}^2 - 2\,\mathbf{z}\,\cos \eta}} \cos (n\eta) \dd \eta,
\end{align*}
where the domain $\mathbf{D}_1$ is given by
\begin{align*}
  \mathbf{D}_1 \triangleq \big\{\mathbf{z} = z_1 + i z_2 \, : \,|z_2| < 1 - z_1,\,z_1 \in (0,1)\big\}.
\end{align*}
For $\mathbf{z} = b\in (0,1) \subset \mathbf{D}_1$, we see that $\Phi_{n,\mathbf{z}}(x) = \phi_{n, b}(x)$,
thus we intend to show that $\Phi_{n,\mathbf{z}}$ and the associated map
\begin{align}\label{eq:extend_map_lambda_n_3}
  \mathbf{z}\in \mathbf{D}_1\mapsto \int_{0}^{+\infty}\Phi_{n,\mathbf{z}}(x)\frac{\dd \mu(x)}{x}
\end{align}
are both analytic in $\mathbf{z}\in \mathbf{D}_1$.
By the definition of $\mathbf{D}_1$, it follows that for any $\mathbf{z}\in \mathbf{D}_1$ and
$\eta\in [0,\pi]$,
\begin{align*}
  \mathrm{Re}\,(1+\mathbf{z}^2-2\mathbf{z}\cos \eta) = 1+ z_1^2 -z_2^2 - 2 z_1 \cos \eta
  \geqslant (1-z_1)^2 - z_2^2 > 0.
\end{align*}
Since $\mathbf{z}\mapsto \sqrt{\mathbf{z}}$ has a unique analytic branch for $\mathrm{Re}\,\mathbf{z}>0$
and $1+\mathbf{z}^2-2 \mathbf{z}\cos \eta$
is analytic on $\mathbf{z}\in \mathbf{D}_1$, we know that for any fixed $\eta\in [0,\pi]$ and $x>0$,
the map $\mathbf{z}\in \mathbf{D}_1\mapsto e^{-x\sqrt{1+\mathbf{z}^2-2 \mathbf{z}\cos \eta}}$ is analytic.
Obviously, the map $\mathbf{z}\in \mathbf{D}_1\mapsto \Phi_{n,\mathbf{z}}(x)$ is continuous.
Furthermore, for every closed piecewise $C^{1}$ curve $\Gamma$ in $\mathbf{D}_1$,
the function $\mathbf{z}\in \Gamma \mapsto e^{-x\sqrt{1+\mathbf{z}^2-2\,\mathbf{z}\cos \eta}}$
is uniformly bounded for any $\eta\in [0,\pi]$.
Hence, we apply Fubini's theorem to get that
\begin{align*}
  \oint_{\Gamma}\Phi_{n,\mathbf{z}}(x) \dd \mathbf{z}
  = 2\int_{0}^{\pi}\oint_{\Gamma}e^{-x\sqrt{1+\mathbf{z}^2-2\,\mathbf{z}\cos \eta}}\dd \mathbf{z}\cos n\eta \, \dd \eta=0,
\end{align*}
where we have used the fact that $\mathbf{z}\in \mathbf{D}_1\mapsto e^{-x\sqrt{1+\mathbf{z}^2-2\,\mathbf{z}\cos \eta}}$
is analytic.
Consequently, by Morera's theorem (e.g. see \cite{SteinS03}), for any fixed $x>0$,
the map $\mathbf{z}\in \mathbf{D}_1\mapsto \Phi_{n,\mathbf{z}}(x)$ is analytic.
Next, we focus on the map given by \eqref{eq:extend_map_lambda_n_3}.
By virtue of Lebesgue's dominated convergence theorem and the fact $\Phi_{n,\mathbf{z}}(0)=0$,
one can check that this map \eqref{eq:extend_map_lambda_n_3} is continuous in $\mathbf{D}_1$,
and in addition, for every closed piecewise $C^{1}$ curve $\Gamma$ in $\mathbf{D}_1$,
there exists a positive constant $C_{\Gamma}>0$ such that
\begin{align*}
  |\Phi_{n,\mathbf{z}}(x)|\leqslant C_\Gamma xe^{-x/C_\Gamma }.
\end{align*}
Taking advantage Fubini's theorem and \eqref{eq:integrablity_mu},
we deduce that for every closed piecewise $C^{1}$ curve $\Gamma\subset \mathbf{D}_1$,
\begin{align*}
  \oint_{\Gamma}\int_{0}^{+\infty}\Phi_{n,\mathbf{z}}(x) \frac{\dd \mu(x)}{x}\dd \mathbf{z}
  =\int_0^{+\infty}\oint_{\Gamma}\Phi_{n,\mathbf{z}}(x) \,\dd \mathbf{z}\frac{\dd \mu(x)}{x}=0.
\end{align*}
As a consequence of Morera's theorem (\cite{SteinS03}), the map \eqref{eq:extend_map_lambda_n_3} is analytic and thus
$ b\in (0,1) \mapsto\widetilde{\lambda}_{n,b}$ is analytic.
\end{proof}

\begin{remark}\label{rmk:assumA2}
From the complete monotonicity of $-K_0'(t)$ and \eqref{eq:integrablity_mu}, we see that
\begin{align*}
  |K'_0(t)|\leqslant |K'_0(1)|=\int_0^{+\infty}e^{-x}\dd \mu(x)\leqslant C_0,\quad \forall t\geqslant 1,
\end{align*}
which implies that
\begin{align*}
  |K_0(t)|\leqslant |K_0(1)|+\int_1^t |K_0'(t)|\leqslant C_0(t+1), \quad \forall t\geqslant 1.
\end{align*}
Then it follows from \eqref{A2} that for any $a_0>0$,
\begin{align*}
  \int_0^{a_0}|K_0(t)|t^{-\alpha+\alpha^2}\dd t
  & \leqslant
  \int_0^1 |K_0(t)|t^{-\alpha+\alpha^2}\dd t+\int_1^{\max(1,a_0)}|K_0(t)|t^{-\alpha+\alpha^2}\dd t\\
  & \leqslant \int_0^1 |K_0(t)|t^{-\alpha+\alpha^2}\dd t+C_0\int_1^{\max(1,a_0)}(t+1)t^{-\alpha+\alpha^2}\dd t<+\infty.
\end{align*}
\end{remark}

\section{Spectral Study}\label{sec:spectral-stud}
In this section, we mainly study the "nonlinear eigenvalues" corresponding to the solutions in $\Omega$ of the dispersion equation
\eqref{eq:Omega-eq}. They are denoted by $\Omega^\pm_{n,b}$ as described in \eqref{eq:Omega_n_pm-intr}
provided that the discriminant $\mathbf{\Delta}_{n,b}$ defined in \eqref{discriminant} is strictly positive.
We first derive the pointwise lower/upper bounds of some spectral terms appearing in \eqref{eq:Omega_n_pm-intr}.
Next, we show the positivity of $\mathbf{\Delta}_{n,b}$ for sufficiently large $n$
under the condition \eqref{eq:assum-1}; and furthermore, for the special case $K=K_0(|\mathbf{x}-\mathbf{y}|)$
we show that condition \eqref{eq:assum-1} holds by removing at most a countable set of $b\in \mathcal{S}_{\mathrm{max}}$.
Finally, for $n$ large enough, we study the monotonicity property in $n$ of the real roots
$\Omega^\pm_{n,b}$.

\subsection{Pointwise bounds of several spectral terms}

In this subsection, we present some upper and lower bounds about the spectral terms
$\lambda_{n,b}$, $\widetilde{\lambda}_{n,b}$ and $p_{n,b}$, $\widetilde{p}_{n,b}$,
which are given by \eqref{def:lambda-nb} and \eqref{def:Pnb}, respectively.

\begin{proposition}\label{prop:lambda_property}
Let assumptions \eqref{A1}-\eqref{A4} be satisfied with some $\alpha\in (0,1)$.
For any $0<b\leqslant 1$ and $n \in \mathbb{N}^\star$, the following estimates hold:
\begin{align}\label{eq:est-upper}
  \frac{1}{C n^2}\leqslant  \lambda_{n,b}\leqslant  \frac{C}{n^{\alpha-\alpha^2}},
  \quad  \frac{1}{C n^3}\leqslant  \lambda_{n,b}-\lambda_{n+1,b}\leqslant  \frac{C}{n^{1+\alpha-\alpha^2}}
\end{align}
and
\begin{align}
  0<\widetilde{\lambda}_{n,b} & \leqslant  \frac{C_{k,b}}{n^k},
  \quad \forall k\in \mathbb{N}^\star, \label{eq:est-upper2} \\
  |p_{n,b}| + |\widetilde{p}_{n,b}| & \leqslant  \frac{C}{n^4},  \label{eq:est-upper3}
\end{align}
with some $C\geqslant 1$ and $C_{k,b}>0$.
\end{proposition}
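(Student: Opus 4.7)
My plan is to treat the four estimates separately, exploiting the factorizations \eqref{def:lam-n-1}, \eqref{def:lam-n-3} for $\lambda_{n,b},\widetilde{\lambda}_{n,b}$ through the universal functions $\phi_n,\phi_{n,b}$, and using the regularity hypothesis \eqref{A3} directly on the Fourier coefficients for $p_{n,b},\widetilde{p}_{n,b}$.

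For the bounds on $\lambda_{n,b}$ in \eqref{eq:est-upper}, I would start from $\lambda_{n,b}=\int_0^\infty\phi_n(bx)\,\frac{\dd\mu(x)}{x}$ and substitute the two-sided estimate \eqref{ineq:est-phi-n}, which gives $\phi_n(bx)/x\sim \frac{b}{n^2+b^2x^2}$ up to absolute constants. For the lower bound I pick an interval $[0,M]$ with $\mu([0,M])>0$ (possible since $\mu\not\equiv 0$), on which $\frac{b}{n^2+b^2x^2}\geqslant \frac{c}{n^2}$ for $n$ sufficiently large, and absorb the finitely many small values of $n$ by positivity of each $\lambda_{n,b}$. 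For the upper bound I use the weighted integrability \eqref{eq:integrablity_mu} (with weight $1+x^{2-\alpha+\alpha^2}$) derived in Lemma~\ref{lem:analytic_lambda}; setting $s=2-\alpha+\alpha^2$ and rescaling $y=bx/n$, the supremum of $\frac{b(1+x^s)}{n^2+b^2x^2}$ gives a contribution of order $n^{s-2}=n^{-(\alpha-\alpha^2)}$, producing $\lambda_{n,b}\leqslant C/n^{\alpha-\alpha^2}$. For the difference $\lambda_{n,b}-\lambda_{n+1,b}$ I repeat this scheme with \eqref{ineq:est-phi-n-diff} in place of \eqref{ineq:est-phi-n}, yielding an integrand of size $\frac{nb}{(n^2+b^2x^2)^2}$; the lower bound then produces $1/n^3$ and the weighted integral produces the factor $n^{s-4}\cdot n = n^{-(1+\alpha-\alpha^2)}$.

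For $\widetilde{\lambda}_{n,b}$ in \eqref{eq:est-upper2}, the key observation is that $b<1$ forces $|1-be^{i\eta}|\geqslant 1-b>0$, so the function $\eta\mapsto e^{-x|1-be^{i\eta}|}$ is real-analytic and its $\eta$-derivatives are controlled by $C_{k,b}(1+x)^k e^{-(1-b)x}$ via Fa\`a di Bruno. Integrating by parts $k$ times in $\eta$ inside \eqref{def:phi-n-b} yields $|\phi_{n,b}(x)|\leqslant C_{k,b} n^{-k}(1+x)^k e^{-(1-b)x}$; combined with the $O(x)$ behavior of $\phi_{n,b}$ at the origin (so $\phi_{n,b}(x)/x$ is bounded near $0$) and the exponential decay in $x$ that dominates the polynomial growth allowed by $\int\frac{\dd\mu}{1+x^s}<\infty$, this gives $\widetilde{\lambda}_{n,b}\leqslant C_{k,b}/n^k$ for every $k$, while positivity is supplied by Lemma~\ref{lem:phi_n_b_1}.

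Finally, for \eqref{eq:est-upper3} I use assumption \eqref{A3}: since $K_1\in C^k_{\mathrm{loc}}(\mathbf{D}^2)$ with $k\geqslant 4$, the integrands $\eta\mapsto K_1(b,be^{i\eta})$ and $\eta\mapsto K_1(b,e^{i\eta})$ are $C^4$, so four integrations by parts in $\eta$ in \eqref{def:Pnb} yield the decay $n^{-4}$. The main technical obstacle I anticipate is the upper bound on $\lambda_{n,b}$, where the precise exponent $\alpha-\alpha^2$ must be extracted: it requires a careful optimisation between the two natural regimes $bx\leqslant n$ and $bx\geqslant n$ and the weighted $\mu$-integrability inherited from \eqref{cond:K0}. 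Everywhere one has to be willing to let the constant $C$ depend on $b$ (and $\alpha$), and to treat the finitely many small values of $n$ separately by invoking positivity of the individual spectral terms.
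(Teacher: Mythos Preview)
Your approach is correct and largely parallels the paper's, with one genuine difference worth noting. For the lower bounds on $\lambda_{n,b}$ and $\lambda_{n,b}-\lambda_{n+1,b}$, for $\widetilde{\lambda}_{n,b}$, and for $p_{n,b},\widetilde{p}_{n,b}$, you do exactly what the paper does (factorization plus \eqref{ineq:est-phi-n}--\eqref{ineq:est-phi-n-diff} for the lower bounds; repeated integration by parts for the rest). The divergence is in the \emph{upper} bounds of \eqref{eq:est-upper}: you push the factorization $\lambda_{n,b}=\int_0^\infty\phi_n(bx)\tfrac{\dd\mu(x)}{x}$ all the way through, combining the pointwise bound \eqref{ineq:est-phi-n} (resp.\ \eqref{ineq:est-phi-n-diff}) with the weighted integrability $\int_0^\infty(1+x^{2-\alpha+\alpha^2})^{-1}\dd\mu<\infty$ from \eqref{eq:integrablity_mu} and optimising over $y=bx/n$. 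The paper instead returns to the definition \eqref{def:lambda-nb}, integrates by parts once (twice, for the difference) in $\eta$, interpolates via $|\sin(n\eta)|\leqslant \min(1,Cn|\sin\tfrac{\eta}{2}|)$ to extract the exponent $n^{-(\alpha-\alpha^2)}$, and then invokes assumption~\eqref{A2} directly through Lemma~\ref{lem:int}. Your route is more unified---a single machinery handles upper and lower bounds---and sidesteps the boundary-term estimates \eqref{eq:est-t-0-K0-0}--\eqref{eq:est-t-0-K0-1} that the paper needs for the double integration by parts; the paper's route is slightly more self-contained in that it stays on the $K_0$-side and does not rely on the derived moment bound \eqref{eq:integrablity_mu}. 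One small sharpening of your $\widetilde{\lambda}_{n,b}$ sketch: the first integration by parts in $\eta$ already produces a factor $x$ (from $\partial_\eta e^{-x|1-be^{i\eta}|}$), so one obtains $|\phi_{n,b}(x)|\leqslant C_{k,b}\,n^{-k}\,x(1+x)^{k-1}e^{-(1-b)x}$ in a single stroke, making the separate $O(x)$ argument at the origin unnecessary.
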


\begin{proof}[Proof of Proposition \ref{prop:lambda_property}]
We begin with the estimation in \eqref{eq:est-upper} concerning $\lambda_{n,b}$ given by \eqref{def:lambda-nb}.
Via integrating by parts, we have
\begin{align}\label{eq:lambda-nb-ub1}
  0 < \lambda_{n,b} & =
  \Big|\int_0^{2\pi} K_0(2 b \sin \tfrac{\eta}{2}) \cos (n\eta)\, \dd \eta\Big| \nonumber \\
  & = \frac{b}{n}\Big|\int_0^{2\pi}K'_0(2b \sin \tfrac{\eta}{2})
  \sin (n\eta) \cos \tfrac{\eta}{2}\,\dd \eta\Big| \nonumber \\
  & \leqslant  \frac{C}{n^{\alpha-\alpha^2}}\int_0^{2\pi}|K'_0(2 b\sin \tfrac{\eta}{2})|
  |\sin \tfrac{\eta}{2}|^{1-\alpha+\alpha^2}\dd \eta,
\end{align}
where we have used that $\big|\frac{\sin (n\eta)}{\sin \frac{\eta}{2}}\big|\leqslant Cn$ and $|\sin n\eta|\leqslant 1$.
In addition, using the fact that
\begin{align*}
  \tfrac{2}{\pi}\eta\leqslant  \sin \eta \leqslant  \eta,\quad 0\leqslant  \eta\leqslant  \tfrac{\pi}{2},
\end{align*}
together with the decreasing property of $|K_0'(t)|$, Lemma \ref{lem:int} and \eqref{cond:K0}, we deduce that
\begin{align}\label{eq:lambda-nb-ub2}
  \int_0^{2\pi}|K'_0(2b\sin \tfrac{\eta}{2})| |\sin \tfrac{\eta}{2}|^{1-\alpha+\alpha^2}\dd \eta \nonumber
  & = 4\int_0^{\frac{\pi}{2}}|K'_0(2 b\sin \eta)| |\sin \eta|^{1-\alpha+\alpha^2}\dd \eta \nonumber \\
  & \leqslant  4\int_0^{\frac{\pi}{2}}|K_0'(\tfrac{4b}{\pi}\eta)|\eta^{1-\alpha+\alpha^2}\dd \eta \nonumber \\
  & \leqslant C \int_0^{2b} |K_0(\eta)| \eta^{-\alpha +\alpha^2} \dd \eta + C \leqslant C ,
\end{align}
which combined with \eqref{eq:lambda-nb-ub1} leads to the upper bound of $\lambda_{n,b}$ in \eqref{eq:est-upper}.
The lower bounds of $\lambda_{n,b}$ and $\lambda_{n,b}-\lambda_{n+1,b}$ follow from \eqref{def:lam-n-1},
\eqref{ineq:est-phi-n}-\eqref{ineq:est-phi-n-diff}
and the fact that the measure $\mu$ is a nonzero non-negative measure.
Indeed, there exist some $0<d<\infty$ and $c_\star>0$ such that $\mu([0,d])\geqslant c_\star$, thus
\begin{align*}
  \lambda_{n,b} & \geqslant  \int_0^\infty \frac{8n^2}{4n^2 +1} \frac{b}{n^2 + b^2 x^2} \dd \mu(x) \\
  & \geqslant \frac{8b}{5} \int_0^d\frac{1}{n^2 + b^2 x^2} \dd \mu(x) \geqslant \frac{8b}{5} \frac{c_\star}{n^2 + b^2 d^2}
  \geqslant \frac{1}{Cn^2}
\end{align*}
and
\begin{align*}
  \lambda_{n,b} -\lambda_{n+1,b} & \geqslant  \int_0^\infty \frac{(2n+1)b}{(n^2+b^2x^2)((n+1)^2+ b^2 x^2)} \dd \mu(x) \\
  & \geqslant \frac{c_\star (2n+1) b}{(n^2 + b^2 d^2)((n+1)^2 + b^2 d^2)}
  \geqslant \frac{1}{C n^3}.
\end{align*}
For the upper bound of $\lambda_{n,b}-\lambda_{n+1,b}$, from \eqref{def:lambda-nb} and applying  integration by parts,
we deduce that
\begin{align*}
  \lambda_{n,b} -\lambda_{n+1,b}
  & = \Big|\int_0^{2\pi}K_0(2 b \sin \tfrac{\eta}{2})\Big( \cos (n\eta)-\cos ((n+1)\eta)\Big)\, \dd \eta\Big| \\
  & = 2\Big|\int_0^{2\pi}K_0(2 b \sin \tfrac{\eta}{2}) \sin \tfrac{\eta}{2} \,\sin (n\eta+\tfrac{\eta}{2})\, \dd \eta\Big| \\
  & = \frac{2}{n+\tfrac{1}{2}}
  \Big|\int_0^{2\pi} \cos(n\eta + \tfrac{\eta}{2}) \frac{\dd}{\dd \eta} \Big(K_0(2b\sin\tfrac{\eta}{2}) \sin\tfrac{\eta}{2} \Big)
  \dd \eta \Big| \\
  & = \frac{2}{(n+\tfrac{1}{2})^2}\Big|\int_0^{2\pi}\sin (n \eta +\tfrac{\eta}{2})  \,\frac{\dd^2 }{\dd \eta^2}
  \Big(K_0(2 b \sin \tfrac{\eta}{2})\sin \tfrac{\eta}{2}\Big)\, \dd \eta\Big|,
\end{align*}
where in the third and fourth lines we have used the following estimates to eliminate the boundary contribution
(owing to assumptions \eqref{A1}-\eqref{A2} and Lemma \ref{lem:int})
\begin{equation}\label{eq:est-t-0-K0-0}
\begin{aligned}
  \lim_{t\to 0^{+}}|K_0(t)t| & \leqslant \lim_{t\to 0^{+}}t\int_{t}^{t_0} \big(-K_0'(\tau) \big)\dd \tau +\lim_{t\to 0^{+}}|K_0(t_0)|t \\
  & \leqslant \lim_{t\to 0^{+}} t^{\alpha-\alpha^2} \int_{t}^{t_0} \big(-K_0'(\tau)\tau^{1-\alpha+\alpha^2}\big) \dd \tau=0
\end{aligned}
\end{equation}
and
\begin{equation}\label{eq:est-t-0-K0-1}
\begin{aligned}
  \lim_{t\to 0^{+}} \big(-K'(t)t^2\big) & \leqslant
  \lim_{t\to 0^{+}}t^{\alpha-\alpha^2}\int_t^{t_0}K''_0(t)t^{2-\alpha+\alpha^2}\dd t
  + \lim_{t\to 0^{+}}t^2|K''(t_0)| \\
  & \leqslant \lim_{t\to 0^{+}}t^{\alpha-\alpha^2}\Big(\int_t^{t_0}K_0(t)t^{-\alpha+\alpha^2}\dd t
  + |K_0(t_0)|t_0^{1-\alpha+\alpha^2}\Big)=0.
\end{aligned}
\end{equation}
In view of the fact that
$|\frac{\sin (n+\frac{1}{2})\eta}{\sin \frac{\eta}{2}}|\leqslant C(2n+1)$ and $|\sin (n+\tfrac{1}{2})\eta|\leqslant 1$,
and using Lemma \ref{lem:int}, we can argue as \eqref{eq:lambda-nb-ub1}-\eqref{eq:lambda-nb-ub2} to deduce that
\begin{align*}
  \lambda_{n,b} -\lambda_{n+1,b} & \leqslant \frac{C}{(n+\tfrac{1}{2})^{1+\alpha-\alpha^2}}
  \int_0^{2\pi}(\sin\tfrac{\eta}{2})^{^{1-\alpha+\alpha^2}} \Big|\frac{\dd^2 }{\dd \eta^2}
  \Big(K_0(2 b \sin \tfrac{\eta}{2})\sin \tfrac{\eta}{2}\Big)\Big|\, \dd \eta \\
  & \leqslant \frac{C}{(n+\tfrac{1}{2})^{1+\alpha-\alpha^2}},
\end{align*}
as desired.
\vskip1mm
	
\noindent
Next, we consider the estimates \eqref{eq:est-upper2}-\eqref{eq:est-upper3}.
For $\phi_{n,b}(x)$ given by \eqref{def:phi-n-b}, through integrating by parts and using \eqref{eq:basic_n_b},
we directly obtain that for every $k\in \mathbb{N}^\star$,
\begin{align*}
  0 < \phi_{n,b}(x) & = \Big|\int_0^{2\pi} e^{-x \sqrt{1+b^2 - 2b\cos \eta}} \cos(n\eta) \dd \eta \Big| \\
  & = \frac{1}{n} \Big|\int_0^{2\pi} e^{-x \sqrt{1+b^2 -2b\cos\eta}} \frac{bx \sin\eta}{\sqrt{1+b^2-2b\cos\eta}}
  \sin(n\eta) \dd \eta\Big| \\
  & \leqslant \frac{1}{n^k} \int_0^{2\pi} \Big|\partial^{k-1}_\eta
  \Big(e^{-x \sqrt{1+b^2 -2b\cos\eta}} \frac{bx \sin\eta}{\sqrt{1+b^2-2b\cos\eta}} \Big)\Big|\dd \eta \\
  & \leqslant\frac{C_{k,b}}{n^{k}}e^{-(1-b)x} x\, \mathcal{P}_{k-1}(x),
\end{align*}
where $\mathcal{P}_k(x)$ is the $k$-th order polynomial about $x$.
Hence, for $\widetilde{\lambda}_{n,b}$ given by \eqref{def:lam-n-3},
by virtue of \eqref{eq:integrablity_mu}, we infer that for any $k\geqslant  0$,
\begin{align*}
  0 < \widetilde{\lambda}_{n,b} & \leqslant \frac{C_{k,b}}{n^k} \int_0^{+\infty} e^{-(1-b)x} \, \mathcal{P}_{k-1}(x) \dd \mu(x)
  \nonumber \\
  & \leqslant \frac{C_{k,b}}{n^k} \Big(\sup_{x\in [0,+\infty)} \frac{\mathcal{P}_{k-1}(x) (1+x^2)}{e^{(1-b)x}}\Big)
  \int_0^{+\infty} \frac{1}{1+x^2} \dd \mu(x)
  \leqslant \frac{\widetilde{C}_{k,b}}{n^{k}}.
\end{align*}
Recalling that $p_{n,b}$ and $\widetilde{p}_{n,b}$ are defined in \eqref{def:Pnb},
the corresponding estimates in \eqref{eq:est-upper3} also follow from integration by parts.
Indeed, taking $p_{n,b}$ as an example, thanks to \eqref{A3}, it follows from the periodicity of $\eta\in [0,2\pi]\mapsto K_1(b,be^{i\eta})$ that
\begin{align*}
  |p_{n,b}| & = \Big|\int_0^{2\pi} K_1(b, b e^{i\eta})e^{in\eta}\dd \eta\Big| \\
  & = \frac{1}{n^4}\Big|\int_0^{2\pi} K_1(b,b e^{i\eta}) \Big(\partial^4_\eta e^{in\eta} \Big) \dd \eta\Big| \\
  & \leqslant \frac{1}{n^4}\Big|\int_0^{2\pi} \partial_\eta^4 \Big(K_1(b,b e^{i\eta})\Big)e^{in\eta}\dd \eta \Big|
  \leqslant  \frac{C}{n^4}.
\end{align*}
This completes the proof of Proposition \ref{prop:lambda_property}.
\end{proof}

\subsection{Asymptotic structure of the discriminant}
The stability of the annular solution $\mathbf{1}_{\mathbb{D}\setminus \overline{b\mathbb{D}}}$
is related to the sign of the discriminant $\mathbf{\Delta}_{n,b}$ introduced in \eqref{discriminant},
see Section \ref{subsec:stable-mode} for more discussion.
It is important to note that bifurcation may occur when neutral stable modes with simple eigenvalues are present.
Analyzing the sign of the discriminant is quite challenging due to the complexity of its structure.
However, in this work, we will focus on its behavior for large modes by leveraging the asymptotic behavior of
the involved functions. This approach will provide valuable information about the stability criteria in the asymptotic regime.

\noindent We have the following result on the positivity of $\mathbf{\Delta}_{n,b}$.
\begin{proposition}\label{prop:Delta-analysis}
Let the kernel function $K$ satisfy the assumptions \eqref{A1}-\eqref{A4}.
Then the following statements are valid.
\begin{enumerate}
\item
We have
\begin{equation}\label{eq:Delta-inf-b}
\begin{split}
  \mathbf{\Delta}_{\infty,b} \triangleq \lim_{n\to\infty} \mathbf{\Delta}_{n,b}
  = \big(V_b^1[\mathbf{0}] - V_b^2[\mathbf{0}] \big)^2
  = \left( \int_0^\infty \tfrac{\Psi_b(x)}{x} \dd\mu(x)
  + \mathsf{c}_b - \widetilde{\mathsf{c}}_b \right)^2,
\end{split}
\end{equation}
with
\begin{align}\label{def:Psi-bx}
  \Psi_b(x)\triangleq \phi_1(x)+\phi_1(bx)
  -(b+\tfrac{1}{b})\phi_{1,b}(x),
\end{align}
and the coefficients $\mathsf{c}_b$ and $\widetilde{\mathsf{c}}_b$
are defined in \eqref{c-1} and \eqref{c-2}, respectively.
\item
Assume that $b\in \mathcal{S}_{\mathrm{max}}$ is such that
\begin{align}\label{eq:assum-1}
  V_b^1[\mathbf{0}] - V_b^2[\mathbf{0}] \ne 0.
\end{align}
Then there exists $N_b\in\mathbb{N}^\star$ so that for any $n\geqslant N_b$, $\mathbf{\Delta}_{n,b}$ is strictly positive.
\end{enumerate}
\end{proposition}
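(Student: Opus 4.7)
The plan is to proceed directly from the factored form of the discriminant, exploiting the sharp decay rates established in Proposition \ref{prop:lambda_property}, and then compute the limit explicitly using the factorization formulas \eqref{def:lam-n-1} and \eqref{def:lam-n-3}. Recall that
\begin{align*}
  \mathbf{\Delta}_{n,b} = (A_{n,b}-B_{n,b})^2 - 4(\widetilde{\lambda}_{n,b}+\widetilde{p}_{n,b})^2,
\end{align*}
and from \eqref{eq:def-A-n}-\eqref{eq:def-B-n},
\begin{align*}
  A_{n,b}-B_{n,b} = -V_b^1[\mathbf{0}]+V_b^2[\mathbf{0}] + \lambda_{n,b}+\lambda_{n,1}+p_{n,b}+p_{n,1}.
\end{align*}

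For part (i), I would first argue that as $n\to\infty$ every $n$-dependent term above vanishes. Indeed, the upper bounds in \eqref{eq:est-upper} give $\lambda_{n,b},\lambda_{n,1}=O(n^{-(\alpha-\alpha^2)})$, the estimate \eqref{eq:est-upper2} yields $\widetilde{\lambda}_{n,b}=O(n^{-k})$ for any $k$, and \eqref{eq:est-upper3} gives $p_{n,b}+p_{n,1}+\widetilde{p}_{n,b}=O(n^{-4})$. Consequently $A_{n,b}-B_{n,b}\to V_b^2[\mathbf{0}]-V_b^1[\mathbf{0}]$ and $\widetilde{\lambda}_{n,b}+\widetilde{p}_{n,b}\to 0$, which immediately gives $\mathbf{\Delta}_{n,b}\to (V_b^1[\mathbf{0}]-V_b^2[\mathbf{0}])^2$. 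For the explicit integral representation of this limit, I would substitute the formulas \eqref{def:Vb1-intro}-\eqref{def:Vb2-intro} (equivalently \eqref{eq:V-0} together with the definitions of $\mathsf{c}_b,\widetilde{\mathsf{c}}_b$ in \eqref{c-1}-\eqref{c-2}) to obtain
\begin{align*}
  V_b^1[\mathbf{0}]-V_b^2[\mathbf{0}] = \lambda_{1,b}+\lambda_{1,1} - \bigl(b+\tfrac{1}{b}\bigr)\widetilde{\lambda}_{1,b} + \mathsf{c}_b-\widetilde{\mathsf{c}}_b,
\end{align*}
and then apply the factorizations \eqref{def:lam-n-1} for $\lambda_{1,b}$ and $\lambda_{1,1}$ and \eqref{def:lam-n-3} for $\widetilde{\lambda}_{1,b}$; combining the three integrals against $\dd\mu(x)/x$ produces exactly the kernel $\Psi_b(x)$ defined in \eqref{def:Psi-bx}, yielding the claimed formula.

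Part (ii) is then a direct consequence of part (i): under the assumption \eqref{eq:assum-1} we have $\mathbf{\Delta}_{\infty,b}=(V_b^1[\mathbf{0}]-V_b^2[\mathbf{0}])^2>0$, so by the convergence just established there exists $N_b\in\mathbb{N}^\star$ with $\mathbf{\Delta}_{n,b}>0$ for every $n\geqslant N_b$. If a quantitative threshold is desired, one can just pick $N_b$ large enough that the remainders $\lambda_{n,b}+\lambda_{n,1}+|p_{n,b}|+|p_{n,1}|$ and $\widetilde{\lambda}_{n,b}+|\widetilde{p}_{n,b}|$ are each smaller than, say, $|V_b^1[\mathbf{0}]-V_b^2[\mathbf{0}]|/8$; this is explicit in terms of the constants appearing in \eqref{eq:est-upper}-\eqref{eq:est-upper3}.

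I do not anticipate any serious obstacle here: the whole argument is a straightforward consequence of the decay rates proved in Proposition \ref{prop:lambda_property} together with the factorization identities already derived in Section \ref{sec:eq-Vstates-linear}. The only mildly delicate point is verifying that the interchange of limit and integral needed to combine $\lambda_{1,b}$, $\lambda_{1,1}$ and $\widetilde{\lambda}_{1,b}$ into a single integral is justified, which follows from the uniform bound $\phi_{1,b}(x)\leqslant 2\pi e^{-(1-b)x}$ of Lemma \ref{lem:phi_n_b_1} together with the integrability estimate \eqref{eq:integrablity_mu} on $\mu$; no pointwise monotonicity of the integrand in $n$ is required since the limit is actually evaluated after the sequence is shown to converge term by term in \eqref{eq:def-A-n}-\eqref{eq:def-B-n}.
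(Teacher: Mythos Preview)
Your proposal is correct and follows essentially the same route as the paper: both arguments invoke the decay bounds of Proposition~\ref{prop:lambda_property} to pass to the limit in $A_{n,b}-B_{n,b}$ and $\widetilde{\lambda}_{n,b}+\widetilde{p}_{n,b}$, and then rewrite $V_b^1[\mathbf{0}]-V_b^2[\mathbf{0}]$ via \eqref{eq:V-0}, \eqref{def:lam-n-1}, \eqref{def:lam-n-3} to obtain the $\Psi_b$ integral. Your remark about ``interchange of limit and integral'' is unnecessary, since $\lambda_{1,b}$, $\lambda_{1,1}$, $\widetilde{\lambda}_{1,b}$ are fixed (no $n$-limit is taken there) and combining them is just linearity of the integral.
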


\begin{proof}[Proof of Proposition \ref{prop:Delta-analysis}]
(i)
From $A_{n,b}$ and $B_{n,b}$ given by \eqref{eq:def-A-n}-\eqref{eq:def-B-n}, and
in view of Proposition \ref{prop:lambda_property}, we see that
\begin{equation}\label{eq:Anb-conver}
\begin{split}
  \lim_{n\to \infty}A_{n,b} = - V_b^1[\mathbf{0}]
  & = \tfrac{1}{b}\widetilde{\lambda}_{1,b}-\lambda_{1,b} -\mathsf{c}_b \\
  & = \int_0^{\infty} \left(\frac1b \phi_{1,b}(x)-\phi_1(bx)\right)\frac{\dd \mu(x)}{x}-\mathsf{c}_b,
\end{split}
\end{equation}
and
\begin{equation}\label{eq:Bnb-conver}
\begin{split}
  \lim_{n\to \infty}B_{n,b} = - V_b^2[\mathbf{0}]
  & =-b\widetilde{\lambda}_{1,b}+\lambda_{1,1} -\widetilde{\mathsf{c}}_b\\
  & = -\int_0^{\infty} \Big(b\,\phi_{1,b}(x)-\phi_{1}(x)\Big)\frac{\dd\mu(x)}{x} -\widetilde{\mathsf{c}}_b,
\end{split}
\end{equation}
where we have used the expressions \eqref{def:lam-n-1} and \eqref{def:lam-n-3}.
Applying  \eqref{discriminant} and Proposition \ref{prop:lambda_property},
it follows that
\begin{align*}
  \lim_{n\rightarrow \infty} \mathbf{\Delta}_{n,b}
  = \lim_{n\rightarrow \infty} \Big(\big(A_{n,b} - B_{n,b}  \big)^2
  - 4 \big(\widetilde{\lambda}_{n,b} + \widetilde{p}_{n,b}\big)^2 \Big)
  = \big(V_b^1[\mathbf{0}] - V_b^2[\mathbf{0}] \big)^2.
\end{align*}
Then combined with \eqref{eq:Anb-conver}-\eqref{eq:Bnb-conver}, this leads to \eqref{eq:Delta-inf-b}, as desired.

\noindent (ii)
Condition \eqref{eq:assum-1} implies that
$\lim\limits_{n\to +\infty}\mathbf{\Delta}_{n,b}
  = ( V_b^1[\mathbf{0}] - V_b^2[\mathbf{0}])^2 > 0$.
By continuity, there exists a sufficiently large $N_b\in\mathbb{N}^\star$
depending on $b$ such that $\mathbf{\Delta}_{n,b}>0$ for any $n\geqslant N_b$.
\end{proof}

\begin{remark}\label{rem:b-small-discussion}
In the convolution case $K(\mathbf{x},\mathbf{y})=K_0(|\mathbf{x} -\mathbf{y}|)$,
one can consider the vanishing limit as $b\to 0$, which will transform the doubly connected case to the simply connected one.
We expect as in \cite{HXX23b} the existence of some $b^\ast>0$ such that for any $b\in (0,b^\ast)$,
the discriminant $\mathbf{\Delta}_{n,b}$ remains strictly positive for all $n\in \mathbb{N}^\star$.
However, achieving this for general kernels appears to be challenging.
\\[1mm]
Indeed, from  \eqref{discriminant}, we write
\begin{align*}
  \mathbf{\Delta}_{n,b}=(B_{n,b}-A_{n,b}-2\widetilde{\lambda}_{n,b})(B_{n,b}-A_{n,b}+2\widetilde{\lambda}_{n,b}).
\end{align*}
Thanks to \eqref{def:lam-n-1}, \eqref{def:lam-n-3} and \eqref{eq:def-A-n}-\eqref{eq:def-B-n}, we find that $B_{n,b}-A_{n,b}\pm 2\widetilde{\lambda}_{n,b}$ equals to
\begin{align*}
  \int_0^{\infty} \Big(\phi_1(x)-\phi_n(x)+\phi_1(bx)-\phi_n(bx)-(b+\tfrac{1}{b})\phi_{1,b}(x)\pm 2\phi_{n,b}(x) \Big)
  \frac{\dd \mu(x)}{x}.
\end{align*}
Consequently, in light of \eqref{eq:phi_n_ori}, \eqref{def:phi-n-b} and \eqref{eq:phi1b-conver}, we find that for any fixed $x>0$,
\begin{align*}
  \lim_{b\to 0^+} \Big(\phi_1(x)-\phi_n(x)+\phi_1(bx)-\phi_n(bx)-(b+\tfrac{1}{b})
  \phi_{1,b}(x)\pm 2\phi_{n,b}(x) \Big) = \phi_1(x)-\phi_n(x)-\pi xe^{-x},
\end{align*}
and thus, formally,
\begin{align*}
  \lim_{b\rightarrow 0^+} \mathbf{\Delta}_{n,b} = \bigg(\int_0^{\infty} \Big( \phi_1(x) -\phi_n(x) - \pi x e^{-x} \Big)
  \frac{\dd \mu(x)}{x} \bigg)^2.
\end{align*}
At this level, the pointwise limit is nonnegative,
but it is not clear whether it is strictly positive as the integrand may change sign.
In fact, similar to $\Psi_b(x)$ in the lemma \ref{lem:kernel_function_limit},
there exists $0<x_\star< x^\star $ such that the function $x\in (0,+\infty)\mapsto\phi_1(x)-\phi_n(x)-\pi xe^{-x}$
is strictly negative for $x\in (0,x_\star)$ but strictly positive for $x\in (x^\star,+\infty)$.
\end{remark}
In the following, we present some properties of the functions $\Psi_b$  given by \eqref{def:Psi-bx} along with the functions
$$x\in(0,\infty)\mapsto \phi_1(bx)-\tfrac{1}{b}\phi_{1,b}(x)
\quad\hbox{and}\quad x\in(0,\infty)\mapsto b\phi_{1,b}(x) - \phi_1(x)
$$
which play a crucial role in analyzing the sign of $V_b^1[\mathbf{0}]$, $V_b^2[\mathbf{0}]$
and $V_b^1[\mathbf{0}] - V_b^2[\mathbf{0}]$.
\begin{lemma}\label{lem:kernel_function_limit}
The following assertions are true.
\begin{enumerate}
\item
There exist $x_{\ast}\in (0,+\infty)$ and
$b_{\ast}\in (0,1)$ such that for every $b\in (0,b_{\ast})$ and $x\in (0,x_\ast)$,
\begin{align}\label{eq:negative-argument}
  \phi_1(bx) - \tfrac{1}{b} \phi_{1,b}(x) < 0,\quad b\phi_{1,b}(x) - \phi_1(x)<0,\quad \Psi_b(x)<0.
\end{align}
\item
For any $b\in(0,1),$ there exists $x^\ast = x^\ast(b) > 0$
such that for every $x\in (x^{\ast},+\infty)$,
\begin{align}\label{eq:Psi-b-pos}
  \phi_1(bx) - \tfrac{1}{b} \phi_{1,b}(x) >0,\quad b\phi_{1,b}(x) - \phi_1(x) < 0,\quad \Psi_b(x)>0.
\end{align}
\item
For $b=0.5$ we have
\begin{align*}
  \Psi_{0.5}(x)>0,\quad \forall x>0.
\end{align*}
\end{enumerate}
\end{lemma}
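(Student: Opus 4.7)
The plan is to handle the three statements by methods adapted to their regimes: \textup{(i)} is perturbative in the small-$b$ limit, \textup{(ii)} exploits the dichotomy between polynomial and exponential decay in $x$, and \textup{(iii)} requires a global argument on the entire half-line with $b=\tfrac{1}{2}$ held fixed.

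For \textup{(i)}, I would differentiate \eqref{def:phi-n-b} at $b=0$ to obtain $\partial_b\phi_{1,b}(x)|_{b=0}=\pi x\, e^{-x}$, so that $\tfrac{1}{b}\phi_{1,b}(x)\to \pi x\,e^{-x}$ and $b\,\phi_{1,b}(x)\to 0$ as $b\to 0^+$, uniformly on compact subsets of $(0,\infty)$. Since $\phi_1(bx)\to 0$ and $\phi_1(x)>0$, the first two inequalities of \eqref{eq:negative-argument} become transparent in this limit and propagate to small $b$ by continuity. For the third, the limit function is $\phi_1(x)-\pi x\, e^{-x}$; a direct computation from \eqref{eq:phi_n_ori} yields $\phi_1'(0)=\tfrac{8}{3}$, so this behaves like $(\tfrac{8}{3}-\pi)x+O(x^2)$ near $x=0$ and is strictly negative on some $(0,x_\ast)$ because $\pi>\tfrac{8}{3}$. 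A uniform control of the $O(b)$ remainder on $[0,x_\ast]$ then fixes $b_\ast$.

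For \textup{(ii)}, I would combine $\phi_1(x)\geq \tfrac{8x}{5(1+x^2)}$ and $\phi_1(bx)\geq \tfrac{8bx}{5(1+b^2x^2)}$ from \eqref{ineq:est-phi-n}, which are of order $1/x$ as $x\to\infty$, with the exponential estimate $\phi_{1,b}(x)\leq 2\pi e^{-(1-b)x}$ from \eqref{eq:bound_uniform}. Since the exponentially small quantities $\tfrac{1}{b}\phi_{1,b}(x)$, $b\phi_{1,b}(x)$ and $(b+\tfrac{1}{b})\phi_{1,b}(x)$ are dominated by their polynomial counterparts once $x$ exceeds an explicit threshold $x^\ast(b)$, the three inequalities of \eqref{eq:Psi-b-pos} follow in turn.

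The principal obstacle is \textup{(iii)}, since no single asymptotic regime suffices to cover the whole half-line. My plan is to partition $(0,\infty)=(0,x_1]\cup[x_1,x_2]\cup[x_2,\infty)$ and to match estimates of different nature on the three pieces. Near the origin, I would Taylor-expand $\Psi_{0.5}$ at $x=0$ by differentiating \eqref{eq:phi_n_ori} and \eqref{def:phi-n-b} under the integral sign: after exploiting the trivial vanishing $\Psi_{0.5}(0)=0$ and the cancellations produced by the symmetry $\eta\mapsto-\eta$, the first non-vanishing coefficient has an explicit sign and handles $(0,x_1]$. For large $x$, the argument of \textup{(ii)} with $b=\tfrac{1}{2}$ and explicit constants closes $[x_2,\infty)$. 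On the compact window $[x_1,x_2]$, I would combine the two-sided bounds \eqref{ineq:est-phi-n} on $\phi_1(x)$ and $\phi_1(x/2)$ with a refined upper bound on $\phi_{1,0.5}(x)$ derived from the Chebyshev-type representation \eqref{exp:phi-nb} (in which the integrand is strictly positive against the explicit weight $(1-y^2)^{1/2}$, giving pointwise control much sharper than \eqref{eq:bound_uniform}), and close any residual gap by a rigorous finite evaluation with Lipschitz error control on this fixed compact set. The main technical hurdle is to synchronise these three estimates at the transitions $x_1$ and $x_2$ so that positivity chains up; the fact that $b=\tfrac{1}{2}$ is a single explicit value makes the remaining verification a finite check.
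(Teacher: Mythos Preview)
Your proposal is essentially the same as the paper's proof. For \textup{(i)} the paper also computes the $b\to 0^+$ limits $\tfrac{1}{b}\phi_{1,b}(x)\to\pi x e^{-x}$ and $b\,\phi_{1,b}(x)\to 0$, then combines explicit remainder bounds with \eqref{ineq:est-phi-n} and reduces everything to checking the sign of continuous functions at $(b,x)=(0,0)$; your continuity argument is the same in spirit, though the paper's device of dividing through by $x$ before evaluating at the origin is what makes the uniformity near $x=0$ clean. For \textup{(ii)} the paper does exactly your polynomial-versus-exponential comparison. For \textup{(iii)} the paper follows your three-region scheme: it computes $\Psi_{0.5}'(0)>0$ (via a numerical approximation of the integral $\int_{-1}^1\sqrt{(1-y^2)/(5/4-y)}\,\dd y\approx 1.52$) for small $x$, uses the part \textup{(ii)} bound to dispatch $x\geq 20$, and for the compact window $[\delta,20]$ it simply displays a Matlab plot and remarks that ``an analytical proof seems difficult to achieve due to the complex structure of the function $\Psi_b(x)$.'' So your intended rigorous finite evaluation with Lipschitz control on the middle window is, if anything, a shade more careful than what the paper actually carries out.
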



\begin{proof}[Proof of Lemma \ref{lem:kernel_function_limit}]
{\bf (i)}
Recall from the expression formula \eqref{exp:phi-nb} that
\begin{align}\label{eq:phi-1b}
  \phi_{1,b}(x) = 2bx\int_{-1}^{1}e^{-x\sqrt{1+b^2-2by}}
  \frac{(1-y^2)^{\frac{1}{2}}}{(1+b^2-2by)^{\frac{1}{2}}}\dd y,
\end{align}
and
\begin{align}\label{eq:phi-b=1}
  \phi_1(x) = \phi_{1,1}(x) = \sqrt{2} x\int_{-1}^1 e^{-x\sqrt{2-2y}}
  (1+y)^{\frac{1}{2}} \dd y.
\end{align}
Thus, we find
\begin{align}\label{eq:phi1b-conver}
  \lim_{b\rightarrow 0^+} b \phi_{1,b}(x) = 0,\quad \textrm{and}\quad \lim_{b\rightarrow 0^+} \tfrac{1}{b} \phi_{1,b}(x)
  = 2x\int_{-1}^{1}e^{-x}(1-y^2)^{\frac{1}{2}}\dd y
  = \pi xe^{-x}.
\end{align}
The above formula together with
\begin{align*}
  |\sqrt{1+x}-1|\leqslant|x|,\quad
  \forall x>-1, \quad 1+b^2-2by=(1-b)^2+2b(1-y),
\end{align*}
allow getting
\begin{align}\label{eq:phi1b-diff}
  \Big|\tfrac{1}{b}\phi_{1,b}(x)-\pi xe^{-x}\Big| & \leqslant
  2x \bigg(\int_{-1}^{1}e^{-x\sqrt{1+b^2-2by}} \frac{|(1+b^2-2by)^{\frac{1}{2}}-1|}{(1+b^2-2by)^{\frac{1}{2}}}
  (1-y^2)^{\frac{1}{2}}\dd y \nonumber \\
  & \quad \quad\quad+ \int_{-1}^{1}\Big|e^{-x\sqrt{1+b^2-2by}}-e^{-x}\Big|(1-y^2)^{\frac{1}{2}}\dd y \bigg) \nonumber \\
  & \leqslant 2x \bigg( e^{-(1-b)x}\int_{-1}^{1}\frac{|b^2-2by|}{(1-b)} (1-y^2)^{\frac{1}{2}}\dd y \nonumber \\
  & \quad \qquad + \int_{-1}^{1}\int_0^{1}e^{-x(\sqrt{1+b^2-2by}\,\eta + (1-\eta))} \big|x(1-\sqrt{1+b^2-2by})\big|
  (1-y^2)^{\frac{1}{2}}\,\dd \eta \dd y \bigg) \nonumber \\
  & \leqslant 2xe^{-(1-b)x}\Big(\frac{2\pi b}{(1-b)}+2\pi bx\Big) .
\end{align}
On the other hand, we can  deduce from \eqref{eq:phi-1b} that
\begin{equation}\label{eq:phi_1_b_upb}
\begin{aligned}
  0<b\,\phi_{1,b}(x)&\leqslant 2b^2 x e^{-(1-b)x}\int_{-1}^{1}
  \frac{(1-y^2)^{\frac{1}{2}}}{(1+b^2 - 2by)^{\frac{1}{2}}}\dd y \\
  &\leqslant 4b^2 x e^{-(1-b)x},
\end{aligned}
\end{equation}
where we have used the inequality
$\frac{(1-y^2)^{1/2}}{(1+b^2 - 2by)^{1/2}} \leqslant 1$.
Hence, we obtain that for $b\in(0,1)$ and $x>0$,
\begin{align}\label{eq:uni-est-phi-1-b}
  \Big|(b+\tfrac{1}{b})\phi_{1,b}(x)-\pi xe^{-x}\Big|\leqslant  bxe^{-(1-b)x}\Big(\frac{4\pi}{1-b}+4b+4\pi x\Big).
\end{align}
In addition, by virtue of \eqref{ineq:est-phi-n}, we know that
\begin{align}\label{eq:est-phi-1-b}
  \frac{8}{5}\frac{bx}{1+(bx)^2} \leqslant \phi_1(bx) \leqslant \frac{8}{3}\frac{bx}{1+(bx)^2},\quad
  0<b\leqslant 1.
\end{align}
From \eqref{eq:phi1b-diff}, \eqref{eq:uni-est-phi-1-b} and \eqref{eq:est-phi-1-b}, it follows that for any $b\in(0,1)$ and $x>0$,
\begin{align*}
  \phi_1(bx) - \tfrac{1}{b} \phi_{1,b}(x) & \leqslant \phi_1(bx) - \pi x e^{-x}
  - \big( \tfrac{1}{b} \phi_{1,b}(x) - \pi x e^{-x} \big) \\
  & \leqslant x\Big(\frac{8 b}{3(1+(bx)^2)}-\pi e^{-x}\Big)
  + bxe^{-(1-b)x}\Big(\frac{4\pi}{1-b} + 4\pi x\Big) \triangleq x \Pi_1(b,x) ,
\end{align*}
\begin{align*}
  \frac{8 x }{3(1+x^2)} \geqslant \phi_1(x) - b \phi_{1,b}(x)
  \geqslant x \Big(\frac{8}{5(1+x^2)} - 4b^2 e^{-(1-b)x} \Big)
  \triangleq x \Pi_2(b,x),
\end{align*}
and
\begin{align*}
  \Psi_b(x) & = \phi_{1}(x) - b \phi_{1,b}(x) + \phi_1(bx)-\tfrac{1}{b}\phi_{1,b}(x) \\
  & \leqslant x\Big(\frac{8}{3(1+x^2)}+\frac{8 b}{3(1+(bx)^2)}-\pi e^{-x}\Big)
  + bxe^{-(1-b)x}\Big(\frac{4\pi}{1-b} + 4\pi x\Big)
  \triangleq x\, \Pi_3(b,x) .
\end{align*}
Note that
\begin{align*}
  \Pi_1(0,0)= -\pi <0,\quad \Pi_2(0,0) = \tfrac{8}{5}>0,\quad \textrm{and}\quad \Pi_3(0,0)=\tfrac{8}{3}-\pi<0.
\end{align*}
Therefore, by continuity, there exist two constants $x_{\ast}>0$ and $b_{\ast}>0$ such that \eqref{eq:negative-argument} holds.
\\[1mm]
\noindent {\bf (ii)}	We combine \eqref{def:Psi-bx}, \eqref{eq:phi_1_b_upb} and \eqref{eq:est-phi-1-b}, which leads  to the conclusion that for any $b\in(0,1)$ and $x>0$,
\begin{align*}
  \phi_1(bx) - \tfrac{1}{b} \phi_{1,b}(x) \geqslant \frac{8bx}{5(1+(bx)^2)}-4 x e^{-(1-b)x}
  \geqslant \frac{x}{1+(bx)^2} \Big(\frac{8b}{5} - 4 (1+b^2x^2) e^{-(1-b)x} \Big),
\end{align*}
and
\begin{align*}
  \phi_1(x) - b\phi_{1,b}(x) \geqslant \frac{x}{1+x^2} \Big(\frac{8}{5} - 4b^2  (1+x^2) e^{-(1-b)x} \Big).
\end{align*}
Therefore, we get
\begin{equation}\label{eq:positive-argument}
\begin{split}
  \Psi_b(x) 
  \geqslant \frac{8x}{1+x^2}\Big(\frac{1}{5}- (1+x^2) e^{-(1-b)x}\Big).
\end{split}
\end{equation}
This allows to get that for each fixed $b\in(0,1)$ there exists a sufficiently large constant $x^\ast=x^\ast(b)>0$
such that \eqref{eq:Psi-b-pos} holds.
\\[0.5mm]
\noindent {\bf (iii)} First observe that $\Psi_b(0)=0$ and (owing to \eqref{eq:phi-1b}-\eqref{eq:phi-b=1})
\begin{align*}
  \Psi_b^\prime(0) = \lim_{x\rightarrow 0^+} \Psi_b'(x)
  & = \lim_{x\rightarrow 0^+} \Big( \phi_1'(x)+ b\phi_1'(bx)
  -(b+\tfrac{1}{b})\phi_{1,b}'(x)\Big)   \\
  & = \sqrt{2}(1+b)\int_{-1}^1\sqrt{1+y} \,\dd y-2(b^2+1)
  \int_{-1}^1\sqrt{\tfrac{1-y^2}{1+b^2-2by}}\,\dd y \\
  &=\tfrac83(1+b)-2(b^2+1)\int_{-1}^1\sqrt{\tfrac{1-y^2}{1+b^2-2by}}\,\dd y.
\end{align*}
For $b=0.5$, we find by the numerical approximation,
\begin{align*}
  \int_{-1}^1\sqrt{\tfrac{1-y^2}{\frac54-y}}\,\dd y\approx 1.52,
\end{align*}
which implies that
\begin{align*}
  \Psi_{0.5}^\prime(0)>0.
\end{align*}
Hence, $\Psi_{0.5}(x)$ is strictly increasing for some small interval $x\in [0, \delta]$, and thus
\begin{align*}
  \Psi_{0.5}(x)>0,\quad  \forall x\in (0,\delta].
\end{align*}
On the other hand, from \eqref{eq:positive-argument} with $b=0.5$, we have
\begin{align*}
  \Psi_{0.5}(x) \geqslant \frac{8x}{1+x^2}\Big(\frac{1}{5}- \frac{1+x^2}{e^{\frac{1}{2}x}}\Big) >0,\quad \forall x\in [20,\infty).
\end{align*}
To show the positivity of $\Psi_{0.5}(x)$ inside the interval $[\delta, 20]$, we use Matlab software to draw the graph
of $\Psi_{0.5}(x)$ in the interval $[0,20],$ see Figure \ref{fig:A_B_infinity_half},
which clearly shows the required property.
An analytical proof seems difficult to achieve due to the complex structure of the function $\Psi_b(x)$.
This completes the proof.
\end{proof}

\begin{figure}[htbp]
  \centering
  \includegraphics[scale=0.7]{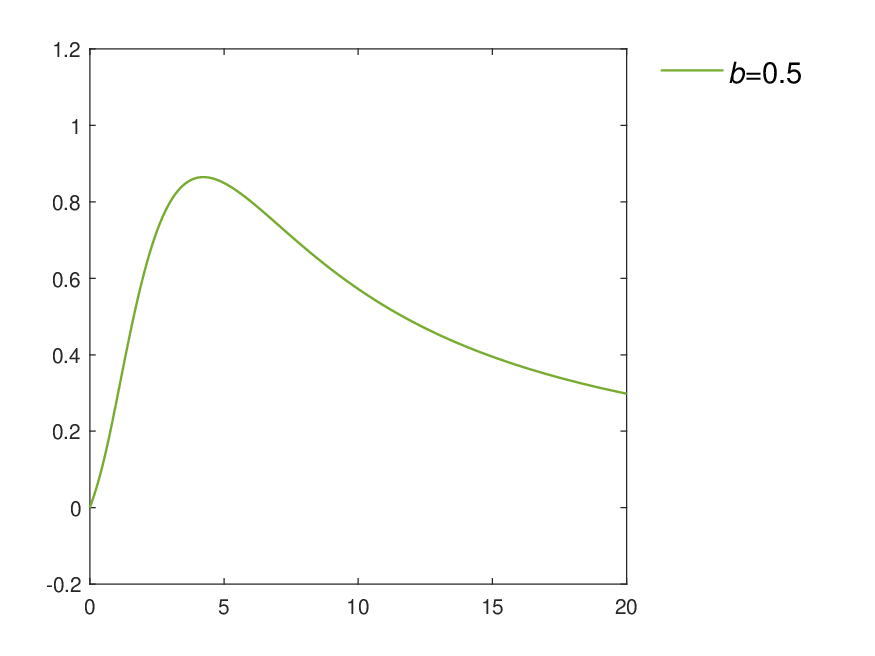} 
  \caption{The graph of $x\mapsto \Psi_b(x)$ for $b=0.5$.}\label{fig:A_B_infinity_half}
\end{figure}

\begin{remark}
  The numerical simulations illustrated in Figure \ref{fig:A_B_infinity} suggest that the function
$\Psi_b(x)$ is always positive for all $b\in [0.3,1)$.
\end{remark}

\begin{figure}[htbp]
\centering
\includegraphics[scale=0.7]{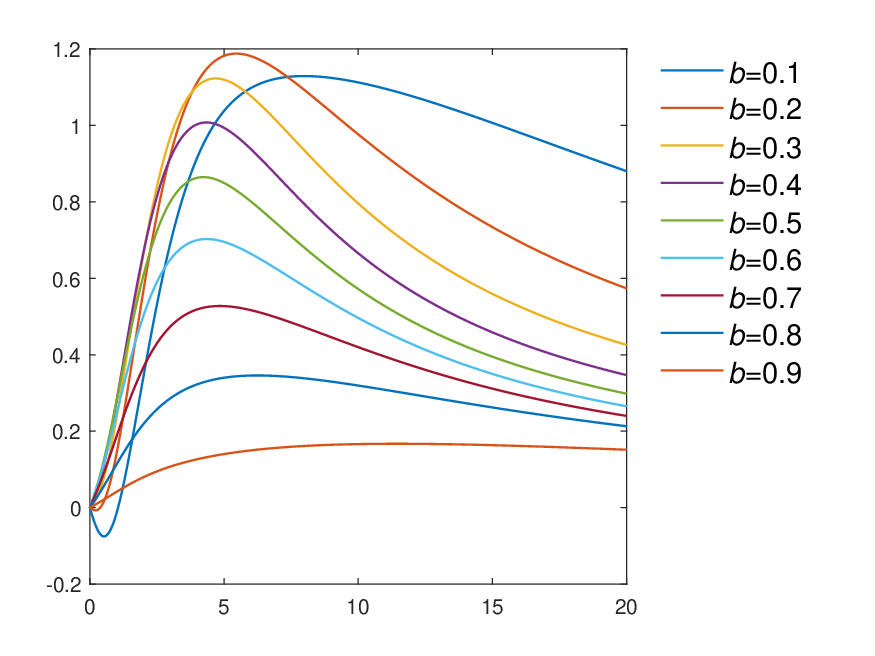} 
\caption{The graph of $x\mapsto \Psi_b(x)$ for different values of $b$.}\label{fig:A_B_infinity}
\end{figure}

In what follows, we shall explore spectrum collision in the convolution case $K_1(\mathbf{x},\mathbf{y})\equiv 0$ (multiple eigenvalues).
We show in particular that a collision cannot occur outside a countable set of values of $b.$

\begin{proposition}\label{prop:limit_zeros}
   Assume that $K(\mathbf{x},\mathbf{y}) = K_0(|\mathbf{x}-\mathbf{y}|)$ satisfies assumptions \eqref{A1}-\eqref{A2}.
Then, the map $b\in (0,1) \mapsto  \mathbf{\Delta}_{\infty,b}$  admits at most a countable number of zeros.
\end{proposition}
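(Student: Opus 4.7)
The plan is to reduce the statement to the classical fact that the zero set of a non-identically-zero real analytic function on an open interval is at most countable (in fact discrete). To do this I will show two things: first, that in the convolution case the map $b\in(0,1)\mapsto V_b^1[\mathbf{0}]-V_b^2[\mathbf{0}]$ is real analytic, and second, that it does not vanish identically because it is strictly positive at $b=0.5$.

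First I would use \eqref{eq:Delta-inf-b} together with the assumption $K_1\equiv 0$ (so that $\mathsf{c}_b=\widetilde{\mathsf{c}}_b=0$) to rewrite
\begin{align*}
  \mathbf{\Delta}_{\infty,b}=\big(V_b^1[\mathbf{0}]-V_b^2[\mathbf{0}]\big)^2,\qquad
  V_b^1[\mathbf{0}]-V_b^2[\mathbf{0}]=\int_0^\infty \frac{\Psi_b(x)}{x}\dd\mu(x),
\end{align*}
in accordance with \eqref{eq:good_decomposition}. Thus it suffices to show that the real analytic function $g(b)\triangleq V_b^1[\mathbf{0}]-V_b^2[\mathbf{0}]$ has at most countably many zeros on $(0,1)$, since the zero set of $\mathbf{\Delta}_{\infty,b}=g(b)^2$ coincides with that of $g$.

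Next I would invoke Lemma \ref{lem:analytic_lambda}: both $b\mapsto \lambda_{n,b}$ and $b\mapsto\widetilde{\lambda}_{n,b}$ are real analytic on $(0,1)$ for every $n\in\mathbb{N}^\star$. Applied with $n=1$ and combined with the expressions
\begin{align*}
  V_b^1[\mathbf{0}]=-\tfrac{1}{b}\widetilde{\lambda}_{1,b}+\lambda_{1,b},\qquad
  V_b^2[\mathbf{0}]=-\lambda_{1,1}+b\,\widetilde{\lambda}_{1,b}
\end{align*}
coming from \eqref{def:Vb1-intro}-\eqref{def:Vb2-intro} in the convolution case, this yields that $g$ is real analytic on $(0,1)$.

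The key non-triviality step is to exhibit a single point in $(0,1)$ where $g$ does not vanish. For this I would use Lemma \ref{lem:kernel_function_limit}(iii), which asserts $\Psi_{0.5}(x)>0$ for all $x>0$. Since $\mu$ is by assumption \eqref{A1} a nonzero non-negative measure on $[0,\infty)$, the integrand $\tfrac{\Psi_{0.5}(x)}{x}$ is strictly positive on the support of $\mu$ restricted to $(0,\infty)$, which has positive $\mu$-measure; hence $g(0.5)>0$. In particular $g\not\equiv 0$ on $(0,1)$. The standard identity principle for real analytic functions on a connected open set then ensures that the zero set of $g$ is discrete, and so at most countable, which in turn gives the same conclusion for $\mathbf{\Delta}_{\infty,b}$. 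The only subtlety I foresee is ensuring that the integral defining $g(0.5)$ is well-defined and strictly positive, but this is handled by the integrability properties established via \eqref{eq:integrablity_mu} and the pointwise positivity of $\Psi_{0.5}$.
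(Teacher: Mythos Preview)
Your proposal is correct and follows essentially the same approach as the paper's proof: both establish real analyticity of $b\mapsto V_b^1[\mathbf{0}]-V_b^2[\mathbf{0}]$ via Lemma~\ref{lem:analytic_lambda}, use Lemma~\ref{lem:kernel_function_limit}(iii) to get strict positivity at $b=0.5$, and then invoke the principle of isolated zeros for real analytic functions.
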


\begin{proof}[Proof of Proposition \ref{prop:limit_zeros}]
First, recall that
\begin{align}\label{eq:A-B-infty}
  b\in(0,1)\mapsto \mathbf{\Delta}_{\infty,b}=\left( \int_0^{\infty} \Psi_b(x)\frac{\dd \mu(x)}{x}\right)^2
  = \left(\lambda_{1,b}+\lambda_{1,1} -\tfrac{1}{b}\widetilde{\lambda}_{1,b}-b\widetilde{\lambda}_{1,b}\right)^2.
\end{align}
From Lemma \ref{lem:analytic_lambda}, we infer that $b\in (0,1)\mapsto \mathbf{\Delta}_{\infty,b}$
is real analytic. On the other hand,
by Lemma \ref{lem:kernel_function_limit}-(iii), we see that the function
$x\in(0,\infty)\mapsto \Psi_{0.5}(x)$ is strictly positive. Hence, $\mathbf{\Delta}_{\infty,0.5}>0.$
Applying the principle of isolated zeros, we deduce that
the map $b\in (0,1)\mapsto \mathbf{\Delta}_{\infty,b}$ admits at most a countable set of zeros.
\end{proof}

The following result concerns the function $x\in (0,\infty)\mapsto \phi_1(x) - b\,\phi_{1,b}(x)$
which appears in the definition of $V_b^{2}[\mathbf{0}]$ through \eqref{eq:V-0}.
This result will later be used in Section \ref{subsec:V_b-sign} to examine the sign \mbox{of $V_b^{2}[\mathbf{0}]$.}
\begin{lemma}\label{lem:B_infinity}
  There exists $b^{\ast}> \frac{1}{3}$ such that for all  $b\in (0,b^{\ast})$,
  the function $x\mapsto \phi_1(x) - b\,\phi_{1,b}(x)$ is positive in $(0,\infty)$.
\end{lemma}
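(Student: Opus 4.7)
The plan is to combine the sharp pointwise bounds already derived in the proof of Lemma \ref{lem:kernel_function_limit} with a straightforward one-variable optimization, and then reduce the claim to a single numerical check at $b=1/3$, which continuity will propagate to a neighborhood.

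First, I would invoke the lower bound in \eqref{eq:est-phi-1-b} to get
\[
\phi_1(x) \geqslant \frac{8x}{5(1+x^2)}, \qquad \forall x>0,
\]
together with the uniform upper bound \eqref{eq:phi_1_b_upb},
\[
b\,\phi_{1,b}(x) \leqslant 4b^2 x\,e^{-(1-b)x}, \qquad \forall x>0,\ b\in(0,1).
\]
Subtracting, we obtain
\[
\phi_1(x) - b\,\phi_{1,b}(x) \geqslant x\,\Phi(b,x),\qquad
\Phi(b,x) \triangleq \frac{8}{5(1+x^2)} - 4b^2 e^{-(1-b)x}.
\]
Thus it suffices to prove that there exists $b^\ast>\tfrac13$ such that $\Phi(b,x)>0$ for every $b\in(0,b^\ast)$ and every $x>0$, or equivalently,
\[
\frac{2}{5b^2} > M(b) \triangleq \max_{x\geqslant 0}\, (1+x^2)\,e^{-(1-b)x}.
\]

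Next, I would compute $M(b)$ explicitly by one-variable calculus. Writing $a=1-b$, a direct differentiation shows that the unique maximizer on $[0,\infty)$ is
\[
x_+(b) = \frac{1+\sqrt{2b-b^2}}{1-b},
\]
and hence $M(b) = (1+x_+(b)^2)\,e^{-(1-b)\,x_+(b)}$. Both $b\mapsto M(b)$ and $b\mapsto \tfrac{2}{5b^2}$ are continuous on $(0,1)$.

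It then remains to verify the strict inequality $\tfrac{2}{5b^2}>M(b)$ at $b=\tfrac13$. A short computation gives $x_+(1/3)=(3+\sqrt 5)/2$, and one checks the algebraic identity $1+x_+^2 = 3 x_+$, so
\[
M(1/3) = 3 x_+(1/3)\, e^{-(2/3)\,x_+(1/3)}
       = \tfrac{3(3+\sqrt 5)}{2}\, e^{-(3+\sqrt{5})/3}
       \approx 1.37 \;<\; 3.6 = \frac{2}{5(1/3)^2}.
\]
By continuity, the strict inequality persists on some open interval $(0,b^\ast)$ with $b^\ast>\tfrac13$, which yields $\Phi(b,x)>0$ for all $x>0$ and the lemma follows.

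The only delicate point is the numerical verification at $b=1/3$; everything else reduces to the bounds \eqref{eq:est-phi-1-b} and \eqref{eq:phi_1_b_upb} already available and to elementary calculus. Should one wish to avoid a decimal estimate, the inequality $\tfrac{2}{5b^2}>M(b)$ at $b=1/3$ can be recast using the identity $1+x_+^2=3x_+$ as $e^{(3+\sqrt 5)/3}>\tfrac{5(3+\sqrt 5)}{4}$, which follows from $e^{5/3}>4.1$ and $\sqrt 5<\tfrac94$.
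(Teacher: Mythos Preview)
Your approach mirrors the paper's exactly: the same two bounds \eqref{eq:est-phi-1-b} and \eqref{eq:phi_1_b_upb}, the same reduction to $b^2(1+x^2)e^{-(1-b)x}<\tfrac25$, the same one-variable optimization, and the same numerical check at $b=\tfrac13$. There is, however, a genuine (easy to fix) gap in your last step: continuity of $M(b)$ and $2/(5b^2)$ at $b=\tfrac13$ only yields a neighborhood of $\tfrac13$, not an interval $(0,b^\ast)$. You need the monotonicity that the paper invokes explicitly---$b\mapsto b^2(1+x^2)e^{-(1-b)x}$ is increasing in $b$ for every fixed $x>0$---to propagate the inequality down to all $b\leqslant\tfrac13$, and then continuity pushes $b^\ast$ slightly above $\tfrac13$. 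A related imprecision: $x_+(b)$ is not the unique maximizer for all $b$; for small $b$ the local maximum at $x_+$ lies below the boundary value $g(0)=1$ (indeed $g(x_+)\to 2/e$ as $b\to0^+$), so the maximum is attained at $x=0$. This does not affect the check at $b=\tfrac13$, where $g(x_+)\approx1.37>1$, but your displayed formula for $M(b)$ is only valid above some threshold.

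There is also an arithmetic slip in your optional decimal-free recast: rearranging $M(1/3)<18/5$ gives $e^{(3+\sqrt5)/3}>\tfrac{5(3+\sqrt5)}{12}$, not $\tfrac{5(3+\sqrt5)}{4}$. With your stated right-hand side ($\approx6.55$) the inequality is in fact false, since $e^{(3+\sqrt5)/3}\approx5.73$. With the correct $\tfrac{5(3+\sqrt5)}{12}\approx2.18$, your hints $e^{5/3}>4.1$ and $\sqrt5<\tfrac94$ do the job.
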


\begin{remark}\label{rem:B_infinity}
Based on numerical experiments (see Figure \ref{fig:b-phi-1-b}), one can conjecture that the function
$b\in (0,1)\mapsto b\,\phi_{1,b}(x)$ increases in $b$,
which implies that $b^{\ast}=1$. However, it is challenging to justify this rigorously, as there exist some
$x>0$ and $y\in (-1,1)$ such that the integrand of
$b\,\phi_{1,b}(x)$, given by $2b^2xe^{-x\sqrt{1+b^2-2by}} \frac{\sqrt{1-y^2}}{\sqrt{1+b^2-2by}}$ $($see \eqref{eq:phi-1b}$)$,  does not increase with respect to $b$.
\end{remark}

\begin{proof}[Proof of Lemma \ref{lem:B_infinity}]
Taking advantage of \eqref{eq:phi_1_b_upb} and \eqref{eq:est-phi-1-b}, it follows that
\begin{align*}
  \phi_1(x)-b\,\phi_{1,b}(x)\geqslant \frac{4x}{1+x^2}\Big(\frac{2}{5} - b^2(1+x^2)e^{-(1-b)x}\Big).
\end{align*}
Since the function $b\mapsto F_b(x)\triangleq b^2(1+x^2)e^{-(1-b)x}$ is increasing, then for any $b^\ast\in(0,1)$ we get
$$
\forall b\in (0,b^\ast),\forall x>0,\quad F_b(x)<F_{b^\ast}(x).$$
We will explore the existence of $b_{\ast}>0$
such that $F_{b^\ast}(x) < \frac{2}{5}$ for any $x>0$. From straightforward computations we get
\begin{align*}
  \frac{\dd F_b(x)}{\dd x}=b^2(1-b)e^{-(1-b)x} \bigg(- \Big(x-\frac{1}{1-b}\Big)^2+\frac{1}{(1-b)^2}-1 \bigg),
\end{align*}
and thus $x\mapsto F_b(x)$ reaches its absolute maximum at  $x=0$ or $x=\frac{1}{1-b}+\sqrt{(\frac{1}{1-b})^2-1}$.
For $b=\frac{1}{3}$, we find
\begin{align*}
  F_{\frac{1}{3}}(0)=\frac{1}{9}<\frac{2}{5},\quad F_{\frac{1}{3}}\Big(\frac{3+\sqrt{5}}{2}\Big)\approx 0.1524<\frac{2}{5}.
\end{align*}
This achieves the proof.
\end{proof}

\begin{figure}[htbp]
  \centering
  \includegraphics[scale=0.7]{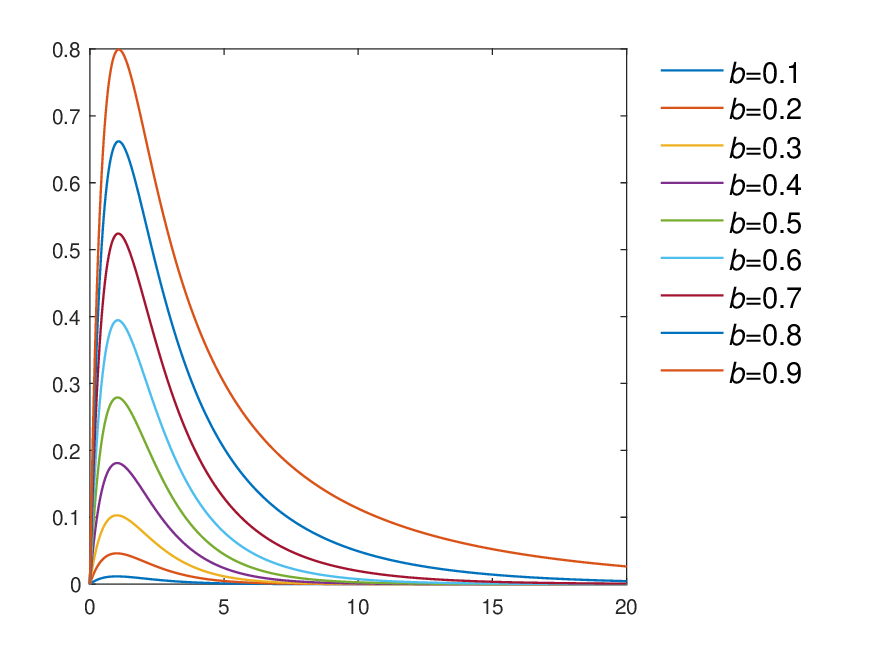} 
  \caption{The graph of $x\mapsto b\phi_{1,b}(x)$ for different values of $b$.}\label{fig:b-phi-1-b}
\end{figure}

\subsection{Spectrum distribution}
The main goal is to analyze the dispersion equation \eqref{eq:Omega-eq}, whose complex solutions correspond to points where the kernel of the linearized operator is nontrivial. By a slight abuse of terminology, we refer to these points as the nonlinear spectrum. The solutions to the quadratic polynomial  \eqref{eq:Omega-eq} are given by
\begin{align}\label{eq:Omega_n_pm}
  \Omega^+_{n,b} \triangleq   \frac{A_{n,b} + B_{n,b}}{2} + \frac{\sqrt{ \mathbf{\Delta}_{n,b}}}{2},\quad
  \Omega^-_{n,b} \triangleq  \frac{A_{n,b} + B_{n,b}}{2}- \frac{\sqrt{ \mathbf{\Delta}_{n,b}}}{2}.
\end{align}
Recall that the discriminant
$\mathbf{\Delta}_{n,b} = (A_{n,b}-B_{n,b})^2- 4 (\widetilde{\lambda}_{n,b}+\widetilde{p}_{n,b})^2$.
\begin{proposition}\label{prop:spectral}
Assume that the kernel function $K$ satisfies the assumptions \eqref{A1}-\eqref{A4}.
Let $b\in \mathcal{S}_{\mathrm{max}}$ such that condition \eqref{eq:assum-1} holds.
There exists a sufficiently large $N\in \mathbb{N}^\star$ depending on $b$ such that the sequences
$(\Omega^+_{n,b})_{n\geqslant N}$ and $(\Omega^-_{n,b})_{n\geqslant N}$ are strictly monotonic with respect to $n$.
Moreover, the following statements are true.
	
(\textbf{i}) If $ V_b^1[\mathbf{0}] > V_b^2[\mathbf{0}]$, then the sequence $(\Omega^+_{n,b})_{n\geqslant N}$ is increasing to $-V_b^2[\mathbf{0}]$ and
$(\Omega^-_{n,b})_{n\geqslant N}$
is decreasing to $- V_b^1[\mathbf{0}]$ with
\begin{equation*}
  - V_b^1[\mathbf{0}] < \Omega^-_{n,b}\leqslant \Omega^-_{N,b}<\Omega^+_{N,b}\leqslant \Omega^+_{n,b} < - V_b^2[\mathbf{0}].
\end{equation*}
	
(\textbf{ii}) If $V_b^1[\mathbf{0}]<V_b^2[\mathbf{0}]$, then $(\Omega_{n,b}^+)_{n\geqslant N}$ is decreasing to $-V_b^1[\mathbf{0}]$
and $(\Omega^-_{n,b})_{n\geqslant N}$ is increasing to $-V_b^2[\mathbf{0}]$ with
\begin{align*}
  \Omega_{N,b}^- \leqslant \Omega^-_{n,b} < -V_b^2[\mathbf{0}] < -V_b^1[\mathbf{0}] < \Omega^+_{n,b} \leqslant \Omega^+_{N,b}.
\end{align*}
\end{proposition}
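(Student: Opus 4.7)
The strategy is to treat $\Omega^{\pm}_{n,b}$ as perturbations of $A_{n,b}$ and $B_{n,b}$, where the perturbation is driven by the rapidly decaying quantity $\widetilde{\lambda}_{n,b}+\widetilde{p}_{n,b}$. The required monotonicity and bounds will then follow by dominating the second-order corrections by the first-order motion of $A_{n,b}$, $B_{n,b}$, which is controlled from below by $\lambda_{n,b}-\lambda_{n+1,b}\gtrsim n^{-3}$.

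The first step is to record the monotonicity of $A_{n,b}$ and $B_{n,b}$ for large $n$. By Lemma \ref{lem:phi-n-b-mono} together with the factorizations \eqref{def:lam-n-1} and the fact that $\mu$ is a nonzero nonnegative measure, the sequences $(\lambda_{n,b})$ and $(\lambda_{n,1})$ are strictly decreasing. Combining the lower bound $\lambda_{n,b}-\lambda_{n+1,b}\geqslant 1/(Cn^{3})$ in \eqref{eq:est-upper} with $|p_{n+1,b}-p_{n,b}|\leqslant 2C/n^{4}$ from \eqref{eq:est-upper3}, one obtains an $N_{0}=N_{0}(b)$ such that for $n\geqslant N_{0}$,
\begin{align*}
A_{n+1,b}-A_{n,b} &= -(\lambda_{n,b}-\lambda_{n+1,b})+(p_{n+1,b}-p_{n,b})\leqslant -\tfrac{1}{2Cn^{3}}, \\
B_{n+1,b}-B_{n,b} &= (\lambda_{n,1}-\lambda_{n+1,1})-(p_{n+1,1}-p_{n,1})\geqslant \tfrac{1}{2Cn^{3}}.
\end{align*}
In particular, $A_{n,b}\searrow -V_{b}^{1}[\mathbf{0}]$ and $B_{n,b}\nearrow -V_{b}^{2}[\mathbf{0}]$ as $n\to\infty$, and using $\lambda_{n,b}\geqslant 1/(Cn^{2})$ together with $|p_{n,b}|\leqslant C/n^{4}$, one has the strict inequalities $A_{n,b}>-V_{b}^{1}[\mathbf{0}]$ and $B_{n,b}<-V_{b}^{2}[\mathbf{0}]$ for $n\geqslant N_{0}$.

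The second step is a two-term Taylor expansion of $\sqrt{\mathbf{\Delta}_{n,b}}$. Set $D_{n,b}\triangleq B_{n,b}-A_{n,b}$ and $\varepsilon_{n,b}\triangleq 2(\widetilde{\lambda}_{n,b}+\widetilde{p}_{n,b})$. By Proposition \ref{prop:Delta-analysis} (using condition \eqref{eq:assum-1}) one has $|D_{n,b}|\to |V_{b}^{2}[\mathbf{0}]-V_{b}^{1}[\mathbf{0}]|>0$, whereas \eqref{eq:est-upper2}–\eqref{eq:est-upper3} give $|\varepsilon_{n,b}|=O(n^{-k})$ for every $k\in\mathbb{N}^{\star}$. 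Hence for $n$ large enough $|\varepsilon_{n,b}/D_{n,b}|<\tfrac12$ and
\begin{equation*}
\sqrt{\mathbf{\Delta}_{n,b}}=|D_{n,b}|\sqrt{1-(\varepsilon_{n,b}/D_{n,b})^{2}}=|D_{n,b}|-\frac{\varepsilon_{n,b}^{2}}{2|D_{n,b}|}+O\!\Big(\frac{\varepsilon_{n,b}^{4}}{|D_{n,b}|^{3}}\Big).
\end{equation*}
Substituting into \eqref{eq:Omega_n_pm} distinguishes the two cases. In case \textbf{(i)}, $D_{n,b}>0$ for large $n$, so $|D_{n,b}|=B_{n,b}-A_{n,b}$ and
\begin{equation*}
\Omega^{+}_{n,b}=B_{n,b}-\tfrac{\varepsilon_{n,b}^{2}}{4(B_{n,b}-A_{n,b})}+O(\varepsilon_{n,b}^{4}),\qquad
\Omega^{-}_{n,b}=A_{n,b}+\tfrac{\varepsilon_{n,b}^{2}}{4(B_{n,b}-A_{n,b})}+O(\varepsilon_{n,b}^{4}).
\end{equation*}
In case \textbf{(ii)}, $D_{n,b}<0$, $|D_{n,b}|=A_{n,b}-B_{n,b}$, and the roles of $A$ and $B$ are swapped in the above expansion.

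The third step combines the two. For the bounds in case \textbf{(i)}, the sign of the $\varepsilon_{n,b}^{2}$ correction gives $\Omega^{+}_{n,b}<B_{n,b}<-V_{b}^{2}[\mathbf{0}]$ and $\Omega^{-}_{n,b}>A_{n,b}>-V_{b}^{1}[\mathbf{0}]$ directly, while $\Omega^{-}_{n,b}<\Omega^{+}_{n,b}$ holds since $\mathbf{\Delta}_{n,b}>0$. For monotonicity,
\begin{equation*}
\Omega^{+}_{n+1,b}-\Omega^{+}_{n,b}=(B_{n+1,b}-B_{n,b})+O(\varepsilon_{n,b}^{2}),
\end{equation*}
where the first term is bounded below by $1/(2Cn^{3})$ and the correction is $O(n^{-k})$ for every $k$; this forces $\Omega^{+}_{n,b}$ to be strictly increasing for $n$ large, and convergence to $-V_{b}^{2}[\mathbf{0}]$ is then immediate from $B_{n,b}\to -V_{b}^{2}[\mathbf{0}]$ and $\varepsilon_{n,b}\to 0$. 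The analysis of $\Omega^{-}_{n,b}$ is identical. Case \textbf{(ii)} is symmetric: the leading part of $\Omega^{+}_{n,b}$ becomes $A_{n,b}$ (decreasing) and that of $\Omega^{-}_{n,b}$ becomes $B_{n,b}$ (increasing); the strict inequalities $-V_{b}^{1}[\mathbf{0}]<\Omega^{+}_{n,b}$ and $\Omega^{-}_{n,b}<-V_{b}^{2}[\mathbf{0}]$ follow from the observation that $\lambda_{n,b}$ and $\lambda_{n,1}$ are of order $n^{-2}$ and therefore dominate the super-polynomially small correction $\varepsilon_{n,b}^{2}/|D_{n,b}|$.

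The principal obstacle is precisely this last point: verifying the strict bounds against $-V_{b}^{j}[\mathbf{0}]$ in case \textbf{(ii)} (and the analogous inequalities in case \textbf{(i)}), where the leading-order correction of $\Omega^{\pm}$ to $-V_{b}^{j}[\mathbf{0}]$ comes from $\lambda_{n,b}$ or $\lambda_{n,1}$ and one must be sure the $\varepsilon_{n,b}^{2}$ term cannot overwhelm it. Here the separation of decay rates provided by Proposition \ref{prop:lambda_property}, namely $\lambda_{n,b}\gtrsim n^{-2}$ but $\widetilde{\lambda}_{n,b}+\widetilde{p}_{n,b}=O(n^{-k})$ for all $k$, is the decisive ingredient.
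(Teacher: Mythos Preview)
Your proof is correct and follows essentially the same route as the paper: rewrite $\Omega^{\pm}_{n,b}$ as $B_{n,b}$ or $A_{n,b}$ plus a correction of order $(\widetilde{\lambda}_{n,b}+\widetilde{p}_{n,b})^{2}/|D_{n,b}|$, then dominate this correction by the gap $\lambda_{n,b}-\lambda_{n+1,b}\gtrsim n^{-3}$; the paper obtains the correction bound via the mean value theorem applied to $x\mapsto\sqrt{D_{n,b}^{2}-x^{2}}$, while you use the equivalent Taylor expansion of $\sqrt{1-t^{2}}$. One small inaccuracy: you assert $|\varepsilon_{n,b}|=O(n^{-k})$ for every $k$ and call the correction ``super-polynomially small'', but \eqref{eq:est-upper3} only gives $|\widetilde{p}_{n,b}|\leqslant C/n^{4}$ (the arbitrary-$k$ decay in \eqref{eq:est-upper2} concerns $\widetilde{\lambda}_{n,b}$ alone), so in general $\varepsilon_{n,b}^{2}=O(n^{-8})$ --- this is still amply dominated by $n^{-3}$ and $n^{-2}$, so the argument goes through unchanged.
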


\begin{remark}\label{remark:low_frequency}
To construct two families of doubly-connected V-states with
$m$-fold symmetry using Crandall-Rabinowitz's theorem, the spectral assumptions must satisfy the following conditions.
\begin{align}\label{eq:m-fold-condition}
  \mathbf{\Delta}_{nm,b}>0,\quad \forall n\geqslant 1,
\end{align}
\begin{align}\label{eq:m-fold-condition2}
  \Omega^{\pm}_{lm}\ne \Omega^{\pm}_{km},
  \quad \Omega^{\pm}_{lm}\ne \Omega^{\mp}_{km},
  \quad \forall k,\,l\in \mathbb{N}^\star\cup\{\infty\}.
\end{align}
Proposition \ref{prop:spectral} ensures that under assumption \eqref{eq:assum-1}, the conditions \eqref{eq:m-fold-condition}-\eqref{eq:m-fold-condition2}
hold at least for sufficiently large $m$. Notably, even in the case of the 2D planar Euler equations,
the validity of \eqref{eq:m-fold-condition}-\eqref{eq:m-fold-condition2} requires
$m$ to exceed a certain threshold, see \cite[Theorem B]{FHMV16}.

\end{remark}

\begin{proof}[Proof of Proposition \ref{prop:spectral}]
(\textbf{i}) We rewrite $\Omega^\pm_{n,b}$ in \eqref{eq:Omega_n_pm} as
\begin{align*}
  \Omega^+_{n,b} = B_{n,b}+\frac{\sqrt{(A_{n,b}-B_{n,b})^2-4(\widetilde{\lambda}_{n,b}+\widetilde{p}_{n,b})^2}-(B_{n,b}-A_{n,b})}{2},
\end{align*}
and
\begin{align*}
  \Omega^-_{n,b}=A_{n,b}-\frac{\sqrt{(A_{n,b}-B_{n,b})^2-4(\widetilde{\lambda}_{n,b}+\widetilde{p}_{n,b})^2}-(B_{n,b}-A_{n,b})}{2}.
\end{align*}
Since $V_b^1[\mathbf{0}] > V_b^2[\mathbf{0}]$, in view of \eqref{eq:Anb-conver}-\eqref{eq:Bnb-conver},
we have $\lim\limits_{n\rightarrow \infty} A_{n,b} < \lim\limits_{n\rightarrow \infty} B_{n,b}$.
Thus, one can choose a sufficiently large number $N\in\mathbb{N}^\star$ such that for any $n\geqslant  N$,
$\mathbf{\Delta}_{n,b}>0$, and
\begin{align*}
  B_{n,b} - A_{n,b}> \frac{V_b^1[\mathbf{0}]-V_b^2[\mathbf{0}]}{2}.
\end{align*}
Applying the mean value theorem to the function $x\mapsto \sqrt{(A_{n,b}-B_{n,b})^2-x^2}$,
together with Proposition \ref{prop:lambda_property}, we infer that for some $\delta\in (0,1)$,
\begin{equation*}
\begin{split}
  \Big|\sqrt{(A_{n,b}-B_{n,b})^2-4 (\widetilde{\lambda}_{n,b}+\widetilde{p}_{n,b})^2}-(B_{n,b}-A_{n,b})\Big|
  & = \frac{4\delta (\widetilde{\lambda}_{n,b}
  +\widetilde{p}_{n,b})^2}{\sqrt{(A_{n,b}-B_{n,b})^2 - 4 \delta^2 (\widetilde{\lambda}_{n,b}+\widetilde{p}_{n,b})^2}} \\
  & \leqslant  \frac{C}{n^4}.
\end{split}
\end{equation*}
Hence, by using \eqref{eq:def-A-n}-\eqref{eq:def-B-n} and \eqref{eq:est-upper3}, we get that
\begin{align*}
  \Omega^+_{n+1,b}-\Omega^+_{n,b} \geqslant B_{n+1,b}-B_{n,b}- \frac{C}{n^{4}}
  \geqslant\lambda_{n,1} - \lambda_{n+1,1} - \frac{C}{n^{4}},
\end{align*}
and
\begin{align*}
  \Omega^-_{n,b}-\Omega_{n+1,b}^{-}  \geqslant A_{n,b}-A_{n+1,b}-\frac{C}{n^{4}}
  \geqslant \lambda_{n,b}-\lambda_{n+1,b}-\frac{C}{n^{4}}.
\end{align*}
In addition, owing to \eqref{eq:est-upper}, we have the crucial lower bound that for every $0<b \leqslant 1$,
\begin{align*}
  \lambda_{n,b}-\lambda_{n+1,b}\geqslant  \frac{1}{C n^3}.
\end{align*}
Choosing $N\in \mathbb{N}^\star$ large enough, we conclude that
\begin{align*}
  \Omega^+_{n+1,b}-\Omega^+_{n,b} \geqslant \frac{1}{C n^3},\quad
  \Omega^-_{n,b} -\Omega^{-}_{n+1}\geqslant  \frac{1}{C n^3}.
\end{align*}
(\textbf{ii})
If $V_b^1[\mathbf{0}] < V_b^2[\mathbf{0}]$, then we infer from \eqref{eq:Anb-conver}-\eqref{eq:Bnb-conver},
that $\lim\limits_{n\rightarrow \infty} A_{n,b} > \lim\limits_{n\rightarrow \infty} B_{n,b}$.
Now, we transform   $\Omega^\pm_{n,b}$ into
\begin{align*}
  \Omega^+_{n,b} = A_{n,b}+\frac{\sqrt{(A_{n,b}-B_{n,b})^2-4(\widetilde{\lambda}_{n,b}+\widetilde{p}_{n,b})^2}-(A_{n,b}-B_{n,b})}{2},
\end{align*}
and
\begin{align*}
  \Omega^-_{n,b} = B_{n,b}-\frac{\sqrt{(A_{n,b}-B_{n,b})^2-4(\widetilde{\lambda}_{n,b}
  +\widetilde{p}_{n,b})^2}-(A_{n,b}-B_{n,b})}{2}.
\end{align*}
Hence, to get the desired result, we implement similar arguments as before, and thus we omit the details.
\end{proof}

\section{Proof of main theorem}\label{sec:proof-thm1}
For $m,k\in \mathbb{N}^{\star}$ and $\alpha\in (0,1)$, we define the following function spaces
\begin{align*}
  \mathbf{C}^{k-\alpha}_{m,e}\triangleq& \Big\{f\in  C^{k-\alpha}(\TT): f(\theta)
  = \sum_{n\geqslant  1}b_n\cos(nm\theta),\,b_n\in \RR,\,\theta\in \TT\Big\},\\
  \mathbf{C}^{k-\alpha}_{m,o}\triangleq& \Big\{f\in  C^{k-\alpha}(\TT): f(\theta)
  = \sum_{n\geqslant  1}b_n\sin(nm\theta),\,b_n\in \RR,\,\theta\in \TT\Big\},
\end{align*}
and
\begin{align}\label{eq:function-space}
  \mathbf{X}_m \triangleq \mathbf{C}_{m,e}^{2-\alpha}\times \mathbf{C}_{m,e}^{2-\alpha},\quad
  \mathbf{Y}_m \triangleq \mathbf{C}_{m,o}^{1-\alpha}\times \mathbf{C}_{m,o}^{1-\alpha}.
\end{align}
\noindent
For $\varepsilon_0>0$, we denote by $\mathbf{B}_{\varepsilon_0}$ the open ball of $\mathbf{X}_m$
centered at $\mathbf{0}$ and of radius $\varepsilon_0$, that is,
\begin{align*}
  \mathbf{B}_{\varepsilon_0}\triangleq \Big\{\mathbf{f}=(f_1,f_2)\in \mathbf{X}_m: \lVert \mathbf{f}\rVert_{\mathbf{X}_m}<\varepsilon_0\Big\}.
\end{align*}

\begin{proposition}\label{prop:regularity}
  Let $m\in\mathbb{N}^\star$, $\alpha\in (0,1)$ and $\mathbf{X}_m$ and $\mathbf{Y}_m$
be the function spaces given by \eqref{eq:function-space}.
Let $F(\Omega,\mathbf{r})$ be the operator introduced in \eqref{eq:vortex-patch}.
Then there exists $\varepsilon_0>0$ small enough such that the following statements hold.
\begin{enumerate}
\item $F:\RR\times\mathbf{B}_{\varepsilon_0}\mapsto \mathbf{Y}_m$ is well-defined.
\item $F:\RR\times \mathbf{B}_{\varepsilon_0}\mapsto \mathbf{Y}_m$ is of class $C^1$.
\item
The partial derivative
$\partial_{\Omega}\partial_rF:\RR\times \mathbf{B}_{\varepsilon_0}\mapsto \mathcal{L}(\mathbf{X}_m,\mathbf{Y}_m)$
exists and is continuous.
\end{enumerate}
\end{proposition}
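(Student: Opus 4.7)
The plan is to exploit the decomposition $K(\mathbf{x},\mathbf{y})=K_0(|\mathbf{x}-\mathbf{y}|)+K_1(\mathbf{x},\mathbf{y})$ from \eqref{A1} and split the nonlinear operator accordingly as $F=F^{K_0}+F^{K_1}$, where the two pieces are defined by replacing $K$ with $K_0$ or $K_1$ in \eqref{def:F0-r}. In addition, inside each piece I would separate the ``self-interaction'' contributions, involving the kernel evaluated at two points on the same boundary curve $R_j(\theta)e^{i\theta}$, $R_j(\eta)e^{i\eta}$, from the ``cross-interaction'' contributions, involving points on different boundary curves $R_j(\theta)e^{i\theta}$, $R_{3-j}(\eta)e^{i\eta}$. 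The radius $\varepsilon_0$ will be chosen small enough to guarantee that for every $\mathbf{r}\in \mathbf{B}_{\varepsilon_0}$, the two curves remain uniformly separated (i.e.\ $R_1(\theta)\leqslant \tfrac{1+b}{2}\leqslant R_2(\theta)$ up to a small gap) and that the inner curve stays compactly inside $\mathbf{D}$. This ensures the cross-interaction terms see a nonsingular kernel, while the self-interaction terms carry the true singularity coming from $K_0$ at the diagonal $\theta=\eta$.

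For the $K_1$-piece and for the $K_0$ cross-interaction piece the argument is direct. By assumption \eqref{A3}, $K_1$ is $C^k_{\mathrm{loc}}$ with $k\geqslant 4$, so differentiation under the integral sign together with classical composition estimates in H\"older spaces produces a map of class $C^2$ from $\mathbb{R}\times \mathbf{B}_{\varepsilon_0}$ to $\mathbf{Y}_m$, and the $m$-fold and parity structure of the image follows from the symmetry identities \eqref{eq:G1-prop1} and the fact that $\mathbf{r}\in \mathbf{X}_m$. The $K_0$ cross-interaction piece is handled identically because its kernel $(\theta,\eta)\mapsto K_0(|R_j(\theta)e^{i\theta}-R_{3-j}(\eta)e^{i\eta}|)$ is smooth on the relevant compact set, so standard derivative-in-parameter estimates apply.

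The main obstacle lies in the $K_0$ self-interaction terms, which are genuinely singular. The plan here is to invoke the Gauss--Green reformulation already performed in \eqref{def:eq-Vb1}-\eqref{def:eq-Vb2}, writing $\partial_\theta F_0^{K_0}$ as a sum of a multiplicative term $V_b[\mathbf{r}]\mathbf{r}'$ and a principal value type integral operator with kernel $K_0(|R_j(\theta)e^{i\theta}-R_j(\eta)e^{i\eta}|)$. This is exactly the situation that was treated for simply connected V-states in \cite{HXX23b}: the integrability assumption \eqref{cond:K0} with exponent $-\alpha+\alpha^2$ is precisely calibrated so that, after an integration by parts in $\eta$, the associated operator maps $C^{2-\alpha}\to C^{1-\alpha}$ continuously, and depends in a $C^1$ way on $\mathbf{r}\in \mathbf{B}_{\varepsilon_0}$. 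I would therefore reproduce those estimates essentially line by line for each of the two boundary components, the only bookkeeping change being that the inner and outer curves now carry different radii $b$ and $1$, but the singularity structure is identical. Combining this with the smooth pieces gives (i) and (ii).

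Finally, part (iii) is essentially free because $F$ is affine in $\Omega$: from \eqref{eq:vortex-patch} we read off $\partial_\Omega F(\Omega,\mathbf{r})=\mathbf{r}'$, so $\partial_\Omega \partial_{\mathbf{r}} F(\Omega,\mathbf{r})\mathbf{h}=\mathbf{h}'$, which is the constant (in $(\Omega,\mathbf{r})$) bounded operator $\mathbf{X}_m\to \mathbf{Y}_m$ of $\theta$-differentiation, and hence continuous in $(\Omega,\mathbf{r})$. The hard part throughout the whole proposition is thus the H\"older regularity of the singular self-interaction operator on the doubly connected configuration; everything else is either routine smoothness under the integral or a consequence of the affine dependence on $\Omega$.
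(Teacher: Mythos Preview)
Your proposal is correct and takes essentially the same approach as the paper: decompose $K=K_0+K_1$, handle the smooth $K_1$-piece and the $K_0$ cross-interaction terms directly via the uniform separation of the two curves, reduce the singular $K_0$ self-interaction terms to boundary integrals via Gauss--Green and invoke the simply connected estimates of \cite[Prop.~4.1]{HXX23b}, and observe that $\partial_\Omega\partial_{\mathbf{r}}F(\Omega,\mathbf{r})\mathbf{h}=\mathbf{h}'$ since $F$ is affine in $\Omega$. One small clarification: in the paper the Gauss--Green step is applied to $F_{00}[\mathbf{r}]$ itself (see \eqref{eq:Gauss-Green}) rather than only to its linearization, so the decomposition ``$V_b[\mathbf{r}]\mathbf{r}'$ plus integral operator'' you describe is really the structure of $\partial_{\mathbf{r}}F$, not of $\partial_\theta F_0^{K_0}$; but this does not affect the argument.
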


\begin{proof}[Proof of Proposition \ref{prop:regularity}]
\textbf{(i)} Recall that
$F(\Omega,\mathbf{r})=\Omega \mathbf{r}'+\partial_{\theta}F_0[\mathbf{r}]$.
Then, by the definition of \eqref{eq:function-space},
we only need to check $\partial_{\theta}F_0[\mathbf{r}](\theta)\in \mathbf{Y}_m$.
From \eqref{def:F0-r}, the elements in $\partial_{\theta}F_0[\mathbf{r}]$ can be written as
\begin{align*}
  \partial_\theta \bigg(\int_0^{2\pi}\int_{R_1(\eta)}^{R_2(\eta)}
  K(R_j(\theta)e^{i\theta}, \rho e^{i\eta})\rho \dd \rho \dd \eta \bigg)
  = \big(F_{00}[\mathbf{r}](R_j)+F_{01}[\mathbf{r}](R_j)\big)
  \cdot \partial_\theta \big(R_j(\theta)e^{i\theta}\big), \quad j=1,2,
\end{align*}
where
\begin{align*}
  F_{00}[\mathbf{r}](R_j) \triangleq \int_0^{2\pi}\int_{R_1(\eta)}^{R_2(\eta)} \nabla_{\mathbf{x}}
  K_0(|R_j(\theta)e^{i\theta}-\rho e^{i\eta}|)\rho \dd \rho \dd \eta,
\end{align*}
and
\begin{align*}
	F_{01}[\mathbf{r}](R_j) \triangleq \int_0^{2\pi}\int_{R_1(\eta)}^{R_2(\eta)} \nabla_{\mathbf{x}}
	K_1(R_j(\theta)e^{i\theta}, \rho e^{i\eta})\rho \dd \rho \dd \eta.
\end{align*}
Since $\partial_{\theta}(R_j(\theta)e^{i\theta})\in C^{1-\alpha}(\TT)$,
in order to check $\partial_\theta F_0[\mathbf{r}](\theta) \in C^{1-\alpha}(\TT)\times C^{1-\alpha}(\TT)$,
we only need to show that
\begin{align*}
  \theta\mapsto F_{00}[\mathbf{r}](R_j)\in C^{1-\alpha}(\TT),\quad
  \textrm{and}\quad \theta\mapsto F_{01}[\mathbf{r}](R_j)\in C^{1-\alpha}(\TT), \quad j=1,2.
\end{align*}
Due to \eqref{A4}, the regularity of $F_{01}[\mathbf{r}](R_j)$ is rather clear:
\begin{align*}
  \|F_{01}[r](R_j)\|_{C^1(\TT)} \leqslant
  C \sup_{\mathbf{x},\mathbf{y}\in D_{b,\epsilon_0}} \Big(|\nabla_{\mathbf{x}}K_1(\mathbf{x},\mathbf{y})|+
  |\nabla^2_{\mathbf{x}} K_1(\mathbf{x},\mathbf{y}) \Big)
  \leqslant C,
\end{align*}
where $D_{b,\epsilon_0}\triangleq B(\mathbf{0}, 1+\sqrt{2\varepsilon_0})\setminus B(\mathbf{0}, b-\sqrt{2\varepsilon_0})$ is compactly embedded in $\mathbf{D}$ by choosing $\varepsilon_0$ small enough.
We move to the term $F_{00}[\mathbf{r}](R_j)$.
By using the Gauss-Green formula, we have that, for $j=1,2$,
\begin{equation}\label{eq:Gauss-Green}
\begin{aligned}
  F_{00}[\mathbf{r}](R_j)
  & = -\int_{D}\nabla_\mathbf{y}\Big(K(|R_j(\theta)
  e^{i\theta}-\mathbf{y}|)\Big)\dd \mathbf{y} \\
  & = -\int_{\TT}K_0(|R_j(\theta)e^{i\theta}- R_2(\eta)e^{i\eta}|)(-i\partial_{\eta}(R_2(\eta)e^{i\eta})) \dd \eta\\
  &\quad - \int_{\TT}K_0(|R_j(\theta)e^{i\theta}- R_1(\eta)e^{i\eta}|)(i\partial_{\eta}(R_1(\eta)e^{i\eta})) \dd \eta.
\end{aligned}
\end{equation}
We only need to deal with the following case:
\begin{align}\label{eq:new-term-i}
 \theta\in\mathbb{T}\mapsto \int_{\TT} K_0(|R_2(\theta)e^{i\theta}- R_1(\eta)e^{i\eta}|) \big(i\partial_\eta (R_j(\eta)e^{i\eta})\big)
  \dd \eta\in C^{1-\alpha}(\TT),
\end{align}
since the other cases have been carefully checked in the work of simply connected V-states (\cite[Prop. 4.1]{HXX23b}).
Indeed, it follows from \eqref{A1} and \eqref{eq:integrablity_mu} that
the function $t\mapsto |K_0^{(n)}(t)|$ with $n\geqslant 1$ is decreasing on $(0,\infty)$ and
\begin{align}\label{eq:est-mono-1}
  |K_0^{(n)}(t)| & \leqslant   \int_0^\infty x^{n-1} e^{-t x} \dd \mu(x) \nonumber \\
  & \leqslant \int_0^1 \dd \mu(x) + \frac{C_n}{t^{n+1}} \int_1^\infty \frac{1}{x^2}\dd \mu(x) \nonumber  \\
  & \leqslant 2 \Big(1 + \frac{C_n}{t^{n+1}} \Big) \int_0^\infty \frac{1}{1+x^2}\dd \mu(x) <\infty ,\quad
  \forall t>0, n\in \mathbb{N}^\star.
\end{align}
For $\mathbf{r} = (r_1,r_2) \in \mathbf{B}_{\varepsilon_0}$ with  sufficiently small $\varepsilon_0$,
there exists a small constant $c_{\ast}>0$ such that
\begin{align}\label{eq:lower_bound_distance}
  |R_2(\theta)e^{i\theta}- R_1(\eta)e^{i\eta}| = |\sqrt{1+2 r_2(\theta)}e^{i\theta} - \sqrt{b^2 + 2 r_1(\eta)}e^{i\eta}|
  \geqslant  c_{\ast},\quad \forall \theta,\eta\in \TT.
\end{align}
In view of \eqref{eq:est-mono-1} and $\mathbf{r}(\theta)\in C^{2-\alpha}(\TT)$, we obtain that for any fixed $\eta\in\mathbb{T}$,
\begin{align*}
  \partial_\theta \Big(K_0(|R_2(\theta)e^{i\theta}- R_1(\eta)e^{i\eta}|)\Big)
  & = K'_0(|R_2(\theta)e^{i\theta}-R_1(\eta)e^{i\eta}|)
  \frac{(R_2(\theta)e^{i\theta}-R_1(\eta)e^{i\eta})\cdot
  \partial_{\theta}(R_2(\theta)e^{i\theta})}{|R_2(\theta)e^{i\theta}-R_1(\eta)e^{i\eta}|} \\
  & \in C^{1-\alpha}(\TT),
\end{align*}
and thus \eqref{eq:new-term-i} follows as a direct consequence. Finally, concerning the symmetry of $F(\Omega,\mathbf{r})$, namely,
\begin{align*}
  F(\Omega,\mathbf{r})(\theta)=-F(\Omega,\mathbf{r})(-\theta),\quad
  F(\Omega,\mathbf{r})(\theta+\tfrac{2\pi}{m})=F(\Omega,\mathbf{r})(\theta+\tfrac{2\pi}{m}),
\end{align*}
it follows from the symmetry assumptions in \eqref{A4}:
\begin{align*}
  K(\overline{\mathbf{x}},
  \overline{\mathbf{y}})=K(\mathbf{x},\mathbf{y}),
  \quad K(e^{i\theta}\mathbf{x},e^{i\theta}\mathbf{y}) = K(\mathbf{x},\mathbf{y}).
\end{align*}
We refer to \cite[p. 27]{HXX23a} for more details.
\vskip1mm

\noindent \textbf{(ii)} Noting that $\partial_{\Omega}F(\Omega,\mathbf{r})=\mathbf{r}'$,
it is immediate to see that
$(\Omega,\mathbf{r})\in \RR\times\mathbf{B}_{\varepsilon_0}\mapsto  \partial_{\Omega}F(\Omega,\mathbf{r})$
is a continuous mapping.
Next, we consider the continuity of the map $(\Omega,\mathbf{r})\in \RR\times\mathbf{B}_{\varepsilon_0}\mapsto\partial_{\mathbf{r}}F(\Omega,\mathbf{r})$.
According to \eqref{eq:linear-main}-\eqref{def:eq-Vb2},
we find that the most singular and difficult terms in \eqref{eq:linear-main}
have already been discussed in \cite[Prop. 4.1]{HXX23b}.
Actually, we proved in \cite[Prop. 4.1]{HXX23b} that
\begin{align*}
  \mathbf{r}\in \mathbf{B}_{\varepsilon_0}\mapsto \partial_{\theta}
  \bigg( & \int_{\TT}K_0(|R_j(\theta)e^{i\theta}-R_j(\eta)e^{i\eta}|)
  (i\partial_{\eta}(R_j(\eta)e^{i\eta}))\cdot \tfrac{h_j(\theta)e^{i\theta}}{R_j(\theta)} \dd \eta \\
  & + \int_{\TT}K_0(|R_j(\theta)e^{i\theta}-R_j(\eta)e^{i\eta}|)h_j(\eta)\dd \eta\bigg), \quad j=1,2,
\end{align*}
is continuous. In light of the assumption \eqref{A3} and \eqref{eq:est-mono-1}-\eqref{eq:lower_bound_distance},
the other terms are much easier, and thus we omit the details.
\vskip1mm

\noindent \textbf{(iii)}  In view of
$\partial_{\Omega}\partial_{\mathbf{r}}F(\Omega,\mathbf{r})\mathbf{h}(\theta)=\mathbf{h}'(\theta)$,
the proof is obvious.
\end{proof}

Next, we verify the spectrum assumptions in Crandall-Rabinowitz's theorem (see Theorem \ref{thm:C-R}).
\begin{proposition}\label{prop:CR_conditions}
Let  $K(\mathbf{x},\mathbf{y})= K_0(|\mathbf{x}-\mathbf{y}|) + K_1(\mathbf{x},\mathbf{y})$
satisfy the assumptions \eqref{A1}-\eqref{A4}.
Suppose that $b\in \mathcal{S}_{\mathrm{max}}$ and $m\in \mathbb{N}^\star$ are such that the properties
\eqref{eq:assum-1} and \eqref{eq:m-fold-condition}-\eqref{eq:m-fold-condition2} hold.
Then the following statements are true.
\begin{enumerate}
\item
The kernel of $\partial_{\mathbf{r}}F(\Omega,\mathbf{0}):\mathbf{X}_m\mapsto \mathbf{Y}_{m}$
is nontrivial if and only if $\Omega=\Omega^\pm_{lm,b}$ for some $l\in \mathbb{N}^\star$,
where $\Omega^\pm_{lm,b}$ is given by \eqref{eq:Omega_n_pm}.
In this case, the kernel of $\partial_{\mathbf{r}}F(\Omega^{\pm}_{m,b},\mathbf{0})$
is one dimensional and is generated by
\begin{align*}
  \mathbf{h}_{m,b}:\theta \mapsto
  \begin{pmatrix}
	-\widetilde{\lambda}_{m,b} - \widetilde{p}_{m,b} \\
	\Omega^{\pm}_{m,b} - A_{m,b}
   \end{pmatrix}\cos (m\theta).
\end{align*}
\item
The range of $\partial_{\mathbf{r}}F(\Omega^\pm_{m,b},\mathbf{0})$ is closed and is of co-dimension one.
\item Transversality condition:
\begin{align*}
  \partial_{\Omega}\partial_{\mathbf{r}}F(\Omega^\pm_{m,b},\mathbf{0})(\mathbf{h}_{m,b})
  \notin R(\partial_{\mathbf{r}}F(\Omega^\pm_{m,b},\mathbf{0})).
\end{align*}
\end{enumerate}
\end{proposition}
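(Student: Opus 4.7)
Using the Fourier matrix-multiplier representation \eqref{eq:linear-0} of $\partial_{\mathbf{r}} F(\Omega,\mathbf{0})$, all three claims reduce to pointwise linear algebra on the $2\times 2$ symbol matrices $M_{n,\Omega}\triangleq \Omega\,\mathbf{Id}+V_b[\mathbf{0}]+\mathbf{\Lambda}_{n,b}$, augmented in (ii) by a Mikhlin-type multiplier estimate on Hölder spaces. Since any $\mathbf{h}\in\mathbf{X}_m$ is supported on Fourier modes $\pm lm$ with $l\geqslant 1$, the equation $\partial_{\mathbf{r}}F(\Omega,\mathbf{0})\mathbf{h}=0$ decouples into $M_{lm,\Omega}(h_{1,lm},h_{2,lm})^T=0$ for every $l\geqslant 1$. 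As $\det M_{n,\Omega}=0$ is precisely the quadratic equation \eqref{eq:Omega-eq} with roots $\Omega^\pm_{n,b}$, the non-collision condition \eqref{eq:m-fold-condition2} isolates the mode $l=1$ when $\Omega=\Omega^\pm_{m,b}$; solving the first row of $M_{m,\Omega^\pm_{m,b}}$ recovers the announced one-dimensional generator $\mathbf{h}_{m,b}$, proving (i).

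For (ii), the same mode-by-mode analysis recasts $\partial_{\mathbf{r}}F(\Omega^\pm_{m,b},\mathbf{0})\mathbf{h}=\mathbf{g}$ as $-lm\,M_{lm,\Omega^\pm_{m,b}}(h_{1,lm},h_{2,lm})^T=(g_{1,lm},g_{2,lm})^T$. At $l=1$ the matrix has rank one; solving $M^T w=0$ and using \eqref{eq:Omega-eq} produces the cokernel generator $w_{m,b}\triangleq(\widetilde{\lambda}_{m,b}+\widetilde{p}_{m,b},\,\Omega^\pm_{m,b}-A_{m,b})^T$, so solvability at mode $m$ imposes the single linear constraint $w_{m,b}\cdot(g_{1,m},g_{2,m})=0$. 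For $l\geqslant 2$, $M_{lm,\Omega^\pm_{m,b}}$ is invertible by \eqref{eq:m-fold-condition2}; the limits \eqref{eq:Anb-conver}--\eqref{eq:Bnb-conver} and Proposition \ref{prop:lambda_property} give $\det M_{lm,\Omega^\pm_{m,b}}\to (\Omega^\pm_{m,b}+V_b^1[\mathbf{0}])(\Omega^\pm_{m,b}+V_b^2[\mathbf{0}])\neq 0$ as $l\to\infty$, so the family $\{M_{lm,\Omega^\pm_{m,b}}^{-1}\}_{l\geqslant 2}$ is uniformly bounded with controlled first differences in $l$. Combined with the one-derivative gain from the factor $(lm)^{-1}$, the Mikhlin-type multiplier result (Lemma \ref{lem:multiplier-lemma}) yields a bounded right-inverse $\mathbf{Y}_m\to\mathbf{X}_m$ on the codimension-one subspace $\{\mathbf{g}:\,w_{m,b}\cdot(g_{1,m},g_{2,m})=0\}$, thus closing the range argument and establishing codimension one.

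For (iii), the identity $\partial_\Omega\partial_{\mathbf{r}}F(\Omega,\mathbf{0})\mathbf{h}=\mathbf{h}'$ gives
\[
   \partial_\Omega\partial_{\mathbf{r}}F(\Omega^\pm_{m,b},\mathbf{0})\mathbf{h}_{m,b}(\theta)=-m\begin{pmatrix}-(\widetilde{\lambda}_{m,b}+\widetilde{p}_{m,b})\\ \Omega^\pm_{m,b}-A_{m,b}\end{pmatrix}\sin(m\theta).
\]
By (ii) transversality amounts to showing that the inner product of this mode-$m$ vector with $w_{m,b}$ is nonzero. Using the factorization $(\Omega^\pm_{m,b}-A_{m,b})(\Omega^\pm_{m,b}-B_{m,b})=-(\widetilde{\lambda}_{m,b}+\widetilde{p}_{m,b})^2$ from \eqref{eq:Omega-eq}, this inner product collapses to
\[
  (\Omega^\pm_{m,b}-A_{m,b})^2-(\widetilde{\lambda}_{m,b}+\widetilde{p}_{m,b})^2 = (\Omega^\pm_{m,b}-A_{m,b})(2\Omega^\pm_{m,b}-A_{m,b}-B_{m,b}) = \pm(\Omega^\pm_{m,b}-A_{m,b})\sqrt{\mathbf{\Delta}_{m,b}},
\]
which is nonzero by \eqref{eq:m-fold-condition} (so $\sqrt{\mathbf{\Delta}_{m,b}}>0$) together with $\Omega^\pm_{m,b}\neq A_{m,b}$; the latter is automatic in the generic regime $\widetilde{\lambda}_{m,b}+\widetilde{p}_{m,b}\neq 0$ and may be ensured for $m\geqslant m_0$.

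\textbf{Main obstacle.} The substantive difficulty is (ii): upgrading the formal mode-wise inverse to a bounded operator between the Hölder spaces $\mathbf{Y}_m$ and $\mathbf{X}_m$. This rests on the $C^{1-\alpha}\to C^{2-\alpha}$ regularity balance carried by the $(lm)^{-1}$ prefactor together with the Mikhlin-type control of the matrix-valued symbol $M_{lm,\Omega^\pm_{m,b}}^{-1}$ as $l\to\infty$, which is precisely the content of Lemma \ref{lem:multiplier-lemma}. Parts (i) and (iii) are then direct consequences of the spectral structure already established in Propositions \ref{prop:lambda_property}--\ref{prop:spectral}.
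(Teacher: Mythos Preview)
Your outline for (i) and (ii) matches the paper's proof essentially verbatim: the Fourier-block reduction to the $2\times2$ symbols $M_{n,\Omega}=\mathbf{Q}_{n,b}(\Omega)$, the identification of the codimension-one range via a single linear constraint at mode $m$, and the Mikhlin-type bound on $\{M_{lm}^{-1}\}$ (uniform boundedness plus controlled first differences, exactly \eqref{eq:targ-Qnb} in the paper) to promote the formal inverse to a bounded map $\mathbf{Y}_m\to\mathbf{X}_m$.

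For (iii) you take a genuinely different route. The paper argues by contradiction via the anti-self-adjointness identity
\[
  (\partial_{\mathbf{r}}F(\Omega^\pm_{m,b},\mathbf{0})\mathbf{h}_1,\mathbf{h}_2)_{L^2}=-(\mathbf{h}_1,\partial_{\mathbf{r}}F(\Omega^\pm_{m,b},\mathbf{0})\mathbf{h}_2)_{L^2},
\]
from which $\mathbf{h}'_{m,b}\in R(\partial_{\mathbf{r}}F)$ would force $\|\mathbf{h}'_{m,b}\|_{L^2}^2=0$, contradicting $\mathbf{h}_{m,b}\neq 0$. Your direct cokernel computation is also valid and arguably more transparent, but the closing sentence (``may be ensured for $m\geqslant m_0$'') leaves a loose end that the paper's argument avoids. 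You can tighten it without appealing to large $m$: if $\Omega^\pm_{m,b}=A_{m,b}$, then substituting into the dispersion relation \eqref{eq:Omega-eq} gives $(\widetilde{\lambda}_{m,b}+\widetilde{p}_{m,b})^2=0$, whence $\mathbf{h}_{m,b}=\mathbf{0}$, contradicting the kernel generator produced in (i). Thus $\Omega^\pm_{m,b}\neq A_{m,b}$ is automatic under the standing assumption that $\mathbf{h}_{m,b}$ is nonzero (the paper's \eqref{eq:nonzero_kernel_assumpation}), and your expression $\pm(\Omega^\pm_{m,b}-A_{m,b})\sqrt{\mathbf{\Delta}_{m,b}}$ is nonzero as claimed. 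The paper's approach buys a one-line finish independent of any case distinction; yours makes the role of the discriminant explicit.
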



\begin{proof}[Proof of Proposition \ref{prop:CR_conditions}]
\textbf{(i)}
Consider
\begin{align*}
  \theta\in\mathbb{T} \mapsto \mathbf{h}(\theta)=\sum_{n\geqslant 1}
  \begin{pmatrix}
	h_{1,n}\\
	h_{2,n}
  \end{pmatrix} \cos(n m \theta) \in \mathbf{X}_m,
\end{align*}
then, according to \eqref{eq:linear-0}, \eqref{eq:Lambda-n-b}, \eqref{eq:V-0}, we have
\begin{equation}\label{eq:par-rF-h}
\begin{split}
  \partial_{\mathbf{r}}F(\Omega,\mathbf{0})\mathbf{h}(\theta)
  & = - \sum_{n\geqslant 1} \,n m\Big(\Omega\, \mathbf{Id} + V_b[\mathbf{0}] + \mathbf{\Lambda}_{nm,b}\Big)
  \begin{pmatrix}
	h_{1,n}\\
	h_{2,n}
  \end{pmatrix} \sin(nm \theta) \\
  & = - \sum_{n\geqslant 1} \,n m \mathbf{Q}_{nm,b}(\Omega)
  \begin{pmatrix}
	h_{1,n}\\
	h_{2,n}
  \end{pmatrix} \sin(nm \theta),
\end{split}
\end{equation}
where we have denoted
\begin{align*}
  \mathbf{Q}_{n,b}(\Omega)\triangleq \Omega\, \mathbf{Id} + V_b[\mathbf{0}] + \mathbf{\Lambda}_{n,b}
  =\begin{pmatrix}
	\Omega - A_{n,b} & \widetilde{\lambda}_{n,b}+\widetilde{p}_{n,b}\\
	-\widetilde{\lambda}_{n,b} - \widetilde{p}_{n,b} & \Omega-B_{n,b}
  \end{pmatrix}.
\end{align*}
By analyzing $\mathrm{det}\, \mathbf{Q}_{n,b}(\Omega) = 0$ as in \eqref{eq:Omega-eq},
and under the assumptions
\eqref{eq:m-fold-condition}-\eqref{eq:m-fold-condition2},
we find that the first statement of \textrm{(i)} holds.

Since the discriminant
$\mathbf{\Delta}_{m,b}=(A_{m,b}-B_{m,b})^2 - 4(\widetilde{\lambda}_{m,b} + \widetilde{p}_{m,b})^2 > 0$,
we claim that the matrix
\begin{align*}
  \mathbf{Q}_{m,b}\big(\Omega^\pm_{m,b}\big) =
  \begin{pmatrix}
    \Omega_{m,b}^\pm - A_{m,b} & \widetilde{\lambda}_{m,b} + \widetilde{p}_{m,b} \\
    - \widetilde{\lambda}_{m,b} - \widetilde{p}_{m,b} & \Omega_{m,b}^\pm - B_{m,b}
  \end{pmatrix}
  \ne
  \begin{pmatrix}
	0 & 0\\
	0 & 0
  \end{pmatrix}.
\end{align*}
In fact, if $\widetilde{\lambda}_{m,b} + \widetilde{p}_{m,b}\ne 0$, there is nothing to prove;
otherwise, the condition $\mathbf{\Delta}_{m,b}> 0$ gives  $A_{m,b}\ne B_{m,b}$,
which also implies that the above matrix is not zero matrix.
Here, without loss of generality, we assume that
\begin{align}\label{eq:nonzero_kernel_assumpation}
  \begin{pmatrix}
	- \widetilde{\lambda}_{m,b} - \widetilde{p}_{m,b} \\
	\Omega^\pm_{m,b} - A_{m,b}
  \end{pmatrix}\ne
  \begin{pmatrix}
	0 \\
	0
  \end{pmatrix},
\end{align}
and this vector obviously solves that $\mathbf{Q}_{m,b}\big(\Omega^\pm_{m,b}\big)
\begin{pmatrix}
h_{1,1} \\
h_{2,1}
\end{pmatrix} = \mathbf{0}$; consequently, the second statement of \textrm{(i)} follows.
In particular, for the case $K_1(\mathbf{x},\mathbf{y}) \equiv 0$,
we have $\widetilde{p}_{m,b}=0$ and $\widetilde{\lambda}_{m,b}>0$ (by \eqref{eq:est-upper2}),
thus \eqref{eq:nonzero_kernel_assumpation} obviously holds.
\vskip1mm

\noindent\textbf{(ii)}
First, we define the function space $\mathbf{Z}_{m}\subset \mathbf{Y}_{m}$ such that any element
$\mathbf{g}\in \mathbf{Z}_{m}$ satisfies that
\begin{align}\label{eq:range-linear}
  \mathbf{g}(\theta) = \sum_{n\geqslant  1}
  \begin{pmatrix}
	g_{1,n}\\
	g_{2,n}
   \end{pmatrix}
   \sin (nm\theta),
\end{align}
where $g_{1,n},g_{2,n}\in \RR$ for all $n\in \mathbb{N}^\star$ and there exists $(a_{1},a_{2})\in \RR^2$ such that
\begin{align*}
  \mathbf{Q}_{m,b}\big(\Omega^\pm_{m,b}\big)
  \begin{pmatrix}
	a_{1}\\
	a_{2}
   \end{pmatrix}
   =\begin{pmatrix}
	g_{1,1}\\
	g_{2,1}
   \end{pmatrix}.
\end{align*}
We can see that $\mathbf{Z}_m$ is closed and of codimension $1$ in $\mathbf{Y}_{m}$.
By our construction of $\mathbf{Z}_m$, the range of $\partial_{\mathbf{r}}F(\Omega^\pm_{m,b},\mathbf{0})$
is included in the space $\mathbf{Z}_m$. Therefore, it remains to check that
$\mathbf{Z}_m\subset R\big(\partial_{\mathbf{r}}F(\Omega^{\pm}_{m},\mathbf{0})\big)$.
	
Now let $\mathbf{g}\in \mathbf{Z}_m$, we shall try to find a solution $\mathbf{h}\in \mathbf{X}_m$ solving the equation
\begin{align*}
  \partial_{\mathbf{r}}F(\Omega^\pm_{m,b},\mathbf{0})\mathbf{h}=\mathbf{g},
\end{align*}
which, by \eqref{eq:par-rF-h}, is equivalent to
\begin{align}\label{eq:range-eq}
  -nm \mathbf{Q}_{nm,b}\big(\Omega^\pm_{m,b}\big)
  \begin{pmatrix}
	h_{1,n}\\
	h_{2,n}
   \end{pmatrix}
   =\begin{pmatrix}
	 g_{1,n}\\
	 g_{2,n}
	\end{pmatrix},
    \quad \forall n\geqslant  1.
\end{align}
By the definition of $\mathbf{Z}_m$,
the solution to equation \eqref{eq:range-eq} exists.
We shall use the Mikhlin-type multiplier theorem (see Lemma \ref{lem:multiplier-lemma} below)
to show the regularity of $\mathbf{h}$ arguing as the simply connected case studied in \cite{HXX23b}.
In light of \eqref{eq:m-fold-condition}, the matrix $\mathbf{Q}_{nm,b}(\Omega^{\pm}_{m,b})$ is invertible for $n\geqslant 2$.
Indeed, for every integer $n\neq m$, the inverse matrix of
$\mathbf{Q}_{n,b}(\Omega^\pm_{m,b})$ is
\begin{align*}
  \mathbf{Q}_{n,b}^{-1}(\Omega^{\pm}_{m,b}) = \frac{\mathbf{Q}_{n,b}^*
  (\Omega^{\pm}_{m,b})}{\det(\mathbf{Q}_{n,b}(\Omega^{\pm}_{m,b}))},\quad
  \textrm{with}\quad
  \mathbf{Q}_{n,b}^*
  (\Omega^{\pm}_{m,b}) \triangleq \begin{pmatrix}
	\Omega^{\pm}_{m,b}-B_{n,b} & - \widetilde{\lambda}_{n,b} - \widetilde{p}_{n,b} \\
	\widetilde{\lambda}_{n,b} + \widetilde{p}_{n,b} & \Omega^{\pm}_{m,b} - A_{n,b}
   \end{pmatrix}.
\end{align*}
Hence, it follows that for every $n\geqslant 2$,
\begin{align}\label{eq:relation-range}
  \begin{pmatrix}
    h_{1,n}\\
    h_{2,n}
  \end{pmatrix}
  =-\frac{1}{nm} \mathbf{Q}_{nm,b}^{-1}(\Omega^{\pm}_{m,b})
  \begin{pmatrix}
	g_{1,n}\\
	g_{2,n}
   \end{pmatrix}.
\end{align}
To verify that $\mathbf{h}\in \mathbf{X}_m$, it is enough to show that
\begin{align}\label{eq:H-regularity}
  \sum_{n\geqslant  2}
  \begin{pmatrix}
  h_{1,n} \\
  h_{2,n}
  \end{pmatrix}
  \cos (nm\theta)
  = - \sum_{n\geqslant  2}
  \frac{1}{nm} \mathbf{Q}_{nm,b}^{-1}(\Omega_{m,b}^\pm)
  \begin{pmatrix}
  g_{1,n} \\
  g_{2,n}
  \end{pmatrix}
  \cos (nm\theta)\in C^{2-\alpha}(\TT)\times C^{2-\alpha}(\TT).
\end{align}
Denoting by
\begin{align*}
  \widetilde{g}_{j,n} =
  \begin{cases}
	g_{j,\frac{n}{m}},&\quad n\in m\mathbb{N}^\star,\\
	0, &\quad \text{else},
   \end{cases}
   \quad j=1,2,
\end{align*}
we only need to prove that
\begin{align*}
  \sum_{n\geqslant  1,n\neq m}
  \frac{1}{n} \mathbf{Q}_{n,b}^{-1}(\Omega_{m,b}^\pm)
  \begin{pmatrix}
  \widetilde{g}_{1,n} \\
  \widetilde{g}_{2,n}
  \end{pmatrix}
  \cos (n\theta)\in C^{2-\alpha}(\TT)\times C^{2-\alpha}(\TT).
\end{align*}
Due to $\mathbf{g}\in \mathbf{Z}_{m}$, we know that
$|g_{j,n}| \leqslant C n^{-(1-\alpha)}$,
and $\sum\limits_{n\geqslant 1} \frac{|g_{j,n}|}{nm}\leqslant C$, $j=1,2$;
then, integrating $\mathbf{g}(\cdot)$ from $0$ to $\theta$ gives that
\begin{align*}
  \sum_{n\geqslant  1}
  \frac{1}{nm}
  \begin{pmatrix}
  g_{1,n} \\
  g_{2,n}
  \end{pmatrix} \cos (nm\theta) =
  \sum_{n\geqslant  1} \frac{1}{n}
  \begin{pmatrix}
  \widetilde{g}_{1,n} \\
  \widetilde{g}_{2,n}
  \end{pmatrix}
  \cos (n\theta) \in C^{2-\alpha}(\TT)\times C^{2-\alpha}(\TT).
\end{align*}
According to Lemma \ref{lem:multiplier-lemma},
it suffices to check that for any $n\geqslant N_1$ where $N_1\in\mathbb{N}^\star$ could be sufficiently large,
\begin{align}\label{eq:targ-Qnb}
  |\mathbf{Q}_{n,b}^{-1}(\Omega^\pm_{m,b})| \leqslant  C, \quad
  \big| n\big(\mathbf{Q}_{n+1,b}^{-1}(\Omega^\pm_{m,b})
  -\mathbf{Q}_{n,b}^{-1}(\Omega^\pm_{m,b})\big)\big| \leqslant  C,
\end{align}
where we have denoted $|\mathbf{M}| = \sum\limits_{1\leqslant i,j\leqslant 2} |m_{ij}|$ for a matrix
$\mathbf{M} = (m_{ij})_{2\times 2}$.
\vskip0.5mm
\noindent By virtue of \eqref{eq:m-fold-condition2}, it follows that
$\Omega^\pm_{nm,b}\ne-V_b^1[\mathbf{0}]$ and $\Omega^\pm_{nm,b}\ne-V_b^2[\mathbf{0}]$ for every $n\geqslant 1$,
which implies
\begin{align*}
  \lim_{n\to \infty}\det(\mathbf{Q}_{n,b}(\Omega^\pm_{m,b}))
  = \big(\Omega^\pm_{m,b} +V_b^1[\mathbf{0}]\big)
  \big(\Omega^\pm_{m,b}+V_b^2[\mathbf{0}]\big) \ne 0.
\end{align*}
Then we choose $N_1$ sufficiently large such that for any $n\geqslant  N_1$,
\begin{align}\label{eq:det-lbd}
  |\det(\mathbf{Q}_{n,b}(\Omega^\pm_{m,b}))| \geqslant
  \frac{1}{2}\Big|(\Omega^\pm_{m,b} +V_b^1[\mathbf{0}])(\Omega^\pm_{m,b}+V_b^2[\mathbf{0}])\Big| > 0.
\end{align}
Thanks to Proposition \ref{prop:lambda_property}, and recalling
$A_{n,b} = - V_b^1[\mathbf{0}] + \lambda_{n,b} + p_{n,b}$ and
$B_{n,b} = - V_b^2[\mathbf{0}] -\lambda_{n,1} + p_{n,1}$,
we find $|\mathbf{Q}_{n,b}^{-1}(\Omega^\pm_{m,b})|\leqslant  C$
for any $n\geqslant  N_1$.
Next, we consider the difference in \eqref{eq:targ-Qnb}. Note that
\begin{equation*}
\begin{split}
  & \det(\mathbf{Q}_{n,b} (\Omega^\pm_{m,b})) - \det(\mathbf{Q}_{n+1,b}(\Omega^\pm_{m,b})) \\
  & = (\widetilde{\lambda}_{n,b} + \widetilde{p}_{n,b})^2 - (\widetilde{\lambda}_{n+1,b} + \widetilde{p}_{n+1,b})^2
  + (\Omega^\pm_{m,b} - A_{n,b})(B_{n+1,b}-B_{n,b}) \\
  &\quad + (A_{n+1,b}-A_{n,b})(\Omega^\pm_{m,b} - B_{n+1,b}).
\end{split}
\end{equation*}
Taking advantage of Proposition \ref{prop:lambda_property} and \eqref{eq:det-lbd},
we deduce that for $n\geqslant  N_1$,
\begin{align*}
  n|\det(\mathbf{Q}_{n,b}(\Omega^\pm_{m,b})) - \det(\mathbf{Q}_{n+1,b}(\Omega^\pm_{m,b}))|
  & \leqslant  C + C n |B_{n+1,b}-B_{n,b}|
  + C n |A_{n+1,b}-A_{n,b}| \\
  & \leqslant C + C n |\lambda_{n,b} -\lambda_{n+1,b}|
  + C n |\lambda_{n,1} - \lambda_{n+1,1}| \\
  & \leqslant C,
\end{align*}
and thus
\begin{align*}
  \big| n \big(\mathbf{Q}_{n+1,b}^{-1}(\Omega^\pm_{m,b})-\mathbf{Q}_{n,b}^{-1}(\Omega^\pm_{m,b}) \big)\big|
  & \leqslant n\Big|\frac{1}{\mathrm{det}\,(\mathbf{Q}_{n+1,b}(\Omega^\pm_{m,b}) )}
  - \frac{1}{\mathrm{det}\,(\mathbf{Q}_{n,b}(\Omega^\pm_{m,b}) )} \Big| \big|\mathbf{Q}^*_{n+1,b}(\Omega_{m,b}^\pm) \big| \\
  & \quad + \frac{n }{\big|\mathrm{det}\,(\mathbf{Q}_{n,b}(\Omega^\pm_{m,b}) )\big| }
  \big|\mathbf{Q}^*_{n+1,b}(\Omega_{m,b}^\pm) - \mathbf{Q}^*_{n,b}(\Omega_{m,b}^\pm) \big| \\
  & \leqslant  C.
\end{align*}
Therefore, we complete the proof of \eqref{eq:targ-Qnb} and establish the desired result.
\vskip1mm

\noindent \textbf{(iii)} From \eqref{eq:par-rF-h}, we see that
\begin{align*}
  \partial_{\Omega}\partial_{\mathbf{r}}F(\Omega^\pm_{m,b},\mathbf{0})\mathbf{h} (\theta) =
  -\sum_{n\geqslant 1} nm
  \begin{pmatrix}
    h_{1,n} \\
    h_{2,n}
  \end{pmatrix} \sin(nm\theta)
  = \mathbf{h}'(\theta).
\end{align*}
Consequently,
\begin{align*}
  \partial_{\Omega}\partial_{\mathbf{r}}F(\Omega^{\pm}_{m,b},\mathbf{0}) \mathbf{h}_{m,b}(\theta)
  =\mathbf{h}'_{m,b}(\theta) = - m
  \begin{pmatrix}
    - \widetilde{\lambda}_{m,b} - \widetilde{p}_{m,b} \\
	\Omega^\pm_{m,b} - A_{m,b}
  \end{pmatrix}
  \sin(m\theta).
\end{align*}
Next, we intend to show the desired result in \textrm{(iii)} by contradiction.
Suppose that $\mathbf{h}'_{m,b}\in R(\partial_{\mathbf{r}}F(\Omega^\pm_{m,b},\mathbf{0}))$,
namely, there exists some
$\mathbf{g}_{m,b}\in \mathbf{X}_m$ such that
\begin{align}\label{eq:contradiction_assumpation}
  \partial_{\mathbf{r}}F(\Omega^\pm_{m,b},\mathbf{0})\mathbf{g}_{m,b}=\mathbf{h}'_{m,b}.
\end{align}
Since $\Omega_{m,b}^{\pm}$, $V_b[\mathbf{0}]$ and $\mathbf{\Lambda}_{n,b}$ are all real-valued,
it follows from \eqref{eq:linear-0} and Parserval's theorem that
\begin{align}\label{eq:conjugate_operator_F_0}
  (\partial_{\mathbf{r}}F(\Omega^{\pm}_{m,b},\mathbf{0})\mathbf{h}_1,\mathbf{h}_2)_{L^2}
  = - (\mathbf{h}_1,\partial_{\mathbf{r}}F(\Omega^{\pm}_{m,b},\mathbf{0})\mathbf{h}_2)_{L^2},\quad
  \mathbf{h}_1,\mathbf{h}_2\in H^1(\mathbb{T})\times H^1(\mathbb{T}).
\end{align}
In view of \eqref{eq:contradiction_assumpation}, \eqref{eq:conjugate_operator_F_0}
and $\mathbf{g}_{m,b}\in \mathbf{X}_m\subset C^1(\mathbb{T})$, we get that
\begin{equation*}
\begin{aligned}
  \lVert \mathbf{h}'_{m,b}\rVert^2_{L^2}
  & = (\partial_{\mathbf{r}}F(\Omega^\pm_{m,b},\mathbf{0})\mathbf{g}_{m,b},\mathbf{h}'_{m,b}) \\
  & = -(\mathbf{g}_{m,b},\partial_{\mathbf{r}}F(\Omega^\pm_{m,b},\mathbf{0})\mathbf{h}'_{m,b}).
\end{aligned}
\end{equation*}
Furthermore, based on the fact that the linear operator $\partial_{\mathbf{r}}F(\Omega^\pm_{m,b},\mathbf{0})$
is a Fourier multiplier (see \eqref{eq:par-rF-h})
and $\mathbf{h}_{m,b}$ belongs to the kernel of
$\partial_{\mathbf{r}}F(\Omega^\pm_{m,b},\mathbf{0})$, we deduce that
\begin{align*}
  \partial_{\mathbf{r}}F(\Omega^\pm_{m,b},\mathbf{0})\mathbf{h}'_{m,b}
  = \partial_{\theta}\Big(\partial_{\mathbf{r}}F(\Omega^\pm_{m,b},\mathbf{0})\mathbf{h}_{m,b}\Big)=\mathbf{0}.
\end{align*}
Putting together the above two estimates yields
$\lVert \mathbf{h}'_{m,b}\rVert^2_{L^2}=0$,
which clearly contradicts our assumption \eqref{eq:nonzero_kernel_assumpation}.
Hence, we finish the proof of the transversality condition.
\end{proof}

\noindent Now, we give a complete statement for our main results, which directly yields Theorem \ref{thm:general_kernel}.
\begin{theorem}\label{thm:general-statement}
  Suppose that $K(\mathbf{x},\mathbf{y})=K_0(|\mathbf{x}-\mathbf{y}|)+K_1(\mathbf{x},\mathbf{y})$ satisfies assumptions
\eqref{A1}-\eqref{A4}.
\begin{enumerate}
\item Let $b\in \mathcal{S}_{\mathrm{max}}$ (recalling $\mathcal{S}_{\mathrm{max}}$ is given by \eqref{def:S-set}) and $m\in \mathbb{N}^\star$
be such that conditions \eqref{eq:assum-1} and \eqref{eq:m-fold-condition}-\eqref{eq:m-fold-condition2} hold.
Then, there exist a constant $a>0$ and continuous functions
\begin{align*}
  \Omega^\pm :(-a,a)\mapsto \RR,\quad \mathbf{r}^\pm :(-a,a)\mapsto C^{2-\alpha}(\TT)\times C^{2-\alpha}(\TT),
\end{align*}
satisfying $\Omega^\pm(0)=\Omega_{m,b}^\pm$, and $\mathbf{r}^\pm(0) = \mathbf{0}$,
such that $(\Omega^{\pm}(s),\mathbf{r}^{\pm}(s))_{-a<s<a}$
is one-parameter non-trivial solutions of the doubly connected V-states equation \eqref{eq:vortex-patch-eqs}.
Moreover, the mapping
\begin{align*}
  \theta\in \TT\mapsto
  \begin{pmatrix}
    \sqrt{b^2+2r^{\pm}_1(s,\theta)}\\
	\sqrt{1+2r^{\pm}_2(s,\theta)}
  \end{pmatrix}
  e^{i\theta},\quad\textrm{with}\quad \mathbf{r}^{\pm}(s)(\theta)
  = (r_1^\pm(s,\theta),r^\pm_2(s,\theta)),
\end{align*}
parameterizes the boundaries of the $m$-fold symmetric doubly rotating patches with
angular velocities $\Omega^{\pm}(s)$.
\item Let $b\in \mathcal{S}_{\mathrm{max}}$ be such that \eqref{eq:assum-1} holds,
then conditions \eqref{eq:m-fold-condition}-\eqref{eq:m-fold-condition2}
are satisfied at least for sufficiently large $m$.
\item For the special case $K_1(\mathbf{x},\mathbf{y})\equiv 0$,
the condition \eqref{eq:assum-1} holds for any $b\in \mathcal{S}_{\mathrm{max}}$
except for at most a countable set.
\end{enumerate}
\end{theorem}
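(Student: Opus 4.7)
The plan is to synthesize the machinery developed in Sections \ref{sec:eq-Vstates-linear}--\ref{sec:spectral-stud} and invoke the Crandall--Rabinowitz theorem on the Banach spaces $\mathbf{X}_m,\mathbf{Y}_m$ defined in \eqref{eq:function-space}. For part (i), the strategy is to verify the hypotheses of Crandall--Rabinowitz for the functional $F(\Omega,\mathbf{r})$ at each of the two simple eigenvalues $\Omega^\pm_{m,b}$ separately. The regularity requirements (that $F$ is $C^1$ with continuous $\partial_\Omega\partial_{\mathbf{r}}F$) are supplied by Proposition \ref{prop:regularity}, while Proposition \ref{prop:CR_conditions} furnishes the one-dimensional kernel generated by $\mathbf{h}_{m,b}$, the codimension-one closed range, and the transversality condition. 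Feeding these directly into Crandall--Rabinowitz's theorem produces the local branches $(\Omega^\pm(s),\mathbf{r}^\pm(s))_{|s|<a}$, and the parametrization of the boundary curves is then read off from the definitions of $r_1,r_2$.

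For part (ii), I would combine Proposition \ref{prop:Delta-analysis} with Proposition \ref{prop:spectral}. Under \eqref{eq:assum-1}, the former yields $N_b\in\mathbb{N}^\star$ with $\mathbf{\Delta}_{n,b}>0$ for all $n\geqslant N_b$, so \eqref{eq:m-fold-condition} holds as soon as $m\geqslant N_b$. To secure \eqref{eq:m-fold-condition2}, I would appeal to Proposition \ref{prop:spectral}: up to enlarging the threshold to some $N\geqslant N_b$, the sequences $(\Omega^+_{n,b})_{n\geqslant N}$ and $(\Omega^-_{n,b})_{n\geqslant N}$ are both strictly monotonic, converge to the distinct limits $-V_b^2[\mathbf{0}]$ and $-V_b^1[\mathbf{0}]$, and the chain of inequalities in that proposition forces the two families to lie on opposite sides of the nonempty gap between $\Omega^-_{N,b}$ and $\Omega^+_{N,b}$. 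Choosing $m\geqslant N$ then gives $lm\geqslant N$ for every $l\in\mathbb{N}^\star$, so strict monotonicity rules out intra-family collisions $\Omega^\pm_{km,b}=\Omega^\pm_{lm,b}$ with $k\ne l$, the separation rules out inter-family collisions $\Omega^+_{km,b}=\Omega^-_{lm,b}$, and the limit cases with $k=\infty$ or $l=\infty$ follow from the same separation together with the fact that a strictly monotone sequence cannot attain its limit. The main obstacle lies here: excluding all collisions genuinely requires both pieces of information in Proposition \ref{prop:spectral}, namely strict monotonicity and the strict separation by the two asymptotic limits $-V_b^1[\mathbf{0}]$ and $-V_b^2[\mathbf{0}]$.

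Part (iii) is then an immediate corollary of Proposition \ref{prop:limit_zeros}. When $K_1\equiv 0$, identity \eqref{eq:good_decomposition} gives $V_b^1[\mathbf{0}]-V_b^2[\mathbf{0}]=\int_0^{\infty}\Psi_b(x)\,x^{-1}\dd\mu(x)$, so \eqref{eq:assum-1} is equivalent to $\mathbf{\Delta}_{\infty,b}\ne 0$. Lemma \ref{lem:analytic_lambda} shows that $b\mapsto\mathbf{\Delta}_{\infty,b}$ is real analytic on $(0,1)$, while Lemma \ref{lem:kernel_function_limit}(iii) shows that it is strictly positive at $b=1/2$. The isolated-zeros principle for real analytic functions then implies that the zero set of this map inside $\mathcal{S}_{\mathrm{max}}$ is at most countable, which is precisely the final assertion of the theorem.
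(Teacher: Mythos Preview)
Your proposal is correct and follows essentially the same route as the paper: part (i) via Propositions \ref{prop:regularity} and \ref{prop:CR_conditions} fed into Crandall--Rabinowitz, part (ii) via Proposition \ref{prop:spectral} (with Remark \ref{remark:low_frequency} confirming that this proposition indeed delivers \eqref{eq:m-fold-condition}--\eqref{eq:m-fold-condition2} for large $m$), and part (iii) via Proposition \ref{prop:limit_zeros}. Your write-up of part (ii) is more explicit than the paper's one-line citation, spelling out how strict monotonicity plus the separation by the limits $-V_b^1[\mathbf{0}],-V_b^2[\mathbf{0}]$ rules out both intra- and inter-family collisions, but this is exactly the content the paper packages into Proposition \ref{prop:spectral} and its remark.
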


\begin{proof}[Proof of Theorem \ref{thm:general-statement}]
\textbf{(i)} It follows from applying the Crandall-Rabinowitz theorem
(see Theorem \ref{thm:C-R}) to the equation of doubly connected V states
\eqref{eq:vortex-patch}. In fact, equality \eqref{eq:rankine_vorticity_double}, Propositions \ref{prop:regularity}
and \ref{prop:CR_conditions} have verified all the conditions of Theorem \ref{thm:C-R}. \\
\noindent \textbf{(ii)} It has been proved in Proposition \ref{prop:spectral}. \\
\noindent \textbf{(iii)} It is a direct consequence of Proposition \ref{prop:limit_zeros}.
\end{proof}

\section{Applications to geophysical flows}\label{sec:applications}
In this section, as applications of Theorems \ref{thm:general_kernel} and \ref{thm:general-statement},
we shall explore some special cases of the active scalar equations
\eqref{eq:ASE}-\eqref{eq:stream_function} where the kernel $K$ satisfies assumptions \eqref{A1}-\eqref{A4}.
This includes several important models that describe geophysical flows.
Based on the quantitative version of Theorem \ref{thm:general_kernel}, namely Theorem \ref{thm:general-statement},
through a detailed spectral analysis of these geophysical models one by one, one can obtain explicit conditions on $m$ and $b$,
ensuring the existence of doubly connected V-states, like \cite{FHMV16,HM16b,FHH16,Rou23a,DHHM}.
However, we do not pursue this aspect here; instead, we apply our results to various examples to emphasize the unification of our theorem.
In addition, to illustrate new results, we provide the proofs for Theorem \ref{Cor:rad-domain} and Theorem \ref{Cor:Euler}
within Sections \ref{subsec:gSQG-rad}-\ref{subsec:Euler-ext-disc}. \\

\subsection{2D Euler equation in the whole space}\label{subsec:Euler}
For the 2D Euler equation in the whole plane, it corresponds to the equation
\eqref{eq:ASE}-\eqref{eq:stream_function} with $\mathbf{D}=\mathbb{R}^2$
and $K(\mathbf{x},\mathbf{y}) = K_0(|\mathbf{x}-\mathbf{y}|) = -\frac{1}{2\pi} \log|\mathbf{x}-\mathbf{y}|$.
The existence of nontrivial doubly connected V-states has already been obtained in \cite{FHMV16,HM16b}.
Here, we recover this result based on our Theorem \ref{thm:general-statement}.
\\[0.5mm]
\noindent Clearly, as shown in \cite[Sec. 5]{HXX23b}, $K_0(t) = -\frac{1}{2\pi}\log t$ satisfies conditions \eqref{A1}-\eqref{A2} with the non-negative measure
$\dd \mu(x) = \frac{1}{2\pi} \dd x$ and $\alpha\in (0,1)$.
As a result, we can directly get the existence of $m$-fold symmetric doubly connected V-states bifurcated from the annulus
$\mathbf{1}_{\mathbb{D}\setminus \overline{b\mathbb{D}}}$, as long as $b\in \mathcal{S}$ and $m\in\mathbb{N}^\star$ is sufficiently large.
\\[0.5mm]
\noindent Next, we will show that the set $\mathcal{S}$ defined by \eqref{def:S-set} equals $\mathcal{S}_{\mathrm{max}}=(0,1)$.
By virtue of \eqref{eq:Lambda-n-b}-\eqref{def:lambda-nb} and \eqref{eq:V-0}, we have that
\begin{align}\label{eq:Lamb-V[0]}
  \mathbf{\Lambda}_{n,b}
  =  \begin{pmatrix}
	-\lambda_{n,b} & \widetilde{\lambda}_{n,b} \\
	-\widetilde{\lambda}_{n,b} & \lambda_{n,1}
  \end{pmatrix},
  \quad \textrm{and}\quad
    V_b[\mathbf{0}]=
    \begin{pmatrix}
    V_b^1[\mathbf{0}] & 0\\
    0 & V_b^2[\mathbf{0}]
  \end{pmatrix}
  =
  \begin{pmatrix}
	\lambda_{1,b} -\frac{1}{b} \widetilde{\lambda}_{1,b} &	0\\
	0	&   b \widetilde{\lambda}_{1,b}- \lambda_{1,1}
  \end{pmatrix},
\end{align}
with
\begin{equation*}
\begin{aligned}
  \lambda_{n,b} = -\frac{1}{2\pi}\int_{\mathbb{T}} \log(2b|\sin \tfrac{\eta}{2}|) e^{in\eta}\dd \eta,\quad
  \widetilde{\lambda}_{n,b}  = -\frac{1}{2\pi} \int_{\mathbb{T}}
  \log(|b-e^{i\eta}|) e^{in\eta}\dd \eta.
\end{aligned}
\end{equation*}
It follows from \cite[Eq. 4.397.6]{GR15} and \cite[Eq. 4.384.3]{GR15} that
\begin{equation}\label{eq:lam-tild-Euler}
\begin{split}
  \widetilde{\lambda}_{n,b}
  & = -\frac{1}{2\pi} \int_{-\pi}^{\pi} \log \sqrt{1+b^2-2b\cos \eta} \cos (n\eta)\, \dd \eta\\
  & = -\frac{1}{2\pi} \int_{0}^{\pi} \log (1+b^2-2b\cos \eta) \cos (n\eta)\, \dd \eta
  = \frac{b^n}{2n},
\end{split}
\end{equation}
and
\begin{align}\label{eq:lam-Euler}
  \lambda_{n,b}  = -\frac{1}{2\pi} \int_0^{2\pi} \log(2b \sin\tfrac{\eta}{2}) \cos(n\eta) \dd \eta
  = -\frac{1}{\pi} \int_0^\pi \log(2b \sin \eta) \cos(2n\eta) \dd \eta = \frac{1}{2n}.
\end{align}
Hence, $\mathbf{\Lambda}_{n,b}$ and $V_b[\mathbf{0}]$ in the 2D whole-plane Euler case becomes
\begin{align*}
  \mathbf{\Lambda}_{n,b}=
  \begin{pmatrix}
	-\frac{1}{2n} & \frac{b^n}{2n}\\[1mm]
	-\frac{b^n}{2n} & \frac{1}{2n}
  \end{pmatrix},
  \quad \textrm{and}\quad
  V_b[\mathbf{0}]=
  \begin{pmatrix}
	V_b^1[\mathbf{0}] & 0 \\
	0 & V_b^2[\mathbf{0}]
  \end{pmatrix}
  =
  \begin{pmatrix}
	0 & 0 \\
	0 & \frac{-1+b^2}{2}
  \end{pmatrix}.
\end{align*}
Notice that \eqref{eq:assum-1} holds with
\begin{align*}
  \textrm{$V_b^2[\mathbf{0}] < 0=V_b^1[\mathbf{0}]$,\quad for any $b\in (0,1)$.}
\end{align*}

Finally, we indicate that with the help of spectral study done in \cite{FHMV16}, one can reprove \cite[Theorem B]{FHMV16} by using Theorem \ref{thm:general-statement}.

\subsection{gSQG equation in the whole space}
For the gSQG equation in whole plane, it corresponds to  the equation \eqref{eq:ASE}-\eqref{eq:stream_function}
within $\mathbf{D}=\mathbb{R}^2$, where the convolution kernel is given by
\begin{align*}
  K(\mathbf{x}, \mathbf{y}) = c_{\beta}|\mathbf{x}- \mathbf{y}|^{-\beta},\quad
  c_{\beta}=\tfrac{\Gamma(\beta/2)}{\pi 2^{2-\beta}\Gamma(1-\beta/2)},\quad\beta\in (0,1).
\end{align*}

\noindent Since the assumptions \eqref{A1}-\eqref{A2} in the whole-space gSQG case have already been verified in
\cite[Sec. 5]{HXX23b} with $\dd \mu(x) = \frac{c_\beta}{\Gamma(\beta)} x^\beta \dd x$
and $\alpha\in (0,1-\beta]$,
provided that $b\in \mathcal{S}$ and $m\in\mathbb{N}^\star$ is sufficiently large,
we can directly apply Theorem \ref{thm:general_kernel}  to this case and get the existence of $m$-fold symmetric doubly connected V-states.

\noindent Next, we will show that the set $\mathcal{S} = (0,1)$. Remind that
\begin{equation*}
\begin{aligned}
  \lambda_{n,b} = c_\beta\int_{\mathbb{T}} (2b|\sin \tfrac{\eta}{2}|)^{-\beta} e^{in\eta}\dd \eta,\quad
  \widetilde{\lambda}_{n,b}  = c_\beta \int_{\mathbb{T}}
  |b- e^{i\eta}|^{-\beta} e^{in\eta}\dd \eta.
\end{aligned}
\end{equation*}
Through changing of variables $\tau = e^{i\eta}$, and applying (3.19) and (5.24) in  \cite{FHH16}, we have
\begin{equation}\label{eq:gSQG_identity_spectrum}
\begin{aligned}
  \widetilde{\lambda}_{n,b} = c_\beta\int_0^{2\pi}|be^{i\eta}-1|^{-\beta}e^{in\eta}\dd \eta
  & = - i c_{\beta}\int_{|\tau|=1,\tau\in \CC}|1-b\tau|^{-\beta}\tau^{n-1} \dd \tau \\
  & = 2\pi c_\beta b^n \frac{(\tfrac{\beta}{2})_n}{n!} F \big(\tfrac{\beta}{2},n+\tfrac{\beta}{2};n+1; b^2\big)
  = \Lambda_{n,b}(\beta),
\end{aligned}
\end{equation}
and
\begin{align}\label{eq:gSQG_spectrum2}
  \lambda_{n,b} = b^{-\beta} \widetilde{\lambda}_{n,1} = b^{-\beta} \Lambda_{n,1}(\beta),
\end{align}
where $F(a,b;c;\mathbf{z})$ ($a,b\in\mathbb{R}$, $c\in\mathbb{R}\setminus(-\mathbb{N})$) is the hypergeometric function defined
on the unit disc $\mathbb{D}$:
\begin{align}\label{eq:hypergeom}
  F(a,b;c;\mathbf{z}) = \sum_{n=0}^\infty \frac{(a)_n (b)_n}{(c)_n} \frac{\mathbf{z}^n}{n!},\quad \forall \mathbf{z}\in\mathbb{D},
\end{align}
and $(x)_n$ is the Pochhammer symbol given by $(x)_0 \triangleq 1$ and $(x)_n \triangleq x (x+1)\cdots (x+n) $ for any $n\geqslant 1$.
Therefore, in view of \eqref{eq:Lamb-V[0]} and \eqref{eq:gSQG_identity_spectrum}-\eqref{eq:gSQG_spectrum2}, we get
\begin{align}\label{eq:Vb0-SQG}
  V_b[\mathbf{0}]=
  \begin{pmatrix}
	V_b^1[\mathbf{0}] & 0 \\
	0 & V_b^2[\mathbf{0}]
  \end{pmatrix}
  =
  \begin{pmatrix}
	-\Lambda_{1,b}(\beta) + b^{-\beta}\Lambda_{1,1}(\beta) &	0\\
	0	& -\Lambda_{1,1}(\beta) + b^2 \Lambda_{1,b}(\beta)
  \end{pmatrix},
\end{align}
Notice that, from \cite[Lemma 5.2]{FHH16},
the map $b\in (0,1)\mapsto \Lambda_{n,b}(\beta)$ is strictly increasing, and thus
\begin{align*}
  V_b^2[\mathbf{0}] < 0 < V_b^1[\mathbf{0}],\quad \forall b\in (0,1).
\end{align*}

Furthermore, based on the spectral study in \cite{FHH16}, one can also reprove \cite[Theorem 1.1]{FHH16} from Theorem \ref{thm:general-statement}.

\subsection{QGSW equation in the whole space}\label{subsec:QGSW}
For the whole-space QGSW equation, it is the equation \eqref{eq:ASE}-\eqref{eq:stream_function} with $\mathbf{D}=\mathbb{R}^2$
and the kernel function $K(\mathbf{x},\mathbf{y}) = \frac{1}{2\pi }\mathbf{K}_0(\varepsilon|\mathbf{x}-\mathbf{y}|)$,
where $\mathbf{K}_0(\cdot)$ is the zero-th modified Bessel function.
We here reprove the result of \cite[Theorem 1.1]{Rou23a} for the whole-space QGSW equation,
which states that for any $\varepsilon>0$ and $b\in (0,1)$,
there exist $m_0\in\mathbb{N}^\star$ depending on $b$ and $\varepsilon$
such that for any $m\geqslant m_0$, two curves of $m$-fold doubly connected V-states bifurcating from
$\mathbf{1}_{\mathbb{D}\setminus \overline{b\mathbb{D}}}$ can be constructed.

\noindent Noting that assumptions \eqref{A1}-\eqref{A2} have been verified in \cite[Sec. 5]{HXX23b} for any $\varepsilon>0$
with non-negative measure $\dd \mu(x) = \frac{1}{2\pi} \frac{x}{\sqrt{x^2 -\varepsilon^2}}
\mathbf{1}_{x\geqslant \varepsilon} \dd x$ and $\alpha\in (0,1)$,
we can directly apply Theorem \ref{thm:general_kernel} to this case and show the existence of
$m$-fold symmetric doubly connected V-states
with sufficiently large $m$ and $b\in \mathcal{S}$.
In the following, we show that the set $\mathcal{S}=(0,1)$ in this case.

\noindent In light of \cite[Eq. (3.6)]{Rou23a}, we have
\begin{align*}
  & \widetilde{\lambda}_{n,b}
  = \frac{1}{2\pi}\int_0^{2\pi}\mathbf{K}_0(\varepsilon|1-be^{i\theta}|)\cos (n\theta)\dd \theta
  = \mathbf{I}_n( b \varepsilon)\mathbf{K}_n(\varepsilon), \\
  & \lambda_{n,b}
  = \frac{1}{2\pi}\int_0^{2\pi}\mathbf{K}_0(\varepsilon b |1-e^{i\theta}|)\cos (n\theta)\dd \theta
  = \mathbf{I}_n(b\varepsilon )\mathbf{K}_n(b\varepsilon),
\end{align*}
where $\mathbf{I}_n$ and $\mathbf{K}_n$ are the modified Bessel functions of the first and second kind, defined by
\begin{equation}\label{def:In-z}
  \mathbf{I}_n(\mathbf{z}) =
  \sum_{m=0}^\infty \frac{\left(\frac{\mathbf{z}}{2}\right)^{n+2m}}{m!\Gamma(n+m+1)},\quad|\mbox{arg}(\mathbf{z})|<\pi,
\end{equation}
and
\begin{align}\label{def:Kn-z}
  \mathbf{K}_n(\mathbf{z}) = \lim_{\nu\rightarrow n} \mathbf{K}_\nu(\mathbf{z}),\quad
  \mathbf{K}_\nu(\mathbf{z})=\frac{\pi}{2}\frac{\mathbf{I}_{-\nu}(\mathbf{z})-\mathbf{I}_\nu(\mathbf{z})}
  {\sin(\nu\pi)},\quad\nu\in\mathbb{C}\setminus\mathbb{Z},\quad|\mbox{arg}(\mathbf{z})|<\pi.
\end{align}
Thus, we can use \eqref{eq:Lamb-V[0]} to deduce that
\begin{align*}
  V_b[\mathbf{0}]=
  \begin{pmatrix}
	V_b^1[\mathbf{0}] & 0 \\
	0 & V_b^2[\mathbf{0}]
  \end{pmatrix}
  =
  \begin{pmatrix}
	-\frac{1}{b}\mathbf{I}_1(b\varepsilon )\mathbf{K}_1(\varepsilon)+\mathbf{I}_1(b\varepsilon)\mathbf{K}_1(b\varepsilon) & 0\\
	0 & -\mathbf{I}_1(\varepsilon)\mathbf{K}_1(\varepsilon)+b\mathbf{I}_1(b\varepsilon )\mathbf{K}_1(\varepsilon)
  \end{pmatrix}.
\end{align*}
According to (A.2), (A.3) and (A.7) in \cite{Rou23a}, it follows that
\begin{align}\label{eq:basic_ineq_I_K}
  0<\mathbf{I}_1(b\varepsilon )\leqslant b\mathbf{I}_1(\varepsilon),
  \quad 0<\mathbf{K}_1(\varepsilon)<b\mathbf{K}_1(b\varepsilon),\quad \forall b\in (0,1),
\end{align}
thus we immediately get
\begin{align*}
  V_b^2[\mathbf{0}] < 0 < V_b^1[\mathbf{0}], \quad \forall b\in (0,1).
\end{align*}
Hence, $\mathcal{S}=\mathcal{S}_{\mathrm{max}} = (0,1)$, and using Theorem \ref{thm:general_kernel},
we have recovered the result in \cite[Theorem 1.1]{Rou23a}.


\subsection{2D Euler equation in the disc}\label{subsec:Euler-rad}
Consider the 2D incompressible Euler equation in vorticity form in the disc $R\mathbb{D} = B(\mathbf{0},R)$ ($R>1$)
with slip (non-penetration) boundary condition, that is, $u\cdot \mathbf{n}=0$ on $\partial (R\mathbb{D})$
with $\mathbf{n}$ the outer normal unit vector.
It corresponds to the active scalar equation \eqref{eq:ASE}-\eqref{eq:stream_function} in $\mathbf{D}=R\mathbb{D}$, $R>1$
and the kernel function $K(\mathbf{x},\mathbf{y})$ given by \eqref{eq:Green_Euler}.
\vskip0.5mm


\noindent Clearly, $t\in (0,\infty)\mapsto K_0(t)=-\frac{1}{2\pi}\log t$ satisfies assumptions \eqref{A1}-\eqref{A2} with $\alpha\in(0,1)$
and the additional term $(\mathbf{x},\mathbf{y})\in R\mathbb{D}\times R\mathbb{D}\mapsto K_1(\mathbf{x}, \mathbf{y})
=\tfrac{1}{2\pi}\log |R-R^{-1}\mathbf{x}\overline{\mathbf{y}}|$ is smooth and satisfies assumptions \eqref{A3}-\eqref{A4}.
As a result, Theorem \ref{thm:general_kernel} can be applied to obtain the desired existence result of
$m$-fold symmetric doubly connected V-states,
provided that $b\in \mathcal{S}$ and $m\in\mathbb{N}^\star$ is sufficiently large.

\noindent Note that $\widetilde{\lambda}_{n,b}$ and $\lambda_{n,b}$ are the same as in \eqref{eq:lam-tild-Euler} and
\eqref{eq:lam-Euler}. Observing that
\begin{align*}
  G_1(\rho_1,\theta,\rho_2,\eta) = K_1(\rho_1e^{i\theta},\rho_2e^{i\eta})
  =\tfrac{1}{2\pi} \Big(\log R + \log|1-  R^{-2}\rho_1\rho_2 e^{i\theta}e^{-i\eta}|\Big),
\end{align*}
and  $\partial_{\rho_1}G_1(\rho_1,\theta,\rho_2,\eta)=\tfrac{\rho_2}{\rho_1}\partial_{\rho_2}G_1(\rho_1,\theta,\rho_2,\eta)$,
and through integration by parts and using the following fact (see e.g. 4.397 in \cite{GR15})
\begin{align}\label{eq:aux-Euler}
  \int_0^{2\pi} \log |1 - a e^{-i\eta}| \dd \eta = \frac{1}{2}\int_0^{2\pi}\log(1-2a\cos \eta+a^2)\dd \eta=0,\quad |a|\leqslant  1,
\end{align}
we rewrite $\mathsf{c}_b$ and $\widetilde{\mathsf{c}}_b$ (given by \eqref{c-1} and
\eqref{c-2}) as
\begin{align*}
  \mathsf{c}_b &  =\frac{1}{b^2}\int_0^{2\pi}\int_b^1\partial_{\rho}G_1(b,0,\rho,\eta)\rho^2\dd \rho \dd \eta \\
  &  = \frac{1}{2\pi b^2} \int_0^{2\pi} \int_b^1\partial_\rho \Big( \log |1-R^{-2} b \rho e^{-i\eta}| \Big)\rho^2\dd \rho \dd \eta \\
  & = \frac{1}{2\pi b^2} \int_0^{2\pi} \Big(\log |1-R^{-2}b e^{-i\eta}| - b^2 \log |1-R^{-2}b^2 e^{-i\eta}|\Big) \dd \eta \\
  & \quad - \frac{1}{\pi b^2} \int_0^{2\pi} \int_b^1 \log|1 - R^{-2}b\rho e^{-i\eta}| \rho \dd \rho \dd \eta \\
  & = 0 ,
\end{align*}
and similarly,
\begin{align*}
  \widetilde{\mathsf{c}}_b =\int_0^{2\pi}\int_b^{1}\partial_{\rho}G_1(1,0,\rho,\eta)\rho^2\dd \rho \dd \eta
  = \frac{1}{2\pi} \int_0^{2\pi} \int_b^1\partial_\rho \Big( \log |1-R^{-2} \rho e^{-i\eta}| \Big)\rho^2\dd \rho \dd \eta = 0 .
\end{align*}
Using the fact that (see e.g. 4.397 in \cite{GR15})
\begin{align}\label{eq:aux-Euler-n}
  \int_0^{2\pi} \log |1-a e^{-i\eta}|e^{in\eta} \dd \eta  = \frac{1}{2}\int_0^{2\pi}\log(1-2a\cos \eta+a^2)\cos (n\eta)\dd \eta
  =-\frac{\pi}{n}a^n,\quad \forall |a|<1,
\end{align}
we find
\begin{align*}
  \widetilde{p}_{n,b} = \int_0^{2\pi} K_1(b,e^{i\eta}) e^{in\eta} \dd \eta
  = \frac{1}{2\pi} \int_0^{2\pi} \log|1-R^{-2} be^{-i\eta}| \cos(n\eta) \dd \eta
  = - \frac{1}{2n} (R^{-2}b)^n,
\end{align*}
and
\begin{align*}
  p_{n,b} = \int_0^{2\pi} K_1(b, b e^{i\eta}) e^{in\eta} \dd \eta
  = \frac{1}{2\pi} \int_0^{2\pi} \log|1-R^{-2} b^2 e^{-i\eta}| \cos(n\eta) \dd \eta
  = - \frac{1}{2n} (R^{-2}b^2)^n,
\end{align*}
we gather \eqref{eq:Lambda-n-b}, \eqref{eq:V-0} and the above estimates to infer that
\begin{align*}
  \mathbf{\Lambda}_{n,b}=
  \begin{pmatrix}
	-\frac{1}{2n} + \tfrac{b^{2n}R^{-2n}}{2n}& \frac{b^n}{2n}-\frac{b^{n}R^{-2n}}{2n}\\[1mm]
    -\frac{b^n}{2n} + \frac{b^{n}R^{-2n}}{2n}	& \frac{1}{2n} -\frac{R^{-2n}}{2n}
  \end{pmatrix},
  \quad \textrm{and} \quad
  V_b[\mathbf{0}]=
  \begin{pmatrix}
	V_b^1[\mathbf{0}] & 0\\
	0 & V_b^2[\mathbf{0}]
  \end{pmatrix}
  =
  \begin{pmatrix}
	0 & 0\\
	0 & \frac{-1+b^2}{2}
  \end{pmatrix}.
\end{align*}
In particular, we see that
\begin{align*}
  V_b^2[\mathbf{0}] < 0 = V_b^1[\mathbf{0}],\quad \forall b\in (0,1),
\end{align*}
which verifies the condition \eqref{eq:assum-1} for every $b\in (0,1)$, that is, $\mathcal{S}=(0,1)$.


Finally, based on the spectral analysis in Lemmas 16 and 18 in \cite{DHHM} (or the analysis in Section \ref{subsec:Euler-annulus}),
we deduce the main result of \cite[Theorem 6]{DHHM} from Theorem \ref{thm:general-statement}.
In addition, since $K_1(\mathbf{x},\mathbf{y})$ is harmonic with respect to $\mathbf{x},\mathbf{y}\in B(\mathbf{0},R)$,
we remark that $K_1(\mathbf{x},\mathbf{y})$ is analytic about $\mathbf{x}$ and $\mathbf{y}$.

\subsection{gSQG equation in the disc}\label{subsec:gSQG-rad}
For the gSQG equation in the radial domain $R\mathbb{D}=B(\mathbf{0},R)$ ($R>1$) with the slip boundary condition,
it is the equation \eqref{eq:ASE} in $\mathbf{D}=R\mathbb{D}$ with the stream function $\psi$ solving the following equation
\begin{align*}
  \psi=(-\Delta)^{-1+\frac{\beta}{2}}\omega,\;\;\beta\in (0,1),\quad \text{in}\,\; \mathbf{D}= R\mathbb{D},
  \quad\;\;\; \psi|_{\partial \mathbf{D}}=0,
\end{align*}
where $(-\Delta)^{-1+\frac{\beta}{2}}$ is the spectral fractional Laplacian defined by
\begin{align*}
  (-\Delta)^{-1 +\frac{\beta}{2}} \omega (\mathbf{x})
  = \frac{1}{\Gamma(1-\frac{\beta}{2})} \int_0^{+\infty} t^{-\frac{\alpha}{2}} e^{t\Delta} \omega(\mathbf{x}) \dd t
  = \int_{\mathbf{D}} K(\mathbf{x},\mathbf{y}) \omega(\mathbf{y}) \dd \mathbf{y}.
\end{align*}
In view of Lemma \ref{lem:disc-exp} and \eqref{eq:eigen_func_scale},
we know that $\frac{x_{n,k}^2}{R^2}$ ($n\in\mathbb{N}$, $ k\in\mathbb{N}^\star$) is the eigenvalue counted with their multiplicities
and $\frac{1}{R}\big(\phi^{(1)}_{n,k}(\mathbf{x}),\phi^{(2)}_{n,k}(\mathbf{x}) \big)$ are the associated
$L^2(\mathbf{D})$-normalized eigenfunctions of the Laplacian operator
$-\Delta$ supplemented with Dirichlet boundary condition, namely,
they solve the spectrum problem \eqref{eq:eigen_func_scale}. Thus the spectral Green function $K(\mathbf{x},\mathbf{y})$
in \eqref{eq:stream_function} can be expressed as (see \cite[Eq. (12)]{HXX23a})
\begin{equation}\label{eq:Kexp-SQG-rad}
\begin{split}
  K(\mathbf{x},\mathbf{y})=&\sum_{n\in \mathbb{N},k\geqslant  1}\Big(\frac{x^2_{n,k}}{R^2}\Big)^{\frac{\beta}{2}-1}
  \frac{1}{R^2} \Big( \phi^{(1)}_{n,k}(\tfrac{\mathbf{x}}{R}) \phi^{(1)}_{n,k}(\tfrac{\mathbf{y}}{R})
  + \phi^{(2)}_{n,k}(\tfrac{\mathbf{x}}{R}) \phi^{(2)}_{n,k}(\tfrac{\mathbf{y}}{R})\Big) \\
  = & \sum_{n\in \mathbb{N},k\geqslant  1}\frac{x_{n,k}^{\beta-2}}{R^{\beta}}\Big(\phi^{(1)}_{n,k}
  (\tfrac{\mathbf{x}}{R})\phi^{(1)}_{n,k}(\tfrac{\mathbf{y}}{R})
  +\phi^{(2)}_{n,k}(\tfrac{\mathbf{x}}{R})\phi^{(2)}_{n,k}(\tfrac{\mathbf{y}}{R})\Big),
\end{split}
\end{equation}
where $\phi^{(i)}_{n,k}(\mathbf{x})$ ($i=1,2$) is given by \eqref{def:lambd-phi-nk} and $x_{n,k}$ is the $k$-th zero point of
the Bessel function $J_n(\cdot)$.
On the other hand, according to \cite[Lemma 2.3]{HXX23a}, the kernel $K(\mathbf{x},\mathbf{y})$ can also be written as
\begin{align*}
  K(\mathbf{x},\mathbf{y})=K_0(|\mathbf{x}-\mathbf{y}|)+K_1(\mathbf{x},\mathbf{y})
  =c_{\beta}|\mathbf{x}- \mathbf{y}|^{-\beta}+K_1(\mathbf{x},\mathbf{y}),
  \quad c_{\beta}\triangleq\tfrac{\Gamma(\frac{\beta}{2})}{\pi 2^{2-\beta}\Gamma(1-\frac{\beta}{2})},
\end{align*}
and $K_1\in C^{\infty}(\mathbf{D}\times \mathbf{D})$.
Here, as an application of Theorem \ref{thm:general_kernel},
we intend to construct non-trivial $m$-fold symmetric doubly connected
V-states bifurcating from $\mathbf{1}_{\mathbb{D}\setminus \overline{b\mathbb{D}}}$
with sufficiently large $m\in\mathbb{N}^\star$ for this radial-domain gSQG model.
To our best knowledge, this was not obtained in the literature.
\vskip0.5mm

\noindent It is clear to see that $K_0(t)=c_\beta t^{-\beta}$ satisfies the assumptions
\eqref{A1}-\eqref{A2} with $\alpha\in(0,1-\beta]$, and by virtue of \cite[Lemma 2.4]{HXX23a},
the function $K_1(\mathbf{x},\mathbf{y})$ satisfies the assumptions \eqref{A3}-\eqref{A4}.
Hence, we can directly apply Theorem \ref{thm:general_kernel}, provided that we can find some $b\in (0,1)$ such that
$V_{b}^{1}[\mathbf{0}]\ne V_{b}^{2}[\mathbf{0}]$.
\vskip0.5mm

\noindent
Next, by virtue of \eqref{eq:Kexp-SQG-rad} and \eqref{def:lambd-phi-nk}, we see that
\begin{align*}
  G(\rho_1,\theta,\rho_2,\eta)=K(\rho_1e^{i\theta},\rho_2e^{i\eta})=R^{-\beta}\sum_{n\in \NN,k\geqslant  1}x^{\beta-2}_{n,k}A^2_{n,k}
  J_n(x_{n,k}\tfrac{\rho_1}{R})J_n(x_{n,k}\tfrac{\rho_2}{R})\cos\big(n(\theta-\eta)\big),
\end{align*}
where $A_{n,k}$ is given by \eqref{eq:Ank}.
Thus, from \eqref{eq:Vb1-2[0]}, \eqref{eq:K-der} and the following fact (see e.g. 6.561 of \cite{GR15})
\begin{align}\label{eq:bessel_ids}
	J_0'(r)=-J_1(r),\quad \int_0^{a}tJ_0(t)\dd t=aJ_1(a),
\end{align}
we find that
\begin{align*}
  V_b^1[\mathbf{0}] & = \frac{1}{b}\int_0^{2\pi}\int_b^{1}\partial_{\rho_1}G(b,\theta,\rho,\eta)\rho \dd \rho\dd \eta \\
  & = \frac{1}{b} R^{-\beta -1} \int_0^{2\pi} \int_b^1  \sum_{n\in\mathbb{N},k\geqslant 1}
  x_{n,k}^{\beta-1} A_{n,k}^2 J_n'(x_{n,k} \tfrac{b}{R}) J_n(x_{n,k} \tfrac{\rho}{R}) \cos\big(n(\theta-\eta)\big)
  \rho \dd \rho \dd \eta \\
  & = \frac{1}{b} R^{-\beta -1} 2\pi  \sum_{k\geqslant 1}
  x_{0,k}^{\beta-1} A_{0,k}^2 J_0'(x_{0,k} \tfrac{b}{R}) \int_b^1  J_0(x_{0,k} \tfrac{\rho}{R}) \rho \dd \rho  \\
  & = -2R^{-\beta}\sum_{k\geqslant  1}x^{\beta-2}_{0,k}\frac{1}{J_1^2(x_{0,k})} J_1(x_{0,k}\tfrac{b}{R})
  \Big(\tfrac{1}{b}J_1(x_{0,k} \tfrac{1}{R})  - J_1(x_{0,k}\tfrac{b}{R})\Big),
\end{align*}
and similarly,
\begin{align*}
  V_b^2[\mathbf{0}]=&\int_0^{2\pi}\int_{b}^{1}\partial_{\rho_1}G(1,\theta,\rho,\eta)\rho \dd \rho\dd \eta \\
  = & -2R^{-\beta}\sum_{k\geqslant  1}x^{\beta-2}_{0,k}\frac{1}{J_1^2(x_{0,k})} J_1(x_{0,k}\tfrac{1}{R})
  \Big(J_1(x_{0,k}\tfrac{1}{R})-bJ_1(x_{0,k}\tfrac{b}{R})\Big).
\end{align*}
By using Sneddon's formula (see Lemma \ref{lem:sned-form} below) and changing of variables,
we also can derive another expression formulas for $V_b[\mathbf{0}]$:
\begin{equation}\label{def:Vb1-SQG}
\begin{split}
  V_b^1[\mathbf{0}] & = - \frac{2 \sin (\tfrac{\pi}{2}\beta)}{\pi R^\beta}  \bigg(
  \int_0^{\infty}\mathbf{I}_1(\tfrac{b}{R}\rho) \big(\tfrac{1}{b}\mathbf{K}_1(\tfrac{1}{R}\rho)
  - \mathbf{K}_1(\tfrac{b}{R}\rho)\big)\rho^{\beta-1}\dd \rho \\
  & \quad \qquad  + \int_0^{\infty}\mathbf{I}_1(\tfrac{b}{R}\rho)
  \big(\tfrac{1}{b}\mathbf{I}_1(\tfrac{1}{R}\rho)-\mathbf{I}_1(\tfrac{b}{R}\rho)\big)
  \frac{\mathbf{K}_0(\rho)}{\mathbf{I}_0(\rho)}\rho^{\beta-1}\dd \rho \bigg) \\
  & = - \frac{2 \sin (\tfrac{\pi}{2}\beta)}{\pi }  \bigg(
  \int_0^{\infty} \mathbf{I}_1(b\rho) \big(\tfrac{1}{b}\mathbf{K}_1(\rho)
  - \mathbf{K}_1(b\rho)\big)\rho^{\beta-1}\dd \rho \\
  & \quad \qquad  + \int_0^{\infty}\mathbf{I}_1(b\rho)
  \big(\tfrac{1}{b}\mathbf{I}_1(\rho)-\mathbf{I}_1(b\rho)\big)
  \frac{\mathbf{K}_0(R\rho)}{\mathbf{I}_0(R\rho)}\rho^{\beta-1}\dd \rho \bigg),
\end{split}
\end{equation}
and
\begin{align*}
  V_b^2[\mathbf{0}] & = - \frac{2\sin(\tfrac{\pi}{2}\beta)}{\pi}  \bigg(
  \int_0^{\infty}\mathbf{K}_1(\rho) \big(\mathbf{I}_1(\rho)-b\mathbf{I}_1(b\rho)\big)
  \rho^{\beta-1}\dd \rho \\
  & \quad \qquad  + \int_0^{\infty}\mathbf{I}_1(\rho)
  \big( \mathbf{I}_1(\rho)-b\mathbf{I}_1(b\rho) \big)
  \frac{\mathbf{K}_0(R\rho)}{\mathbf{I}_0(R\rho)}\rho^{\beta-1}\dd \rho \bigg),
\end{align*}
where $\mathbf{I}_n$ and $\mathbf{K}_n$ are the modified Bessel functions of the first and second kind, respectively, defined by \eqref{def:In-z} and \eqref{def:Kn-z}.

\noindent Next, we check that $V_{b}^{1}[\mathbf{0}]\ne V_{b}^{2}[\mathbf{0}]$ holds for every $b\in (0,1)$.
Notice that
\begin{equation}\label{eq:Vb1-2-SQG}
\begin{split}
  V_b^1[\mathbf{0}] - V_b^2[\mathbf{0}]
  & = \frac{2\sin(\frac{\pi}{2}\beta)}{\pi} \int_0^\infty \mathbf{I}_1(b\rho)
  \big(\mathbf{K}_1(b\rho) -\tfrac{1}{b} \mathbf{K}_1( \rho) \big) \rho^{\beta-1} \dd \rho  \\
  & \quad + \frac{2\sin(\frac{\pi}{2}\beta)}{\pi}
  \int_0^{\infty}\mathbf{K}_1(\rho) \big(\mathbf{I}_1(\rho)-b\mathbf{I}_1(b\rho)\big)
  \rho^{\beta-1}\dd \rho  \\
  & \quad + \frac{2\sin(\frac{\pi}{2}\beta)}{\pi }
  \int_0^{\infty} \Big( \mathbf{I}_1^2 (\rho)
  - (b + \tfrac{1}{b}) \mathbf{I}_1(\rho) \mathbf{I}_1(b \rho)
  + \mathbf{I}_1^2(b\rho) \Big)
  \frac{\mathbf{K}_0(R\rho)}{\mathbf{I}_0(R\rho)}\rho^{\beta-1}\dd \rho .
\end{split}
\end{equation}
Owing to  \eqref{eq:basic_ineq_I_K}, we find that for every $\rho>0$ and $b\in(0,1)$,
\begin{align*}
  \mathbf{K}_1(b\rho)  - \tfrac{1}{b}\mathbf{K}_1(\rho)  >0,
  \quad \textrm{and}\quad
  \mathbf{I}_1(\rho) - b \mathbf{I}_1(b \rho)>0,
\end{align*}
which combined with the fact $\mathbf{I}_0(\rho)>0$ and $\mathbf{K}_0(\rho)>0$ for all $\rho\in (0,\infty)$ immediately yields that
the first two terms on the right-hand side of \eqref{eq:Vb1-2-SQG} are both positive.
For the remaining third term on the right-hand side of \eqref{eq:Vb1-2-SQG}, noting that
\begin{align*}
  \mathbf{I}_1(x)= \frac{x}{2}\mathcal{I}(x),\quad \textrm{with}\quad
  \mathcal{I}(x)\triangleq \sum_{m=0}^{\infty}\frac{(\frac{x}{2})^{2m}}{m!\Gamma(m+1)},
\end{align*}
the wanted positivity follows from the following estimate that
\begin{equation}\label{eq:QGSW_ineq_2}
\begin{split}
  \mathbf{I}^2_1(bx)+\mathbf{I}_1^2(x)- \big(b+\tfrac{1}{b}\big) \mathbf{I}_1(bx)\mathbf{I}_1(x)
  & = \frac{x^2}{4} \Big(b^2 \mathcal{I}^2(bx)+\mathcal{I}^2(x)-(b^2+1)\mathcal{I}(bx)\mathcal{I}(x)\Big) \\
  & = \frac{x^2}{4} \big(\mathcal{I}(x) - \mathcal{I}(bx) \big)
  \big(\mathcal{I}(x) - b^2 \mathcal{I}(bx) \big)>0.
\end{split}
\end{equation}
Consequently, we conclude that
\begin{align*}
  \textrm{$V_b^1[\mathbf{0}]-V_b^2[\mathbf{0}]>0$,\quad $\forall b\in (0,1)$}.
\end{align*}
Therefore, we can apply Theorem \ref{thm:general_kernel} to this radial-domain gSQG case for all $b\in (0,1)$
and get the existence of $m$-fold symmetric doubly connected V-states bifurcating from $\mathbf{1}_{\mathbb{D}\setminus \overline{b\mathbb{D}}}$
with sufficiently large $m\in\mathbb{N}^\star$.

\noindent
Besides, based on the above analysis, it also holds that $V_b^2[\mathbf{0}] < 0$,
but we are not sure about the sign of $V_b^1[\mathbf{0}]$.  In addition, by letting $R\to +\infty$,
we can recover the $V_b[\mathbf{0}]$ in the whole space given by \eqref{eq:Vb0-SQG}.
We take $V_b^1[\mathbf{0}]$ as an example.
In view of \eqref{def:In-z} and (A.3), (A.7) in \cite{Rou23a}, we see that for every $x\in (0,+\infty)$ and
$n\in \mathbb{N}$,
\begin{align*}
  \lim_{x\rightarrow +\infty}\mathbf{I}_n(x)= + \infty,
  \quad  \mathbf{I}_n(x)>0,\quad \mathbf{K}_n(x)>0,\quad \mathbf{K}_n'(x) < 0,
\end{align*}
which implies that
\begin{align}\label{eq:limit_K_I_n}
  \lim_{R\to +\infty}\frac{\mathbf{K}_n(R\rho)}{\mathbf{I}_n(R\rho)}=0,\quad \forall  \rho>0,\,n\in \mathbb{N}.
\end{align}
Thus, using the dominated convergence theorem, \eqref{def:J} and \eqref{eq:gSQG_identity_spectrum},
it follows that
\begin{align*}
  \lim_{R\to +\infty}V_b^1[\mathbf{0}] & = -\frac{2}{\pi}\sin \big(\tfrac{\pi}{2}\beta \big)
  \int_0^{\infty}\mathbf{I}_1(b\rho) \big(\tfrac{1}{b}\mathbf{K}_1(\rho)
  - \mathbf{K}_1(b\rho)\big) \rho^{\beta-1} \dd \rho \\
  & = -\Lambda_{1,b}(\beta) + b^{-\beta}\Lambda_{1,1}(\beta),
\end{align*}
and $V_b^{1}[\mathbf{0}]$ is positive at least for sufficiently large $R$. In fact, from the numerical experiments
we can conjecture that $V_b^{1}[\mathbf{0}]$ is positive for any $\beta\in (0,1)$, $b\in (0,1)$ and $R>1$.


\subsection{QGSW equation in the disc}\label{subsec:QGSW-rad}
Consider the QGSW model in the radial domain $R\mathbb{D}=B(\mathbf{0},R)$ ($R>1$) with the slip boundary condition,
it is the equation \eqref{eq:ASE} in $\mathbf{D}=R\mathbb{D}$ with the stream function $\psi$
satisfying the following Dirichlet problem
\begin{align*}
  (-\Delta + \varepsilon^2) \psi = \omega\quad \textrm{in}\;\, \mathbf{D},\quad \psi |_{\partial \mathbf{D}} = 0.
\end{align*}
Moreover, $\psi$ has the expression formula \eqref{eq:stream_function} with the spectral Green function $K(\mathbf{x},\mathbf{y})$
given by
\begin{align*}
  K(\mathbf{x},\mathbf{y})=K_0(|\mathbf{x}-\mathbf{y}|)+K_1(\mathbf{x},\mathbf{y}),\quad \text{with}\quad K_0(|\mathbf{x}-\mathbf{y}|)
  = \frac{1}{2\pi}\mathbf{K}_0(\varepsilon|\mathbf{x}-\mathbf{y}|),\,\varepsilon>0,
\end{align*}
and $K(\mathbf{x},\mathbf{y})$ solving the following elliptic problem
\begin{align}\label{eq:K1}
  -\Delta_{\mathbf{y}}K_1(\mathbf{x},\mathbf{y})
  +\varepsilon^2K_1(\mathbf{x},\mathbf{y})=0\quad \text{in}\,\; \mathbf{D},
  \quad K_1(\mathbf{x},\mathbf{y})|_{\mathbf{y}\in \partial \mathbf{D}}
  = -\frac{1}{2\pi}\mathbf{K}_0(\varepsilon|\mathbf{x}-\mathbf{y}|).
\end{align}
To the best of our knowledge, the question of the existence of nontrivial doubly connected
V-states in the radial-domain QGSW model is still unsolved, and it is our goal here to address this issue.
\vskip0.5mm

\noindent To start with, $K_0(t) = \frac{1}{2\pi} \mathbf{K}_0(\varepsilon t)$ satisfies the assumptions \eqref{A1}-\eqref{A2}
as in the whole-place QGSW case, and we refer to \cite[Sec. 5.7]{HXX23b} for details of verifying the assumptions
\eqref{A3}-\eqref{A4} for $K_1(\mathbf{x},\mathbf{y})$; indeed, $K_1(\mathbf{x},\mathbf{y})\in C^\infty(\mathbf{D}\times \mathbf{D})$.
Hence, in order to apply Theorem \ref{thm:general_kernel}, it is required to show $V_b^{1}[\mathbf{0}]\ne V_b^{2}[\mathbf{0}]$
for some $b\in (0,1)$.

\noindent Next, we first give another explicit expression formula of $K(\mathbf{x},\mathbf{y})$ on
$\mathbf{D}=R\mathbb{D}=B(\mathbf{0},R)$. According to Lemma \ref{lem:disc-exp} and \eqref{eq:eigen_func_scale},
the eigenvalues $\frac{1}{R^2}x_{n,k}^2$ and the associated eigenfunctions
$\big(\frac{1}{R}\phi_{n,k}^{(1)}(\frac{\mathbf{x}}{R}),\frac{1}{R}\phi_{n,k}^{(2)}(\frac{\mathbf{x}}{R})\big)$
solve the spectral Laplacian problem \eqref{eq:eigen_func_scale}.
Thus, we infer that
\begin{align*}
  \psi(\mathbf{x}) = \int_{\mathbf{D}} \sum_{n\in \mathbb{N},k\geqslant  1}\frac{1}{\frac{x^2_{n,k}}{R^2}+\varepsilon^2}
  \Big(\frac{1}{R^2} \phi^{(1)}_{n,k}(\tfrac{\mathbf{x}}{R}) \phi^{(1)}_{n,k}(\tfrac{\mathbf{y}}{R})
  + \frac{1}{R^2} \phi^{(2)}_{n,k}(\tfrac{\mathbf{x}}{R}) \phi^{(2)}_{n,k}(\tfrac{\mathbf{y}}{R})\Big)
  \omega(\mathbf{y})\dd \mathbf{y},
\end{align*}
which yields that
\begin{align*}
  K(\mathbf{x},\mathbf{y}) 
  & = \sum_{n\in \mathbb{N},k\geqslant  1}\frac{1}{x^2_{n,k}+\varepsilon^2R^2}\Big(\phi^{(1)}_{n,k}
  (\tfrac{\mathbf{x}}{R})\phi^{(1)}_{n,k}(\tfrac{\mathbf{y}}{R})
  +\phi^{(2)}_{n,k}(\tfrac{\mathbf{x}}{R})\phi^{(2)}_{n,k}(\tfrac{\mathbf{y}}{R})\Big).
\end{align*}
Consequently, we find
\begin{align*}
 G(\rho_1,\theta,\rho_2,\eta)= K(\rho_1e^{i\theta},\rho_2e^{i\eta})=\sum_{n\in \mathbb{N},k\geqslant  1}\frac{1}{x^2_{n,k}
  + \varepsilon^2R^2}A^2_{n,k}J_n(x_{n,k}\tfrac{\rho_1}{R})J_n(x_{n,k}\tfrac{\rho_2}{R})\cos\big(n(\theta-\eta)\big),
\end{align*}
with $A_{n,k}$ given by \eqref{eq:Ank}.
Thanks to \eqref{eq:Vb1-2[0]}-\eqref{def:Lambda-nb}, \eqref{eq:K-der}, together with \eqref{eq:bessel_ids},
we deduce that
\begin{equation}\label{eq:QGSW_radial_domain_1}
\begin{aligned}
  V_b^1[\mathbf{0}] & = \frac{1}{b}\int_0^{2\pi}\int_b^{1}\partial_{\rho_1}G(b,\theta,\rho,\eta)\rho \dd \rho\dd \eta \\
  & = \frac{1}{b} \int_0^{2\pi} \int_b^1 \sum_{n\in\mathbb{N},k\geqslant 1}
  \frac{1}{x_{n,k}^2 +\varepsilon^2 R^2} \frac{x_{n,k}}{R} A_{n,k}^2 J_n'(x_{n,k}\tfrac{b}{R}) J_n(x_{n,k}\tfrac{\rho}{R} )
  \cos\big(n(\theta -\eta)\big) \rho \dd \rho \dd \eta \\
  & = \frac{2\pi}{b} \sum_{k\geqslant 1} \frac{1}{x_{0,k}^2 +\varepsilon^2 R^2}
  A_{0,k}^2 J_0'(x_{0,k} \tfrac{b}{R}) \int_b^1 \tfrac{x_{0,k}}{R} J_0(x_{0,k} \tfrac{\rho}{R}) \rho \dd \rho  \\
  & = -2\sum_{k\geqslant  1}\frac{1}{x^2_{n,k}+\varepsilon^2R^2} \frac{J_1(x_{0,k}\frac{b}{R})}{J_1^2(x_{0,k})}
  \Big(b^{-1}J_1(x_{0,k}\tfrac{1}{R})-J_1(x_{0,k}\tfrac{b}{R})\Big),
\end{aligned}
\end{equation}
and
\begin{equation}\label{eq:QGSW_radial_domain_2}
\begin{aligned}
  V_b^2[\mathbf{0}]=&\int_0^{2\pi}\int_b^{1}\partial_{\rho_1} G(1,\theta,\rho,\eta)\rho \dd \rho\dd \eta\\
  = & -2\sum_{k\geqslant  1}\frac{1}{x^2_{n,k}+\varepsilon^2R^2}\frac{J_1(x_{0,k}\frac{1}{R})}{J_1^2(x_{0,k})}
  \Big(J_1(x_{0,k}\tfrac{1}{R})-bJ_1(x_{0,k}\tfrac{b}{R})\Big).
\end{aligned}
\end{equation}
By virtue of Lemma \ref{lem:sum_up_QGSW} below, 
it follows that
\begin{align}\label{eq:Vb1-QGSW-rad}
  V_b^1[\mathbf{0}]=-\frac{\mathbf{K}_0( R\varepsilon) \mathbf{I}_1(b\varepsilon)}{\mathbf{I}_0( R\varepsilon)}
  \Big(\tfrac{1}{b}\mathbf{I}_1(\varepsilon) -\mathbf{I}_1( b\varepsilon)\Big)
  -  \mathbf{I}_1(b\varepsilon )\Big(\tfrac{1}{b} \mathbf{K}_1(\varepsilon)
  -  \mathbf{K}_1(b\varepsilon) \Big),
\end{align}
and
\begin{align*}
  V_b^2[\mathbf{0}] = - \frac{\mathbf{K}_0(R \varepsilon ) \mathbf{I}_1(\varepsilon)}{\mathbf{I}_0( R\varepsilon)}
  \Big(\mathbf{I}_1(\varepsilon )   - b \mathbf{I}_1( b\varepsilon)\Big)
  - \mathbf{K}_1(\varepsilon ) \Big(\mathbf{I}_1(\varepsilon )
  - b\mathbf{I}_1(b\varepsilon )\Big),
\end{align*}
and
\begin{align*}
  V_b^1[\mathbf{0}] - V_b^2[\mathbf{0}] & = \frac{\mathbf{K}_0( R\varepsilon)}{\mathbf{I}_0( R\varepsilon)}
  \Big(\mathbf{I}_1^2(\varepsilon) - ( \tfrac{1}{b} + b)\mathbf{I}_1(\varepsilon) \mathbf{I}_1(b\varepsilon)
  + \mathbf{I}_1^2( b\varepsilon)\Big) \\
  & \quad +  \mathbf{I}_1(b\varepsilon )\Big(\mathbf{K}_1(b\varepsilon) -  \tfrac{1}{b} \mathbf{K}_1(\varepsilon) \Big)
  + \mathbf{K}_1(\varepsilon ) \Big(\mathbf{I}_1(\varepsilon )  - b\mathbf{I}_1(b\varepsilon )\Big) .
\end{align*}
In view of \eqref{eq:basic_ineq_I_K} and \eqref{eq:QGSW_ineq_2}, we find that for any $b\in (0,1)$,
\begin{align*}
  V_b^2[\mathbf{0}]<0,\quad V_b^1[\mathbf{0}]-V_b^{2}[\mathbf{0}]>0.
\end{align*}
Therefore, we have verified all the conditions in Theorem \ref{thm:general_kernel}
and get the existence of nontrivial $m$-fold symmetric doubly connected V-states bifurcating from $\mathbf{1}_{\mathbb{D}\setminus \overline{b\mathbb{D}}}$
for sufficiently large $m\in\mathbb{N}^\star$.

\noindent
Due to \eqref{eq:limit_K_I_n}, and taking $R\to \infty$, we recover $V_b[\mathbf{0}]$
in the QGSW equation of the whole space. Then,  $V_b^{1}[\mathbf{0}]$ is positive for $R>1$ large enough.
From the numerical experiments, it strongly suggests that $V_b^{1}[\mathbf{0}]$ given by
\eqref{eq:Vb1-QGSW-rad} is positive for every $\varepsilon>0$, $b\in (0,1)$ and $R>1$.

\noindent Finally, we remark that because $K_1$ solves the elliptic problem \eqref{eq:K1},
it follows from the classical elliptic PDE theory (e.g. \cite{Muller02}) that $K_1(\mathbf{x},\mathbf{y})$
is analytic about $\mathbf{x}$ and $\mathbf{y}$ in any domain compactly embedded in $\mathbf{D}=B(\mathbf{0},R)$.

\subsection{2D Euler equation in the annulus}\label{subsec:Euler-annulus}
Consider the 2D Euler equation in vorticity form in the annulus
$R_2\mathbb{D}\setminus \overline{R_1 \mathbb{D}}$ ($R_2>1>b>R_1>0$)
with the slip (non-penetration) boundary condition, that is,
$u\cdot \mathbf{n}=0$ on $\partial (R_2\mathbb{D})\cup \partial (R_1\mathbb{D})$
with $\mathbf{n}$ the outer normal unit vector.
It corresponds to the active scalar equation \eqref{eq:ASE}
in $\mathbf{D}=R_2\mathbb{D}\setminus \overline{R_1\mathbb{D}}$ and the stream function $\psi$ solves the Dirichlet problem of the Poisson equation
\begin{align*}
  -\Delta \psi = \omega\quad \textrm{in}\,\,\mathbf{D} = R_2\mathbb{D}\setminus \overline{R_1\mathbb{D}},\quad\; \psi|_{\partial \mathbf{D}} = 0.
\end{align*}
According to \cite{GT14,Hickey29} or \cite[Appendix V]{EP96}, $\psi$ has the expression formula \eqref{eq:stream_function}
with the Green function $K(\mathbf{x},\mathbf{y})$ given by
\begin{equation}\label{eq:K-Euler-annulus}
\begin{split}
  K(\mathbf{x},\mathbf{y})=\frac{1}{2\pi}\bigg(& -\log |\mathbf{x}-\mathbf{y}|
  + \mathcal{A}_0(|\mathbf{y}|)+\mathcal{B}_0(|\mathbf{y}|)\log |\mathbf{x}| \\
  &-\sum_{m=1}^{\infty}\frac{1}{m} \Big(\mathcal{A}_m(|\mathbf{y}|)|\mathbf{x}|^m
  +\mathcal{B}_m(|\mathbf{y}|)|\mathbf{x}|^{-m} \Big) \cos m(\theta-\eta)\bigg),
\end{split}
\end{equation}
where $\mathbf{x}=(|\mathbf{x}|\cos \theta,|\mathbf{x}|\sin \theta)$,
$\mathbf{y}=(|\mathbf{y}|\cos \eta,|\mathbf{y}|\sin \eta)$, and
\begin{align*}
  \mathcal{A}_0(r)\triangleq \log R_2 \frac{\log (R_1/r)}{\log (R_1/R_2)},
  &\qquad \mathcal{B}_0(r)\triangleq \frac{\log (r/R_2)}{\log (R_1/R_2)},\\
  \mathcal{A}_m(r)\triangleq \frac{r^{m}-\big(\frac{R_1^2}{r}\big)^m}
  {R_2^{2m}- R_1^{2m}},
  &\qquad \mathcal{B}_m(r)\triangleq \frac{R_1^{2m}\big(\big(\frac{R_2^2}{r}\big)^m
  -r^m \big)}{R_2^{2m}-R_1^{2m}}.
\end{align*}
In order to show that $K(\mathbf{x},\mathbf{y}) = -\frac{1}{2\pi} \log |\mathbf{x}-\mathbf{y}| + K_1(\mathbf{x},\mathbf{y})$
satisfies the assumptions \eqref{A1}-\eqref{A4} with $\alpha\in(0,1)$,
it suffices to check the symmetric assumption \eqref{A4}: for every $\mathbf{x}\neq \mathbf{y}\in \mathbf{D}$,
$K_1(\mathbf{x},\mathbf{y}) = K_1(\mathbf{y},\mathbf{x})$
is the classical property of Green function (e.g. see \cite[pp. 35]{Evans10}) and it can also be checked from the explicit formula \eqref{eq:K-Euler-annulus};
the properties $K_1(\mathbf{x}, \mathbf{y})=K_1(\mathbf{x}e^{i\vartheta}, \mathbf{y}e^{i\vartheta})$ ($\forall\vartheta \in \mathbb{R}$) and
$K_1(\mathbf{x}, \mathbf{y}) = K_1(\overline{\mathbf{x}}, \overline{\mathbf{y}})$ follows from \eqref{eq:K-Euler-annulus} and the fact that
$\cos m(\theta+\vartheta-(\eta+\vartheta))=\cos m(\theta-\eta)=\cos m(-\theta-(-\eta))$.
Hence, we can apply Theorem \ref{thm:general-statement} to this case.

\noindent Now, we compute $V_b^{1}[\mathbf{0}]$ and $V_b^{2}[\mathbf{0}]$.
We first write
\begin{align*}
  G_1(\rho_1,\theta,\rho_2,\eta) & = K_1(\rho_1 e^{i\theta},\rho_2 e^{i\eta}) \\
  & = \frac{1}{2\pi}\bigg(\mathcal{A}_0(\rho_2)+\mathcal{B}_0(\rho_2)
  \log \rho_1 -\sum_{m=1}^\infty \frac{1}{m}\Big(\mathcal{A}_m(\rho_2) \rho_1^m
  +\mathcal{B}_m(\rho_2)\rho_1^{-m}\Big) \cos m(\theta-\eta)\bigg).
\end{align*}
Recalling \eqref{c-1} and \eqref{c-2}, we deduce that
\begin{align}\label{def:Cb}
  \widetilde{\mathsf{c}}_b & = \int_0^{2\pi}\int_{b}^{1}\partial_{\rho_1}G_1(1,0,\rho,\eta)\rho
  \dd \rho \dd \eta = \int_b^1 \mathcal{B}_0(\rho) \rho\, \dd\rho \nonumber \\
  & = \frac{1}{\log (R_1/R_2)}\int_{b}^{1}\log (\tfrac{\rho}{R_2}) \rho \dd \rho
  \triangleq \mathfrak{C}_b,
\end{align}
and similarly,
\begin{align*}
  \mathsf{c}_b = \frac{1}{b}\int_0^{2\pi}\int_{b}^{1}\partial_{\rho_1}G_1(b,0,\rho,\eta)\rho
  \dd \rho \dd \eta
  =\frac{1}{b}\int_{b}^{1}\mathcal{B}_0(\rho)\frac{\rho}{b}\dd \rho
  = \frac{1}{b^2}\mathfrak{C}_b.
\end{align*}
Through integration by parts, we can exactly compute $\mathfrak{C}_b$:
\begin{align*}
   \mathfrak{C}_b=\frac{1}{\log (R_1/R_2)}
  \Big(-\frac{b^2}{2}\log b-\frac{1-b^2}{4}-\frac{1-b^2}{2}\log R_2\Big).
\end{align*}
From the definition of $\mathfrak{C}_b$ in \eqref{def:Cb}, we also see that
\begin{align*}
  0< \mathfrak{C}_b < \int_b^1 \rho \dd \rho = \frac{1-b^2}{2},\quad \forall 1<R_2<+\infty,\quad \textrm{and} \quad
  \lim_{R_2 \rightarrow +\infty} \mathfrak{C}_b = \frac{1-b^2}{2}.
\end{align*}

Then, recalling \eqref{eq:V-0}, together with the computations in Section \ref{subsec:Euler}, it follows that
\begin{align*}
  V_b[\mathbf{0}]=
  \begin{pmatrix}
    V_b^1[\mathbf{0}] & 0\\
	0 & V_b^2[\mathbf{0}]
  \end{pmatrix}
  =
  \begin{pmatrix}
    \frac{1}{b^2}\mathfrak{C}_b & 0\\
	0 & -\frac{1-b^2}{2}+\mathfrak{C}_b
  \end{pmatrix},
\end{align*}
and
\begin{align*}
  V_b^1[\mathbf{0}]-V_b^2[\mathbf{0}]=\frac{1-b^2}{2}+\frac{1-b^2}{b^2}\mathfrak{C}_b>0,\quad \forall b\in (R_1,1),
\end{align*}
which verifies the condition \eqref{eq:assum-1} for every $b\in \mathcal{S}=(R_1,1)$.
Furthermore, in view of \eqref{def:Pnb}, we find that
\begin{align*}
  p_{n,b} & = \int_{\TT}K_1(b,be^{i\eta})e^{in\eta}\dd \eta=\int_{\TT}G_1(b,0,b,\eta)e^{in\eta}\dd \eta\\
  & = -\frac{1}{2\pi n}\Big(\mathcal{A}_n(b)b^{-n}+\mathcal{B}_n(b)b^{-n}\Big)\int_0^{2\pi}\cos^2 (n\eta)\dd \eta\\
  & =  -\frac{1}{2n}\Big(\mathcal{A}_n(b)b^{n}+\mathcal{B}_n(b)b^{-n}\Big),
\end{align*}
and
\begin{align*}
  \widetilde{p}_{n,b}= \int_{\TT}K_1(b,e^{i\eta})e^{in\eta}\dd \eta 
  =-\frac{1}{2 n}\Big(\mathcal{A}_n(1)b^n+\mathcal{B}_n(1)b^{-n}\Big).
\end{align*}
Thanks to \eqref{eq:Lambda-n-b} and \eqref{eq:lam-tild-Euler}-\eqref{eq:lam-Euler}, we obtain
\begin{align*}
  \mathbf{\Lambda}_{n,b} & = \frac{1}{2n}
  \begin{pmatrix}
	\mathcal{A}_n(b)b^n + \mathcal{B}_n(b)b^{-n}-1 & -(\mathcal{A}_n(1)-1)b^n-\mathcal{B}_n(1)b^{-n}\\
	(\mathcal{A}_n(1)-1)b^n+\mathcal{B}_n(1)b^{-n} & -\mathcal{A}_n(1)-\mathcal{B}_n(1)+1
  \end{pmatrix} \\
  & = \frac{1}{2n} \frac{1}{1 - (\frac{R_1}{R_2})^{2n}}
  \begin{pmatrix}
    (R_2^{-2n} - b^{-2n} ) (b^{2n} - R_1^{2n})  & (1 - R_2^{-2n}) (b^n - R_1^{2n} b^{-n})   \\[1mm]
    - (1 - R_2^{-2n})( b^n - R_1^{2n} b^{-n} )
    &  (1- R_2^{-2n}) (1- R_1^{2n})
  \end{pmatrix}.
\end{align*}

Now, motivated by the argument in \cite[Sec. 4.1.1]{DHHM},
we shall derive an explicit condition on $m$ to ensure the existence of $m$-fold symmetric doubly connected V-states in this case.
The equation \eqref{eq:Omega-eq} of $\Omega$ becomes
\begin{align*}
  \Omega^2 - (A_{n,b} + B_{n,b}) \Omega + A_{n,b} B_{n,b} + \frac{1}{4 n^2} \frac{(1 - R_2^{-2n})^2}{(1- R_1^{2n} R_2^{-2n})^2}
  b^{2n}\Big( 1 - \frac{R_1^{2n}}{b^{2n}}\Big)^2 =0 ,
\end{align*}
with
\begin{align}
  A_{n,b} & = -\frac{1}{b^2} \mathfrak{C}_b - \frac{1}{2n} \frac{1}{1- (R_1R_2^{-1})^{2n}}
  \big( \tfrac{1}{R_2^{2n}} - \tfrac{1}{b^{2n}} \big) (b^{2n} - R_1^{2n}) , \label{eq:Euler-ann-A-n}\\
  B_{n,b} & = \frac{1-b^2}{2} - \mathfrak{C}_b -  \frac{1}{2n} \frac{1}{1- (R_1 R_2^{-1})^{2n}}
  \big(1- \tfrac{1}{R_2^{2n}}\big) (1- R_1^{2n}); \label{eq:Euler-ann-B-n}
\end{align}
and the discriminant given by \eqref{discriminant} reduces to
\begin{equation}\label{eq:Euler-ann-discri}
\begin{split}
  \mathbf{\Delta}_{n,b} & = \Big(\frac{(1-b^2)(b^2 + 2 \mathfrak{C}_b)}{2 b^2}
  +\frac{1}{2n} \frac{1}{1-(R_1R_2^{-1})^{2n}} \Big(\tfrac{b^{2n} +1 -2 R_1^{2n}}{R_2^{2n}}
  + R_1^{2n} + \tfrac{R_1^{2n}}{b^{2n}}-2\Big)\Big)^2 \\
  & \quad - \frac{(1 - R_2^{-2n})^2}{(1- R_1^{2n} R_2^{-2n})^2}
  \frac{b^{2n}}{n^2} \Big( 1 - \frac{R_1^{2n}}{b^{2n}}\Big)^2 .
\end{split}
\end{equation}
Due to the fact that $1- \frac{R_1^{2n}}{b^{2n}}>0$ for every $b\in (R_1,1)$, the inequality $\mathbf{\Delta}_{n,b}>0$ is equivalent to
\begin{align}\label{eq:Delta>0cas1}
  \frac{(1-b^2)(b^2 + 2 \mathfrak{C}_b)}{2 b^2}
  +\frac{1}{2n} \tfrac{1}{1- \frac{R_1^{2n}}{R_2^{2n}}} \Big(\tfrac{b^{2n} +1 -2 R_1^{2n}}{R_2^{2n}}
  + R_1^{2n} + \tfrac{R_1^{2n}}{b^{2n}}-2\Big) - \frac{1 - R_2^{-2n}}{1- \frac{R_1^{2n}}{ R_2^{2n}}}
  \frac{b^n}{n} \Big( 1 - \tfrac{R_1^{2n}}{b^{2n}}\Big)>0,
\end{align}
or
\begin{align}\label{eq:Delta>0cas2}
  \frac{(1-b^2)(b^2 + 2 \mathfrak{C}_b)}{2 b^2}
  + \frac{1}{2n} \tfrac{1}{1- \frac{R_1^{2n}}{R_2^{2n}}} \Big(\tfrac{b^{2n} +1 -2 R_1^{2n}}{R_2^{2n}}
  + R_1^{2n} + \tfrac{R_1^{2n}}{b^{2n}}-2\Big) + \tfrac{1 - R_2^{-2n}}{1- \frac{R_1^{2n}}{ R_2^{2n}}}
  \frac{b^n}{n} \Big( 1 - \frac{R_1^{2n}}{b^{2n}}\Big)<0.
\end{align}
However, the second case \eqref{eq:Delta>0cas2} cannot happen if $n\geqslant 2$. In fact, noticing that
\begin{align*}
  & \frac{1}{2n} \frac{1}{1- \frac{R_1^{2n}}{R_2^{2n}}} \Big(\tfrac{b^{2n} +1 -2 R_1^{2n}}{R_2^{2n}}
  + R_1^{2n} + \tfrac{R_1^{2n}}{b^{2n}}-2\Big) + \frac{1 - R_2^{-2n}}{1- \frac{R_1^{2n}}{ R_2^{2n}}}
  \frac{b^n}{n} \Big( 1 - \frac{R_1^{2n}}{b^{2n}}\Big) \\
  & \qquad= \frac{1}{2n} \frac{1}{1- \frac{R_1^{2n}}{R_2^{2n}}}\Big(\frac{(b^n-1)^2+2R_1^{2n}(b^{-n}-1)}{R_2^{2n}}+R_1^{2n}(1-b^{-n})^2-2(1-b^n)\Big) \\
  & \qquad= \frac{1}{2n} \frac{1-b^{n}}{1- \frac{R_1^{2n}}{R_2^{2n}}}\Big(\frac{(1-b^n)+2R_1^{2n}b^{-n}}{R_2^{2n}}+R_1^{2n}b^{-2n}(1-b^{n})-2\Big),
\end{align*}
we find that for every $n\geqslant 2$ and $0<R_1<b<1<R_2$,
\begin{align*}
  & \textrm{L.H.S.  of} \;\eqref{eq:Delta>0cas2} \\
  & = \frac{1-b^2}{2} + \frac{1-b^2}{b^2}\mathfrak{C}_b -\frac{1-b^n}{n} +
  \frac{1-b^n}{2n(1- \frac{R_1^{2n}}{R_2^{2n}})}\Big(\frac{(1-b^n)+2R_1^{2n}(b^{-n}-1)}{R_2^{2n}} + R_1^{2n}b^{-2n}(1-b^{n})\Big) \\
  &  >0,
\end{align*}
where the last inequality is ensured by the fact that $\frac{1-b^n}{n} \leqslant \frac{1-b^2}{2}$ for every $n\geqslant 2$ and $b\in(0,1)$.
\\[0.5mm]
Note that \eqref{eq:Delta>0cas1} is also equivalent to the following condition that
\begin{align}\label{eq:Delta>0equa}
  n > \frac{b^2}{(1-b^2)(b^2 + 2\mathfrak{C}_b)} \frac{1}{1- (\frac{R_1}{R_2})^{2n}}
  \Big(2- R_1^{2n} - \tfrac{R_1^{2n}}{b^{2n}} -\tfrac{b^{2n} +1 - 2R_1^{2n}}{R_2^{2n}}
  +  2(1- R_2^{-2n}) b^n \big(1- \tfrac{R_1^{2n}}{b^{2n}}\big) \Big) .
\end{align}
Next, we plan to show that if $m_0\geqslant 2$ satisfies \eqref{eq:Delta>0equa},
then each $m\geqslant m_0$ satisfies the condition \eqref{eq:Delta>0equa} too.
By introducing
\begin{align*}
  x\in \mathbb{R}^+\mapsto f_t(x)
  & \triangleq \frac{1-t^{2x}}{x},\quad t\in (0,1), \\
  x\in \mathbb{R}^+\mapsto g_{t_1,t_2}(x) & \triangleq (1-t_1^{2x})(1-t_2^{2x}) = 1- t_1^{2x} - t_2^{2x} + (t_1 t_2)^{2x},
  \quad t_1,t_2\in (0,1),
\end{align*}
we extend the mapping $n\in \mathbb{N}^{\star}\mapsto \mathbf{\Delta}_{n,b}$ as
\begin{equation*}
\begin{split}
  x\in \mathbb{R}^+\mapsto \mathbf{\Delta}_{x,b}
  \triangleq \frac{1}{4} \Pi_{x,b}^2  - \mathsf{E}_{x,b}^2,
\end{split}
\end{equation*}
with
\begin{align}
  \Pi_{x,b} & \triangleq \frac{(1-b^2)(b^2 +2\mathfrak{C}_b)}{b^2}
  - \frac{1}{f_{R_1/R_2}(x)\,x^2}\Big(g_{\frac{b}{R_2},\frac{R_1}{b}}(x)+g_{\frac{1}{R_2},R_1}(x)\Big) \label{def:ann-Pixb}
  \end{align}
  and
  \begin{align*}
  \mathsf{E}_{x,b} & \triangleq\frac{b^{x}}{x^2f_{R_1/R_2}(x)}g_{\frac{1}{R_2},\frac{R_1}{b}}(x). 
\end{align*}
Notice that for all $t\in (0,1)$, the function $x\in [2,+\infty)\mapsto f_t(x)$
is strictly decreasing (see \cite[Eq. (58)]{DHHM}), and
\begin{align*}
  f_t'(x) = \frac{t^{2x} \big(\log \frac{1}{t^{2x}} +1\big) -1}{x^2} = \frac{t^{2x} \log \frac{1}{t^{2x}}}{x^2} - \frac{f_t(x)}{x}.
\end{align*}
which satisfies that
\begin{align}\label{eq:property-f-t}
  f_t'(x)<0,\qquad x f_t'(x)+f_{t}(x)>0,\quad \forall x\in [2,\infty), \quad \forall  t\in (0,1).
\end{align}
Direct computations show that for any $t_1,t_2\in (0,1)$, it holds that for all $x\in (0,\infty)$,
\begin{align*}
  g_{t_1,t_2}(x)>0,\quad g'_{t_1,t_2}(x)=-t_1^{2x}(\log t_1^{2})(1-t_2^{2x})-t_2^{2x}(\log t_2^{2})(1-t_1^{2x})>0,
\end{align*}
and
\begin{equation}\label{eq:part-g_t-diff-12}
\begin{split}
  g_{t_1,t_2}(x)-xg'_{t_1,t_2}(x)&=(1-t_2^{2x})\big(1-t_1^{2x}+t_1^{2x}\log t_1^{2x}\big)\\
  &\quad \, +(1-t_1^{2x})\big(1-t_2^{2x}+t_2^{2x}\log t_2^{2x}\big)>0,
\end{split}
\end{equation}
where we have used the that $1-r+r\log r>0$ for all $r\in (0,1)$. From \eqref{def:ann-Pixb}, \eqref{eq:property-f-t}
and \eqref{eq:part-g_t-diff-12}, we find that
\begin{align*}
  \partial_{x}\Pi_{x,b} & = \partial_{x}\Big(- \frac{1}{f_{R_1/R_2}(x)\,x^2}\Big)\Big(g_{\frac{b}{R_2},\frac{R_1}{b}}(x)
  + g_{\frac{1}{R_2},R_1}(x)\Big)
  - \frac{1}{f_{R_1/R_2}(x)\,x^3}\Big(x g'_{\frac{b}{R_2},\frac{R_1}{b}}(x)
  + x g'_{\frac{1}{R_2},R_1}(x)\Big)\\
  & = \Big(\partial_{x}\Big(- \frac{1}{f_{R_1/R_2}(x)\,x^2}\Big)- \frac{1}{f_{R_1/R_2}(x)\,x^3}\Big)
  \Big(g_{\frac{b}{R_2},\frac{R_1}{b}}(x) + g_{\frac{1}{R_2},R_1}(x)\Big)\\
  &\quad+\frac{1}{f_{R_1/R_2}(x)\,x^3}\Big(g_{\frac{b}{R_2},\frac{R_1}{b}}(x)
  - x g'_{\frac{b}{R_2},\frac{R_1}{b}}(x) + g_{\frac{1}{R_2},R_1}(x) -x g'_{\frac{1}{R_2},R_1}(x)\Big)\\
  &=
  \Big(\frac{1}{x}\partial_{x}\Big(-\frac{1}{f_{R_1/R_2}(x)\,x}\Big)\Big)
  \Big(g_{\frac{b}{R_2},\frac{R_1}{b}}(x)+g_{\frac{1}{R_2},R_1}(x)\Big) \\
  & \quad +\frac{1}{f_{R_1/R_2}(x)\,x^3}\Big(g_{\frac{b}{R_2},\frac{R_1}{b}}(x)
  - x g'_{\frac{b}{R_2},\frac{R_1}{b}}(x) + g_{\frac{1}{R_2},R_1}(x) -x g'_{\frac{1}{R_2},R_1}(x)\Big),
\end{align*}
and thus
\begin{align*}
  \partial_x \Pi_{x,b} > \frac{f_{R_1/R_2}(x)+xf'_{R_1/R_2}(x)}{x^3f^2_{R_1/R_2}(x)}
  \Big(g_{\frac{b}{R_2},\frac{R_1}{b}}(x)+g_{\frac{1}{R_2},R_1}(x)\Big) >0.
\end{align*}
Similarly, we also have
\begin{align*}
  - \partial_x \mathsf{E}_{x,b}
  & = -b^{x}\log b \Big(\frac{g_{\frac{1}{R_2},\frac{R_1}{b}}(x)}{x^2f_{R_1/R_2}(x)}\Big)
  - b^{x}\partial_x\Big(\frac{g_{\frac{1}{R_2},\frac{R_1}{b}}(x)}{x^2f_{R_1/R_2}(x)}\Big) \\
  & > - b^{x}\partial_x\Big(\frac{g_{\frac{1}{R_2},\frac{R_1}{b}}(x)}{x^2f_{R_1/R_2}(x)}\Big)\\
  &  = \frac{ b^x }{x^3f_{R_1/R_2}(x)} \Big(g_{\frac{1}{R_2},\frac{R_1}{b}}(x)
  -xg'_{\frac{1}{R_2},\frac{R_1}{b}}(x) \Big) \\
  &\quad+b^{x}g_{\frac{1}{R_2},\frac{R_1}{b}}(x)\Big(\partial_{x}
  \Big(-\frac{1}{x^2f_{R_1/R_2}(x)}\Big)-\frac{1}{x^{3}f_{R_1/R_2}(x)}\Big)\\
  & \geqslant b^{x}g_{\frac{1}{R_2},\frac{R_1}{b}}(x)\frac{f_{R_1/R_2}(x)
  + xf'_{R_1/R_2}(x)}{x^{3}f^2_{R_1/R_2}(x)}>0.
\end{align*}
In addition, arguing as the above analysis for $\mathbf{\Delta}_{n,b}$ in \eqref{eq:Delta>0cas1},
the condition $\mathbf{\Delta}_{x,b}>0$ for every $x\geqslant 2$ is equivalent to
\begin{align*}
  \frac{1}{2} \Pi_{x,b}  -  \mathsf{E}_{x,b}>0.
\end{align*}
Hence, if $\mathbf{\Delta}_{\tilde{x},b}>0$ for some $\tilde{x}\geqslant 2$, then
\begin{align*}
  \partial_{x}\mathbf{\Delta}_{x,b}\Big|_{x=\tilde{x}} = \Big(\frac{1}{2}\Pi_{x,b}\,\partial_{x}\Pi_{x,b}
  - 2 \mathsf{E}_{x,b} \,
  \partial_{x}\mathsf{E}_{x,b} \Big)\Big|_{x=\tilde{x}}
  > \mathsf{E}_{\tilde{x},b} \Big(\partial_x\Pi_{x,b}
  + 2\partial_x\big(-  \mathsf{E}_{x,b}\big)\Big)\Big|_{x=\tilde{x}}>0,
\end{align*}
and as a result, we conclude that the mapping $x\in [\tilde{x},+\infty)\mapsto \mathbf{\Delta}_{x,b}$ is positive and strictly increasing,
which leads to the desired result. \\
Finally, we show that the mapping
$n\in [m_0,+\infty)\cap \mathbb{N}\mapsto \Omega^{+}_{n,b} =\frac{A_{n,b}+B_{n,b}}{2}+\frac{\sqrt{\mathbf{\Delta}_{n,b}}}{2}$
is strictly increasing and $n\in [m_0,+\infty)\cap \mathbb{N}\mapsto \Omega^{-}_{n,b}
  = \frac{A_{n,b}+B_{n,b}}{2}-\frac{\sqrt{\mathbf{\Delta}_{n,b}}}{2}$
is strictly decreasing for any $m_0\geqslant 2$ that satisfies \eqref{eq:Delta>0cas1}.
As above, we extend the mapping $n\mapsto \Omega^{\pm}_{n,b}$ as
\begin{align*}
  x\mapsto \Omega^{\pm}_{x,b}
  & \triangleq  \frac{1}{2}\Big(\frac{1-b^2}{2} - \frac{1+b^2}{b^2}\mathfrak{C}_b
  - \frac{1}{2x^2 f_{R_1/R_2}(x)}\big(g_{\frac{1}{R_2},R_1}(x)-g_{\frac{b}{R_2},\frac{R_1}{b}}(x)\big)\Big)
  \pm \frac{\sqrt{\mathbf{\Delta}_{x,b}}}{2}.
\end{align*}
Through straightforward calculations, we see that
\begin{align*}
  \partial_{x}\Omega_{x,b}^{\pm }=\frac{1}{4}\Big(\mathsf{A}_1-\mathsf{A}_2
  + \mathsf{B}_1-\mathsf{B}_2\pm \frac{\partial_{x}\mathbf{\Delta}_{x,b}}{\sqrt{\mathbf{\Delta}_{x,b}}}\Big),
\end{align*}
where
\begin{align*}
  \mathsf{A}_1 & \triangleq\frac{1}{x}\partial_{x}\Big(-\frac{1}{xf_{R_1/R_2}(x)}\Big)g_{\frac{1}{R_2},R_1}(x)>0,\\
  \mathsf{A}_2 & \triangleq\frac{1}{x}\partial_{x}\Big(-\frac{1}{xf_{R_1/R_2}(x)}\Big)g_{\frac{b}{R_2},\frac{R_1}{b}}(x)>0,\\
  \mathsf{B}_1 & \triangleq \frac{1}{x^{3}f_{R_1/R_2}(x)}\Big(g_{\frac{1}{R_2},R_1}(x)-x g'_{\frac{1}{R_2},R_1}(x)\Big)>0\\
  \mathsf{B}_2 & \triangleq \frac{1}{x^{3}f_{R_1/R_2}(x)}\Big(g_{\frac{b}{R_2},\frac{R_1}{b}}(x)
  - x g'_{\frac{b}{R_2},\frac{R_1}{b}}(x)\Big)>0.
\end{align*}
In addition, recalling our computation for $\partial_{x}\mathbf{\Delta}_{x,b}$,
we infer that
\begin{align*}
  \frac{\partial_{x}\mathbf{\Delta}_{x,b}}{\sqrt{\mathbf{\Delta}_{x,b}}}
  & = \frac{\frac{1}{2}\Pi_{x,b}\,\partial_x \Pi_{x,b}-2\mathsf{E}_{x,b}\,\partial_x\mathsf{E}_{x,b} }{\sqrt{\mathbf{\Delta}_{x,b}}} \\	
  & > \frac{\frac{1}{2}\Pi_{x,b}}{\sqrt{\mathbf{\Delta}_{x,b}}}\partial_{x}\Pi_{x,b}
  = \frac{\frac{1}{2}\Pi_{x,b}}{\sqrt{\mathbf{\Delta}_{x,b}}} \Big(\mathsf{A}_1+\mathsf{A}_2+\mathsf{B}_1+\mathsf{B}_2\Big).
\end{align*}
Furthermore, if $m_0\geqslant 2$ is an integer satisfies $\mathbf{\Delta}_{m_0,b}>0$, we can deduce from \eqref{eq:Delta>0cas1} that
\begin{align*}
  \frac{1}{2}\Pi_{x,b}>\sqrt{\mathbf{\Delta}_{x,b}}>0 ,\quad \forall x\in [m_0,+\infty),
\end{align*}
which implies that $\frac{\frac{1}{2}\Pi_{x,b}}{\sqrt{\mathbf{\Delta}_{x,b}}}>1$ for all $x\in [m_0,+\infty)$.
Thus, based on the above analysis, we have that for all $x\in [m_0,+\infty)$,
\begin{align*}
  \partial_{x}\Omega_{x,b}^{+}>\frac{1}{4}\Big(2\mathsf{A}_1+2\mathsf{B}_1\Big)>0,\quad
  \partial_{x}\Omega_{x,b}^{-}<\frac{1}{4}\Big(-2\mathsf{A}_2-2\mathsf{B}_2\Big)<0.
\end{align*}
Therefore, once $m_0\geqslant 2$ satisfies \eqref{eq:Delta>0equa} (that is, $\mathbf{\Delta}_{m_0,b}>0$),
then for any $m\geqslant m_0$, both conditions \eqref{eq:m-fold-condition}-\eqref{eq:m-fold-condition2}
are satisfied and consequently, by Theorem \ref{thm:general-statement},
there exist two families of $m$-fold symmetric doubly connected V-states bifurcating from
$\mathbf{1}_{\mathbb{D}\setminus \overline{b\mathbb{D}}}(\mathbf{x})$
at two different angular velocities $\Omega_{m,b}^{\pm}$.

\subsection{2D Euler equation in the exterior of a disc.}\label{subsec:Euler-ext-disc}
Consider the 2D Euler equation in vorticity form in the exterior of a disc
$\mathbb{R}^2\setminus\overline{R\mathbb{D}} = \{\mathbf{z}\,:\, |\mathbf{z}|>R \}$ ($R\in (0,b)$)
with the slip boundary condition $u\cdot \mathbf{n}=0$ on $\partial (R\mathbb{D})$,
where $\mathbf{n}$ is the outer normal unit vector.
It is the active scalar equation \eqref{eq:ASE}-\eqref{eq:stream_function}
in $\mathbf{D}= \mathbb{R}^2\setminus\overline{R\mathbb{D}} $,
and the kernel function $K(\mathbf{x},\mathbf{y})$ is given by
\begin{align}\label{eq:K-ext-disc}
  K(\mathbf{x},\mathbf{y})=-\frac{1}{2\pi}\log |\mathbf{x}-\mathbf{y}|+\frac{1}{2\pi}\log |R-R^{-1}\mathbf{x}\overline{\mathbf{y}}|,
  \quad \mathbf{x},\mathbf{y}\in \mathbf{D},
\end{align}
which uses an identical formula to that of the disc case presented in \eqref{eq:Green_Euler}.
This Green function $K(\mathbf{x},\mathbf{y})$ can be derived using the reflection method as detailed in \cite{Evans10};
on the other hand, using the Taylor's expansion $-\log (1-\mathbf{z}) = \sum_{m=1}^\infty \frac{1}{m} \mathbf{z}^m$ for every
$|\mathbf{z}|<1$, it can also be recovered from \eqref{eq:K-Euler-annulus} by passing $R_2\rightarrow \infty$ and setting $R_1=R$:
\begin{align}\label{eq:K-extdom-der}
  K(\mathbf{x},\mathbf{y}) & = - \frac{1}{2\pi} \log |\mathbf{x} -\mathbf{y}| +
  \frac{1}{2\pi} \Big(- \log \frac{R}{|\mathbf{y}|} + \log |\mathbf{x}| - \sum_{m=1}^\infty \frac{1}{m}
  R^{2m} |\mathbf{x}|^{-m} |\mathbf{y}|^{-m} \cos m(\theta -\eta)\Big) \nonumber \\
  & = -\frac{1}{2\pi} \log |\mathbf{x} -\mathbf{y}|
  + \frac{1}{2\pi} \Big( \log \frac{|\mathbf{x}| |\mathbf{y}|}{R}
  - \mathrm{Re} \sum_{m=1}^\infty \frac{1}{m} \big(R^2 |\mathbf{x}|^{-1} |\mathbf{y}|^{-1} e^{-i (\theta -\eta)}\big)^m \Big) \nonumber \\
  & = - \frac{1}{2\pi} \log |\mathbf{x} -\mathbf{y}|
  + \frac{1}{2\pi} \Big( \log \frac{|\mathbf{x}| |\mathbf{y}|}{R}  - \mathrm{Re} \sum_{m=1}^\infty \frac{1}{m}
  \Big(\frac{R^2}{\mathbf{x} \overline{\mathbf{y}}} \Big)^m\Big) \nonumber \\
  & = - \frac{1}{2\pi} \log |\mathbf{x} -\mathbf{y}|
  + \frac{1}{2\pi} \Big( \log \frac{|\mathbf{x}| |\mathbf{y}|}{R}
  + \log \Big| 1 - \frac{R^2}{\mathbf{x} \overline{\mathbf{y}}}\Big| \Big) \nonumber \\
  & = -\frac{1}{2\pi}\log |\mathbf{x}-\mathbf{y}|+\frac{1}{2\pi}\log |R-R^{-1}\mathbf{x}\overline{\mathbf{y}}|.
\end{align}
Obviously, the kernel function
$(\mathbf{x},\mathbf{y})\in (\mathbb{R}^2\setminus \overline{R\mathbb{D}})\times (\mathbb{R}^2\setminus \overline{R\mathbb{D}})
\mapsto K_1(\mathbf{x},\mathbf{y})=\tfrac{1}{2\pi}\log |R-R^{-1}\mathbf{x}\overline{\mathbf{y}}|$
is smooth and satisfies assumptions \eqref{A3}-\eqref{A4}.
Hence, we can apply Theorem \ref{thm:general-statement} to this case.
\\
Since  $K(\mathbf{x},\mathbf{y})$ can be obtained from \eqref{eq:K-Euler-annulus} by letting $R_2\rightarrow \infty$ and setting $R_1=R$, then
both $V_b[\mathbf{0}]$ and $\mathbf{\Lambda}_{n,b}$ can be derived from their counterparts in Section
\ref{subsec:Euler-annulus} using this identical approach:
\begin{align*}
  V_b[\mathbf{0}]=
  \begin{pmatrix}
     V_b^1[\mathbf{0}] & 0\\
     0 & V_b^2[\mathbf{0}]
  \end{pmatrix}
  =
  \begin{pmatrix}
     \frac{1-b^2}{2b^2} & 0\\
     0 & 0
  \end{pmatrix},
  \quad  V_b^2[\mathbf{0}] = 0 <  \frac{1-b^2}{2b^2} = V_b^1[\mathbf{0}],
\end{align*}
and
\begin{align*}
  \mathbf{\Lambda}_{n,b}=\frac{1}{2n}
  \begin{pmatrix}
    (\frac{R}{b})^{2n} - 1 & b^n \big( 1 - (\frac{R}{b})^{2n}\big) \\[1mm]
    - b^n \big(1- (\frac{R}{b})^{2n}\big)  & 1 - R^{2n}
  \end{pmatrix}.
\end{align*}
We note that by directly using \eqref{eq:K-ext-disc},
these formulas can also be obtained from the definitions,
similar to those in Section \ref{subsec:Euler-rad}.
\\
At last, as in Section \ref{subsec:Euler-annulus},
we can show that, for any $m\geqslant 2$ such that
\begin{align}\label{eq:Euler-exterior-cond-1}
  \mathbf{\Delta}_{m,b} = \Big(\frac{1-b^2}{2 b^2}
  +\frac{1}{2m} \Big(R^{2m} + \tfrac{R^{2m}}{b^{2m}}-2\Big)\Big)^2
   -  \frac{b^{2m}}{m^2} \Big( 1 - \frac{R^{2m}}{b^{2m}}\Big)^2  >0,
\end{align}
there exist two families of doubly connected $m$-fold symmetric V-states bifurcating from the trivial solution
$\mathbf{1}_{\mathbb{D}\setminus \overline{b\mathbb{D}}}(\mathbf{x})$ at two different angular velocities $\Omega^{\pm}_{m,b}$ given by
\begin{align*}
  \Omega^\pm_{m,b} = \frac{1}{2}\Big(-\frac{1-b^2}{2b^2} + \frac{1-(\frac{R}{b})^{2m}}{m} -\frac{1-R^{2m}}{m}
  \Big) \pm \frac{\sqrt{\mathbf{\Delta}_{m,b}}}{2}  .
\end{align*}
Furthermore, for $m\geqslant 2$, the condition \eqref{eq:Euler-exterior-cond-1} is equivalent to
\begin{align}\label{eq:Euler-exterior-cond-2}
	m > \frac{b^2}{1-b^2} \Big(2- R^{2m}+ (\tfrac{R}{b})^{2m} +2b^m \big( 1-  (\tfrac{R}{b})^{2m} \big)\Big).
\end{align}

\section{More discussion}\label{sec:conclusion_discuss}
In this section, we first examine the possible signs of $V_b^1[\mathbf{0}]$, $V_b^2[\mathbf{0}],$
as well as  the spectra $\Omega_{n,b}^{\pm}$ given by \eqref{eq:Omega_n_pm}. Due to their intricate structure, the analysis will focus on large $n$.
Next, we will conduct a brief study of the linear stability of the annulus considering the influence of the kernel structure.

\subsection{The sign of $V_b^1[\mathbf{0}]$, $V_b^2[\mathbf{0}]$ and $\Omega_{n,b}^{\pm}$}\label{subsec:V_b-sign}
In this subsection, we will show that even for the convolution case,
both scenarios described  in Proposition \ref{prop:spectral} can occur.
\\[0.5mm]
\noindent First, according to \eqref{eq:Anb-conver}-\eqref{eq:Bnb-conver} and the estimates in the proof of Proposition \ref{prop:spectral},
we find that
\begin{align*}
  \lim_{n\rightarrow\infty} \Omega_{n,b}^+ = - \min\{V_b^1[\mathbf{0}], V_b^2[\mathbf{0}]\},
  \quad\textrm{and} \quad
  \lim_{n\rightarrow\infty} \Omega_{n,b}^- = - \max\{V_b^1[\mathbf{0}], V_b^2[\mathbf{0}]\}.
\end{align*}
Thus, by continuity, we can summarize the sign of $\Omega_{n,b}^\pm$ with sufficiently large $n$ in the following result.
Note that in view of Lemma \ref{lem:B_infinity} and Remark \ref{rem:B_infinity},
the case $ V_b^2[\mathbf{0}] > 0$ is unlikely to occur, as suggested by numerical experiments.
\begin{proposition}\label{prop:differet_branch}
Under the assumption of Proposition \ref{prop:spectral}, the following statements are true.
\begin{enumerate}
\item For the case $V_b^2[\mathbf{0}] <0 < V_b^1[\mathbf{0}]$, there exists a sufficiently large $N^{\ast}\in\mathbb{N}^\star$
such that for any $n\geqslant N^{\ast}$,
the frequency $\Omega^+_{n,b}>0$ and $\Omega^-_{n,b}<0$.
\item For the case $ V_b^2[\mathbf{0}] < V_b^1[\mathbf{0}] < 0$, there exists a sufficiently large $N^{\ast}\in\mathbb{N}^\star$
such that for any $n\geqslant N^{\ast}$,
the frequency $\Omega^+_{n,b}>0$ and $\Omega^-_{n,b}>0$.
\item For the case $V_b^1[\mathbf{0}]<V_b^2[\mathbf{0}]<0$, there exists a sufficiently large $N^{\ast}\in\mathbb{N}^\star$
such that for any $n\geqslant N^{\ast}$,
the frequency $\Omega^+_{n,b}>0$ and $\Omega^-_{n,b}>0$.
\end{enumerate}
\end{proposition}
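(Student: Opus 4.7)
The plan is entirely direct: invoke the limit identities displayed just before the statement,
\[
\lim_{n\to\infty}\Omega^+_{n,b}=-\min\{V_b^1[\mathbf{0}],V_b^2[\mathbf{0}]\},\qquad
\lim_{n\to\infty}\Omega^-_{n,b}=-\max\{V_b^1[\mathbf{0}],V_b^2[\mathbf{0}]\},
\]
and then apply the elementary fact that a convergent sequence with nonzero limit eventually carries the sign of that limit. In particular, there is nothing quantitative to prove; only a careful case-by-case reading of the above two limits is required.

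First I would briefly re-derive the two limit identities so the argument is self-contained. From \eqref{eq:Omega_n_pm} we have $\Omega^\pm_{n,b}=\tfrac12(A_{n,b}+B_{n,b})\pm\tfrac12\sqrt{\mathbf{\Delta}_{n,b}}$; Proposition~\ref{prop:Delta-analysis}(ii) ensures $\mathbf{\Delta}_{n,b}>0$ for $n\geqslant N_b$, so the square root is well defined. Using \eqref{eq:Anb-conver}--\eqref{eq:Bnb-conver} together with \eqref{eq:Delta-inf-b}, one obtains
\[
\lim_{n\to\infty}(A_{n,b}+B_{n,b})=-V_b^1[\mathbf{0}]-V_b^2[\mathbf{0}],\qquad
\lim_{n\to\infty}\sqrt{\mathbf{\Delta}_{n,b}}=|V_b^1[\mathbf{0}]-V_b^2[\mathbf{0}]|,
\]
and combining these two identities gives precisely the limits of $\Omega^\pm_{n,b}$ recalled above.

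I would then treat each case in turn. In case~(i), $V_b^2[\mathbf{0}]<0<V_b^1[\mathbf{0}]$ implies $\lim\Omega^+_{n,b}=-V_b^2[\mathbf{0}]>0$ and $\lim\Omega^-_{n,b}=-V_b^1[\mathbf{0}]<0$, so choosing $N^\star$ so that each sequence lies within half the absolute value of its limit gives the required signs. In case~(ii), both limits $-V_b^1[\mathbf{0}]$ and $-V_b^2[\mathbf{0}]$ are strictly positive, so both $\Omega^\pm_{n,b}$ are positive for $n\geqslant N^\star$. Case~(iii) is identical after swapping the roles of $V_b^1[\mathbf{0}]$ and $V_b^2[\mathbf{0}]$.

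The only bookkeeping point is that $N^\star$ must simultaneously exceed the threshold $N_b$ from Proposition~\ref{prop:Delta-analysis}(ii) (so that $\mathbf{\Delta}_{n,b}>0$ and $\Omega^\pm_{n,b}$ are real), the threshold $N$ from Proposition~\ref{prop:spectral} (so that the monotonicity and one-dimensional kernel properties remain valid), and the threshold coming from the elementary sign argument above; taking the maximum of these three is harmless. No genuine analytic obstacle arises.
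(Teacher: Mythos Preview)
Your proposal is correct and matches the paper's argument exactly: the paper also derives the two limit identities for $\Omega^\pm_{n,b}$ from \eqref{eq:Anb-conver}--\eqref{eq:Bnb-conver} and \eqref{eq:Delta-inf-b}, and then simply invokes continuity to deduce the sign of $\Omega^\pm_{n,b}$ for large $n$. Your additional bookkeeping about the threshold $N^\ast$ is slightly more explicit than the paper, but the approach is identical.
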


\begin{remark}
Interestingly, for the whole plane or disc or annulus cases, all applications to geophysical flows in Section \ref{sec:applications} satisfy
$ V_b^1[\mathbf{0}] > V_b^2[\mathbf{0}]$ for all $b\in \mathcal{S}=\mathcal{S}_{\mathrm{max}}$.
We list these results in Table \ref{tab:A_B_infinity},
and one can find that case (i) of Proposition \ref{prop:differet_branch} includes several examples.
\begin{table}[hbpt]
\begin{tabular}{ccccccc}
\toprule
  & \multicolumn{2}{c}{Whole space}& \multicolumn{2}{c}{Disc\footnotemark} & \multicolumn{2}{c}{Annulus} \\
  \cmidrule{2-7}
  & $V_b^1[\mathbf{0}]$\;\; & $V_b^2[\mathbf{0}]$\;\; & $V_b^1[\mathbf{0}]$\;\; & $V_b^2[\mathbf{0}]$ \;\; & $V_b^1[\mathbf{0}]$\;\; & $V_b^2[\mathbf{0}]$ \\
  \midrule
  Euler & $0$ & $-$ & $0$ &  $-$ & $+$ & $-\,/\,0$ \\
  gSQG & $+$ &  $-$  & ${+}$  &  $-$ & $*$ & $*$ \\
  QGSW &  $+$  & $-$  &  ${+}$  & $-$ & $*$ & $*$  \\
\bottomrule
\end{tabular}
\vskip1mm
\caption{The sign of $V_b^1[\mathbf{0}]$ and $V_b^2[\mathbf{0}]$ for any $b\in \mathcal{S}_{\mathrm{max}}$.}\label{tab:A_B_infinity}
\end{table}
\end{remark}
\footnotetext{The sign of $V_b^{1}[\mathbf{0}]$ for the gSQG equation and QGSW equation in radial domains $\mathbf{D}=R \mathbb{D}$
($R>1$) are strongly suggested by numerical experiments for $R>1$ not so large,
but they are not rigorously justified for all $R>1$ so far.}


Now, for $K(\mathbf{x},\mathbf{y})=K_0(|\mathbf{x}-\mathbf{y}|)$ satisfying assumptions \eqref{A1}-\eqref{A2},
we present some non-trivial examples of $K_0(\cdot)$ that verify cases (i) and (iii) of Proposition \ref{prop:differet_branch}.
\begin{proposition}\label{prop:measure_examples}
\begin{enumerate}
\item There exist positive constants $b_{\ast}>0$ and $x_\ast>0$ such that, for any positive integrable function
$f: (0,x_\ast)\rightarrow \mathbb{R}^+$ and any $C_0\in \RR$,
the following kernel function,
\begin{align}\label{eq:K0-exam1}
  K_0(t)= \int_{t}^1 \int_0^\infty e^{-\tau x}f(x) \mathbf{1}_{(0,x_\ast)} \dd x\dd \tau+ C_0,\quad  t>0,
\end{align}
equivalently, $\mathrm{d} \mu(x) = f(x) \mathbf{1}_{(0,x_\ast)} \dd x$ in \eqref{eq:K0prim},
satisfies the conditions \eqref{A1}-\eqref{A2} and
\begin{align*}
  V_b^1[\mathbf{0}] < V_b^2[\mathbf{0}] < 0,\quad \forall\, b\in (0,b_{\ast}).
\end{align*}
\item For any $b\in (0,1)$, there exists a sufficiently large constant $x^\ast=x^\ast(b)>0$
such that for every positive function $f:(x^\ast,\infty)\rightarrow \mathbb{R}^+$ with the upper bound
\begin{align*}
  f(x)\leqslant Cx^{1-\gamma},\quad \,	\gamma>0,
\end{align*}
and any $C_0\in \RR$, the following kernel associated with $\mathrm{d} \mu(x) = f(x) \mathbf{1}_{(x^\ast,\infty)} \dd x$ in \eqref{eq:K0prim},
\begin{align}\label{eq:K0-exam2}
  K_0(t)=\int_t^1\int_0^{\infty} e^{-\tau x} f(x) \mathbf{1}_{(x^\ast,\infty)}  \dd x \dd \tau + C_0,\quad  t>0,
\end{align}

satisfies the conditions \eqref{A1}-\eqref{A2} and
\begin{align*}
  V_b^2[\mathbf{0}] < 0 < V_b^1[\mathbf{0}].
\end{align*}
\end{enumerate}
\end{proposition}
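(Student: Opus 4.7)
The plan is to verify assumptions \eqref{A1}-\eqref{A2} for each of the two choices of $\mu$, and then read off the signs of $V_b^1[\mathbf{0}]$ and $V_b^2[\mathbf{0}]$ directly from Lemma \ref{lem:kernel_function_limit} using the convolution-case identities. Since $K_1\equiv 0$, the coefficients $\mathsf{c}_b$ and $\widetilde{\mathsf{c}}_b$ in \eqref{c-1}-\eqref{c-2} both vanish, and from \eqref{eq:V-0} together with \eqref{def:lam-n-1} and \eqref{def:lam-n-3} one obtains the clean representations
\begin{equation*}
  V_b^1[\mathbf{0}] = \int_0^\infty \Big(\phi_1(bx) - \tfrac{1}{b}\phi_{1,b}(x)\Big) \tfrac{\dd\mu(x)}{x}, \qquad
  V_b^2[\mathbf{0}] = \int_0^\infty \Big(b\,\phi_{1,b}(x) - \phi_1(x)\Big) \tfrac{\dd\mu(x)}{x},
\end{equation*}
together with $V_b^1[\mathbf{0}] - V_b^2[\mathbf{0}] = \int_0^\infty \tfrac{\Psi_b(x)}{x} \dd\mu(x)$ for $\Psi_b$ as in \eqref{def:Psi-bx}.

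For part (i), I would take $b_\ast$ and $x_\ast$ to be the constants supplied by Lemma \ref{lem:kernel_function_limit}(i), so that all three integrands above are strictly negative on $(0,x_\ast)$ for every $b\in(0,b_\ast)$. With $\dd\mu(x) = f(x)\mathbf{1}_{(0,x_\ast)}\dd x$ a non-negative, not identically zero measure, assumption \eqref{A1} is automatic. Assumption \eqref{A2} is easy in this case: by Fubini, $K_0(t) = \int_0^{x_\ast} f(x) \tfrac{e^{-tx}-e^{-x}}{x}\dd x + C_0$, which stays bounded as $t\to 0^+$ since $f\in L^1(0,x_\ast)$ and $\tfrac{1-e^{-x}}{x}\leqslant 1$, so $|K_0(t)|t^{-\alpha+\alpha^2}$ is integrable on $(0,1)$ for any $\alpha\in(0,1)$. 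Integrating the three negative integrands against the positive measure $\dd\mu/x$ then yields simultaneously $V_b^1[\mathbf{0}]<0$, $V_b^2[\mathbf{0}]<0$ and $V_b^1[\mathbf{0}]<V_b^2[\mathbf{0}]$, as required.

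For part (ii), fix $b\in(0,1)$ and take $x^\ast = x^\ast(b)$ from Lemma \ref{lem:kernel_function_limit}(ii), so that on $(x^\ast,\infty)$ the integrand defining $V_b^1[\mathbf{0}]$ is strictly positive while the one defining $V_b^2[\mathbf{0}]$ is strictly negative. Setting $\dd\mu(x)=f(x)\mathbf{1}_{(x^\ast,\infty)}\dd x$ again makes \eqref{A1} transparent, and the sign claim $V_b^2[\mathbf{0}] < 0 < V_b^1[\mathbf{0}]$ is immediate from the representations above. The only step requiring a computation is \eqref{A2}: using the polynomial bound $f(x)\leqslant Cx^{1-\gamma}$, the change of variables $u=tx$ applied to $K_0(t) = \int_{x^\ast}^\infty f(x) \tfrac{e^{-tx}-e^{-x}}{x}\dd x + C_0$ gives, for $t\in(0,1)$, the estimate
\begin{equation*}
  |K_0(t)| \leqslant C\Big(1 + t^{\min(\gamma,1)-1}\Big),
\end{equation*}
with a logarithmic factor at the borderline $\gamma=1$. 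Hence $\int_0^1 |K_0(t)|t^{-\alpha+\alpha^2}\dd t$ converges as soon as $\min(\gamma,1) > \alpha(1-\alpha)$, a constraint that can be satisfied for any $\gamma>0$ by shrinking $\alpha\in(0,1)$ if necessary (recall $\alpha(1-\alpha)\leqslant 1/4$).

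The main (minor) technical point is the small-$t$ control of $K_0$ in part (ii); once that is in place, the sign analysis in both parts amounts to a direct application of Lemma \ref{lem:kernel_function_limit}, and I do not foresee any genuine obstacle.
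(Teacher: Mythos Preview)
Your proposal is correct and follows essentially the same approach as the paper: both use the convolution-case representations of $V_b^1[\mathbf{0}]$ and $V_b^2[\mathbf{0}]$ and read off the signs directly from Lemma \ref{lem:kernel_function_limit}, parts (i) and (ii), after verifying \eqref{A1}-\eqref{A2}. Your verification of \eqref{A2} in part (i) is in fact slightly cleaner than the paper's (you observe $K_0$ is bounded near $0$, whereas the paper integrates by parts), and your treatment of \eqref{A2} in part (ii) via the bound $|K_0(t)|\leqslant C(1+t^{\min(\gamma,1)-1})$ matches the paper's computation.
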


\begin{remark}
  The case $(ii)$ in Proposition \ref{prop:differet_branch} can also occur.
From Fig. \ref{fig:Psi-phi-compare}, there exists some interval $(\tilde{x}_\ast,\tilde{x}^\ast)\subset (0,\infty)$ such that the maps
$x\in (\tilde{x}_\ast,\tilde{x}^\ast) \mapsto \Psi_b(x)$ (defined by \eqref{def:Psi-bx}) and
$x\in (\tilde{x}_\ast,\tilde{x}^\ast)\mapsto \frac{1}{b}\phi_{1,b}(x)-\phi_1(bx)$
are both positive for $b=0.1$. In light of Lemma \ref{lem:B_infinity},
the map $x\in (\tilde{x}_\ast,\tilde{x}^\ast) \mapsto \phi_1(x)-b\phi_{1,b}(x)$ is always positive.
Then, by picking the non-negative measure $\dd \mu(x) = f(x)\mathbf{1}_{(\tilde{x}_\ast,\tilde{x}^\ast)}\dd x$ with the positive integrable function
$f(x): (\tilde{x}_\ast,\tilde{x}^\ast)\rightarrow \mathbb{R}^+$, it follows from \eqref{def:Psi-bx} and
\eqref{eq:V10-V20-exp} that
\begin{align*}
   V_b^2[\mathbf{0}] < V_b^1[\mathbf{0}] < 0, \quad b = 0.1.
\end{align*}
The associated kernel function $K_0(t)$ satisfies the assumptions \eqref{A1}-\eqref{A2} with $\alpha\in (0,1)$
as we have done in Proposition \ref{prop:measure_examples}.
\end{remark}

\begin{figure}[htbp]
  \centering
  \includegraphics[scale=0.7]{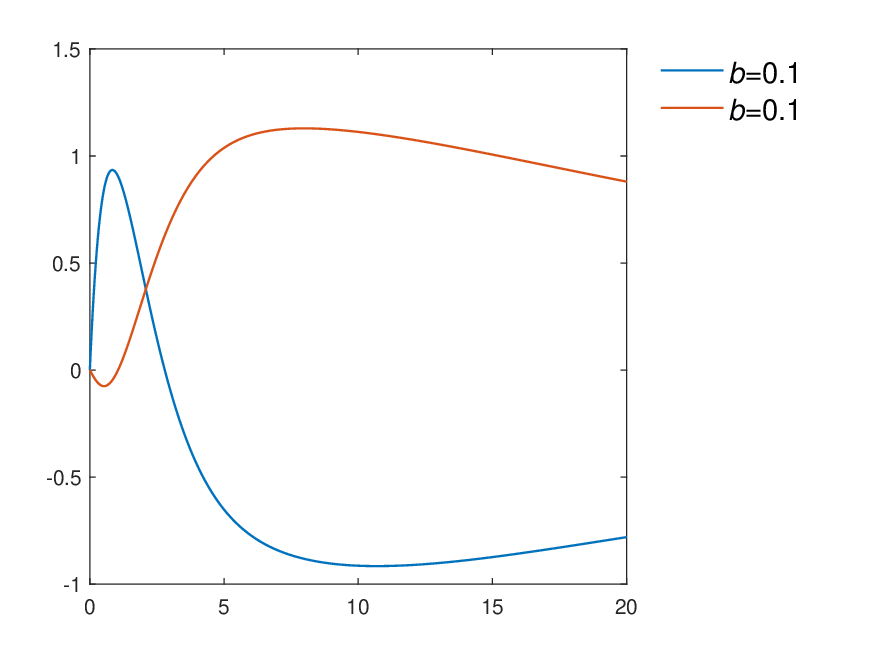} 
  \caption{The graph of $\color[rgb]{0,0.447,0.741}{x\mapsto \frac{1}{b}\phi_{1,b}(x)-\phi_1(bx)}$ and
  $\color[rgb]{0.85,0.325,0.098}{x\mapsto \Psi_b(x)}$ for $b=0.1$.}\label{fig:Psi-phi-compare}
\end{figure}

\begin{proof}[Proof of Proposition \ref{prop:measure_examples}]
Recall from \eqref{eq:V-0}, \eqref{def:lam-n-1} and \eqref{def:lam-n-3} that
\begin{align}\label{eq:V10-V20-exp}
  V_b^1[\mathbf{0}]=\int_0^{\infty}\big(\phi_{1}(bx)-\tfrac{1}{b}\phi_{1,b}(x)\big)\frac{\dd \mu(x)}{x},\quad
  V_b^2[\mathbf{0}]=\int_0^{\infty}\big(b\phi_{1,b}(x) - \phi_{1}(x)\big)\frac{\dd \mu(x)}{x}.
\end{align}
\textbf{(i)} In view of \eqref{eq:negative-argument} in Lemma \ref{lem:kernel_function_limit}, there exist small constants
$b_{\ast}>0$ and $x_{\ast}>0$,  such that by choosing the non-negative measure $\dd\mu(x)=f(x)\mathbf{1}_{(0,x_\ast)}\dd x$,
we have
\begin{align*}
  V_b^1[\mathbf{0}]<0, \quad V_b^2[\mathbf{0}]<0, \quad V_b^1[\mathbf{0}] - V_b^2[\mathbf{0}] <0.
\end{align*}
In addition, by the definition of $K'_0(t)$ given by \eqref{eq:K0prim}, we see that the associated kernel function $K_0(t)$ has
the expression formula \eqref{eq:K0-exam1}, which clearly satisfies \eqref{A1}.
Next, taking advantage of integration by parts gives that for $\alpha\in (0,1)$ and $K_0(1)=C_0$,
\begin{align*}
  & \int_0^1 |K_0(t) - C_0| t^{-\alpha + \alpha^2} \dd t = \Big|\int_0^1 (K_0(t) - C_0) t^{-\alpha + \alpha^2} \dd t\Big| \\
  &\quad =  \frac{1}{1-\alpha +\alpha^2} \bigg| \Big(t^{1-\alpha +\alpha^2} (K_0(t)-C_0)\Big)\Big|_{t=0}^1
  + \int_0^1 \Big(\int_0^{x_\ast}e^{-t x}f(x)\dd x\Big) t^{1-\alpha + \alpha^2}\dd t \bigg| \\
  & \quad \leqslant \frac{1}{1-\alpha+\alpha^2} \int_0^{x_\ast} f(x) \dd x <\infty,
\end{align*}
where in the last line we have used that
\begin{align*}
  \lim_{t\to 0}\Big|t^{1-\alpha+\alpha^2}(K_0(t)-C_0)\Big|
  & = \lim_{t\to 0}\Big|t^{1-\alpha+\alpha^2}\int_t^1\int_0^{\infty} e^{-\tau x} f(x) \mathbf{1}_{{(0,x_\ast)}}  \dd x \dd \tau\Big|\\
  & \leqslant \lim_{t\to 0}\Big|t^{1-\alpha+\alpha^2}\int_0^{\infty}f(x) \mathbf{1}_{{(0,x_\ast)}}  \dd x\Big|
  =0,\quad \forall \alpha\in (0,1).
\end{align*}
Thus, it immediately implies the integrability assumption \eqref{A2} with any $\alpha\in(0,1)$.

\noindent \textbf{(ii)}
For any $b\in(0,1)$, thanks to \eqref{eq:Psi-b-pos} in Lemma \ref{lem:kernel_function_limit},
there exists a sufficiently large constant $x^\ast >0$, such that by choosing the non-negative measure
$\dd \mu(x) = f(x)\mathbf{1}_{(x^\ast,\infty)}\dd x$, we have
\begin{align*}
  V_b^1[\mathbf{0}] >0,\quad  V_b^2[\mathbf{0}]<0,\quad V_b^1[\mathbf{0}] - V_b^2[\mathbf{0}] > 0.
\end{align*}
Then the associated kernel function $K_0(t)$ can be expressed as \eqref{eq:K0-exam2} and also verifies the assumption \eqref{A1}.
Under the asymptotic assumption of $f(x)$, we argue as in the above case (i) to deduce that for any $0<\gamma'<\min\{\gamma,1\}$,
\begin{align*}
  \int_{x^\ast}^{\infty}e^{-tx}f(x)\dd x & \leqslant C \int_{x^\ast}^{\infty}e^{-tx}x^{1-\gamma'}\dd x
  = Ct^{\gamma'-2} \int_{x^\ast t}^{\infty}e^{-x}x^{1-\gamma'}\dd x\\
  & \leqslant Ct^{\gamma'-2},
\end{align*}
and
\begin{align*}
  \int_0^1 |K_0(t)|t^{-\alpha + \alpha^2}\dd t & \leqslant \int_0^1 |K_0(t) - C_0| t^{-\alpha + \alpha^2} \dd t
  + |C_0| \int_0^1 t^{-\alpha +\alpha^2} \dd t \\
  & \leqslant
  \frac{1}{1-\alpha + \alpha^2} \int_0^1 \Big( \int_{x^\ast}^\infty e^{-tx} f(x) \dd x \Big) t^{1-\alpha + \alpha^2}\dd t
  + \frac{|C_0|}{1-\alpha +\alpha^2} \\
  & \leqslant C \int_0^1 t^{\gamma'-1-\alpha + \alpha^2} \dd t<\infty,\quad \forall 0< \alpha \leqslant \gamma'<1.
\end{align*}
Hence, $K_0(t)$ given by \eqref{eq:K0-exam2} satisfies assumptions \eqref{A1}-\eqref{A2}
with $0<\alpha<\min\{\gamma,1\}$.
\end{proof}

\subsection{Stable and unstable states}\label{subsec:stable-mode}
In this subsection, we clarify the meaning of the spectrum determined by \eqref{eq:Omega-eq}.
Let $t\in [0,T)\mapsto \mathbf{1}_{D_t}(\mathbf{x})$ with $T>0$ be a doubly connected patch solution close to
the stationary solution $\mathbf{1}_{\DD\setminus \overline{b\mathbb{D}}}$ for equation \eqref{eq:ASE}.
We parameterize the boundary $\partial D_t$ using the polar coordinates, as follows
\begin{align*}
  \mathbf{z}(t,\cdot):\mathbb{T}&\mapsto \partial D_t, \\
  \theta & \mapsto \mathbf{z}(t,\theta)\triangleq e^{it\Omega}
  \begin{pmatrix}
	\sqrt{b^2+2r_1(t,\theta)}e^{i\theta}\\
	\sqrt{1+2r_2(t,\theta)}e^{i\theta}
  \end{pmatrix},
\end{align*}
where $\mathbf{z}(t,\theta) = (z_1(t,\theta),z_2(t,\theta))^T$.
Following the same lines as the derivation of \eqref{eq:vortex-patch-eqs}, the contour dynamics equation of $\mathbf{z}(t,\cdot)$ reads
\begin{align}\label{eq:perturbed-patch-eqs}
  \partial_t r_j+ \partial_\theta \bigg(\int_{D_t}K(z_j(t,\theta),\mathbf{y})\dd \mathbf{y}\bigg)=0, \quad j=1,2.
\end{align}
In particular, choosing $r_j(\theta,t)=\widehat{r}_j(\theta+\Omega t)$ where $\widehat{r}_j(\cdot):\mathbb{T}\mapsto \RR$
is a smooth map, we can recover the doubly connected V-states equation \eqref{eq:vortex-patch-eqs}.
Denoting by
\begin{align*}
  F_0[\mathbf{r}](t,\theta)\triangleq
  \begin{pmatrix}
	\int_{D_t}K(z_1(t,\theta),\mathbf{y})\dd \mathbf{y} \\[1mm]
	\int_{D_t}K(z_2(t,\theta),\mathbf{y})\dd \mathbf{y}
  \end{pmatrix},
  \quad \mathbf{r}=
  \begin{pmatrix}
	r_1\\
	r_2
  \end{pmatrix},
\end{align*}
the equation \eqref{eq:perturbed-patch-eqs} can be written in the vector form:
\begin{align}\label{eq:perturbed-eq}
  \partial_t\mathbf{r}(t,\theta)+\partial_{\theta}F_0[\mathbf{r}](t,\theta)=0.
\end{align}
Arguing as obtaining \eqref{eq:linear-0} (with fixing $\Omega=0$),
for every $\mathbf{h}(\theta)=\sum\limits_{n\in \mathbb{Z}}
  \begin{pmatrix}
	h_{1,n}\\
	h_{2,n}
  \end{pmatrix}e^{in\theta}$,
we can get the linear operator,
\begin{align}\label{eq:linear_operator_F_0}
  \partial_{\mathbf{r}}\partial_{\theta}F_0[\mathbf{0}]\mathbf{h}
  =\sum_{n\in \mathbb{Z}\setminus \{0\}} i\,n\,(V_b[\mathbf{0}]+\mathbf{\Lambda}_{n,b})
  \begin{pmatrix}
	h_{1,n}\\
	h_{2,n}
  \end{pmatrix}
  e^{in\theta},
\end{align}
where $\mathbf{\Lambda}_{n,b}$ and $V_b[\mathbf{0}]$ are respectively given by
\eqref{eq:Lambda-n-b} and \eqref{eq:V-0}.
Comparing \eqref{eq:linear-0} with \eqref{eq:linear_operator_F_0},
we find that $-i\,n\,\Omega_{n,b}^{\pm}$ are the eigenvalues of the linear operator
$\partial_{\mathbf{r}}\partial_{\theta}F_0[\mathbf{0}]\mathbf{h}$,
where $\Omega_{n,b}^{\pm}$ are the solutions \mbox{of \eqref{eq:Omega-eq}.}
\\[0.5mm]
\noindent From the viewpoint of the linear stability of the dynamic system \eqref{eq:perturbed-eq} around equilibrium
$\mathbf{r}\equiv 0$, the case $\mathbf{\Delta}_{n,b}<0$
leads to some eigenvalues with positive real parts of the linear operator
$\partial_{\mathbf{r}}\partial_{\theta}F_0[\mathbf{0}]$,
which correspond to unstable states. We refer to \cite{Vishik18a,Vishik18b,ABCD+,CFMS25}
and the references therein for recent study on unstable annular type patches,
which play a key role in the proof of non-uniqueness for the 2D Euler and gSQG equations with forcing.
Conversely, if $\mathbf{\Delta}_{n,b}>0$, then $-i\,n\, \Omega_{n,b}^{\pm}$
are pure imaginary numbers which could be viewed as stable states.

\section{Appendix}\label{sec:appendix}
\noindent In this appendix, we collect some auxiliary results that will be used in this paper.
\\[0.5mm]
\noindent The first lemma states that the eigenvalues and eigenfunctions of the spectral Laplacian $-\Delta$
on the unit disc $\mathbb{D}\subset \mathbb{R}^2$ with Dirichlet boundary condition have explicit expression through Bessel functions
(e.g. see Section 5.5 of Chapter V in \cite{CH09} or \cite[pp. 127]{Borth20}).
\begin{lemma}\label{lem:disc-exp}
The eigenvalues and the eigenfunctions solving the spectral problem associated with the Laplacian $-\Delta$:
\begin{align}\label{def:phi-j2}
  \hbox{for}\; j\geqslant1,\,\;\;-\Delta\phi_j = \lambda_j \phi_j\quad \textrm{in}\;\,\mathbb{D},\quad \phi_j \big|_{\partial \mathbb{D}} =0,\quad
  \int_{\mathbb{D}}\phi_j^2(\mathbf{x}) \dd \mathbf{x}=1,
\end{align}
are described by the double-index families $(\lambda_{n,k})_{n\in \mathbb{N},k\in \mathbb{N}^\star}$
and $\big((\phi_{n,k}^{(1)}, \phi_{n,k}^{(2)})\big)_{n\in\mathbb{N},k\in\mathbb{N}^\star}$ such that
\begin{align}\label{def:lambd-phi-nk}
  \lambda_{n,k}=x_{n,k}^2, \quad \phi_{n,k}^{(1)}(\mathbf{x})=J_n(x_{n,k}|\mathbf{x}|)A_{n,k}\cos(n\theta),
  \quad \phi_{n,k}^{(2)}(\mathbf{x})=J_n(x_{n,k}|\mathbf{x}|)A_{n,k}\sin(n\theta),
\end{align}
where
\begin{align}\label{eq:Ank}
  \pi A_{0,k}^2=\frac{1}{J_{1}^2(x_{0,k})}\quad\hbox{and}\quad
  \pi A_{n,k}^2=\frac{2}{J_{n+1}^2(x_{n,k})},\quad\forall n\in\mathbb{N}^\star,
\end{align}
and $J_n(\cdot)$ denotes the Bessel function of order $n$  and $x_{n,k}$ is the $k$-th zero point of $J_n$.
\end{lemma}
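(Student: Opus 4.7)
The plan is to proceed by separation of variables in polar coordinates together with the standard spectral theory of the Dirichlet Laplacian. First I would write any eigenfunction as $\phi(r,\theta) = R(r)\Theta(\theta)$ and insert it into $-\Delta\phi = \lambda\phi$ using $\Delta = \partial_r^2 + \frac{1}{r}\partial_r + \frac{1}{r^2}\partial_\theta^2$. The angular variable being periodic on $[0,2\pi]$ forces $-\Theta'' = n^2\Theta$ with $n\in\mathbb{N}$, which provides the two angular modes $\cos(n\theta)$ and $\sin(n\theta)$ (for $n=0$ the $\sin$ mode is trivial, giving the single family in that case). The radial factor then satisfies Bessel's equation
\begin{equation*}
  R''(r) + \tfrac{1}{r} R'(r) + \Bigl(\lambda - \tfrac{n^2}{r^2}\Bigr) R(r) = 0,\quad r\in (0,1).
\end{equation*}
Requiring $R$ to be bounded at $r=0$ rules out the Bessel function of the second kind and forces $R(r) = c\, J_n(\sqrt{\lambda}\, r)$, while the Dirichlet condition $R(1)=0$ selects $\sqrt{\lambda} = x_{n,k}$, the $k$-th positive zero of $J_n$. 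This yields the eigenvalues $\lambda_{n,k} = x_{n,k}^2$ and the eigenfunctions displayed in \eqref{def:lambd-phi-nk}.

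Next I would invoke general spectral theory to confirm that this family exhausts the whole spectrum. Since $-\Delta$ with Dirichlet condition on $\mathbb{D}$ is self-adjoint with compact resolvent on $L^2(\mathbb{D})$, its eigenfunctions form a Hilbert basis; the density of the trigonometric system in $L^2(\mathbb{T})$ combined with the Fourier–Bessel completeness on $L^2([0,1], r\,dr)$ guarantees that the functions $\{\phi_{n,k}^{(1)},\phi_{n,k}^{(2)}\}_{n,k}$ span $L^2(\mathbb{D})$, so no eigenvalues are missed.

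Finally, I would compute the normalization \eqref{eq:Ank} using the well-known identity
\begin{equation*}
  \int_0^1 J_n(x_{n,k}\, r)^2 \, r\, dr = \tfrac{1}{2} J_{n+1}(x_{n,k})^2,
\end{equation*}
(which itself follows by integration by parts from the Bessel ODE, exploiting $J_n'(x_{n,k}) = -J_{n+1}(x_{n,k})$). Combining this with $\int_0^{2\pi}\cos^2(n\theta)\, d\theta = \pi$ for $n\geqslant 1$ and $=2\pi$ for $n=0$ yields $\pi A_{0,k}^2 J_1^2(x_{0,k}) = 1$ and $\tfrac{\pi}{2} A_{n,k}^2 J_{n+1}^2(x_{n,k}) = 1$ for $n\geqslant 1$, which are exactly the formulas in \eqref{eq:Ank}.

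The only genuinely delicate point is the completeness assertion; the separation-of-variables argument only guarantees that the listed functions are eigenfunctions, not that they are all of them. However, this is classical and can be quoted from standard references (e.g. Courant–Hilbert \cite{CH09}), so I would simply cite it rather than reproduce the Hilbert-basis argument.
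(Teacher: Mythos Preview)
The paper does not actually prove this lemma; it is stated in the Appendix as a classical fact with references to Courant--Hilbert \cite{CH09} and Borthwick \cite{Borth20}, and no argument is given. Your separation-of-variables sketch is exactly the standard proof one finds in those references, so your proposal is correct and aligned with what the paper implicitly relies on.
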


\begin{remark}
Notice that, if $\phi(\mathbf{x})$ is a solution of the spectral problem \eqref{def:phi-j2},
then the function $\phi_R(\mathbf{x})=\frac{1}{R}\phi(\frac{\mathbf{x}}{R})$ satisfies that
\begin{align}\label{eq:eigen_func_scale}
  -\Delta \phi_R = \tfrac{\lambda}{R^2} \phi_R\quad \textrm{in}\;\, R\mathbb{D},\quad
  \phi_R\big|_{\partial (R\mathbb{D})} = 0,
  \quad \int_{R\mathbb{D}} \phi_R^2(\mathbf{x})\dd \mathbf{x}=1.
\end{align}
\end{remark}

Next, we introduce Sneddon's formula that can be found, for instance, in $(2.2.9)$ \mbox{in \cite{Sne66}}
or $(24)$-$(25)$ in \cite{Mar22}.
\begin{lemma}\label{lem:sned-form}
  Let $0<a,b\leqslant 1$, $n,\beta,\gamma\in \NN$ and $1<q <\beta + \gamma - 2n +2$. Then we have
\begin{equation}\label{eq:sned-form}
\begin{split}
  \sum_{k=1}^\infty \frac{J_\beta(a x_{n,k}) J_\gamma(b x_{n,k})}{x_{n,k}^q J_{n+1}^2(x_{n,k})}
  = \mathbb{J} + \frac{1}{\pi} \sin \big(\tfrac{\pi}{2}(\beta +\gamma -2n-q )\big)
  \int_0^\infty \rho^{1-q} \mathbf{I}_\beta(a\rho) \mathbf{I}_\gamma(b \rho)
  \frac{\mathbf{K}_n(\rho)}{\mathbf{I}_n(\rho)} \dd \rho,
\end{split}
\end{equation}
where
\begin{equation}\label{def:J}
\begin{split}
  \mathbb{J} & \triangleq  \frac{1}{\pi} \sin\big(\tfrac{\pi}{2}(\gamma-\beta + q)\big)
  \int_0^\infty \rho^{1-q} \mathbf{I}_\beta(a\rho) \mathbf{K}_\gamma(b\rho) \dd \rho  \\
  & = \frac{a^\beta \Gamma\big(1+\tfrac{\beta+\gamma -q}{2}\big)}{2^q b^{2+\beta-q}
  \Gamma(\beta+1)\Gamma(\tfrac{\gamma-\beta +q}{2})}
  F\big(1+\tfrac{\beta+\gamma -q}{2},1+\tfrac{\beta-\gamma -q}{2}; \beta+1;\tfrac{a^2}{b^2}\big),
\end{split}
\end{equation}
where $\mathbf{I}_n(\cdot)$ and $\mathbf{K}_n(\cdot)$ are the modified Bessel functions of the first and second kind,
and $F(a,b;c;\mathbf{z})$ denotes the hypergeometric function given by \eqref{eq:hypergeom}.
\end{lemma}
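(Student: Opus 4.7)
Since Sneddon's formula is classical, the plan is to reproduce the standard residue-calculus proof adapted to the present parameters, and then identify the closed form of $\mathbb{J}$ with a Weber--Schafheitlin evaluation.

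First I would set up the auxiliary meromorphic function
\begin{align*}
  f(z) = \frac{J_\beta(az)\, J_\gamma(bz)}{z^{q-1}\, J_n(z)} \cdot Y_n(z),
\end{align*}
where $Y_n$ denotes the Bessel function of the second kind. The only singularities of $f$ on the positive real axis are the simple poles at $z=x_{n,k}$, and a direct computation using $J_n'(x_{n,k}) = -J_{n+1}(x_{n,k})$ together with the Wronskian $J_n(z) Y_n'(z) - J_n'(z) Y_n(z) = \tfrac{2}{\pi z}$ identifies the residue of $f$ at $x_{n,k}$ with (up to a universal constant) the $k$-th term of the series on the left-hand side of \eqref{eq:sned-form}. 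Integrating $f$ over a keyhole-type contour consisting of a large quarter-circle in the first quadrant, a segment along the real axis avoiding the poles via small semicircles, and a segment along the positive imaginary axis, the residue theorem relates the sum of residues to the contour integrals.

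Next, the key analytic input is that the hypothesis $1 < q < \beta+\gamma - 2n + 2$ guarantees, via the standard asymptotics
\begin{align*}
  J_\nu(z) \sim \sqrt{\tfrac{2}{\pi z}} \cos\big(z-\tfrac{\nu\pi}{2}-\tfrac{\pi}{4}\big), \quad |z|\to\infty, \ |\arg z|<\pi,
\end{align*}
together with the power-law behavior $J_\nu(z) \sim (z/2)^\nu/\Gamma(\nu+1)$ as $z\to 0$, that the contribution of the large quarter-circle vanishes in the limit, and that the integrand is integrable near $0$. On the imaginary axis I would use $J_\nu(iy) = i^\nu \mathbf{I}_\nu(y)$ and the Hankel-function identity $Y_n(iy) = i^{n+1}\big(\tfrac{2}{\pi}\mathbf{K}_n(y) + i\mathbf{I}_n(y)\big)$ (up to sign conventions) to rewrite the imaginary-axis integral as a sum of two real integrals on $(0,\infty)$: one involving $\mathbf{I}_\beta(a\rho)\mathbf{I}_\gamma(b\rho)\mathbf{K}_n(\rho)/\mathbf{I}_n(\rho)$, which matches the last term in \eqref{eq:sned-form}, and one involving $\mathbf{I}_\beta(a\rho)\mathbf{K}_\gamma(b\rho)$, which will constitute $\mathbb{J}$. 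Trigonometric identities among $\sin\tfrac{\pi}{2}(\cdot)$ arise naturally from collecting the factors $i^\beta$, $i^\gamma$, $i^n$ and $i^{-q}$.

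Finally, for the closed form of $\mathbb{J}$ as a ${}_2F_1$ I would invoke the Weber--Schafheitlin integral
\begin{align*}
  \int_0^\infty \rho^{1-q}\, \mathbf{I}_\beta(a\rho)\, \mathbf{K}_\gamma(b\rho)\, \dd\rho
  = \frac{a^\beta\, \Gamma\big(1+\tfrac{\beta+\gamma-q}{2}\big)\,\Gamma\big(\tfrac{\gamma-\beta+q}{2}\big)\cdot (\cdots)}{2^q\, b^{2+\beta-q}\,\Gamma(\beta+1)}\, F\big(1+\tfrac{\beta+\gamma-q}{2},1+\tfrac{\beta-\gamma-q}{2};\beta+1;\tfrac{a^2}{b^2}\big),
\end{align*}
which holds for $0<a\leqslant b$ and the convergence range implied by $1<q<\beta+\gamma-2n+2$; this is a direct consequence of expanding $\mathbf{I}_\beta$ in its Taylor series and integrating termwise using the Mellin transform of $\mathbf{K}_\gamma$.

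The main obstacle will be the contour-deformation step: because the zeros $x_{n,k}$ of $J_n$ become dense along the positive real axis at large $k$, one must choose the semicircular indentations and the outer arc carefully so that $|J_n(z)|^{-1}$ remains bounded uniformly along them, which is standard but technical and requires invoking the lower bound $|J_n(z)|\gtrsim |z|^{-1/2}$ on circles of radius $(k+\tfrac{1}{2})\pi$, for instance.
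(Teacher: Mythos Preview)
The paper does not prove this lemma at all: it simply states Sneddon's formula and cites the literature, namely $(2.2.9)$ in Sneddon \cite{Sne66} and $(24)$--$(25)$ in Martin \cite{Mar22}. Your residue-calculus sketch is indeed the standard derivation behind the formula, but for the purposes of matching the paper, a citation suffices and your plan goes well beyond what is required.
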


In order to sum up the series appearing in \eqref{eq:QGSW_radial_domain_1} and \eqref{eq:QGSW_radial_domain_2},
we derive an interesting summation identity by using formula (3.47) in \cite{Saharian07}
(we here reprove this formula to clarify that there is a missing coefficient in the original formula).
\begin{lemma}\label{lem:sum_up_QGSW}
For any $\varepsilon>0$, $k\in\mathbb{N}^\star$, and $0<Y\leqslant X<1$, the following identity holds:
\begin{align*}
  \sum_{k\geqslant 1}\frac{1}{x^2_{0,k}+\varepsilon^2}\frac{J_1(Xx_{0,k})J_1(Yx_{0,k})}{J^2_1(x_{0,k})}
  = \frac{1}{2} \frac{\mathbf{I}_1(Y\varepsilon)}{\mathbf{I}_0(\varepsilon)}
  \Big(\mathbf{I}_1(X\varepsilon)\mathbf{K}_0(\varepsilon)
  +\mathbf{I}_0(\varepsilon)\mathbf{K}_1(X\varepsilon)\Big).
\end{align*}
\end{lemma}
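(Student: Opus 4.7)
The plan is to identify the target sum $S(X,Y):=\sum_k\frac{J_1(Xx_{0,k})J_1(Yx_{0,k})}{(x_{0,k}^2+\varepsilon^2)J_1^2(x_{0,k})}$ as, up to explicit factors, the double integral of the radial ($n=0$) Fourier mode $G_0(X,Y)$ of the Dirichlet Green function of $(-\Delta+\varepsilon^2)$ on the unit disc, which admits two independent representations. By Lemma \ref{lem:disc-exp} applied with $R=1$ together with \eqref{eq:Ank}, the spectral representation reads
\[
G_0(X,Y)=\frac{1}{\pi}\sum_{k\geqslant 1}\frac{J_0(x_{0,k}X)\,J_0(x_{0,k}Y)}{(x_{0,k}^2+\varepsilon^2)J_1^2(x_{0,k})}.
\]
Using the Bessel identity $\int_0^a s\,J_0(x_{0,k}s)\,\dd s=aJ_1(ax_{0,k})/x_{0,k}$ twice and then the partial-fraction decomposition $\frac{1}{x^2(x^2+\varepsilon^2)}=\frac{1}{\varepsilon^2}\bigl(\frac{1}{x^2}-\frac{1}{x^2+\varepsilon^2}\bigr)$, one obtains
\[
\int_0^Y\!\!\int_0^X st\,G_0(s,t)\,\dd s\,\dd t=\frac{XY}{\pi\varepsilon^2}\bigl[T_0(X,Y)-S(X,Y)\bigr],\quad T_0(X,Y):=\sum_k \frac{J_1(Xx_{0,k})J_1(Yx_{0,k})}{x_{0,k}^2\,J_1^2(x_{0,k})}.
\]

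Independently, $G(\mathbf{x},\mathbf{y})=\frac{1}{2\pi}\mathbf{K}_0(\varepsilon|\mathbf{x}-\mathbf{y}|)+h(\mathbf{x},\mathbf{y})$, where $h$ solves $(-\Delta+\varepsilon^2)h=0$ in $\mathbb{D}$ and cancels the fundamental solution on $\partial\mathbb{D}$. Invoking the addition formula $\mathbf{K}_0(\varepsilon|\mathbf{x}-\mathbf{y}|)=\mathbf{K}_0(\varepsilon X_{\max})\mathbf{I}_0(\varepsilon X_{\min})+2\sum_{n\geqslant 1}\mathbf{K}_n(\varepsilon X_{\max})\mathbf{I}_n(\varepsilon X_{\min})\cos(n(\theta-\eta))$ and matching Fourier coefficients at $|\mathbf{y}|=1$ reduces the $n=0$ mode to
\[
G_0(X,Y)=\frac{1}{2\pi}\mathbf{I}_0(\varepsilon X_{\min})\Bigl[\mathbf{K}_0(\varepsilon X_{\max})-\frac{\mathbf{K}_0(\varepsilon)\mathbf{I}_0(\varepsilon X_{\max})}{\mathbf{I}_0(\varepsilon)}\Bigr],\quad X_{\min}=\min(X,Y),\;X_{\max}=\max(X,Y).
\]
Splitting $\int_0^Y\!\int_0^X st\,G_0\,\dd s\,\dd t$ into the regions $\{s\geqslant t\}$ and $\{s<t\}$, applying the primitives $\int_0^a s\mathbf{I}_0(\varepsilon s)\,\dd s=a\mathbf{I}_1(\varepsilon a)/\varepsilon$ and $\int_0^a s\mathbf{K}_0(\varepsilon s)\,\dd s=\frac{1}{\varepsilon^2}-\frac{a\mathbf{K}_1(\varepsilon a)}{\varepsilon}$, and collapsing the Wronskian combination $\mathbf{I}_0(z)\mathbf{K}_1(z)+\mathbf{I}_1(z)\mathbf{K}_0(z)=1/z$ that emerges from summing the mixed contributions, the double integral evaluates to
\[
\frac{Y^2}{4\pi\varepsilon^2}-\frac{XY}{2\pi\varepsilon^2}\mathbf{I}_1(\varepsilon Y)\Bigl[\mathbf{K}_1(\varepsilon X)+\frac{\mathbf{K}_0(\varepsilon)\mathbf{I}_1(\varepsilon X)}{\mathbf{I}_0(\varepsilon)}\Bigr].
\]

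To close the chain I will evaluate $T_0(X,Y)$ by the same dual-representation strategy applied to the Laplace Green function, whose $n=0$ mode is $G_0^{(0)}(X,Y)=-\frac{1}{2\pi}\log\max(X,Y)$. A single integration in the first variable yields $\sum_k J_1(Xx_{0,k})J_0(Yx_{0,k})/(x_{0,k}^3 J_1^2(x_{0,k}))=-\tfrac{X\log X}{4}+\tfrac{X^2-Y^2}{8X}$ for $Y\leqslant X$, and differentiation in $Y$ together with $\partial_Y J_0(x_{0,k}Y)=-x_{0,k}J_1(x_{0,k}Y)$ produces the clean identity $T_0(X,Y)=Y/(4X)$, which recovers the classical value $\sum_k x_{0,k}^{-2}=\tfrac14$ at $X=Y=1$. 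Substituting the two evaluations into the relation above, the constant $\tfrac{\pi\varepsilon^2}{XY}\cdot\tfrac{Y^2}{4\pi\varepsilon^2}=\tfrac{Y}{4X}$ cancels exactly against $T_0(X,Y)$, leaving precisely the asserted formula.

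The principal analytical obstacle is the precise bookkeeping of this $\tfrac{Y}{4X}$ cancellation: the factor $\varepsilon^{-2}$ from the partial-fraction step amplifies every constant contribution, so the two independent computations of $T_0$ and of the Wronskian-generated constant $\tfrac{Y^2}{4\pi\varepsilon^2}$ must agree not only in order but exactly in coefficient—any numerical mismatch would produce a spurious $O(\varepsilon^{-2})$ divergence. Getting the two $\tfrac14$ coefficients to agree is precisely where the missing coefficient in the original formula of Saharian referenced after the statement enters; the hypothesis $0<Y\leqslant X<1$ ensures absolute convergence of every series and the validity of each term-by-term integration used throughout.
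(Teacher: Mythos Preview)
Your proof is correct and takes a genuinely different route from the paper. The paper proceeds by complex analysis: it applies Saharian's residue-summation formula, integrating the meromorphic function
\[
F_1(\mathbf{z})=\Big(J_1(X\mathbf{z})\mathbf{Y}_0(\mathbf{z})-\mathbf{Y}_1(X\mathbf{z})J_0(\mathbf{z})\Big)\frac{\mathbf{z}\,J_1(Y\mathbf{z})}{(\mathbf{z}^2+\varepsilon^2)J_0(\mathbf{z})}
\]
around a rectangular contour indented at $\pm i\varepsilon$; the residues at the zeros $x_{0,k}$ of $J_0$ reproduce the sum $S(X,Y)$, while the residues at $\pm i\varepsilon$ produce the modified-Bessel expression after converting $J_n,\mathbf{Y}_n$ on the imaginary axis into $\mathbf{I}_n,\mathbf{K}_n$. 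Your argument is entirely real-variable and spectral: you recognise $S(X,Y)$ as arising from the $n=0$ Fourier mode of the Dirichlet Green function of $(-\Delta+\varepsilon^2)$ on $\mathbb{D}$, integrate against $st$ to pass from $J_0$ to $J_1$, and then equate the spectral expansion with the explicit Bessel representation $G_0(X,Y)=\frac{1}{2\pi}\mathbf{I}_0(\varepsilon X_{\min})\big[\mathbf{K}_0(\varepsilon X_{\max})-\frac{\mathbf{K}_0(\varepsilon)}{\mathbf{I}_0(\varepsilon)}\mathbf{I}_0(\varepsilon X_{\max})\big]$. The partial-fraction step introduces the auxiliary sum $T_0(X,Y)=Y/(4X)$, which you evaluate by a parallel dual-representation argument for the Laplace Green function; the Wronskian identity $\mathbf{I}_0\mathbf{K}_1+\mathbf{I}_1\mathbf{K}_0=1/z$ then produces exactly the constant $Y^2/(4\pi\varepsilon^2)$ needed to cancel $T_0$. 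What each approach buys: the paper's contour method is shorter once Saharian's formula is in hand and generalises mechanically to other weight functions and Bessel indices; your approach avoids complex analysis and external summation formulas entirely, and is more transparently tied to the Green-function context in which the lemma is actually used (Section~\ref{subsec:QGSW-rad}), at the cost of the extra $T_0$ computation. One minor remark: your closing sentence linking the $\tfrac14$ cancellation to Saharian's missing coefficient is not quite apt, since your argument never invokes Saharian's formula; the cancellation is internal to your Green-function bookkeeping.
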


\begin{proof}[Proof of Lemma \ref{lem:sum_up_QGSW}]

Denote by $F(\mathbf{z})\triangleq \frac{\mathbf{z}}{\mathbf{z}^2+\varepsilon^2}J_1(Y\mathbf{z})$,
with $\mathbf{z}=x+yi$, $x,y\in \RR$. Recalling that the Bessel function
$J_1(\mathbf{z}) = \sum\limits_{m=0}^\infty \frac{(-1)^m (\frac{\mathbf{z}}{2})^{1+2m}}{m! (m+1)!} $
satisfies $J_1(\mathbf{z})=-J_1(-\mathbf{z})$ and
$J_1(\mathbf{z})=\frac{1}{2\pi}\int_{-\pi}^{\pi}e^{i\mathbf{z} \sin \theta-i\theta}\dd \theta$ (see 8.411 of \cite{GR15}),
it follows that
\begin{align*}
  F(xi)=\frac{ix J_1(xi)}{(xi)^2+\varepsilon^2}=\frac{-ix J_1(-xi)}{(-xi)^2+\varepsilon^2}=F(-xi),\quad \forall x\in \RR,
\end{align*}
and
\begin{align*}
  |F(\mathbf{z})|\leqslant C|\mathbf{z}|^{-1}e^{Y|y|},\quad \forall |\mathbf{z}|\gg 1,
\end{align*}
and $F(\mathbf{z})$ is analytic except two poles $\mathbf{z}=\pm \varepsilon i$.

\noindent Taking $A=1$, $B=0$, $\nu =0$, $m=1$, $t=X$ and
$F(\mathbf{z})=\frac{\mathbf{z}}{\mathbf{z}^2+\varepsilon^2}J_1(Y\mathbf{z})$
in Corollary 1 of \cite[pp. 17]{Saharian07}, it follows from (3.1), (3.8) and (3.47) in \cite{Saharian07} that
\begin{align}\label{eq:ids_residue_theorems}
  \sum_{k\geqslant 1}\frac{1}{x^2_{0,k}+\varepsilon^2}\frac{J_1(Xx_{0,k})J_1(Yx_{0,k})}{J^2_1(x_{0,k})}
  =\frac{\pi}{4}\sum_{\mathbf{z}=\pm \varepsilon i}\text{Res}\Big\{\Big(J_1(X\mathbf{z})\mathbf{Y}_0(\mathbf{z})
  -\mathbf{Y}_1(X\mathbf{z})J_0(\mathbf{z})\Big)
  \frac{F(\mathbf{z})}{J_0(\mathbf{z})},\mathbf{z}\Big\},
\end{align}
where $\mathbf{Y}_n(\cdot)$ is the Bessel function of the second kind.
Note that the coefficients on the right-hand side of \cite[Eq. (3.47)]{Saharian07}
should be slightly changed (there is a missing constant $\frac{i}{2}$).
Here, inspired by the remark below Corollary 1 in \cite{Saharian07},
we sketch the proof of \eqref{eq:ids_residue_theorems} to make this point clear.

\noindent First, we introduce the contour $C_{h,l,r}$ in Fig. \ref{fig:contour},
which is a rectangle with vertices $(0,-h)$, $(0,h)$, $(l,-h)$
and $(l,h)$ except that the point $(0,\pm\varepsilon)$ are bypassed by semicircles with radius $r$.

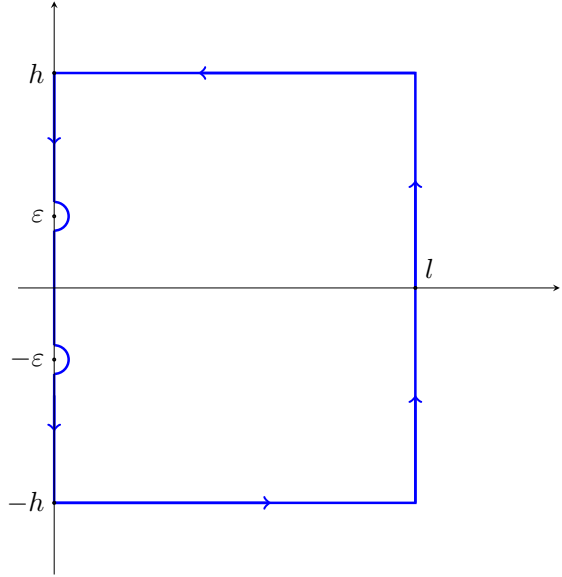
\begin{figure}[htbp]
   \scalebox{0.95}{\begin{tikzpicture}
	\draw[line width=1pt,blue] (0,-1.2)--(0,-3)--(5,-3)--(5,3)--(0,3)--(0,1.2);
	\draw[line width=1pt,blue] (0,-0.8)--(0,0.8);
	\draw[line width=1pt,blue,-to] (0,-3)--(3,-3);
	\draw[line width=1pt,blue,-to] (5,-3)--(5,-1.5); \draw[line width=1pt,blue,-to] (5,0)--(5,1.5);
	\draw[line width=1pt,blue,-to] (5,3)--(2,3);
	\draw[line width=1pt,blue,-to] (0,3)--(0,2);
	\draw[line width=1pt,blue,-to] (0,-1.5)--(0,-2);
	\draw[line width=1pt,blue] (0,0.8) arc (-90:90:0.2);
	\draw[line width=1pt,blue] (0,-1.2) arc (-90:90:0.2);
			
	\node[anchor=east] () at (0,1) {$\varepsilon$};
	\draw[line width=1pt,black] (0,1) circle [radius=0.01]; 	
	\draw[line width=1pt,black] (0,-1) circle [radius=0.01];
	\node[anchor=east] () at (0,-1) {$-\varepsilon$};
	\draw[line width=1pt,black] (0,-3) circle [radius=0.01];
	\node[anchor=east] () at (0,-3) {$-h$};
	\draw[line width=1pt,black] (0,3) circle [radius=0.01];
	\node[anchor=east] () at (0,3) {$h$};
	\draw[line width=1pt,black] (5,0) circle [radius=0.01];
	\node[anchor=south west] () at (5,0) {$l$};
		
	\draw[-stealth,line width=0.2pt] (-0.5,0) -- (7,0);
	\draw[-stealth,line width=0.2pt] (0,-4) -- (0,4);
	
		
%
%
	\end{tikzpicture}
    }
	\caption{The contour $C_{h,l,r}$.}\label{fig:contour}
\end{figure}
\noindent Denoting by
\begin{align*}
  F_1(\mathbf{z}) \triangleq \Big(J_1(X\mathbf{z}) \mathbf{Y}_0(\mathbf{z})- \mathbf{Y}_1(X\mathbf{z})J_0(\mathbf{z})\Big)
  \frac{F(\mathbf{z})}{J_0(\mathbf{z})},
\end{align*}
it follows from residue's theorem that
\begin{align}\label{eq:residue_ids}
  \frac{1}{2\pi i}\oint_{C_{h,j,r}}F_1(\mathbf{z})\dd \mathbf{z}=\sum_{
  k\geqslant 1, x_{0,k}<l}\text{Res}\Big\{F_1(\mathbf{z}),\mathbf{z}=x_{0,k}\Big\}.
\end{align}
Using the properties of Bessel functions and $F(\mathbf{z})$,
we find that the integrals along the segments of the imaginary axis cancel each other,
the integrals along the semicircle will converge to $-\frac{1}{2} \mathrm{Res}\{F_1(\mathbf{z}) \}$
at $\mathbf{z}=(0,\pm \varepsilon)=\pm \varepsilon i$
as $r$ goes to zero and the remaining three segments of $C_{h,l,r}$ will approach to zero as $h,l\to \infty$.
Hence, by taking $h,l\to \infty$ and $r\to 0^{+}$ in \eqref{eq:residue_ids}, we get that
\begin{align}\label{eq:poles_ids}
  -\frac{1}{2}\sum_{\mathbf{z}=\pm \varepsilon i}\text{Res}\Big\{F_1(\mathbf{z}),\mathbf{z}\Big\}
  = \sum^\infty_{k=1}\text{Res}\Big\{F_1(\mathbf{z}),\mathbf{z}=x_{0,k}\Big\}.
\end{align}
Since $J_{0}(\mathbf{z})$ has only simple zeros and $J_0'(\mathbf{z})=-J_1(\mathbf{z})$,
it follows from the fact (see e.g. \cite[Sec. 3.1]{MOS66})
\begin{align*}
  J_1(\mathbf{z})\mathbf{Y}_0(\mathbf{z})-\mathbf{Y}_1(\mathbf{z})J_0(\mathbf{z})=\frac{2}{\pi \mathbf{z}},
\end{align*}
and $J_0(x_{0,k})=0$ for every $k\in \mathbb{N}^\star$
that
\begin{align*}
  \text{Res}\Big\{F_1(\mathbf{z}),\mathbf{z}=x_{0,k}\Big\}
  & = \Big(J_1(Xx_{0,k}) \mathbf{Y}_0(x_{0,k}) - \mathbf{Y}_1(Xx_{0,k})J_0(x_{0,k})\Big)
  \frac{F(x_{0,k})}{J'_0(x_{0,k})}\\
  & = -\frac{J_1(Xx_{0,k})J_1(Yx_{0,k})}{x^2_{0,k}+\varepsilon^2}\frac{x_{0,k}\mathbf{Y}_0(x_{0,k})}{J_1(x_{0,k})}\\
  & =  -\frac{2}{\pi}\frac{1}{x^2_{0,k}+\varepsilon^2}\frac{J_1(Xx_{0,k})J_1(Yx_{0,k})}{J^2_1(x_{0,k})},
\end{align*}
which together with \eqref{eq:poles_ids} gives the desired formula \eqref{eq:ids_residue_theorems}.

\noindent Now, we move to the computation of the right-hand side of \eqref{eq:ids_residue_theorems}.
Note that $F(\mathbf{z})$ only contains poles at $\mathbf{z}=\pm \varepsilon i$, we directly compute to deduce that
\begin{align*}
  \text{Res}\Big\{\Big(J_1(Xz) \mathbf{Y}_0(\mathbf{z})-\mathbf{Y}_1(X\mathbf{z})J_0(\mathbf{z})\Big)
  \frac{F(\mathbf{z})}{J_0(\mathbf{z})},\varepsilon i\Big\}
  = \Big(J_1(X\varepsilon i)\mathbf{Y}_0(\varepsilon i)-\mathbf{Y}_1(X\varepsilon i)J_0(\varepsilon i)\Big)
  \frac{J_1(Y\varepsilon i)}{2J_0(\varepsilon i)},
\end{align*}
and
\begin{align*}
  \text{Res}\Big\{\Big(J_1(Xz)\mathbf{Y}_0(\mathbf{z})-&\mathbf{Y}_1(X\mathbf{z})J_0(\mathbf{z})\Big)
  \frac{F(\mathbf{z})}{J_0(\mathbf{z})},- \varepsilon i\Big\}\\
  &= \Big(J_1(-X\varepsilon i)\mathbf{Y}_0(-\varepsilon i)-\mathbf{Y}_1(-X\varepsilon i)J_0(-\varepsilon i)\Big)
  \frac{J_1(-Y\varepsilon i)}{2J_0(-\varepsilon i)}.
\end{align*}
Based on following identities that (e.g. see \cite[Sec. 3.1]{MOS66})
\begin{align*}
  \mathbf{I}_0(x)=J_0(ix),\quad i\mathbf{I}_1(x)=J_1(ix), \quad J_1(-\mathbf{z})=-J_1(\mathbf{z}),\quad J_0(-\mathbf{z})=J_0(\mathbf{z}),
\end{align*}
and
\begin{align*}
  \mathbf{Y}_0(ix)=i\mathbf{I}_0(x)-\frac{2}{\pi}\mathbf{K}_0(x),\quad \mathbf{Y}_1(ix)=-\mathbf{I}_1(x)+\frac{2i}{\pi}\mathbf{K}_1(x),
\end{align*}
and
\begin{align*}
  \mathbf{K}_0(-x)=\mathbf{K}_0(x) + i\pi \mathbf{I}_0(x),\quad \mathbf{K}_1(-x)=-\mathbf{K}_1(x)+i\pi \mathbf{I}_1(x),
\end{align*}
we infer that
\begin{align*}
  & \sum_{k\geqslant 1}\frac{1}{x^2_{0,k}+\varepsilon^2}\frac{J_1(Xx_{0,k})J_1(Yx_{0,k})}{J^2_1(x_{0,k})} \\
  & \qquad = \frac{\pi }{4}\frac{i\mathbf{I}_1(Y\varepsilon)}{2\mathbf{I}_0(\varepsilon)}
  \Big(i\mathbf{I}_1(X\varepsilon)(\mathbf{Y}_0(\varepsilon i)
  + \mathbf{Y}_0(-\varepsilon i))
  + \mathbf{I}_0(\varepsilon)(\mathbf{Y}_1(-X\varepsilon i)-\mathbf{Y}_1(X\varepsilon i))\Big) \\
  & \qquad = \frac{\pi i}{8}\frac{\mathbf{I}_1(Y\varepsilon)}{\mathbf{I}_0(\varepsilon)}\bigg(i\mathbf{I}_1(X\varepsilon)
  \Big(-\frac{4}{\pi}\Big)\mathbf{K}_0(\varepsilon)+\mathbf{I}_0(\varepsilon)
  \Big(-\frac{4i}{\pi}\Big) \mathbf{K}_1(X\varepsilon) \bigg) \\
  & \qquad = \frac{\mathbf{I}_1(Y\varepsilon)}{2\mathbf{I}_0(\varepsilon)}
  \Big(\mathbf{I}_1(X\varepsilon)\mathbf{K}_0(\varepsilon)+\mathbf{I}_0(\varepsilon)\mathbf{K}_1(X\varepsilon) \Big),
\end{align*}
as desired.
\end{proof}

The terminology of completely monotone functions, appearing in assumption \eqref{A1}, is important in our paper.
\begin{definition}\label{def:cmf}
A function $f:(0,\infty)\to \mathbb{R}$ is said to be \textit{completely monotone}
if it is of class $C^{\infty}$ and it satisfies
\begin{align*}
  (-1)^nf^{(n)}(t)\geqslant 0\qquad \forall t>0, \quad \forall  n\in \mathbb{N}.
\end{align*}
\end{definition}
\vskip0.5mm
\noindent The typical example is $f(t)= t^{-\alpha},$ with $\alpha\geqslant 0$.
One can refer, for instance, to \cite{SSV10} for various examples of completely monotone functions.
\\
The following result is fundamental in the theory of completely monotone functions.
For more details, e.g. see Theorem 1.4 in \cite{SSV10}.
\begin{lemma}[Bernstein's theorem]\label{lem:bernstein}
Let $f:(0,\infty)\mapsto \mathbb{R}$ be a completely monotone function. Then it is the Laplace transform of a unique nonnegative measure
$\mu$ on $[0,\infty)$, that is,
\begin{align*}
  \forall\, t>0,\quad f(t)=\int_{0}^{\infty}e^{-tx}\dd \mu(x)\triangleq \mathcal{L}(\mu)(t).
\end{align*}
Conversely, whenever $\mathcal{L}(\mu)(t)<\infty$ for every $t>0$, the function $t\mapsto \mathcal{L}(\mu)$ is a completely monotone function.
\end{lemma}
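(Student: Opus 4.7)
\textbf{Proof proposal for Lemma \ref{lem:bernstein} (Bernstein's theorem).}

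The converse direction is the easy half, so I would dispatch it first. Assume $\mathcal{L}(\mu)(t)<\infty$ for every $t>0$. Fix $t_0>0$ and let $t\in(t_0/2,\infty)$. The integrand $e^{-tx}$ is smooth in $t$ with $|\partial_t^n e^{-tx}|=x^n e^{-tx}\leqslant x^n e^{-t_0 x/2}\, e^{-t_0 x/2}$, and since $x^n e^{-t_0 x/2}$ is bounded on $[0,\infty)$, the dominating function $C_n\, e^{-t_0 x/2}$ is $\mu$-integrable by hypothesis. Differentiation under the integral sign is therefore legal and yields
\begin{equation*}
(-1)^n f^{(n)}(t)=\int_0^\infty x^n e^{-tx}\,\dd\mu(x)\geqslant 0,
\end{equation*}
which is complete monotonicity.

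For the forward direction, the plan is to recover $\mu$ from $f$ via a Post-Widder style inversion. First I would reduce to the case $f(0^+)<\infty$ by the tilt $f_\varepsilon(t)\triangleq f(t+\varepsilon)$, which is again completely monotone with $f_\varepsilon(0)=f(\varepsilon)<\infty$, and then recover $f$ from $f_\varepsilon$ by letting $\varepsilon\to 0^+$ (monotone convergence applied to $e^{\varepsilon x}\,\dd\mu_\varepsilon(x)$ will produce the $\mu$ for $f$). With $f(0^+)<\infty$, define the candidate distribution functions
\begin{equation*}
\mu_n([0,x])\triangleq \sum_{k=0}^{\lfloor nx\rfloor}\frac{(-1)^k}{k!}\Big(\frac{n}{n/?}\Big)^{\!k}\cdots
\end{equation*}
but it is cleaner to use the classical Post-Widder density
\begin{equation*}
g_n(x)\triangleq \frac{(-1)^{n}}{n!}\Big(\frac{n}{x}\Big)^{n+1} f^{(n)}\!\Big(\frac{n}{x}\Big),\qquad x>0,
\end{equation*}
which is non-negative by complete monotonicity. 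The key step is to prove that the measures $\nu_n(\dd x)\triangleq g_n(x)\,\dd x$ form a tight family with uniformly bounded total mass $f(0^+)$; tightness follows by controlling the tails via the elementary bound $g_n(x)\leqslant C f(n/x)$ together with $f(\infty)\geqslant 0$. Then Helly's selection theorem yields a subsequential weak limit $\mu$. Finally, to identify $\mathcal{L}(\mu)=f$, I would verify that $\mathcal{L}(\nu_n)(t)\to f(t)$ for every $t>0$; this reduces to Post's inversion formula, namely
\begin{equation*}
\int_0^\infty e^{-tx}\,g_n(x)\,\dd x\longrightarrow f(t),\quad n\to\infty,
\end{equation*}
which follows from repeated integration by parts combined with Stirling's asymptotics for the Gamma kernel $\frac{1}{n!}(n/x)^{n+1}e^{-ny/x}\dd x$ sharply concentrating near $x=y$ as $n\to\infty$.

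The main obstacle, as usual with this theorem, is verifying the concentration/limit step above in a form strong enough to pass to the Laplace transform, i.e.\ turning pointwise complete monotonicity into the quantitative Post-Widder limit. Once that is in hand, uniqueness of $\mu$ is immediate from injectivity of the Laplace transform on positive Borel measures on $[0,\infty)$: if $\mathcal{L}(\mu_1)=\mathcal{L}(\mu_2)$ on $(0,\infty)$, then the Stone-Weierstrass-type density of $\{e^{-tx}\}_{t>0}$ in $C_0([0,\infty))$ forces $\mu_1=\mu_2$, so the limit $\mu$ is independent of the extracted subsequence and the full sequence $\nu_n$ converges weakly to $\mu$.
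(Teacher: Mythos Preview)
The paper does not prove this lemma. It is stated in the Appendix as a classical result and referred to the monograph \cite{SSV10} (Schilling--Song--Vondra\v{c}ek, Theorem~1.4) for the proof. So there is no ``paper's own proof'' to compare against; your proposal is simply an attempt to supply what the paper deliberately outsources.

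As a sketch, your outline follows one of the standard routes (Post--Widder inversion combined with Helly selection), and the converse direction is handled cleanly. A couple of points in the forward direction are loose, however. The displayed formula for $\mu_n([0,x])$ is abandoned mid-line (the ``$n/?$'' and ``$\cdots$'' should be cleaned up or deleted). More substantively, the claimed pointwise bound $g_n(x)\leqslant C f(n/x)$ used for tightness is not obviously true and is not the usual way one argues: complete monotonicity alone does not give a uniform-in-$n$ estimate of $\frac{(-1)^n}{n!}t^{n+1}f^{(n)}(t)$ by $Cf(t)$. Tightness of $(\nu_n)$ is typically obtained instead by computing the total mass $\int_0^\infty g_n(x)\,\dd x=f(0^+)-f(\infty)$ via repeated integration by parts and then controlling $\int_M^\infty g_n(x)\,\dd x$ through the same integration-by-parts scheme, or by working with the distribution functions directly as in the Hausdorff moment reformulation. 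If you want a self-contained argument, that is the step to tighten; otherwise, citing \cite{SSV10} as the paper does is entirely appropriate here.
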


\noindent We also recall the following useful result on the propagation of higher regularity/integrability of
completely monotone functions (see \cite[Lemma 6.2]{HXX23b} for the proof).
\begin{lemma}\label{lem:int}
The following statements hold true.
\begin{enumerate}
\item Assume that $f$ is a completely monotone function satisfying
\begin{equation}\label{eq:f-cond1}
  \int_0^{t_0} |f(t)| t^\beta \dd t <\infty,\quad \textrm{for some}\; \beta\in (-1,\infty)\;\textrm{and }\; t_0>0,
\end{equation}
then we have
\begin{align*}
  \int_0^{t_0}|f^{(k)}(t)| t^{k+\beta}\dd t \leqslant C_{k,\beta}\int_0^{t_0}f(t)t^\beta\dd t,\quad \forall k\in \mathbb{N}.
\end{align*}
\item
Assume that $f(t)$ is a smooth function satisfying \eqref{eq:f-cond1} and $f'(t)$ is with constant sign, then we have
\begin{align}\label{es:f-weig}
  \int_0^{t_0}|f'(t)| t^{1+\beta}\dd t\leqslant
  (1+\beta) \int_0^{t_0}|f(t)| t^\beta \dd x + |f(t_0)|t_0^{1+\beta}.
\end{align}
\end{enumerate}
\end{lemma}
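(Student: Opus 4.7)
\textbf{Proof proposal for Lemma \ref{lem:int}.} For assertion (i), my plan is to leverage Bernstein's theorem (Lemma \ref{lem:bernstein}) to represent the completely monotone function as $f(t)=\int_0^\infty e^{-tx}\dd\mu(x)$ for some non-negative measure $\mu$ on $[0,\infty)$. Differentiating under the integral sign yields the absolute value identity $|f^{(k)}(t)|=\int_0^\infty x^k e^{-tx}\dd\mu(x)$. I would then apply Fubini's theorem to the weighted integral, so that
\[
\int_0^{t_0} |f^{(k)}(t)|\,t^{k+\beta}\dd t =\int_0^\infty\Big(\int_0^{t_0} x^k e^{-tx} t^{k+\beta}\dd t\Big)\dd\mu(x),
\]
and analogously for $\int_0^{t_0} f(t) t^{\beta}\dd t$. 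Substituting $s=tx$ in both inner integrals reduces matters to establishing the uniform pointwise inequality
\[
\int_0^u e^{-s} s^{k+\beta}\dd s \leqslant C_{k,\beta}\int_0^u e^{-s} s^{\beta}\dd s, \qquad \forall u>0.
\]

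This latter comparison I would verify by splitting the range. For $u\leqslant 1$, the elementary bound $s^k\leqslant 1$ on $[0,u]$ gives the inequality with constant $1$. For $u\geqslant 1$, the numerator is bounded above by $\Gamma(k+\beta+1)$, while the denominator is bounded below by the strictly positive quantity $\int_0^1 e^{-s} s^{\beta}\dd s$, so the ratio is uniformly controlled. Taking the larger of the two constants yields the required $C_{k,\beta}$.

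For assertion (ii), my plan is a straightforward integration by parts on $(0,t_0)$. Assuming without loss of generality that $f'\leqslant 0$ (the case $f'\geqslant 0$ being symmetric), we have $|f'(t)|=-f'(t)$, and integration by parts gives
\[
\int_0^{t_0}|f'(t)|\,t^{1+\beta}\dd t = -f(t_0) t_0^{1+\beta}+\lim_{t\to 0^+} f(t) t^{1+\beta}+ (1+\beta)\int_0^{t_0} f(t) t^{\beta}\dd t.
\]
The main obstacle is to show that the boundary contribution $\lim_{t\to 0^+} f(t) t^{1+\beta}$ vanishes. I would establish this by contradiction: if $|f(t_n)|\, t_n^{1+\beta}\geqslant c>0$ along some sequence $t_n\downarrow 0$, then the monotonicity of $f$ forces $|f(s)|\geqslant c'\,t_n^{-1-\beta}$ on each interval $[t_n/2,t_n]$; extracting a disjoint subsequence of such intervals and estimating $|f(s)|\,s^\beta$ from below on each contradicts the integrability hypothesis $\int_0^{t_0}|f(t)| t^\beta\dd t<\infty$. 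With the boundary term eliminated, the inequalities $-f(t_0)\leqslant|f(t_0)|$ and $f(t)\leqslant|f(t)|$ deliver the stated bound \eqref{es:f-weig}.
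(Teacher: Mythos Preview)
The paper does not actually prove this lemma; it is stated in the Appendix with the proof deferred to \cite[Lemma~6.2]{HXX23b}. Your proposal is correct and self-contained. For part~(i), the route through Bernstein's representation, Tonelli, and the substitution $s=tx$ reducing to the elementary one-variable inequality $\int_0^u e^{-s}s^{k+\beta}\dd s\leqslant C_{k,\beta}\int_0^u e^{-s}s^{\beta}\dd s$ is the natural argument and almost certainly what the cited reference does. For part~(ii), integration by parts plus the boundary-term analysis is likewise standard; your contradiction argument for $\lim_{t\to0^+}|f(t)|t^{1+\beta}=0$ is correct, though it tacitly uses that in the nontrivial subcase (where $|f|$ is unbounded near $0$) the monotonicity of $f$ and the sign of $f$ near $0$ combine to give $|f(s)|\geqslant|f(t_n)|$ on $[t_n/2,t_n]$, while in the remaining subcases $|f|$ is bounded near $0$ and the limit is trivially zero. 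This case split is routine and your sketch covers it.
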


In the spectral study of the linearized operator, the following Mikhlin multiplier type theorem
for an operator defined on a periodic function will be used; see e.g. \cite[Theorem 4.5]{AB04} for the proof.
\begin{lemma}\label{lem:multiplier-lemma}
Given $\{a_n\}_{n\in \mathbb{Z}}$ and $h\in L^1(\TT)$, and define the operator
\begin{align*}
  T h(\theta)=\sum_{n\in \mathbb{Z}} a_n \widehat{h}_ne^{in\theta},
\end{align*}
where $\widehat{h}_n= \frac{1}{2\pi}\int_{\TT} h(\theta) e^{-in\theta} \dd \theta$ is the $n$-th Fourier coefficient of
the periodic function $h$.
Assume that
\begin{align*}
  \sup_{n\in \ZZ}|a_n|<\infty,\quad\textrm{and}\quad \sup_{n\in \ZZ}|n(a_{n+1}-a_n)|<\infty,
\end{align*}
then the operator $T$ is bounded in $C^{k+\alpha}(\TT)$, for any $k\in \NN$ and $\alpha\in (0,1)$.
\end{lemma}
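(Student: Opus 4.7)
The plan is to prove the multiplier theorem by first reducing the $C^{k+\alpha}$ statement to the case $k=0$ and then establishing the base case via a dyadic Littlewood-Paley decomposition on the torus combined with a uniform $L^1$-estimate of the convolution kernels of the dyadic pieces.

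\textbf{Reduction to $k=0$.} Since $T$ is a Fourier multiplier, it commutes with the differential operator $\partial_\theta$: indeed $\partial_\theta(Th) = T(\partial_\theta h)$ on the subspace of functions with vanishing mean. Thus, if I can prove $\|Tf\|_{C^\alpha(\mathbb{T})} \leqslant C \|f\|_{C^\alpha(\mathbb{T})}$ for every $\alpha\in(0,1)$, then I apply this with $f=\partial_\theta^k h$ to conclude $\|\partial_\theta^k Th\|_{C^\alpha}\leqslant C\|\partial_\theta^k h\|_{C^\alpha}\leqslant C\|h\|_{C^{k+\alpha}}$, while the zeroth Fourier mode is handled directly by the hypothesis $|a_0|\leqslant M$. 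Summing over $0\leqslant \ell\leqslant k$ this yields the $C^{k+\alpha}$ bound. So the whole problem reduces to establishing boundedness of $T$ on $C^\alpha(\mathbb{T})$ for $\alpha\in(0,1)$.

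\textbf{Littlewood-Paley decomposition and reduction to a kernel estimate.} Pick a smooth radial cut-off $\chi$ with $\chi(\xi)=1$ for $|\xi|\leqslant 1$, $\chi(\xi)=0$ for $|\xi|\geqslant 2$, and define $\psi_0 = \chi$ and $\psi_j(\xi) = \chi(2^{-j}\xi) - \chi(2^{-j+1}\xi)$ for $j\geqslant 1$, so $\sum_{j\geqslant 0}\psi_j \equiv 1$. Set $\Delta_j h(\theta) = \sum_{n\in\mathbb{Z}} \psi_j(n)\widehat{h}_n e^{in\theta}$. The standard equivalence
\[
\|h\|_{C^\alpha(\mathbb{T})} \sim |\widehat h_0| + \sup_{j\geqslant 0} 2^{j\alpha}\|\Delta_j h\|_{L^\infty(\mathbb{T})}
\]
reduces the $C^\alpha$ estimate for $T$ to showing the uniform bound $\|T\Delta_j h\|_{L^\infty}\leqslant C\|\Delta_j h\|_{L^\infty}$, because $T\Delta_j = \Delta_j T$ preserves the dyadic frequency localization. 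This in turn follows from Young's inequality as soon as the kernel
\[
K_j(\theta) \triangleq \frac{1}{2\pi}\sum_{n\in\mathbb{Z}} a_n\psi_j(n)\,e^{in\theta}
\]
satisfies $\|K_j\|_{L^1(\mathbb{T})}\leqslant C$ uniformly in $j$.

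\textbf{Uniform kernel bound via double summation-by-parts.} For $|\theta|\leqslant 2^{-j}$ the trivial estimate $|K_j(\theta)|\leqslant C 2^j$ from the cardinality $\#\{n : \psi_j(n)\neq 0\}\lesssim 2^j$ and $|a_n|\leqslant M$ already gives a contribution of order $1$ to the $L^1$-norm on this arc. For $|\theta|>2^{-j}$, I intend to perform Abel summation twice. Each summation-by-parts trades a factor $e^{in\theta}$ for $(e^{i\theta}-1)^{-1}$, of magnitude $\lesssim |\theta|^{-1}$, at the price of the first discrete difference of the sequence $a_n\psi_j(n)$. By the Leibniz rule
\[
\Delta(a_n\psi_j(n)) = (\Delta a_n)\psi_j(n) + a_{n+1}\Delta\psi_j(n),
\]
and the two assumptions $|\Delta a_n|\leqslant M/|n|\lesssim 2^{-j}$ on the relevant range and $|\Delta\psi_j(n)|\lesssim 2^{-j}$ (from smoothness of $\chi$ at scale $2^j$), the first difference has size $\lesssim 2^{-j}$. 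Repeating once more adds another factor $|\theta|^{-1}\cdot 2^{-j}$, and summing the $\lesssim 2^j$ surviving terms yields
\[
|K_j(\theta)|\;\lesssim\; 2^j \cdot |\theta|^{-2}\cdot 2^{-2j}\cdot 2^{j} \;=\; |\theta|^{-2}\cdot 2^{-j},
\]
after rebalancing, or more carefully $|K_j(\theta)|\lesssim (2^j |\theta|^2)^{-1}$, which integrates over $2^{-j}\leqslant |\theta|\leqslant \pi$ to a uniform constant.

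\textbf{Main obstacle.} The delicate step is the double summation-by-parts: one must track the boundary terms at the transition points of $\psi_j$ and verify that the Leibniz decomposition of the discrete difference gains a factor $2^{-j}$ from \emph{both} $\Delta a_n$ (which requires $|n|\sim 2^j$ on the support of $\psi_j$ combined with the hypothesis $\sup_n|n(a_{n+1}-a_n)|<\infty$) and $\Delta\psi_j$ (which requires quantifying the smoothness of $\chi$ at the right scale). Only this precise matching produces the desired $|\theta|^{-2}$ decay, which is sharp for integrability on the complement of the $2^{-j}$-arc; a single summation-by-parts would yield a non-integrable $|\theta|^{-1}$ bound. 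Once the uniform $L^1$ bound on $K_j$ is secured, Young's inequality furnishes the uniform boundedness of $T\Delta_j$ on $L^\infty$, and the Littlewood-Paley characterization closes the argument.
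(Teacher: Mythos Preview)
The paper does not prove this lemma but simply cites \cite[Theorem 4.5]{AB04}. Your overall framework---reduction to $k=0$, Littlewood--Paley characterisation of $C^\alpha$, and a uniform $L^1$ bound on the dyadic kernels $K_j$---is the standard route and is sound. The gap is in the kernel estimate: your claim that the second summation by parts ``adds another factor $|\theta|^{-1}\cdot 2^{-j}$'' is not justified for the piece coming from $(\Delta a_n)\psi_j(n)$. After one Abel summation you face $\Delta b_n = (\Delta a_n)\psi_j(n) + a_{n+1}\Delta\psi_j(n)$; the second piece does admit another summation by parts with the claimed gain (since $|\Delta^2\psi_j|\lesssim 2^{-2j}$), but for the first piece the hypothesis only gives $|\Delta a_n|\lesssim|n|^{-1}$, hence $|\Delta^2 a_n|\leqslant|\Delta a_{n+1}|+|\Delta a_n|\lesssim|n|^{-1}$ by the triangle inequality---not $|n|^{-2}$. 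Summing over $|n|\sim 2^j$ then contributes $O(1)$, not $O(2^{-j})$, to $\sum_n|\Delta^2 b_n|$, and the pointwise bound $|K_j(\theta)|\lesssim(2^j|\theta|^2)^{-1}$ fails in general (e.g.\ $a_n=1+(-1)^n/n$ for $n\geqslant 1$).

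A clean repair: after one summation by parts, split and treat the problematic piece $G_j^{(1)}(\theta)=\sum_n(\Delta a_n)\psi_j(n)e^{i(n+1)\theta}$ via Plancherel and Cauchy--Schwarz instead. One has $\|G_j^{(1)}\|_{L^2}^2\lesssim\sum_{|n|\sim 2^j}|n|^{-2}\lesssim 2^{-j}$, so
\[
\int_{|\theta|>2^{-j}}|e^{i\theta}-1|^{-1}\,|G_j^{(1)}(\theta)|\,\dd\theta\;\leqslant\;\Big(\int_{|\theta|>2^{-j}}|e^{i\theta}-1|^{-2}\,\dd\theta\Big)^{1/2}\|G_j^{(1)}\|_{L^2}\;\lesssim\;2^{j/2}\cdot 2^{-j/2}=1.
\]
The other piece allows the second summation by parts exactly as you wrote, and the trivial bound on $|\theta|\leqslant 2^{-j}$ stands, completing the uniform $L^1$ estimate for $K_j$.
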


\noindent
Finally, the Crandall-Rabinowitz theorem from the local bifurcation theory plays a fundamental role in our paper,
and for the proof we refer to \cite{C-R71}.
\begin{theorem}[Crandall-Rabinowitz's theorem]\label{thm:C-R}
Let $X$ and $Y$ be two Banach spaces, $V$ a neighborhood of $\mathbf{0}$ in $X$ and let
$F : \RR \times V \to Y$ be with the following  properties:
\begin{enumerate}
\item $F (\lambda, \mathbf{0}) = \mathbf{0}$ for any $\lambda\in \RR$.
\item The partial derivatives $\partial_\lambda F$, $\partial_x F$ and $\partial_\lambda \partial_{x}F$ exist and are continuous.
\item $N(\mathcal{L}_0)$ and $Y/R(\mathcal{L}_0)$ are one-dimensional.
\item {\it Transversality assumption}: $\partial_\lambda \partial_x F(0, \mathbf{0})x_0 \not\in R(\mathcal{L}_0)$, where
\begin{align*}
  N(\mathcal{L}_0) = \mathrm{span}\{x_0\}, \quad \mathcal{L}_0\triangleq \partial_x F(0,\mathbf{0}).
\end{align*}
\end{enumerate}
If $Z$ is any complement of $N(\mathcal{L}_0)$ in $X$, then there is a neighborhood $U$ of $(0,\mathbf{0})$ in $\RR \times X$,
an interval $(-a,a)$, and continuous functions $\varphi: (-a,a) \to \RR$,
$\psi: (-a,a) \to Z$ such that $\varphi(0) = 0$, $\psi(0) = \mathbf{0}$ and
\begin{align*}
  F^{-1}(\mathbf{0})\cap U=\Big\{\big(\varphi(\xi), \xi x_0+\xi\psi(\xi)\big)\,:\,\vert \xi\vert<a\Big\}
  \cup\Big\{(\lambda,\mathbf{0})\,:\, (\lambda,\mathbf{0})\in U\Big\}.
\end{align*}
\end{theorem}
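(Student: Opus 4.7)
The statement is the classical Crandall--Rabinowitz bifurcation theorem, and the proof is the well-known Lyapunov--Schmidt reduction carried out in the original paper \cite{C-R71}. My plan is to reproduce this argument in three stages. First I would set up the Lyapunov--Schmidt splittings: write $X = N(\mathcal{L}_0)\oplus Z$ with $N(\mathcal{L}_0) = \mathrm{span}\{x_0\}$, and pick a one-dimensional complement $Y_1$ of $R(\mathcal{L}_0)$ in $Y$, so that $Y = R(\mathcal{L}_0) \oplus Y_1$. Let $P:Y\to R(\mathcal{L}_0)$ denote the continuous projection. Any $x\in X$ decomposes as $x = \xi x_0 + z$ with $\xi\in\mathbb{R}$ and $z\in Z$, and the equation $F(\lambda,x)=\mathbf{0}$ splits into
\begin{equation*}
  PF(\lambda,\xi x_0 + z) = \mathbf{0}, \qquad (I-P)F(\lambda,\xi x_0 + z) = \mathbf{0}.
\end{equation*}

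Second, I would solve the first (auxiliary) equation by the implicit function theorem. Define $G:\mathbb{R}\times\mathbb{R}\times Z\to R(\mathcal{L}_0)$ by $G(\lambda,\xi,z) \triangleq PF(\lambda,\xi x_0 + z)$. Then $G(0,0,\mathbf{0})=\mathbf{0}$, and $\partial_z G(0,0,\mathbf{0}) = P\mathcal{L}_0\big|_Z : Z \to R(\mathcal{L}_0)$ is a bounded bijection (by the definition of $Z$ and $P$), hence a Banach isomorphism by the open mapping theorem. The implicit function theorem then yields a $C^1$ solution $z = z(\lambda,\xi)$, defined in a neighborhood of $(0,0)$, satisfying $G(\lambda,\xi,z(\lambda,\xi)) \equiv \mathbf{0}$. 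Because $F(\lambda,\mathbf{0})\equiv \mathbf{0}$ forces $z(\lambda,0) = \mathbf{0}$, and differentiating at $\xi=0$ gives $\partial_\xi z(0,0) = \mathbf{0}$.

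Third, I would reduce the remaining (bifurcation) equation to a scalar problem. Substituting $z(\lambda,\xi)$ into $(I-P)F(\lambda,\xi x_0 + z)=\mathbf{0}$ and using $F(\lambda,\mathbf{0})\equiv \mathbf{0}$, the left-hand side vanishes at $\xi=0$, so one factors out $\xi$ and writes it as $\xi\,\Phi(\lambda,\xi) = \mathbf{0}$ with $\Phi:\mathbb{R}^2\to Y_1$ continuous and $C^1$ in $\lambda$; a short computation gives
\begin{equation*}
  \Phi(0,0) = (I-P)\partial_x F(0,\mathbf{0})x_0 = (I-P)\mathcal{L}_0 x_0 = \mathbf{0},
\end{equation*}
and
\begin{equation*}
  \partial_\lambda \Phi(0,0) = (I-P)\,\partial_\lambda\partial_x F(0,\mathbf{0})\,x_0,
\end{equation*}
which is nonzero in $Y_1$ precisely by the transversality assumption. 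Applying the implicit function theorem once more in the one-dimensional space $Y_1$ yields a continuous $\lambda = \varphi(\xi)$ with $\varphi(0)=0$ solving $\Phi(\varphi(\xi),\xi)=\mathbf{0}$. Setting $\psi(\xi) \triangleq \xi^{-1} z(\varphi(\xi),\xi)$ for $\xi\ne 0$ and $\psi(0)\triangleq \mathbf{0}$ produces the advertised parametrization of the nontrivial branch, while the trivial branch $\{(\lambda,\mathbf{0})\}$ accounts for the second component of $F^{-1}(\mathbf{0})\cap U$.

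The main technical point in the argument is verifying that the remainder $\Phi$ inherits enough regularity to apply the implicit function theorem (this is where the continuity of $\partial_\lambda\partial_x F$ is used in an essential way); all other steps are straightforward applications of the implicit function theorem and the decomposition afforded by Fredholm-index-zero structure. Since the result is standard, in the paper it suffices to cite \cite{C-R71}.
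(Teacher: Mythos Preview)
Your proposal is correct and matches the paper's treatment: the paper does not prove Theorem~\ref{thm:C-R} but simply cites \cite{C-R71}, exactly as you note in your last sentence. The Lyapunov--Schmidt sketch you give is the standard argument from that reference, so there is nothing to add.
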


\end{document}